\documentclass[a4paper,reqno,11pt]{amsart}

\newfont{\cyr}{wncyr10 scaled 1100}

\usepackage[left=2.7cm,right=2.7cm,top=3.5cm,bottom=3cm]{geometry}

\usepackage{amsthm,amssymb,amsmath,amsfonts,mathrsfs,amscd}
\usepackage{mathtools}
\usepackage{extarrows}
\usepackage[latin1]{inputenc}
\usepackage[all]{xy}
\usepackage{latexsym}
\usepackage{longtable}
\usepackage{xcolor}
\usepackage{comment}
\usepackage[shortlabels]{enumitem}
\usepackage{setspace}
\setdisplayskipstretch{}
\usepackage{multirow}
\usepackage{xcolor,colortbl}
\usepackage{changepage}
\usepackage{float}
\usepackage{tikz-cd}
\usepackage[colorlinks=true,
            linkcolor=black,
            citecolor=black,
            urlcolor=black]{hyperref}
\onehalfspacing

\setcounter{tocdepth}{1}

\usepackage[backend=biber,style=alphabetic,maxnames=99,minnames=99, sorting=nyt]{biblatex}

\addbibresource{thesis.bib}
\DeclareFieldFormat[article]{title}{#1}
\DefineBibliographyStrings{english}{
  in = {}
}
\AtBeginBibliography{\footnotesize}

\theoremstyle{plain}
\newtheorem{theorem}{Theorem}[section]
\newtheorem{corollary}[theorem]{Corollary}
\newtheorem{lemma}[theorem]{Lemma}
\newtheorem{proposition}[theorem]{Proposition}
\newtheorem{conjecture}[theorem]{Conjecture}

\theoremstyle{definition}
\newtheorem{definition}[theorem]{Definition}

\newtheorem{examplewr}[theorem]{Example}

\theoremstyle{remark}
\newtheorem{obswr}[theorem]{Observation}
\newtheorem{remarkwr}[theorem]{Remark}

\definecolor{Gray}{gray}{0.85}
\definecolor{LightCyan}{rgb}{0.88,1,1}

\newcolumntype{g}{>{\columncolor{Gray}}c}
\newcolumntype{y}{>{\columncolor{LightCyan}}c}
\newcolumntype{o}{>{\columncolor{pink}}c}

\newenvironment{remark}{\begin{remarkwr}\begin{upshape}}{\end{upshape}\end{remarkwr}}

\newcommand{\cO}{{\mathcal O}}

\newcommand{\sH}{\mathscr H}

\newcommand{\sX}{{\mathscr X}}

\newcommand{\Z}{\mathbb{Z}}
\newcommand{\Q}{\mathbb{Q}}
\newcommand{\R}{\mathbb{R}}
\newcommand{\C}{\mathbb{C}}

\newcommand{\PP}{\mathbb{P}}
\newcommand{\X}{\mathbb{X}}
\newcommand{\D}{\mathbb{D}}

\newcommand{\GL}{\mathrm{GL}}

\newcommand{\SL}{\mathrm{SL}}
\newcommand{\SO}{\mathrm{SO}}
\newcommand{\Norm}{\mathrm{N}}

\newcommand{\Gal}{\mathrm{Gal\,}}

\newcommand{\Hom}{\mathrm{Hom}}

\newcommand{\ord}{\mathrm{ord}}

\newcommand{\coInd}{\mathrm{coInd}}

\newcommand{\fp}{\mathfrak{p}}

\newcommand{\too}{\longrightarrow}								
\newcommand{\mapstoo}{\longmapsto}
\newcommand{\intoo}{\lhook\joinrel\longrightarrow}

\DeclareRobustCommand\ontoo{\relbar\joinrel\twoheadrightarrow}

\makeatletter
\newsavebox{\@brx}
\newcommand{\llangle}[1][]{\savebox{\@brx}{\(\m@th{#1\langle}\)}%
  \mathopen{\copy\@brx\kern-0.5\wd\@brx\usebox{\@brx}}}
\newcommand{\rrangle}[1][]{\savebox{\@brx}{\(\m@th{#1\rangle}\)}%
  \mathclose{\copy\@brx\kern-0.5\wd\@brx\usebox{\@brx}}}
\makeatother

\begin{document}
	
	\include{thebibliography}

\title[Eisenstein class of a torus bundle and log-rigid classes for $\SL_n(\Z)$]{Eisenstein class of a torus bundle and \\ log-rigid analytic classes for $\SL_n(\Z)$}
\author{Mart\'i Roset and Peter Xu}

\address{M.~R.: IMJ-PRG, Sorbonne Universit\'e, Paris, France}
\email{marti.roset-julia@imj-prg.fr}
\address{P.~X.: UCLA, Los Angeles, USA}
\email{peterx@math.ucla.edu}

\subjclass[2020]{11F75, 11G15, 11S40, 11S80}

\begin{abstract}
	Starting from a topological treatment of the Eisenstein class of a torus bundle, we define log-rigid analytic classes for $\SL_n(\Z)$. These are group cohomology classes for $\SL_n(\Z)$ valued on log-rigid analytic functions on Drinfeld's $p$-adic symmetric domain. Such classes can be evaluated at points attached to totally real fields of degree $n$ where $p$ is inert. We conjecture that these values are $p$-adic logarithms of Gross--Stark units in the narrow Hilbert class field of totally real fields. We provide evidence for the conjecture by comparing our constructions to $p$-adic $L$-functions. In addition, we prove it in certain situations where the totally real field is Galois over $\Q$, as a consequence of the fact that in this case there is a conjugate of a Gross--Stark unit in $\Q_p$.
\end{abstract}

\maketitle

\tableofcontents

\section{Introduction}

The values of modular units at CM points, called elliptic units, have rich arithmetic significance. Notably, they generate abelian extensions of imaginary quadratic fields. In \cite{DD}, Darmon and Dasgupta proposed a conjectural construction of elliptic units for real quadratic fields and predicted that they behave similarly to elliptic units. Their construction consists of a $p$-adic limiting process involving periods along real quadratic geodesics of logarithmic derivatives of modular units.  

Using a different approach, Dasgupta used Shintani's method to extend this construction to totally real fields of arbitrary degree in \cite{DasguptaShintani}, and Dasgupta and Kakde proved that this recipe gives $p$-units in abelian extensions of totally real fields \cite{DKBrumerStark}, \cite{DKExplicitCFT}. More precisely, they proved that their resulting objects are Gross--Stark units. Remarkably, their work provides a solution to Hilbert's twelfth problem for totally real fields via $p$-adic methods.

Darmon, Pozzi, and Vonk constructed analogs of modular functions, called rigid classes, which can be evaluated at real quadratic points, and expressed the original construction of \cite{DD} as the value of a rigid class in \cite{DPV2}. Their work accentuates the parallel between Gross--Stark units and elliptic units, as rigid classes play the role of modular functions. Moreover, it leads to an alternative proof of the conjecture of \cite{DD} in the real quadratic setting.

In this paper, we construct a log-rigid analytic class for $\SL_n(\Z)$ and study its values at points attached to totally real fields where $p$ is inert. We conjecture that these values are $p$-adic logarithms of Gross--Stark units and provide evidence for it by comparing our constructions to $p$-adic $L$-functions and using the rank $1$ Gross--Stark conjecture, proven in \cite{DDP} and \cite{Ventullo}.

Moreover, we prove the conjecture in certain situations where the totally real field is Galois over $\Q$, as a consequence of the fact that in this case there is a conjugate of a Gross--Stark unit in $\Q_p$. To our knowledge, this extends the type of abelian extensions of $F$ that can be constructed explicitly using only the values of the derivatives of $p$-adic $L$-functions. Moreover, it expresses Gross--Stark units as values of a modular-like object, namely the log-rigid class.

A key ingredient in our construction is the Eisenstein class of a torus bundle of Bergeron, Charollois, and Garc\'ia \cite{BCG} and its pullbacks by torsion sections, that replace the role of modular units. {In particular, we conjecture that Gross--Stark units can be obtained via a $p$-adic limiting process involving periods of Eisenstein classes on locally symmetric spaces attached $\SL_n(\R)$ along tori determined by totally real fields.} We hope that this represents a first step toward a modular, or more technically automorphic, construction of Gross--Stark units for totally real fields, which would generalize the results of \cite{DD} and \cite{DPV2}.

\subsection{Siegel units and abelian extensions of quadratic fields}\label{subsec: Siegel Units and abelian extensions of imaginary quadratic fields}
We begin by explaining the construction of Siegel units and their relation with the theory of complex multiplication for imaginary quadratic fields. 
Let $E$ be an elliptic curve defined over a scheme $S$, fix a positive integer $c$ coprime to $6$, and denote by $\mathbb{N}^{(c)}$ the set of positive integers coprime to $c$. 
\begin{proposition}\label{prop: construction of Siegel units}
	There exists a unique function ${}_c \theta_E \in \cO(E - E[c])^\times$ satisfying:
	\begin{enumerate}
		\item The divisor of ${}_c \theta_E$ is $E[c] - c^2(0)$.
		\item ${}_c \theta_E$ is invariant under pushforward induced by multiplication by $a$ for all $a \in \mathbb{N}^{(c)}$.
	\end{enumerate}
\end{proposition}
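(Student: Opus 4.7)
The plan is to establish uniqueness first, and then use it to glue a local construction of existence. Throughout, for $a \in \mathbb{N}^{(c)}$ the map $[a]\colon E \to E$ is finite locally free of rank $a^2$, so the pushforward $[a]_*$ on functions is given by the norm; in particular it is multiplicative and sends a constant $\mu \in \cO_S^\times$ to $\mu^{a^2}$. Moreover, $[a]$ restricts to an automorphism of $E[c]$ (since $a$ is invertible mod $c$), so $[a]_*(E[c] - c^2(0)) = E[c] - c^2(0)$ as divisors; hence any function satisfying (1) is sent by $[a]_*$ to a constant multiple of itself. For uniqueness, if $f, g$ both satisfy (1) and (2), the ratio $\mu \defeq f/g \in \cO_S^\times$ satisfies $[a]_*(\mu g) = \mu^{a^2} g = \mu g$, so $\mu^{a^2 - 1} = 1$. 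Since $\gcd(c, 6) = 1$, both $a = 2$ and $a = 3$ are allowed; then $\mu^3 = \mu^8 = 1$, and $\gcd(3, 8) = 1$ forces $\mu = 1$.

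For existence I would work locally on $S$. The divisor $E[c] - c^2(0)$ has degree zero, and its image under Abel--Jacobi is trivial (as $E[c]$ is a subgroup scheme, its points sum to zero in the group law), so the class of $\cO_E(E[c] - c^2(0))$ lies in the kernel of $\Pic(E/S) \to \Pic^0(E/S)(S) = E(S)$ and is therefore the pullback of a line bundle on $S$. After shrinking $S$, a function $f_1$ with $\operatorname{div}(f_1) = E[c] - c^2(0)$ exists. For each $a \in \mathbb{N}^{(c)}$, write $[a]_* f_1 = \lambda_a f_1$ with $\lambda_a \in \cO_S^\times$. Expanding $[ab]_* f_1 = [a]_*[b]_* f_1$ via multiplicativity and $[a]_*(\mu) = \mu^{a^2}$ gives $\lambda_{ab} = \lambda_a \lambda_b^{a^2}$; swapping $a \leftrightarrow b$ and using $\lambda_{ab} = \lambda_{ba}$ yields the key symmetry
\[
\lambda_a^{b^2 - 1} = \lambda_b^{a^2 - 1}, \qquad a, b \in \mathbb{N}^{(c)}.
\]

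Next, choose $x, y \in \Z$ with $3x + 8y = 1$, set $\nu \defeq \lambda_2^x \lambda_3^y$, and define $f \defeq \nu^{-1} f_1$. The symmetry relation at $b = 2, 3$ reads $\lambda_2^{a^2 - 1} = \lambda_a^3$ and $\lambda_3^{a^2 - 1} = \lambda_a^8$, so $\nu^{a^2 - 1} = \lambda_a^{3x + 8y} = \lambda_a$, and thus $[a]_* f = \nu^{-a^2} \lambda_a f_1 = \nu^{-1} f_1 = f$, confirming (2). By the uniqueness already proved, these locally constructed functions agree on overlaps and glue to the desired global ${}_c\theta_E \in \cO(E - E[c])^\times$. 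The main obstacle is really the identification of the symmetry $\lambda_a^{b^2 - 1} = \lambda_b^{a^2 - 1}$ and the Bezout-style trick exploiting $\gcd(3, 8) = 1$ -- precisely where the hypothesis $\gcd(c, 6) = 1$ intervenes. Everything else is formal bookkeeping with pushforwards and the triviality of the relevant line-bundle class.
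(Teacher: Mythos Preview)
The paper states this proposition in the introduction as background and does not supply a proof; it is quoted as a classical fact (the construction goes back to Kato). So there is no ``paper's proof'' to compare against.

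Your argument is correct and is essentially the standard one. The uniqueness step is exactly right: the ratio of two candidates is a global unit on $S$, and norm-invariance under $[2]_*$ and $[3]_*$ forces it to satisfy $\mu^3 = \mu^8 = 1$, hence $\mu = 1$. For existence, the key observations are all in place: (i) $E[c] - c^2(0)$ has fiberwise degree $0$ and trivial Abel--Jacobi image, so the associated line bundle is pulled back from $S$ and trivializes Zariski-locally; (ii) the failure of norm-invariance is measured by units $\lambda_a \in \cO_S^\times$ satisfying the symmetry $\lambda_a^{b^2-1} = \lambda_b^{a^2-1}$; (iii) the B\'ezout identity $3x + 8y = 1$ lets you correct $f_1$ by $\nu = \lambda_2^x \lambda_3^y$; and (iv) uniqueness glues the local constructions. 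One small remark: your claim that the Abel--Jacobi image of $E[c]$ vanishes (``its points sum to zero'') is most cleanly justified over a base on which $c$ is invertible, so that $E[c]$ is \'etale and the sum of sections makes literal sense; alternatively one can argue via $\cO_E(E[c]) = [c]^*\cO_E((0))$. Either way this is a routine point and not a gap.
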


Let $N \geq 3$ be a positive integer coprime to $c$, denote by $\Gamma(N) \subset \SL_2(\Z)$ the congruence subgroup of full level $N$, and let $\sH$ be the complex upper half-plane. We can then consider the universal elliptic curve
\[
E \coloneq \Gamma(N) \backslash \left((\sH \times \C)/\Z^2\right) \too Y(N)\coloneq \Gamma(N) \backslash \sH. 
\]
The proposition above yields the function ${}_c \theta_E \in \cO(E - E[c])^\times$, which can be used to construct modular units on $\Gamma(N) \backslash \sH$ in the following way. A vector $v \in \Q^2/\Z^2 - \{ 0\}$ of order $N$ induces a torsion section $v\colon Y(N) \to E - E[c]$. Then, the pullback ${}_c g_v \coloneq v^\ast({}_c \theta_E) \in \cO(Y(N))^\times$ is called a {\em Siegel unit} and is an instance of a modular unit. It gives rise to a $\Gamma(N)$-invariant function on $\sH$, that we will denote by the same symbol. The theory of complex multiplication implies that the values of Siegel units at special points have deep significance. 
\begin{theorem}\label{thm: CM values of Siegel units}
	Let $\tau \in \sH$ be a CM point attached to a quadratic imaginary field $K$, i.e. $\tau$ is stabilized by a subgroup of norm one elements $K^1 \subset \SL_2(\Q)$ of $K$. Then, 
	\[
	{}_c g_v (\tau) \in K^{\mathrm{ab}} \subset \bar{\Q}.
	\]
\end{theorem}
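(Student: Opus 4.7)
The plan is to interpret ${}_c g_v(\tau)$ algebro-geometrically as the value of ${}_c\theta_{E_\tau}$ at a torsion point on a CM elliptic curve, and then invoke the classical theory of complex multiplication to conclude.

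First, I would unwind the definition of the Siegel unit at $\tau$. Since $Y(N)$ is the moduli space of elliptic curves with full level $N$ structure, the point $\tau \in \sH$ corresponds to the pair $(E_\tau, \alpha_\tau)$ where $E_\tau = \C/(\Z\tau + \Z)$ and $\alpha_\tau$ is the natural level structure. Under this identification the vector $v \in \tfrac{1}{N}\Z^2/\Z^2$ picks out a specific $N$-torsion point $P_v \in E_\tau[N]$, and the definition of ${}_c g_v$ as the pullback of ${}_c\theta_E$ along a torsion section gives
\[
{}_c g_v(\tau) \;=\; {}_c \theta_{E_\tau}(P_v).
\]

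Next I would appeal to the main theorem of complex multiplication. Because $\tau$ is stabilized by a norm-one torus of $K$, the lattice $\Z\tau+\Z$ is proportional to a proper fractional ideal of an order $\cO \subset \cO_K$, so $E_\tau$ has CM by $\cO$. By Shimura's theory, $E_\tau$ admits a model $\widetilde E$ defined over the Hilbert class field $H$ of $K$, and the full torsion subgroup of $\widetilde E$ is pointwise rational over $K^{\mathrm{ab}}$; in particular $P_v$ corresponds to a $K^{\mathrm{ab}}$-rational torsion point $\widetilde P_v \in \widetilde E[N]$.

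Finally I would show that ${}_c\theta_{\widetilde E}$ itself descends to $K^{\mathrm{ab}}$. The two conditions characterizing it in Proposition \ref{prop: construction of Siegel units} — the prescribed divisor $\widetilde E[c] - c^2(0)$ and invariance under pushforward by multiplication-by-$a$ for $a \in \mathbb{N}^{(c)}$ — are purely algebro-geometric and therefore Galois-equivariant. Any $\mathrm{Gal}(\C/K^{\mathrm{ab}})$-conjugate of ${}_c\theta_{\widetilde E \otimes \C}$ satisfies the same two properties, so coincides with it by the uniqueness clause of Proposition \ref{prop: construction of Siegel units}. Thus ${}_c\theta_{\widetilde E}$ is defined over $K^{\mathrm{ab}}$, and evaluating at $\widetilde P_v$ yields ${}_c g_v(\tau) \in K^{\mathrm{ab}}$.

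I expect the main obstacle to be the second step: one needs a version of the main theorem of CM precise enough to give simultaneously the field of definition of $E_\tau$ and the $K^{\mathrm{ab}}$-rationality of all of its torsion, even when the CM order is non-maximal. This is classical material (Shimura, Deuring), but its invocation for arbitrary orders requires some care in matching the analytic torsion section induced by $v$ with its algebraic counterpart on $\widetilde E$. Once those inputs are granted, the Galois descent in the third step follows automatically from the uniqueness in Proposition \ref{prop: construction of Siegel units}, and no further work is needed.
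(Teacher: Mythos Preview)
The paper does not actually supply a proof of this theorem: it appears in the introductory Section~\ref{subsec: Siegel Units and abelian extensions of imaginary quadratic fields} as background, preceded only by the sentence ``The theory of complex multiplication implies that the values of Siegel units at special points have deep significance.'' So there is nothing to compare your argument against.

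That said, your sketch is a correct and standard way to prove the statement. The reduction to ${}_c g_v(\tau) = {}_c\theta_{E_\tau}(P_v)$ is immediate from the definitions, and the Galois-descent argument for ${}_c\theta_{\widetilde E}$ via the uniqueness clause of Proposition~\ref{prop: construction of Siegel units} is exactly the right idea. One small correction: when the CM order $\cO$ is non-maximal, the field of definition of $\widetilde E$ is the ring class field of $\cO$, not the Hilbert class field of $K$; this does not affect the conclusion since ring class fields are contained in $K^{\mathrm{ab}}$, but you should say ``ring class field'' in step two. Your own caveat about matching analytic and algebraic torsion sections is well placed but not a genuine obstruction: once $\widetilde E$ is fixed, the complex uniformization identifies $\widetilde E(\C)$ with $E_\tau$ and carries the algebraic $N$-torsion bijectively to the analytic one, so $P_v$ automatically corresponds to a $K^{\mathrm{ab}}$-rational point.
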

An important question, included as the twelfth problem on Hilbert's famous list, is to find an analog of this theorem for general number fields. The case of real quadratic fields has been extensively studied by various methods. We are particularly interested in the $p$-adic approach initiated by Darmon and Dasgupta in \cite{DD} and followed, among others, by Darmon, Pozzi, and Vonk in \cite{DPV2}. We next outline these works in a language suited to this paper.

Let $F$ be a real quadratic field and $p$ a rational prime. Observe that $\sH$ does not contain real quadratic points, i.e. there are no points stabilized by a torus of norm one elements $F^1 \subset \SL_2(\Q)$ of $F$. On the other hand, $\sH$ has geodesics stabilized by these tori. Moreover, if $(z, \gamma z) \subset \sH$ is a segment of such geodesic, where $\gamma \in F^1 \cap \Gamma(p^r)$, and  $v \in \Q^2/\Z^2 - \{0\}$ is of exact order $p^r$, we have the so-called Meyer's theorem
\begin{equation}\label{eq: periods of dlog Siegel unit}
	\frac{1}{2\pi i}\int_{z}^{\gamma z} \mathrm{dlog}({}_c g_v) = \zeta_c(F, [\mathfrak b] , 0) \in \Z.
\end{equation}
Here, $\zeta_c(F, [\mathfrak b], 0)$ denotes the value at $s = 0$ of a $c$-smoothed partial zeta function attached to $F$ and an ideal class $[\mathfrak b]$ in a narrow class group of conductor divisible $p^r$, determined by the inclusion $F^1 \subset \SL_2(\Q)$ and $v$. In addition to encoding information about abelian extensions of totally real fields, these zeta values possess notable $p$-adic properties and serve for the construction of measures that determine $p$-adic partial zeta functions of $F$.

The search for a symmetric space containing real quadratic points, combined with the $p$-adic properties of the partial zeta values considered above, leads to replacing $\sH$ by a $p$-adic symmetric space to generalize Theorem \ref{thm: CM values of Siegel units}. More precisely, if we let $\sH_p\coloneq \mathbb{P}^1(\C_p) - \mathbb{P}^1(\Q_p)$ be the $p$-adic upper half-plane and $\mathcal A$ its ring of rigid analytic functions, we have: 
\begin{itemize}
	\item $\sH_p$ contains points stabilized by $F^1 \subset \SL_2(\Q)$ if and only if $p$ is nonsplit in $F$.
	\item There is a $\GL_2(\Q_p)$-equivariant isomorphism between $\mathcal A^\times/\C_p^\times$ and the space of $\Z$-valued measures on $\PP^1(\Q_p)$ of total mass zero (\cite{vdP82}), suggesting that $\mathcal A^\times$ encodes information about $p$-adic zeta functions and refinements of their values.
\end{itemize} 
In \cite{DPV2}, Darmon, Pozzi, and Vonk exploit the distribution relation of Siegel units attached to vectors of arbitrary $p$-power order to construct a cohomology class
\[
\mathcal J_{\mathrm{DR}} \in H^1(\SL_2(\Z), \mathcal A^\times).
\] 
This class can be viewed as a generalization of a modular function. Indeed, the space of invariant functions $H^0(\SL_2(\Z), \mathcal A^\times) = \C_p^\times$ is too simple, suggesting to study the first cohomology group instead. Moreover, if $\tau \in \sH_p$ is stabilized by $F^1 \subset \SL_2(\Q)$ and $F^1 \cap \SL_2(\Z) = \langle \pm \gamma_\tau \rangle$, they define the value $\mathcal J_{\mathrm{DR}}[\tau] \coloneq \mathcal J_{\mathrm{DR}}(\gamma_\tau)(\tau) \in \C_p^\times$. Fix embeddings $\bar{\Q} \subset \bar{\Q}_p \subset \C_p$.

\begin{theorem}[Darmon--Pozzi--Vonk]\label{thm: main thm of DPV2}
	Let $\tau \in \sH_p$ be as above with stabilizer $\langle \pm \gamma_\tau \rangle \subset \SL_2(\Z)$ be attached to a real quadratic field $F$ where $p$ is inert, and suppose that $\{\tau , 1\}$ generate a fractional ideal of $F$. Then,
	\[
	\log_p(\mathcal J_{\mathrm{DR}}[\tau]) = \log_p(u), \ u \in H = \text{narrow Hilbert class field of $F$} \subset F^{\mathrm{ab}} \subset \bar{\Q}. 
	\]
\end{theorem}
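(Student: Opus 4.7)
The plan is to translate the statement into an equality of $p$-adic $L$-derivatives and then invoke the (proven) rank $1$ Gross--Stark conjecture. More precisely, I would show that $\log_p(\mathcal J_{\mathrm{DR}}[\tau])$ equals, up to sign and an explicit combinatorial factor, the derivative at $s=0$ of a $c$-smoothed $p$-adic partial zeta function $\zeta_{p,c}(F, [\mathfrak b_\tau], s)$, where $[\mathfrak b_\tau]$ is the ideal class determined by $\tau$. Once this identification is in place, \cite{DDP, Ventullo} provide a Gross--Stark unit $u \in H$ with $\zeta_{p,c}'(F, [\mathfrak b_\tau], 0) = \log_p(u)$, which is the conclusion.

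To carry out the comparison, I would first invoke the van der Put isomorphism to describe $\mathcal J_{\mathrm{DR}}$ as a $1$-cocycle valued in $\Z$-valued measures on $\PP^1(\Q_p)$ of total mass zero. The construction of \cite{DPV2} assembles these measures out of the compatible family of Siegel units ${}_c g_v$ indexed by $p$-power torsion vectors $v$: the measure attached to $\mathcal J_{\mathrm{DR}}(\gamma)$ on a basic compact open $U\subset\PP^1(\Q_p)$ is read off from the period $\tfrac{1}{2\pi i}\int_z^{\gamma z}\mathrm{dlog}({}_c g_v)$ for an appropriate $v$ adapted to $U$, which by Meyer's theorem (equation \eqref{eq: periods of dlog Siegel unit}) equals a smoothed partial zeta value $\zeta_c(F,[\mathfrak b], 0)$. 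Taking $\log_p$ of the multiplicative integral representing $\mathcal J_{\mathrm{DR}}[\tau]$ yields an additive integral
\[
\log_p(\mathcal J_{\mathrm{DR}}[\tau]) = \int_{\PP^1(\Q_p)} \log_p(t-\tau)\, d\mu_{\gamma_\tau}(t),
\]
and the compatibility of the Siegel-unit measures under the action of $F^1$ translates this integral into the derivative of the $p$-adic Mellin--Mazur transform of the measure; that derivative is by construction $\zeta_{p,c}'(F,[\mathfrak b_\tau], 0)$.

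The final step is to feed this identity into the Gross--Stark conjecture in the version proved by \cite{DDP, Ventullo}, which asserts precisely that $\zeta_{p,c}'(F, [\mathfrak b_\tau], 0)$ is the $p$-adic logarithm of a $p$-unit in the narrow Hilbert class field $H$; the $c$-smoothing, combined with the hypothesis that $\{\tau,1\}$ generates a fractional ideal, lets one extract a genuine narrow Hilbert class field element (rather than merely an element of a ray class field of higher $p$-power conductor). The main obstacle, to my mind, is the middle step: verifying that the $p$-adic integration produced by the cocycle $\mathcal J_{\mathrm{DR}}$ lines up exactly with the derivative (and not the value) of the right partial zeta function. This requires carefully tracking the $\C_p^\times$-ambiguity in the van der Put correspondence, matching the cocycle $\mathcal J_{\mathrm{DR}}$ with Dasgupta's cocycle refining the Shintani construction of \cite{DasguptaShintani}, and checking that the combinatorics of cones and signs line up. After this comparison, the Gross--Stark conjecture serves as a deep black box that produces the unit $u$.
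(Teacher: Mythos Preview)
Your strategy has a genuine gap: the rank $1$ Gross--Stark conjecture only controls the \emph{trace} (equivalently, the norm) of the logarithm, not the logarithm itself. Concretely, the $p$-adic partial zeta function $\zeta_{p,c}(F,[\mathfrak b_\tau],s)$ is $\Q_p$-valued, and what \cite{DDP,Ventullo} prove is
\[
\zeta_{p,c}'(F,[\mathfrak b_\tau],0) \;=\; -\log_p\!\bigl(\Norm_{F_p/\Q_p} u^{\sigma}\bigr) \;=\; -\mathrm{Tr}_{F_p/\Q_p}\log_p(u^{\sigma}),
\]
whereas $\log_p(\mathcal J_{\mathrm{DR}}[\tau])=\int_{\PP^1(\Q_p)}\log_p(t-\tau)\,d\mu_{\gamma_\tau}(t)$ lives in $F_p$, a degree-$2$ extension of $\Q_p$. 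Your identification of this $F_p$-valued integral with a $\Q_p$-valued $L$-derivative can only hold after applying $\mathrm{Tr}_{F_p/\Q_p}$; indeed this is exactly what the paper establishes in Theorem~\ref{thm: Tr J[tau] = Tr log u} (see also Theorem~\ref{thm: Tr J_Eis[tau] = Tr log(u)}). Knowing that two elements of $F_p$ have the same $\Q_p$-trace does not make them equal, so the black-box appeal to Gross--Stark cannot close the argument. The paper's own method does succeed along these lines only in the restricted setting of Theorem~\ref{thm intro: JEis[tau]=log_p(u^sigma a) in the Gal case}, where an additional $\Gal(F/\Q)$-invariance forces both sides into $\Q_p$, and then the trace identity is decisive.

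Note also that Theorem~\ref{thm: main thm of DPV2} is quoted here from \cite{DPV2} rather than proved; the paper explicitly presents it as motivation and records that the $n=2$ case of Conjecture~\ref{conj: JEis[tau] = log(u_tau)} follows from \cite[Theorem~B]{DPV2} via Remark~\ref{remark: comparison J_Eis with JDR when n = 2}. The actual argument in \cite{DPV2} is not a reduction to rank-$1$ Gross--Stark: it proceeds through $p$-adic deformations of Hilbert Eisenstein series and the analysis of their diagonal restrictions, extracting the RM value of $\mathcal J_{\mathrm{DR}}$ from the spectral expansion of a ``diagonal restriction derivative''. That input is what supplies the missing information beyond the trace.
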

This theorem provides a level $1$ version of Theorem \ref{thm: CM values of Siegel units} for real quadratic fields where $p$ is inert. Indeed, it produces nontrivial elements in abelian extensions of real quadratic fields as values of $\mathcal J_{\mathrm{DR}}$ at special points in $\sH_p$.

The class $\mathcal J_{\mathrm{DR}}$ is the unique lift via the quotient map $\mathcal A^\times \to \mathcal A^\times/\C_p^\times$ of the restriction to $\SL_2(\Z)$ of a class $J_{\mathrm{DR}} \in H^{1}(\SL_2(\Z[1/p]), \mathcal A^\times /\C_p^\times)$, also constructed in \cite{DPV2}. Moreover, the Hecke module $H^{1}(\SL_2(\Z[1/p]), \mathcal A^\times /\C_p^\times)_{\Q}$ is isomorphic to the sum of $H^1(\Gamma_0(p), \Q)$ and an Eisenstein line. The lift $\mathcal{J}_{\mathrm{DR}}$ is important to define the values of $J_{\mathrm{DR}}$ and it is the object we aim to generalize in this work. Here and for the rest of the paper, the subindex $\Q$ denotes the tensor product with $\Q$ over $\Z$.

\subsection{Construction of the log-rigid class for $\SL_n(\Z)$}
The work of Bergeron, Charollois, and Garc\'ia in \cite{BCG} provides a generalization of logarithmic derivatives of Siegel units which is relevant for the study of totally real fields of degree $n$: the {\em Eisenstein class of a torus bundle}. Let $E \to X$ be an oriented real vector bundle of rank $n$ over an oriented manifold $X$. Suppose that $E$ contains a sub-bundle $E_\Z$ with fibers isomorphic to $\Z^n$. We can then construct the torus bundle $T \coloneq E/E_\Z \to X$. Consider the following class in singular cohomology with $\Z$-coefficients
\[
T[c] - c^n \{ 0 \} \in H^0(T[c]) \simeq H^n(T , T - T[c]),
\]
where the isomorphism above is the Thom isomorphism. The long exact sequence in relative cohomology provides a map $H^{n-1}(T - T[c]) \to H^n(T, T - T[c])$. The Eisenstein class ${}_c z_T$ attached to $T$ and $c$ is constructed from the next theorem and is analogous to the functions ${}_c \theta_E$ determined in Proposition \ref{prop: construction of Siegel units}.

\begin{theorem}[\cite{BCG}]\label{thm: Eisenstein class of a torus bundle}
	There exists a unique class ${}_c z_T \in H^{n-1}(T - T[c] , \Z[1/c])$ satisfying:
	\begin{enumerate}
		\item ${}_c z_T$ is a lift of $T[c] - c^n \{ 0\} \in H^n(T , T - T[c], \Z[1/c])$.
		\item ${}_c z_T$ is invariant under pushforward induced by multiplication by $a$ for all $a \in \mathbb{N}^{(c)}$.
	\end{enumerate}
\end{theorem}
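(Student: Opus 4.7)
My plan is to treat uniqueness and existence separately, both via the Leray spectral sequence of $\pi\colon T \to X$ combined with the weight action of the fiberwise pushforward $[a]_*$ on fiber cohomology.

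For uniqueness, any two classes satisfying (1) and (2) differ by an element $\delta$ coming from $H^{n-1}(T, \Z[1/c])$ via the long exact sequence of the pair $(T, T - T[c])$, and $\delta$ is again $[a]_*$-invariant for every $a \in \mathbb{N}^{(c)}$. Since $R^q\pi_*\Z \simeq \wedge^q L^\vee$ with $L = E_\Z$, and $[a]_*$ acts on this local system as the scalar $a^{n-q}$, and since the Leray differentials are $[a]_*$-equivariant, the induced filtration on $H^{n-1}(T, \Z[1/c])$ has graded pieces where $[a]_*$ acts as $a^{p+1}$ (with $p \geq 0$). The invariance of $\delta$ forces $(a^{p+1} - 1)\delta_p = 0$ on the $p$-th graded piece; taking $a = \ell$ for any prime $\ell \nmid c$ gives $a^{p+1} - 1 \equiv -1 \pmod{\ell}$, so these elements generate the unit ideal in $\Z[1/c]$, forcing $\delta_p = 0$ and, iterating along the filtration, $\delta = 0$.

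For existence, I would first check that $T[c] - c^n\{0\}$ maps to zero in $H^n(T, \Z)$, so a lift $z_0 \in H^{n-1}(T - T[c], \Z[1/c])$ exists. This follows because every torsion section $s_v \colon X \to T$ is homotopic through sections to the zero section, giving $[s_v] = [s_0]$ in $H^n(T, \Z)$ and hence $[T[c]] = c^n [s_0]$. To upgrade $z_0$ to an invariant lift, define $\alpha_a := [a]_* z_0 - z_0 \in H^{n-1}(T, \Z[1/c])$; this satisfies the cocycle identity $\alpha_{ab} = \alpha_a + [a]_* \alpha_b$ and, by commutativity, the symmetry $(1 - [b]_*)\alpha_a = (1 - [a]_*)\alpha_b$. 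If one can find $a_1, \ldots, a_k \in \mathbb{N}^{(c)}$ and $r_1, \ldots, r_k \in \Z[1/c]$ with $\sum_i r_i (1 - [a_i]_*) = \mathrm{id}$ on $H^{n-1}(T, \Z[1/c])$, then $\delta := \sum_i r_i \alpha_{a_i}$ satisfies $(1 - [a]_*)\delta = \alpha_a$ for every $a$ by the symmetry, so $z_0 + \delta$ is the desired invariant lift.

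The main obstacle is this last step: producing the operator identity $\sum_i r_i (1 - [a_i]_*) = \mathrm{id}$ simultaneously across all graded pieces of the Leray filtration. On each graded piece the uniqueness argument supplies a Bezout identity $\sum_i r_i^{(p)}(1 - a_i^{p+1}) = 1$ in $\Z[1/c]$, but one must glue these into a single identity valid on every piece at once. This amounts to solving a finite linear system over the product ring $\prod_{p=0}^{n-1}\Z[1/c]$, with the vectors $(1 - a^{p+1})_{p=0}^{n-1}$ for $a \in \mathbb{N}^{(c)}$ as generators; choosing $a = \ell$ for primes $\ell \nmid c$ yields a vector congruent to $(1, \ldots, 1)$ modulo $\ell$, so a Chinese remainder argument combined with finitely many such $a$'s produces the required combination over $\Z[1/c]$, completing the construction.
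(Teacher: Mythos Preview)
The paper itself does not prove this theorem: it cites Sections~2--3 of \cite{BCG} for existence over $\Z[1/N]$ (some $N$ divisible by $c$) and a separate reference for the sharpening to $\Z[1/c]$. Your proposal supplies the actual argument, and it follows the same Leray/weight strategy as \cite{BCG}; your uniqueness argument is correct as written.

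For existence, the cocycle $\alpha_a := [a]_*z_0 - z_0 \in H^{n-1}(T,\Z[1/c])$ and the symmetry $(1-[b]_*)\alpha_a = (1-[a]_*)\alpha_b$ are set up correctly, but the final step has two issues worth flagging. First, you do not actually need $\sum_i r_i(1-[a_i]_*) = \mathrm{id}$: invertibility suffices, since if $\psi := \sum_i r_i(1-[a_i]_*)$ commutes with every $[a]_*$ and is an automorphism, then $\delta := \psi^{-1}\sum_i r_i\alpha_{a_i}$ already satisfies $(1-[a]_*)\delta = \alpha_a$. Second, and more seriously, neither version follows from your CRT sketch: knowing $v_\ell \equiv (1,\ldots,1)\pmod{\ell}$ for every prime $\ell\nmid c$ only shows that the image of $(1,\ldots,1)$ in $\Z[1/c]^n/M$ lies in $\ell\cdot(\Z[1/c]^n/M)$ for each such $\ell$, which does not force it to vanish in a general finitely generated $\Z[1/c]$-module.

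A cleaner route avoids the simultaneous identity altogether: solve for $\delta$ by induction along the Leray filtration. On each graded piece $\mathrm{gr}^p$ the action of $[a]_*$ is the scalar $a^{p+1}$, and the ideal $\langle 1-a^{p+1} : (a,c)=1\rangle$ equals $\Z[1/c]$ (already by your observation that $1-\ell^{p+1}$ is an $\ell$-adic unit), so the graded problem has a solution; lift it, subtract, and pass to the next step of the filtration. This is exactly the juncture at which the paper notes that \cite{BCG} only obtains $\Z[1/N]$ and defers the $\Z[1/c]$ refinement to another reference, so your instinct that this is the delicate step is on target.
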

Let $\sX \coloneq \SL_n(\R)/\SO_n$ be the symmetric space attached to $\SL_n(\R)$, let $v_r \in \Q^n/\Z^n$ be the column vector $(1/p^r, 0 , \dots, 0)^t$ and let $\Gamma_r$ be its stabilizer in $\Gamma \coloneq \SL_n(\Z)$. Finally, fix $q$ an auxiliary integer such that the full level congruence subgroup $\Gamma(q)$ is torsion-free and $[\Gamma: \Gamma(q)]$ is prime to $p$, which imposes that $p$ is sufficiently large. Then, $\Gamma_r(q)\coloneq \Gamma_r \cap \Gamma(q)$ is torsion-free. We can apply the previous theorem to the universal family of tori
\[
T_r \coloneq \Gamma_r(q) \backslash (\sX \times \R^n / \Z^n) \too \Gamma_r(q) \backslash \sX
\]
and obtain the Eisenstein class ${}_c z_{T_r}$, that we will simply denote by $z_r$. The vector $v_r$ induces a torsion section $v_r \colon \Gamma_r(q) \backslash \sX \to T_r - T_r[c]$ and we can consider the pullback $v_r^\ast z_r$, which defines a $\Gamma_r$-invariant class on $\Gamma_r(q) \backslash \sX$. This class is a {higher-dimensional analog of} $\mathrm{dlog} {}_c g_v$. 

The pullbacks of Eisenstein classes by $p$-power torsion sections satisfy distribution relations parallel to those of Siegel units. In particular, $(v_r^\ast z_r)_r$ are compatible with respect to pushforward by the projection maps. Using these properties and Shapiro's lemma, we package the pullbacks of the Eisenstein classes by $p$-power torsion sections in a group cohomology class 
\[
\mu_0 \in H^{n-1}(\Gamma, \D_0(\X, \Z[1/m]))^{w = -1},
\]
where $m$ is a multiple of $c$  prime to $p$, $\D_0(\X, \Z[1/m])$ is the space of $\Z[1/m]$-valued measures on $\X \coloneq \Z_p^n - p\Z_p^n$ of total mass zero, and $w$ denotes the involution given by the action of $\GL_n(\Z)/\SL_n(\Z)$. We will sometimes refer to $\mu_0$ as an {\em Eisenstein cocycle}, following precedent in the literature, which we briefly review and compare with our approach in Section~\ref{subsection : litreview}.

The class $\mu_0$ valued in $\D_0(\X, \Z[1/m])$ is suitable for the construction of rigid classes via a Poisson kernel. Let $\sX_p \coloneq \mathbb{P}^{n-1}(\C_p) - \bigcup_{H \in \mathcal H} H$ be Drinfeld's $p$-adic symmetric domain, where $\mathcal H$ is the set of all $\Q_p$-rational hyperplanes. Denote by $\mathcal A_{\mathcal L}$ the space of log-rigid analytic functions. Informally, $\mathcal A_{\mathcal L}$ consists of the $\C_p$-valued functions on $\sX_p$ such that its restriction to any affinoid is of the form 
\[
(\text{rigid analytic function}) + \sum_{H, H' \in \mathcal \mathcal H} c_{H, H'}\log_p\left(\ell_H(z) / \ell_{H'}(z) \right),
\] 
where $c_{H, H'} \in \Q_p$ are all but finitely many $0$, $\ell_H(z)$ denotes the equation of the hyperplane $H \in \mathcal H$, and $\log_p\colon \C_p^\times \to \C_p$ is the $p$-adic logarithm satisfying $\log_p(p) = 0$. Integration over $\X$ leads to a $\Gamma$-equivariant lift
\begin{equation}\label{eq: ST lift}
	\mathrm{ST}\colon \D_0(\X, \Z_p) \too \mathcal A_{\mathcal L}, \ \lambda \mapstoo \left( z \mapstoo \int_\X \log_p( z^t \cdot x  ) d\lambda \right)
\end{equation} 
This lift is an instance of the theory of $p$-adic Poisson kernels, which relate measures on the set of hyperplanes of $\Q_p^n$, or on $\Z_p^\times$-bundles over them, to functions on $\sX_p$. We refer the reader to the works of Schneider--Teitelbaum \cite{SchneiderTeitelbaum}, van der Put \cite{vdP82}, and Gekeler \cite{Gekeler2020} for more examples of this phenomenon.
We define our desired log-rigid analytic class as
\[
J_{E, \mathcal L} \coloneq \mathrm{ST}(\mu_0) \in H^{n-1}(\Gamma, \mathcal A_{\mathcal L}).
\]
The construction of \( J_{E, \mathcal L} \) can be compared to that in \cite{DPV2} when $n = 2$, leading to \( J_{E, \mathcal L} = \log_p(\mathcal J_{\mathrm{DR}}) \). In particular, this shows that the class \( \log_p(\mathcal J_{\mathrm{DR}}) \) can be constructed solely from logarithmic derivatives of Siegel units, rather than from the full Siegel units.

\subsection{Values of $J_{E, \mathcal L}$ at totally real fields where $p$ is inert} Recall the embeddings $\bar{\Q} \subset \bar{\Q}_p \subset \C_p$. Let $F$ be a totally real field of degree $n$ where $p$ is inert, and let $\tau \in F^n$ be such that its coordinates give an oriented $\Z$-basis $\mathfrak a^{-1}$, for $\mathfrak a$ an ideal of $\cO_F$. Since $p$ is inert, it follows that $\tau \in \sX_p$. Moreover, $\tau$ is a special point in $\sX_p$ in the sense that its stabilizer in $\SL_n(\Q)$ is isomorphic to the norm $1$ elements of $F$. In particular, its stabilizer in $\Gamma$ is an abelian group of rank $n-1$. Following a similar recipe than the case $n = 2$, we define the evaluation of $J \in H^{n-1}(\Gamma, \mathcal A_{\mathcal L})$ at $\tau \in \sX_p$, giving $J[\tau] \in \C_p$. From our construction, one readily deduces $J_{E, \mathcal L}[\tau] \in F_p$ and the theorem below gives evidence regarding the arithmetic significance of this value. 

Let $H$ be the narrow Hilbert class field of $F$. The embedding $\bar{\Q} \subset \bar{\Q}_p$ determines a prime $\mathfrak p$ of $H$ above $p\cO_F$. Denote by $\cO_H[1/p]_{-}^\times$ the subgroup of $p$-units of $H$ where every complex conjugation of $H$ acts by $-1$. Attached to $\mathfrak p$ and $c$, there is a Gross--Stark unit $u \in \cO_H[1/p]^\times_{-} \otimes \Q$, whose valuations at primes above $p$ are related to $c$-smoothed partial $L$-functions of the extension $H/F$. In fact, the proof of the Brumer--Stark conjecture in \cite{DKBrumerStark} and \cite{DKSW} ensures that $u \in \cO_H[1/p]^\times_-$ under certain minor assumptions on $c$, that we will assume for the rest of the introduction (see Remark \ref{rmk: from Gross--Stark to Brumer--Stark}).

\begin{theorem}\label{thm: Tr J[tau] = Tr log u}
	For $n \geq 2$, $\mathrm{Tr}_{F_p/\Q_p}J_{E, \mathcal L}[\tau] = \mathrm{Tr}_{F_p/\Q_p}\log_p(u^{\sigma_{\mathfrak a}})$, where $u \in \cO_H[1/p]_{-}^\times$ is the Gross--Stark unit given above and $\sigma_{\mathfrak a} \in \Gal(H/F)$ is the Frobenius corresponding to $\mathfrak a$.
\end{theorem}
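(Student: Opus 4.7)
The plan is to unfold $J_{E,\mathcal L}[\tau]$ into an explicit period integral, identify it after applying the trace with a derivative of a $p$-adic $L$-function of $F$, and then invoke the rank one Gross--Stark conjecture of \cite{DDP} and \cite{Ventullo}. The passage to $\mathrm{Tr}_{F_p/\Q_p}$ is precisely what allows reduction to the rank one setting; without it, the same strategy would only recover the full (and currently unproven) higher rank statement $J_{E,\mathcal L}[\tau]=\log_p(u^{\sigma_\mathfrak a})$.

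First I would make $J_{E,\mathcal L}[\tau]$ fully explicit. The stabilizer of $\tau$ in $\Gamma$ is a free abelian group of rank $n-1$ arising from the norm-one units of $\cO_F$; capping the cocycle $J_{E,\mathcal L}=\mathrm{ST}(\mu_0)$ against a fundamental class of this stabilizer produces an element of $F_p$. Unfolding the Poisson formula \eqref{eq: ST lift}, this element takes the shape
\[
J_{E,\mathcal L}[\tau]=\int_{\X}\log_p(\tau^{t}\cdot x)\,d\mu_\tau(x),
\]
where $\mu_\tau\in\D_0(\X,\Z_p)$ is the measure obtained by pairing the measure-valued cocycle $\mu_0$ against the cycle defined by the stabilizer of $\tau$.

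Next I would identify $\mu_\tau$ with zeta data for $F$. The coordinates of $\tau$ realize $\mathfrak a^{-1}$ as a $\Z$-lattice in $F$, which in turn embeds an $F$-torus inside $\Gamma_r(q)\backslash\sX$ along which the BCG Eisenstein class is being pulled back. The cohomological normalization of $z_r$ in Theorem~\ref{thm: Eisenstein class of a torus bundle}, combined with a higher dimensional Meyer-type identity (the natural $n$-dimensional analogue of \eqref{eq: periods of dlog Siegel unit}), should then express $\int_\X f\,d\mu_\tau$ on locally constant $f$ as a finite $\Z[1/m]$-combination of $c$-smoothed partial zeta values $\zeta_c(F,[\mathfrak a\mathfrak b],0)$ indexed by classes $\mathfrak b$ in narrow ray class groups of $p$-power conductor. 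Extending from step functions to the logarithmic test function $\log_p(\tau^{t}x)$, which is precisely the extra content provided by $\mathrm{ST}$, presents $J_{E,\mathcal L}[\tau]$ as a natural $p$-adic limit of such zeta values weighted by $p$-adic logarithms of local factors of $\tau^{t}x$. Applying $\mathrm{Tr}_{F_p/\Q_p}$ then turns $\log_p(\tau^{t}x)$ into $\log_p(\mathrm{N}_{F_p/\Q_p}(\tau^{t}x))$, which depends only on a single Dirichlet series; the resulting expression matches the derivative at $s=0$ of the $c$-smoothed $p$-adic $L$-function attached to the ray class character of $F$ associated to $\mathfrak a$, as constructed from partial zeta values in the style of \cite{DKBrumerStark}. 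The rank one Gross--Stark conjecture of \cite{DDP} and \cite{Ventullo} equates this derivative with $\mathrm{Tr}_{F_p/\Q_p}\log_p(u^{\sigma_\mathfrak a})$, concluding the argument.

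The main obstacle will be the matching step: showing that the measure $\mu_\tau$ coming from the BCG Eisenstein cocycle, after Shapiro's lemma and the Poisson integration $\mathrm{ST}$, really coincides with the measure used to define the $p$-adic $L$-functions in \cite{DDP} and \cite{DKBrumerStark}. This requires a careful comparison of the topological construction of $z_r$ with Shintani-type decompositions, together with bookkeeping of orientations, characters, and conductors under the identification between the torus stabilizing $\tau$ and the narrow ray class groups of $F$. Once this comparison is in hand, the rank one Gross--Stark theorem immediately delivers the identity.
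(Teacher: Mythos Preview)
Your outline is correct and follows essentially the same path as the paper: unwind $J_{E,\mathcal L}[\tau]$ as $\int_\X \log_p(\tau^t\cdot x)\,d\mu_\tau$, identify the values of $\mu_\tau$ on locally constant functions with $c$-smoothed partial zeta values $\Delta_c(\chi 1_{[\mathfrak a],p},0)$, observe that the trace converts $\log_p(\tau^t\cdot x)$ into $\log_p(\Norm_{F_p/\Q_p}(\tau^t\cdot x))$ so that the resulting integral is $-L_p'(1_{[\mathfrak a],p},0)$, and then invoke the rank one Gross--Stark theorem of \cite{DDP} and \cite{Ventullo}.

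The one place your expectations diverge from the paper is in the ``main obstacle.'' You anticipate that the matching of $\mu_\tau$ with the $p$-adic $L$-function measure requires a comparison of the topological class $z_r$ with Shintani-type decompositions. The paper bypasses this entirely: it uses the explicit Bergeron--Charollois--Garc\'ia differential form ${}_cE_\psi$ representing $z_r$, and computes the period over the compact torus $X(F) = U_F\backslash (F\otimes\R)^1_+$ directly. The key integral $\int_{(F\otimes\R)^1_+} i_\tau^\ast\eta(\beta^{-1}x,s)$ is evaluated in closed form (a gamma-factor times $\mathrm{sign}(\Norm x)/|\Norm x|^s$), after which the period reduces to a classical Dirichlet-series expression for $\Delta_c(\chi 1_{[\mathfrak a],p},0)$ via a formula of Siegel. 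This shows that $c_{U_F}\frown\mu_{|U_F}$ \emph{is} the Deligne--Ribet measure $\mu_{\mathfrak a}$, with no Shintani input needed. So the obstacle you flag is real in principle but is circumvented rather than confronted.
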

The proof of this result uses that the integral of $v_r^\ast z_r$ along the $(n-1)$-dimensional submanifold of $\Gamma_r(q) \backslash \sX$ determined by the inclusion $F^1 \subset \SL_n(\Q)$ is a special value of a partial zeta function of $F$, generalizing \eqref{eq: periods of dlog Siegel unit}. From there, we construct the $p$-adic partial zeta function of $F$ attached to $\mathfrak a$ from $\mu_0$ and express $\mathrm{Tr}_{F_p/\Q_p}J_{E, \mathcal L}[\tau]$ as its derivative at $s = 0$. Thus, Theorem \ref{thm: Tr J[tau] = Tr log u} follows from the Gross--Stark conjecture in rank $1$, proved in \cite{DDP} and \cite{Ventullo}.

The previous theorem, together with Theorem \ref{thm: main thm of DPV2} involving real quadratic fields, suggests:
\begin{conjecture}\label{conj: JEis[tau] = log(u_tau)}
	We have $
	J_{E, \mathcal L}[\tau] = \log_p(u^{\sigma_{\mathfrak a}})$, where $u\in  \cO_H[1/p]^\times_{-}$ and $\sigma_{\mathfrak a} \in \Gal(H/F)$ are as above.
\end{conjecture}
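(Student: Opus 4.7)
The plan is to bootstrap the trace identity of Theorem~\ref{thm: Tr J[tau] = Tr log u} into the full equality in $F_p$. Since both $J_{E, \mathcal L}[\tau]$ and $\log_p(u^{\sigma_{\mathfrak a}})$ lie in the $n$-dimensional $\Q_p$-vector space $F_p$, a single trace down to $\Q_p$ supplies only one of the $n$ linear constraints needed to pin them down. The idea is to produce the remaining constraints by twisting the construction by characters $\chi$ of narrow ray class groups of $F$ of $p$-power conductor.

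First I would construct, for each such $\chi$, a twisted Eisenstein cocycle $\mu_0^\chi$ via the same Shapiro's-lemma procedure that yields $\mu_0$, now with coefficients in $\D_0(\X, \Z[1/m])$ twisted by $\chi$ composed with Artin reciprocity. Applying the Poisson kernel $\mathrm{ST}$ of \eqref{eq: ST lift} produces a twisted log-rigid class $J_{E,\mathcal L}^\chi$. The proof of Theorem~\ref{thm: Tr J[tau] = Tr log u}, which integrates $v_r^\ast z_r$ over the $(n-1)$-cycle cut out by $F^1 \subset \SL_n(\Q)$ to obtain derivatives of partial $p$-adic $L$-functions, should extend essentially verbatim to give $\mathrm{Tr}_{F_p/\Q_p} J_{E,\mathcal L}^\chi[\tau] = \mathrm{Tr}_{F_p/\Q_p}\log_p(u_\chi)$ by applying the rank-one Gross--Stark conjecture to the $\chi$-twisted $p$-adic $L$-function. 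Summing these identities against $\chi^{-1}(\sigma_{\mathfrak a})$ and invoking character orthogonality should then recover the conjectural equality.

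The cleanest special case is the Galois one, which is the setting of the unconditional result mentioned in the introduction. When $F/\Q$ is Galois, $\Gal(F/\Q)$ acts on $\sX_p$ by permuting the coordinates of $\tau$, so the trace identity applied at each of the $n$ conjugate points $\sigma\tau$ yields $n$ linear equations that are independent precisely because some Galois conjugate of $u$ lies in $\Q_p$; solving this linear system pins down $J_{E,\mathcal L}[\tau]$ without requiring any higher-rank arithmetic input.

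The hard part in full generality is exactly this independence. The rank-one Gross--Stark conjecture controls derivatives of $p$-adic $L$-functions only after tracing down to $\Q_p$, so the $\chi$-twisted identity extracts only one linear functional per twist. Outside the Galois case there is no evident symmetry that promotes these into $n$ independent identities from the currently available input, and closing this gap appears to require either a higher-rank or derived refinement of the Gross--Stark conjecture, or a direct modular-theoretic comparison of $J_{E,\mathcal L}[\tau]$ with an explicit unit constructed from the log-rigid class itself.
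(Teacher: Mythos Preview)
The statement is a \emph{conjecture}; the paper does not claim to prove it in general. What the paper does establish is the special case where $F/\Q$ is Galois and the ideal $\mathfrak a$ is $\Gal(F/\Q)$-stable (Theorem~\ref{thm intro: JEis[tau]=log_p(u^sigma a) in the Gal case}, proved in Section~\ref{subsec: The case of Galois extensions}). Your proposal should be read against that.

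Your general strategy has the gap you yourself diagnose in the last paragraph: each $\chi$-twisted rank-one Gross--Stark identity is still a trace down to $\Q_p$, so twisting produces many $\Q_p$-valued equations but never $n$ independent linear constraints on a fixed element of the $n$-dimensional space $F_p$. Character orthogonality over ray class groups separates ideal classes of $F$, not $\Q_p$-coordinates of $F_p$; the summation step does not recover the untraced equality. There is also a structural problem one step earlier: a twist by a character of a class group of $F$ is only $U_F$-equivariant, not $\Gamma$-equivariant, so there is no twisted class $\mu_0^\chi$ for $\Gamma$ and hence no canonical lift to mass-zero measures. You lose precisely the rigidity, coming from the vanishing of $H^{n-2}(\Gamma,\Q)^-$, that makes $J_{E,\mathcal L}[\tau]$ well defined in the first place.

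Your account of the Galois case is also not the paper's argument, and as written it does not go through. Evaluating the trace identity at the conjugate points $\sigma\tau$ does not produce an independent linear system: when $[\mathfrak a]$ is Galois-stable the right-hand sides $\mathrm{Tr}_{F_p/\Q_p}\log_p(u^{\sigma_{\sigma\mathfrak a}})$ all coincide, and the left-hand sides are not a priori linear functionals of the single unknown $J_{E,\mathcal L}[\tau]$. (Also, $\Gal(F/\Q)$ does not act on $\sX_p$ by permuting coordinates of $\tau$; it acts through the matrices $A_\eta\in\GL_n(\Z)$ of \eqref{eq: F inside Mn(Q)}.) The paper instead shows directly that each side already lies in $\Q_p$: for $J_{E,\mathcal L}[\tau]$, by proving that the measure $c_{U_F}\frown\tilde\mu_0\in(\D_0)_{U_F}$ is $\Gal(F/\Q)$-fixed (Lemma~\ref{lemma: c_{U_F} cap mu0 is GF-invariant}, using the $w=-1$ property and Shapiro for $\GL_n(\Z)$); for $\log_p(u^{\sigma_{\mathfrak a}})$, by showing the unit is fixed by the decomposition group at $\fp$ (Proposition~\ref{lemma: log(sigma_a u) is in Qp if [a] is Gal(F/Q)-stable}). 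Once both sides lie in $\Q_p$, the trace identity of Theorem~\ref{thm: Tr J_Eis[tau] = Tr log(u)} is multiplication by $n$, and one divides.
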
 
We outline some evidence towards the conjecture. When $F/\Q$ is Galois, the Gross--Stark units satisfy $\log_p(\sigma_\mathfrak a u) \in \Q_p$ if the narrow ideal class $[\mathfrak a]$ is $\Gal(F/\Q)$-stable. This is studied in detail in \cite{RX2.5}, but we include a discussion in Section \ref{subsec: The case of Galois extensions} for completeness. Moreover, it can be deduced from the properties of $J_{E, \mathcal L}$ that $J_{E, \mathcal L}[\tau] \in \Q_p$ if $\tau$ generates an ideal $\mathfrak a^{-1}$ that is $\Gal(F/\Q)$-stable. Thus, it follows from Theorem \ref{thm: Tr J[tau] = Tr log u} that the values of $J_{E, \mathcal L}$ can be used to calculate the $p$-adic logarithm of the Gross--Stark unit $u$ in this setting.

\begin{theorem}\label{thm intro: JEis[tau]=log_p(u^sigma a) in the Gal case}
    Suppose $F/\Q$ is Galois, $p$ is inert in $F$, and $\tau$ generates an ideal $\mathfrak a$ that is $\Gal(F/\Q)$-stable. Then, $J_{E, \mathcal L}[\tau] = \log_p(u^{\sigma_\mathfrak a}) \in \Q_p$ for the Gross--Stark unit $u \in \cO_H[1/p]^\times_{-}$ introduced above.
\end{theorem}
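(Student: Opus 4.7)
The plan is to deduce the theorem from Theorem \ref{thm: Tr J[tau] = Tr log u} by proving that, under the Galois-stability hypotheses, both $J_{E,\mathcal{L}}[\tau]$ and $\log_p(u^{\sigma_\mathfrak{a}})$ already lie in $\Q_p$. Once this is shown, the conclusion is immediate: since $p$ is inert in the degree-$n$ extension $F/\Q$, the completion $F_p$ is the unramified extension of $\Q_p$ of degree $n$, so $\mathrm{Tr}_{F_p/\Q_p}$ acts on $\Q_p$ as multiplication by $n$. Theorem \ref{thm: Tr J[tau] = Tr log u} then gives $n \cdot J_{E,\mathcal{L}}[\tau] = n \cdot \log_p(u^{\sigma_\mathfrak{a}})$ in $\Q_p$, and dividing by $n$ (invertible in $\Q_p$) yields the desired equality.

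The rationality $\log_p(u^{\sigma_\mathfrak{a}}) \in \Q_p$ is an arithmetic input: since $F/\Q$ is Galois and $\mathfrak{a}$ is $\Gal(F/\Q)$-stable, the narrow class $[\mathfrak{a}]$ is also Galois-stable, and the companion paper \cite{RX2.5} shows that in this situation a Galois conjugate of the Gross--Stark unit $u$ lies in $\Q_p$; applying $\log_p$ preserves rationality.

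Proving the rationality $J_{E,\mathcal{L}}[\tau] \in \Q_p$ is the analytic core of the argument. The key observation is that, since $\mathfrak{a}$ is $\Gal(F/\Q)$-stable, for each $\sigma \in \Gal(F/\Q)$ the basis $\sigma(\tau)$ of $\mathfrak{a}^{-1}$ differs from $\tau$ by an element $\gamma_\sigma \in \GL_n(\Z)$. Because $p$ is inert and $F/\Q$ is Galois, $\Gal(F_p/\Q_p) \cong \Gal(F/\Q)$, so the Galois action on the coordinates of $\tau$ viewed as a point of $\sX_p(F_p)$ coincides with the action of $\gamma_\sigma$ on $\sX_p$. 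Combining the $\Gamma$-equivariance of the cocycle $\mu_0$, the fact that it lies in the $w = -1$ eigenspace, and the $\SL_n(\Q)$-equivariance of the lift $\mathrm{ST}$ in \eqref{eq: ST lift}, one derives a symmetry $J_{E,\mathcal{L}}[\gamma_\sigma \cdot \tau] = J_{E,\mathcal{L}}[\tau]$; hence $J_{E,\mathcal{L}}[\tau]$ is fixed by every element of $\Gal(F_p/\Q_p)$ and lies in $\Q_p$.

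I expect this last equivariance calculation to be the main obstacle: one must carefully track how the evaluation $J[\tau]$, defined via capping the class against the fundamental class of the rank $n-1$ abelian stabilizer of $\tau$ in $\Gamma$, transforms under $\tau \mapsto \gamma_\sigma \cdot \tau$, since the stabilizer is conjugated by $\gamma_\sigma$ and $\gamma_\sigma$ typically lies outside $\SL_n(\Z)$. Balancing the parity $\det(\gamma_\sigma) = \pm 1$ against the $w = -1$ constraint on $\mu_0$ is where the bookkeeping concentrates, but given that $J_{E,\mathcal{L}} = \mathrm{ST}(\mu_0)$, this should reduce to a formal verification.
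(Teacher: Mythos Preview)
Your proposal is correct and follows essentially the same strategy as the paper: show that both $J_{E,\mathcal L}[\tau]$ and $\log_p(u^{\sigma_{\mathfrak a}})$ lie in $\Q_p$, then divide the trace identity of Theorem~\ref{thm: Tr J[tau] = Tr log u} by $n = [F_p:\Q_p]$. The paper gives a self-contained proof that $\log_p(u^{\sigma_{\mathfrak a}}) \in \Q_p$ (rather than citing \cite{RX2.5}) via the uniqueness characterization of $u$ in Proposition~\ref{prop: determination of u}, and carries out the equivariance calculation you anticipate by lifting $\mu_0$ to $H^{n-1}(\GL_n(\Z), \D_0(\det))$ via Shapiro's lemma and then restricting along $U_F \rtimes \Gal(F/\Q) \hookrightarrow \GL_n(\Z)$, which packages the $w=-1$ constraint and the $\det(\gamma_\sigma)=\pm 1$ bookkeeping in one step by showing that $c_{U_F}\frown\tilde\mu_0 \in (\D_0)_{U_F}$ is $\Gal(F/\Q)$-fixed.
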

In particular, Theorem \ref{thm intro: JEis[tau]=log_p(u^sigma a) in the Gal case} applies to real quadratic fields, recovering instances of \cite[Theorem B]{DPV2} when the coordinates of $\tau$ generate a $\Gal(F/\Q)$-fixed ideal of $\cO_F$. Note that this result implies that we can obtain a formula for certain Gross--Stark units in the narrow Hilbert class field of $F$ only from derivatives of $p$-adic $L$-functions in settings where $F_p$ is not equal to $\Q_p$, see Remark \ref{rmk: from log(u) to u and narrow Hilbert class field Galois case}. Moreover, the relevant units involved in the constructions appear as values of the modular-like object $J_{E, \mathcal L}$, supporting the parallel between $J_{E, \mathcal L}$ and Siegel units. In ongoing work, we are exploring which ramified abelian extensions of $F$ can be constructed using a generalization of this observation.

{The proof of Conjecture \ref{conj: JEis[tau] = log(u_tau)} would give a construction of Gross--Stark units using Eisenstein classes defined purely from the topology of torus bundles. Moreover, it is possible to find explicit representatives of the classes considered in this article via an integral symbol complex (similarly to \cite{X1}), which we will present in a sequel to this article. Ultimately, such formulas can be related to those obtained via Shintani's method, as we mention below, and from there to the formulas for Gross--Stark units of \cite{DasguptaShintani} and \cite{DasguptaSpiess2018} proven in \cite{DKExplicitCFT} and \cite{DHS}. We hope that this paves the way for proving Conjecture \ref{conj: JEis[tau] = log(u_tau)}.  

The symbol complex methods mentioned above also seem to shed light on the construction of rigid analytic classes for $\SL_n(\Z[1/p])$ lifting our log-rigid class, which we will present in the sequel. This would fully generalize the construction of $J_{\mathrm{DR}}$ of \cite{DPV2} to $\SL_n$, and suggest a different approach to Conjecture \ref{conj: JEis[tau] = log(u_tau)}, namely to generalize the strategy of \cite{DPV2}. In a different direction, Darmon, Gehrmann, and Lipnowski generalized the theory of rigid \emph{meromorphic} classes of \cite{DarmonVonk2021} to the setting of orthogonal groups in \cite{DGL}, see \cite{GGM} for an extensive list of values of these classes at special points. 

\begin{remark}
    In this paper, we focused on invariants that conjecturally belong to the narrow Hilbert class field of $F$. It would be interesting to explore the following extensions of the construction. First, we could evaluate $J_{E, \mathcal L}$ at points $\tau \in \sX_p$ represented by $\tau \in F^n$ whose coordinates give a $\Q$-basis of $F$, but they do not necessarily span an ideal of $F$ over $\Z$. In this case, we expect to construct invariants in ramified abelian extensions of $F$. For example, when $n = 2$, such invariants are conjectured to belong to ring class fields of $F$. Second, when $n$ is odd, Gross--Stark units in the narrow Hilbert class field of $F$ are trivial. It seems that to construct meaningful invariants also in the setting when $n$ is odd, we would need a higher-level version of $J_{E, \mathcal L}$, generalizing \cite{chapdelaine2009punits} to totally real fields. For such construction, we expect that the corresponding invariants belong to ray class fields of $F$. 
\end{remark}

\subsection{Related cocycles in the literature}\label{subsection : litreview}
There are constructions of similar cohomology classes to $\mu_0$ in the literature, frequently under the name of {\em Eisenstein cocycles}. Notably, the work of Sczech \cite{Sczech1993} together with its integral refinement by Charollois and Dasgupta \cite[Theorem 4]{CharolloisDasgupta2014}, and the works of Charollois, Dasgupta, Greenberg, and Spiess (see \cite{CharolloisDasguptaGreenberg2015} and \cite{DasguptaSpiess2018}) using Shintani's method give explicit formulas for Eisenstein cocycles. These works yield cocycles for $S$-arithmetic groups; on the other hand, they take values in measures on $\X$ together with some additional data, such as a set of linear forms in $n$-variables (used for $Q$-summation), or the set of rays in $\R^n$ not generated by a vector in $\Q^n$.

More directly related to our approach is the work of Beilinson, Kings, and Levin \cite{beilinson2018} in the equivariant cohomology of a torus, its adelic refinement by Galanakis and Spiess \cite{GalanakisSpiess2024}, as well as the results of Bannai et. al. in equivariant Deligne cohomology \cite{Bannai_2024}. These articles also define equivariant Eisenstein classes by specifying residues in a torus bundle but work with larger and more general coefficient modules, such as the logarithm sheaf or a variant of it. In this way, the first two articles construct distribution-valued cohomology classes by delicate topological considerations. Our Eisenstein class is closely related to the specialization to trivial coefficients of these classes, see Remark \ref{rmk: relation to BKL}. Some computations in cohomology will afford us a lift of our class with finer properties, e.g. a total-mass zero condition, making it sufficient to construct log-rigid classes and produce a conjectural formula for Gross--Stark units.  

The latter article \cite{Bannai_2024} works equivariantly under a nonsplit torus associated to a particular totally real field (rather than a general linear group), and relates a de Rham regulator of this class to $L$-values closely tied to a method of Shintani. This suggests that $\mu_0$, or its restriction to a nonsplit torus, can be compared to the cocycles of \cite{CharolloisDasguptaGreenberg2015}, \cite{DasguptaSpiess2018}, \cite{Spiess}.

\subsection{Structure of the paper}
In Section \ref{section:singular}, we define the Eisenstein class of a torus bundle and prove a distribution relation involving the pullbacks of this class by torsion sections. In Section \ref{sec: differential form representatives of Eisenstein class}, we introduce an explicit differential form representing the Eisenstein class for a universal family of tori following \cite{BCG}. We use it to prove that the sum of the pullbacks of this form along the torsion sections of exact order $p$ is $0$. The content of these two sections is combined in Section \ref{sec: Topological construction of the Eisenstein group cohomology class} to construct the class $\mu_0 \in H^{n-1}(\Gamma, \D_0(\X, \Z[1/m]))^{-}$. In Section \ref{sec: Drinfeld's $p$-adic symmetric domain and rigid cocycles}, we construct the log-rigid class $J_{E, \mathcal L}$ from $\mu_0$ and define its values at points attached to totally real fields where $p$ is inert. In Section \ref{sec: Values of the log-rigid class and Gross--Stark conjecture}, we prove Theorem \ref{thm: Tr J[tau] = Tr log u}, relating the local trace of these values, to the derivative of a $p$-adic $L$-function, and therefore to local traces of $p$-adic logarithms of Gross--Stark units. Finally, in Section \ref{sec: conjecture on values of JEis}, we state Conjecture \ref{conj: JEis[tau] = log(u_tau)}, study the case where $F/\Q$ is Galois, and include a numerical example for real quadratic fields that illustrates the behaviour of Gross--Stark units. 

\subsection{Acknowledgments} We would like to thank Henri Darmon for suggesting that we work on this project. We are also indebted to him and Pierre Charollois for their constant advice during its development. We are further very grateful to Nicolas Bergeron, Romain Branchereau, Luis Garc\'ia, Lennart Gehrmann, Eyal Goren, Hazem Hassan, Pierre Morain, Peter Patzt, Brent Pym, and Jan Vonk for stimulating discussions and key suggestions on the subject. We would also like to thank Mateo Crabit Nicolau for his advice when performing numerical computations with Gross--Stark units, and H\r{a}vard Damm-Johnsen, Max Fleischer, and Yijia Liu for publicly sharing their algorithms that compute Gross--Stark units for real quadratic fields. Marti Roset received support from an FRQNT Doctoral Training Scholarship.

\section{Eisenstein class of a torus bundle}\label{section:singular}

In this section, we introduce the Eisenstein class of a torus bundle, as studied in \cite{BCG}. We focus specifically on the torus bundle
\[
\Gamma' \backslash (\sX \times \R^n/\Z^n ) \too \Gamma' \backslash \sX,
\]
where $\sX$ is the symmetric space attached to $\SL_n(\R)$ and $\Gamma' \subset \Gamma\coloneq \SL_n(\Z)$ is a congruence subgroup that is torsion-free. We then prove several properties of this class, including a distribution relation between its pullbacks by torsion sections, which parallels the distribution relations satisfied by Siegel units. Unless stated otherwise, in this section, we consider singular cohomology with $\Z$-coefficients.

\subsection{Thom and Eisenstein classes of a torus bundle}
Let $\pi\colon E \to X$ be an oriented real vector bundle of rank $n$ over an oriented manifold $X$. Since $E$ is oriented, for every fiber $E_x \subset E$ over $x \in X$ we have a preferred generator 
\[
u_{E_x} \in H^n(E_x, E_x - \{ 0\}) \simeq \Z
\]
satisfying a local compatibility condition (see \cite[Page 96]{MilnorStasheff}). The Thom isomorphism theorem asserts that there is a global class that restricts to the orientation of each fiber. 

\begin{theorem}[Thom isomorphism theorem]\label{thm: thom iso for vector bundles}
	There is a unique class $u_E \in H^n(E, E - \{ 0\})$ such that its pullback to any fiber $E_x$ of $E$ is equal to $u_{E_x}$. Moreover, for every $i \in \Z$, we have an isomorphism 
	\[
	H^i(X) \xlongrightarrow{\sim} H^{i + n}(E, E - \{ 0\}), \ y \mapstoo \pi^\ast y \smile u_E.
	\]
\end{theorem}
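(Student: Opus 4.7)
The plan is to apply the Leray--Serre spectral sequence to the fibration of pairs $\pi \colon (E, E - \{0\}) \to X$, whose fiber over $x \in X$ is $(E_x, E_x - \{0\}) \simeq (\R^n, \R^n - \{0\})$. The cohomology of this fiber is $\Z$ in degree $n$ and zero otherwise. The orientation hypothesis on $E$ trivializes the local coefficient system $x \mapsto H^n(E_x, E_x - \{0\})$ on $X$: the transition functions of $E$ are orientation-preserving linear isomorphisms and hence act trivially on top cohomology of the fiber pairs, while the preferred generators $u_{E_x}$ assemble into a global section of this local system.

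Consequently the $E_2$ page is $E_2^{p,q} = H^p(X;\, H^q(\R^n, \R^n - \{0\}))$, which vanishes unless $q = n$ and equals $H^p(X)$ when $q = n$. All differentials are therefore zero, the spectral sequence degenerates at $E_2$, and the edge map yields a natural isomorphism $\Phi \colon H^p(X) \xrightarrow{\sim} H^{p+n}(E, E - \{0\})$. I would set $u_E \defeq \Phi(1)$. Naturality with respect to the inclusion of a fiber $E_x \hookrightarrow E$ shows that the restriction of $u_E$ to $(E_x, E_x - \{0\})$ coincides with $u_{E_x}$, since evaluation of the trivial local system at $x$ sends $1$ to $u_{E_x}$. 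Uniqueness follows from the same reasoning: a class whose restrictions to all fibers vanish corresponds under $\Phi^{-1}$ to an element of $H^0(X)$ with zero stalk at every point, hence equals $0$.

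For the cup-product description, I would invoke multiplicativity of the spectral sequence: under the canonical identifications the product $E_\infty^{p,0} \otimes E_\infty^{0,n} \to E_\infty^{p,n}$ realizes $y \otimes u_E \mapsto \pi^* y \smile u_E$, and combining this with the degeneration gives the Thom isomorphism in the stated form.

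The main obstacle is the bookkeeping around the orientation hypothesis and its role in trivializing the local system on $X$; once this is set up correctly the remainder is formal, being a consequence of a first-quadrant spectral sequence concentrated on a single row together with its multiplicative structure. An alternative, more elementary route would induct over a good cover of $X$: the statement is immediate for a trivial bundle $U \times \R^n \to U$ by the K\"unneth formula, and a Mayer--Vietoris argument then patches the local Thom classes, whose agreement on overlaps is guaranteed by the fiberwise uniqueness and the orientation compatibility.
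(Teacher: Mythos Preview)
Your argument is correct and considerably more detailed than what the paper does: the paper simply cites Theorem~10.4 of Milnor--Stasheff, \emph{Characteristic Classes}, without further comment. Your Leray--Serre approach is a standard and clean route to the result. One small imprecision worth flagging: in the relative spectral sequence for the pair $(E, E-\{0\})$ the row $q=0$ vanishes, since $H^0(\R^n, \R^n-\{0\}) = 0$, so the product $E_\infty^{p,0} \otimes E_\infty^{0,n} \to E_\infty^{p,n}$ you invoke is not internal to that spectral sequence. What one actually uses is the module structure of the relative spectral sequence over the absolute spectral sequence for $E \to X$ (equivalently, the $H^*(X)$-module structure on $H^*(E, E-\{0\})$ via $\pi^*$), whose $q=0$ row is $H^*(X)$. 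With that adjustment the cup-product identification goes through exactly as you describe.

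It is also worth noting that your ``alternative, more elementary route'' via a good cover and Mayer--Vietoris is essentially the proof Milnor--Stasheff give in their Section~10 (they treat the general paracompact base by a limiting argument, but for manifolds the good-cover induction suffices). So your fallback is in fact the paper's cited proof, while your primary spectral-sequence argument is a genuinely different, though equally standard, route.
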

\begin{proof}
    See Section 10, and in particular Theorem 10.4, of \cite{MilnorStasheff}.
\end{proof}

Now suppose that $E$ contains a sub-bundle $E_\Z$ with fibers isomorphic to $\Z^n$. We can then construct the torus bundle $T \coloneq E/E_{\Z} \to X$. For every $x \in X$, the orientation on $E_x$ yields an orientation on $T_x$. Fix $c \in \Z_{\geq 1}$ and consider the class
\[
u_{T_x,c} \in H^n(T_x, T_x - T_x[c]) \simeq \bigoplus_{z \in T_x[c]} H^n(T_x, T_x - \{z\}),
\]
which restricts to the generator of each $H^n(T_x, T_x - \{z\})$ determined by the orientation of $T_x$ at $z$, for every $z \in T_x[c]$. 
By considering a tubular neighborhood of $T[c]$ in $T$, and applying the excision theorem, we deduce the Thom isomorphism for torus bundles from Theorem \ref{thm: thom iso for vector bundles} above. 

\begin{theorem}\label{thm: Thom iso for a torus bundle relative to c torsion}
	There is a unique class $u_{T,c} \in H^n(T, T - T[c])$ such that its pullback to any fiber $T_x$ of $T$ is equal to $u_{T_x, c}$. 
    Moreover, for every $i \in \Z$, the Thom isomorphism in Theorem \ref{thm: thom iso for vector bundles} induces an isomorphism
	\[
	H^i(T[c]) \xlongrightarrow{\sim} H^{i+n}(T, T - T[c]).
	\]
\end{theorem}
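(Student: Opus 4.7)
The plan is to reduce the statement to the Thom isomorphism for vector bundles (Theorem \ref{thm: thom iso for vector bundles}) by identifying a tubular neighborhood of $T[c]$ in $T$ with an open disk bundle inside a rank-$n$ oriented vector bundle on $T[c]$.

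First I would note that the projection $p\colon T[c] \to X$ is a finite unramified covering of degree $c^{n}$, so the pullback $p^{*}E \to T[c]$ is again an oriented rank-$n$ real vector bundle. Next, choosing a bundle metric on $E$ and an $\varepsilon > 0$ smaller than half the minimum fiberwise distance between distinct $c$-torsion points of $T$, I would produce an open neighborhood $U$ of $T[c]$ in $T$ together with a diffeomorphism
\[
\Phi\colon D_{\varepsilon}(p^{*}E) \xrightarrow{\sim} U, \qquad (z, v) \longmapsto z + v \bmod E_{\Z},
\]
where $D_{\varepsilon}(p^{*}E) \subset p^{*}E$ denotes the open disk bundle of radius $\varepsilon$. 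By construction $\Phi$ sends the zero section to $T[c]$, is compatible with the projection to $X$, and restricts on each fiber of $T \to X$ to the natural identification of a small neighborhood of $0 \in E_{x}$ with a small neighborhood of a $c$-torsion point of $T_{x}$.

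Excising the complement of $U$ and deformation retracting $p^{*}E$ onto $D_{\varepsilon}(p^{*}E)$ then yields a natural isomorphism
\[
H^{*}(T, T - T[c]) \xrightarrow{\sim} H^{*}(p^{*}E,\, p^{*}E - \text{zero section}).
\]
Applying Theorem \ref{thm: thom iso for vector bundles} to the bundle $p^{*}E \to T[c]$ produces a unique Thom class $u_{p^{*}E} \in H^{n}(p^{*}E, p^{*}E - 0)$ whose fiberwise restrictions are the canonical orientation generators, together with the isomorphism $H^{i}(T[c]) \xrightarrow{\sim} H^{i+n}(p^{*}E, p^{*}E - 0)$. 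Transporting $u_{p^{*}E}$ back through the excision isomorphism gives the required class $u_{T,c} \in H^{n}(T, T - T[c])$, and composing the two isomorphisms yields the stated isomorphism $H^{i}(T[c]) \xrightarrow{\sim} H^{i+n}(T, T - T[c])$.

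For uniqueness, any candidate $u'$ satisfying the fiberwise condition corresponds under the excision isomorphism to a class in $H^{n}(p^{*}E, p^{*}E - 0)$ whose restriction to every fiber of $p^{*}E$ is the canonical orientation generator; the uniqueness half of Theorem \ref{thm: thom iso for vector bundles} then forces this class to equal $u_{p^{*}E}$. The principal technical obstacle lies in constructing $\Phi$ and checking that it is an orientation-preserving diffeomorphism compatible with the bundle structure over $X$, which in full generality is a special case of the tubular neighborhood theorem for embedded submanifolds. In the settings of interest in this paper, where $T$ arises from a trivial vector bundle of the form $\sX \times \R^{n}$, this step reduces to a direct computation with the standard Euclidean metric.
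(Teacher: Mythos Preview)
Your proposal is correct and follows exactly the approach indicated in the paper: take a tubular neighborhood of $T[c]$ in $T$, apply excision, and reduce to the Thom isomorphism for the oriented vector bundle $p^{*}E \to T[c]$. You have simply unpacked the one-line argument in the paper into its constituent steps (identifying the tubular neighborhood with the disk bundle in $p^{*}E$, and tracing uniqueness back to Theorem~\ref{thm: thom iso for vector bundles}).
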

\begin{definition}
	The class $u_E$ is called the Thom class of the bundle $E \to X$, and $u_{T,c}$ is the Thom class of the torus bundle $T \to X$ relative to the $c$-torsion.
\end{definition}
We now outline the definition of the Eisenstein class of the torus bundle $T \to X$ relative to the $c$-torsion. For this, we assume that for all $i \in \Z$, the group $H^i(X)$ is finitely generated. Consider the following class in singular cohomology 
\[
T[c] - c^n \{ 0\} \in H^0(T[c]).
\]
Denote by the same symbol the image of this class in $H^n(T, T - T[c])$ via the Thom isomorphism given in Theorem \ref{thm: Thom iso for a torus bundle relative to c torsion}. The long exact sequence in relative cohomology gives 
\begin{equation}\label{eq: les relative cohomology}
	\cdots \too H^{n-1}({T}) \too H^{n-1}({T} - {T}[c]) \too H^n({T} , {T} - {T}[c]) \too H^n({T}) \too \cdots.
\end{equation}
We then have the following theorem.
\begin{theorem}[Sullivan, Bergeron--Charollois--Garc\'ia]\label{thm: Eisenstein class of a torus bundle}
	There exists a unique class ${}_c z_T \in H^{n-1}(T - T[c] , \Z[1/c])$ satisfying:
	\begin{enumerate}
		\item It is a lift of $T[c] - c^n \{ 0\} \in H^n(T , T - T[c], \Z[1/c])$ by the map in \eqref{eq: les relative cohomology}.
		\item It is invariant under pushforward induced by multiplication by $a$ in $T$ for all $a \in \mathbb{N}^{(c)}$.
	\end{enumerate}
\end{theorem}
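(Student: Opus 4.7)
The plan uses the Leray spectral sequence for $\pi\colon T \to X$ (with $E_2^{p,q} = H^p(X; \wedge^q \Z^n)$), combined with the following weight observation: the pushforward $[a]_{\ast}$ acts on the graded piece $E_\infty^{p, n-p}$ of $H^n(T)$ (respectively $E_\infty^{p, n-1-p}$ of $H^{n-1}(T)$) as multiplication by $a^p$ (respectively $a^{p+1}$), since $[a]_{\ast}$ on $H^q((\R/\Z)^n) = \wedge^q \Z^n$ is multiplication by $a^{n-q}$. To manage $\Z[1/c]$-coefficients, the key trick is to take $a = \ell$ for each prime $\ell \nmid c$: the eigenvalues $\ell, \ell^2, \dots, \ell^n$ are all divisible by $\ell$, making $1 - [\ell]_{\ast}$ congruent to the identity modulo $\ell$, hence invertible on $H^{\ast}(T) \otimes \Z_{(\ell)}$ by Nakayama applied to the finite Leray filtration.

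For existence of a lift, write $U_a$ for the operator induced by pushforward along multiplication-by-$a$ on the relevant relative cohomology. Under the Thom isomorphism, $T[c] - c^n\{0\}$ corresponds to $\mathbf{1}_{T[c]} - c^n \mathbf{1}_{\{0\}} \in H^0(T[c])$, and $U_a$ acts there by translation-by-$a^{-1} \pmod{c}$, fixing both summands. Consequently, the image $\beta \in H^n(T, \Z[1/c])$ of $T[c] - c^n\{0\}$ is $[a]_{\ast}$-fixed for every $a \in \mathbb{N}^{(c)}$. The edge map $\pi_{\ast}\colon H^n(T,\Z[1/c]) \to H^0(X,\Z[1/c])$ sends $\beta$ to $c^n - c^n = 0$ (fiber integration: $T[c]$ has $c^n$ points per fiber), placing $\beta \in F^1 H^n$. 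Iteratively, on each graded piece $E_\infty^{p, n-p}$ with $p \geq 1$, the $[\ell]_{\ast}$-fixed part over $\Z_{(\ell)}$ is annihilated by the unit $\ell^p - 1 \in \Z_{(\ell)}^{\times}$; globalizing over all primes $\ell \nmid c$ (via $\Z[1/c] = \bigcap_{\ell \nmid c} \Z_{(\ell)}$) yields vanishing over $\Z[1/c]$, so $\beta \in F^{n+1} = 0$ and a lift $z_0 \in H^{n-1}(T - T[c], \Z[1/c])$ exists.

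For the invariant lift, set $\delta(a) := U_a z_0 - z_0$, which lives in $H^{n-1}(T, \Z[1/c]) \hookrightarrow H^{n-1}(T - T[c], \Z[1/c])$ (the injection uses $H^{n-1}(T, T-T[c]) \cong H^{-1}(T[c]) = 0$). The cocycle identity $\delta(ab) = \delta(a) + [a]_{\ast}\delta(b)$ combined with commutativity of the $[a]_{\ast}$'s yields $(1 - [a]_{\ast})\delta(b) = (1 - [b]_{\ast})\delta(a)$. For each prime $\ell \nmid c$, the invertibility of $1 - [\ell]_{\ast}$ on $H^{n-1}(T) \otimes \Z_{(\ell)}$ provides a local solution $\alpha_\ell := (1 - [\ell]_{\ast})^{-1}\delta(\ell)$. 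These $\alpha_\ell$ agree rationally (the joint $[a]_{\ast}$-fixed part of $H^{n-1}(T, \Q)$ vanishes by the weight argument, making rational solutions unique) and are integral at each $\ell \nmid c$, assembling into $\alpha \in H^{n-1}(T, \Z[1/c])$; then $z := z_0 + \alpha$ is the desired $U_a$-invariant lift. Uniqueness follows from the same weight analysis on the difference of two invariant lifts. I expect the main obstacle to be precisely this integrality step: no single $a \in \mathbb{N}^{(c)}$ makes $a^i - 1$ a unit in $\Z[1/c]$ for every $i \leq n$, so the rationally-immediate norm construction must be upgraded via the local-to-global patching above, leveraging Nakayama's lemma and the finiteness of the Leray filtration.
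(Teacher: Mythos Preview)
Your argument is correct. The paper does not actually prove this theorem: it cites Sections~2--3 of \cite{BCG} for the result with coefficients in $\Z[1/N]$ (for suitable $N$ divisible by $c$), and \cite{xu2023arithmetic} for the refinement to $\Z[1/c]$. Your Leray--weight argument is precisely the standard approach used in those references.

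A few minor remarks. First, the $E_2$-term is $H^p(X; R^q\pi_*\Z)$ with $R^q\pi_*\Z$ a local system whose fiber is $\wedge^q\Z^n$; the monodromy may be nontrivial, but since $[a]_*$ acts on this local system by the scalar $a^{n-q}$, your eigenvalue computation on graded pieces is unaffected. Second, the invertibility of $1-[\ell]_*$ on $H^{n-1}(T,\Z_{(\ell)})$ follows directly by induction on the filtration length (each graded piece is multiplied by a unit), so the appeal to Nakayama is not strictly necessary. Third, your patching step is valid: since $H^{n-1}(T,\Z[1/c])$ is finitely generated over the PID $\Z[1/c]$, it splits as free $\oplus$ finite torsion; the free parts of the $\alpha_\ell$ are all equal to the unique rational solution $\alpha_{\Q}$ (which therefore lies in the free part), while the finitely many nonzero torsion components assemble by the Chinese remainder theorem. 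One then checks that $(1-[a]_*)\alpha=\delta(a)$ holds in $M$ because it holds after every localization.
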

\begin{proof}
	Section 2 and Section 3 of \cite{BCG} prove the existence of the class ${}_c z_T$ with coefficients in $\Z[1/N]$, for $N$ divisible by $c$ and coprime to $p$ (see the remarks below Lemma 9 and Definition 10 of \cite{BCG}). This is sufficient for our purposes, but we refer the reader to \cite[Page 14]{xu2023arithmetic} for a proof that the coefficients can be taken to be $\Z[1/c]$.
\end{proof}
\begin{definition}
	The class ${}_c z_T$ above is the Eisenstein class attached to $T$ and $c$.
\end{definition}
Throughout this work, we will define invariants attached to totally real fields of degree $n$ from periods of Eisenstein classes of torus bundles of rank $n$. 

\begin{remark}
Theorem \ref{thm: Eisenstein class of a torus bundle} has the following visual interpretation. The first point is equivalent to the fact that the image of $T[c]- c^n \{ 0 \}$ in $H^n(T, \Z[1/c])$ vanishes. Informally, this means that there is a codimension $n-1$ submanifold $\Sigma \subset T - T[c]$ such that 
\[
\partial \Sigma = t(T[c]- c^n \{ 0\}), \ t \in \Z,
\]
where $\partial \Sigma$ denotes the boundary of $\Sigma$. On the other hand, the class of $\Sigma$ is not unique, and the second point of the theorem provides a preferred class, ${}_c z_T$ with this property. In particular, ${}_c z_T$ allows defining linking numbers with $T[c] - c^n \{ 0\}$ as the intersection number with the preferred choice of $\Sigma$.
\end{remark}

\begin{remark}\label{rmk: details on pushforward by [a]}
	Let $a \in \mathbb{N}^{(c)}$. Consider inclusion 
	\[
	i\colon T - T[ac] \xhookrightarrow{\quad} T - T[c].
    \]
	Multiplication by $a$ induces a map
	\[
	[a]\colon T - T[ac] \too T - T[c].
	\]
	The map pushforward induced by multiplication by $[a]$ on $H^i(T - T[c])$ appearing in Theorem \ref{thm: Eisenstein class of a torus bundle} is defined as the composition
	\[
	H^{i}(T - T[c]) \xlongrightarrow{i^\ast} H^{i}(T - T[ac]) \xlongrightarrow{[a]_\ast} H^{i}(T - T[c]).
	\]
	We similarly define $[a]_\ast\colon H^i(T , T-T[c]) \xrightarrow{} H^i(T, T - T[c])$.
\end{remark}
\subsection{Eisenstein class of universal families of tori}\label{subsec: Eisenstein class of universal family of tori with level structure}\label{subsec: topo construction of Eis class}

Let $n \geq 2$ and denote by $\sX \coloneq \SL_n(\R)/ \SO_n$ the symmetric space attached to $\SL_n(\R)$. We are interested in the Eisenstein class of universal families of tori over quotients of $\sX$ by the following congruence subgroups.

Let $p$ be an odd prime such that $(p, c) = 1$, for $r \geq 0$ consider the column vector 
\[
v_r \coloneq (1/p^r, 0 , \dots, 0)^t \in \Q^n/\Z^n,
\]
and let $\Gamma_r$ be its stabilizer in $\Gamma \coloneq \SL_n(\Z)$. Fix $q \neq p$ an auxiliary prime such that the full level congruence subgroup $\Gamma(q) \subset \Gamma$ is torsion-free and has index prime to $p$. Observe that these conditions imply that $p$ is sufficiently large. Finally, define $\Gamma_r(q) \coloneq \Gamma_r \cap \Gamma(q)$ and consider the torus bundle
\[
T_r \coloneq \Gamma_r(q) \backslash (\sX \times \R^n / \Z^n) \too \Gamma_r(q) \backslash \sX.
\]
\begin{definition}
	Denote by $z_r \coloneq {}_c z_{T_r} \in H^{n-1}(T_r - T_r[c] , \Z[1/c])$ the Eisenstein class attached to the torus bundle $T_r$ and $c$.
\end{definition}
\begin{remark}
    We introduced the auxiliary prime $q$ and the congruence subgroups $\Gamma_r(q)$ to ensure that their action on $\sX$ is free, which holds as $\Gamma_r(q)$ is torsion-free. Thus, the fibers of $T_r$ are $n$-tori. 
\end{remark}

\begin{remark} \label{rem:dependc}
    We are omitting $c$ from the notation since it is generally fixed. We note in passing also that the dependence of our classes on $c$ is simple: as explained in \cite[Section 3.3]{BCG}, it follows from the definition of the Eisenstein class that, if $c$ and $d$ are coprime, 
    \[
    ([c]^*-c^n){}_dz_r = ([d]^*-d^n){}_cz_r.
    \]
\end{remark}

For $r \geq 1$, the vector $v_r$ induces a section 
\[
v_r\colon \Gamma_r(q)\backslash \sX \too {T}_r - T_r[c], \ [g] \mapstoo [ (g , v_r)].
\]
We can then consider the pullback $v_r^\ast z_r \in H^{n-1}(\Gamma_r(q) \backslash \sX , \Z[1/c])$. We proceed to study the behavior of $v_r^\ast z_r$ with respect to two different actions. Observe that $\Gamma_r(q)$ is a normal subgroup of $\Gamma_r$. Thus, we can define an action of $\Gamma_r$ on $\Gamma_r(q) \backslash \sX$ as follows. For $\gamma \in \Gamma_r$, 
\[
\gamma\colon \Gamma_r(q) \backslash \sX \too \Gamma_r(q) \backslash \sX, \ [g] \mapstoo [\gamma g].
\]
As a consequence, $\Gamma_r$ acts on $H^{n-1}(\Gamma_r(q) \backslash \sX , \Z[1/c])$ via pullback. Since $\Gamma_r$ fixes $v_r$, we deduce that the class $v_r^\ast z_r$ is fixed by this action, as we make precise in the next lemma.
\begin{lemma}\label{lemma: v_r*z_r is Gamma_r equivariant}
	Consider the same notation as above. We have
	\[
	v_r^{\ast} z_r \in H^{n-1}(\Gamma_r(q) \backslash \sX, \Z[1/c])^{\Gamma_r}.
	\]
\end{lemma}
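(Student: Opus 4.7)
The plan is to promote the $\Gamma_r$-action on $\Gamma_r(q) \backslash \sX$ to a fiberwise $\Gamma_r$-action on the whole torus bundle $T_r$ and then use the uniqueness characterization of the Eisenstein class in Theorem \ref{thm: Eisenstein class of a torus bundle} to conclude that $z_r$ itself is $\Gamma_r$-invariant. The $\Gamma_r$-equivariance of the section $v_r$ then transfers this invariance to the pullback $v_r^* z_r$.

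More concretely, I would first observe that, since $\Gamma_r(q)$ is normal in $\Gamma_r$, the formula $\gamma \cdot [(g,w)] \defeq [(\gamma g, \gamma w)]$ defines an action of $\Gamma_r$ on $T_r$ covering the action on $\Gamma_r(q)\backslash \sX$. This action preserves the zero section and sends the $c$-torsion subgroup $T_r[c]$ to itself, since the action of $\SL_n(\Z)$ on $\R^n/\Z^n$ is a group automorphism preserving the $c$-torsion. Next, the square
\[
\begin{CD}
\Gamma_r(q) \backslash \sX @>v_r>> T_r - T_r[c] \\
@V\gamma VV @V\gamma VV \\
\Gamma_r(q) \backslash \sX @>v_r>> T_r - T_r[c]
\end{CD}
\]
commutes because $\gamma \in \Gamma_r$ fixes $v_r \in \R^n/\Z^n$; hence $\gamma^* v_r^* z_r = v_r^*(\gamma^* z_r)$ in $H^{n-1}(\Gamma_r(q)\backslash \sX, \Z[1/c])$.

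It therefore suffices to show $\gamma^* z_r = z_r$ in $H^{n-1}(T_r - T_r[c], \Z[1/c])$, for which I apply the uniqueness in Theorem \ref{thm: Eisenstein class of a torus bundle}. The class $\gamma^* z_r$ is a lift of $\gamma^*(T_r[c] - c^n\{0\})$ under the connecting map in \eqref{eq: les relative cohomology}, and the latter class equals $T_r[c] - c^n \{0\}$ because $\gamma$ permutes the points of $T_r[c]$ in each fiber (with uniform coefficient $1$) while fixing the zero section (carrying coefficient $1-c^n$). Moreover, the $\gamma$-action commutes with multiplication by $a \in \mathbb{N}^{(c)}$ on $T_r$ (both being group homomorphisms of the fiber tori), so the pushforward $[a]_*$ commutes with $\gamma^*$ as defined in Remark \ref{rmk: details on pushforward by [a]}. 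Consequently $\gamma^* z_r$ also satisfies both defining properties of the Eisenstein class, so by uniqueness $\gamma^* z_r = z_r$. Combining with the commutative square gives $\gamma^* v_r^* z_r = v_r^* z_r$ for every $\gamma \in \Gamma_r$.

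The only real care is in verifying that $\gamma^*$ commutes with the map $[a]_*$ and respects orientations; both are formal consequences of the fact that $\gamma$ acts by a bundle automorphism of $T_r$ covering a diffeomorphism of the base and acting by group automorphisms on fibers. I do not anticipate any serious obstacle.
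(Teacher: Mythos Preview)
Your proposal is correct and follows essentially the same route as the paper: you lift the $\Gamma_r$-action to the bundle automorphism $[(g,w)]\mapsto[(\gamma g,\gamma w)]$, verify that $\gamma^*$ preserves both the residue class $T_r[c]-c^n\{0\}$ (via orientation-preservation) and the $[a]_*$-invariance, invoke the uniqueness of Theorem~\ref{thm: Eisenstein class of a torus bundle} to get $\gamma^* z_r=z_r$, and then pull back by the $\Gamma_r$-equivariant section $v_r$. The paper proceeds identically, with only slightly more detail on why $\gamma^*$ commutes with $[a]_*$ (writing $\gamma^*=(\gamma^{-1})_*$ and using functoriality of pushforward).
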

\begin{proof}
	Let $\gamma \in \Gamma_r$ and define the map of torus bundles
	\[
	\tilde{\gamma}\colon T_r \too T_r, \ [(g, v)] \mapstoo [(\gamma g, \gamma v)].
	\]
    We have $\tilde{\gamma}^\ast T_r[c] = T_r[c]$ and $\tilde{\gamma}^\ast \{0\} = \{0\}$ in $H^0(T_r[c])$. Moreover, since $\tilde{\gamma}^\ast u_{T_r,c} = u_{T_r,c}$, as $\tilde{\gamma}$ is orientation preserving, it follows that 
    \[
    \tilde{\gamma}^\ast(T_r[c] - c^n\{ 0\}) = T_r[c] - c^n \{0\} \in H^n(T_r , T_r - T_r[c]).
    \]
    This implies that $\tilde{\gamma}^\ast z_r$ is a lift of $T_r[c] - c^n \{0\}$. Moreover, for every $a \in \mathbb{N}^{(c)}$, $\tilde{\gamma}^\ast$ commutes with $[a]_\ast$. Indeed, define $\widetilde{\gamma^{-1}}$ in the same way as $\tilde{\gamma}$ but replacing $\gamma$ by $\gamma^{-1}$. Since $\tilde{\gamma}^\ast = \widetilde{\gamma^{-1}}_\ast$, the desired commutativity follows then from taking the pushforward of $[a] \circ \widetilde{\gamma^{-1}} = \widetilde{\gamma^{-1}} \circ [a]$. From there, we deduce that $\tilde{\gamma}^\ast z_r$ is invariant under $[a]_\ast$. Thus, Theorem \ref{thm: Eisenstein class of a torus bundle} implies $z_r = \tilde{\gamma}^\ast z_r$. Pulling back this equality by $v_r\colon \Gamma_r(q) \backslash \sX \to T_r - T_r[c]$ yields the desired expression.
\end{proof}

Let $w = \mathrm{diag}(1, -1, 1, \dots, 1) \in \GL_n(\Z)$. Since $w$ normalizes $\Gamma_r(q)$ and $\SO_n$, conjugation induces the following map 
\[
w\colon \Gamma_r(q) \backslash \sX \too \Gamma_r(q) \backslash \sX, \ [g] \mapstoo [w g w^{-1}],
\]
which induces an involution $w$ on $H^{n-1}(\Gamma_r(q) \backslash \sX, \Z[1/c])$ via pullback. Here and for the rest of the section, we will denote with a superindex $-$ the $w = -1$ eigenspace for $w$.

\begin{lemma}\label{lemma: action of w on pullback of Eis class}
    For every $r \geq 1$, we have
    \[
    v_r^\ast z_r \in H^{n-1}(\Gamma_r(q) \backslash \sX, \Z[1/c])^{-}.
    \]
\end{lemma}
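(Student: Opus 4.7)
The plan is to mirror the argument of Lemma \ref{lemma: v_r*z_r is Gamma_r equivariant}, but keeping track of orientations. The key point is that $w = \mathrm{diag}(1,-1,1,\dots,1)$ has determinant $-1$, so it reverses orientations on the fibers of $T_r$, while it still fixes the section $v_r = (1/p^r, 0, \dots, 0)^t$.

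First I would introduce the map of torus bundles
\[
\tilde{w}\colon T_r \too T_r, \quad [(g, v)] \mapstoo [(wgw^{-1}, wv)],
\]
which is well-defined because $w$ normalizes $\Gamma_r(q)$ and preserves $\Z^n \subset \R^n$. Since $\tilde{w}$ fixes the zero section and permutes $T_r[c]$, we have $\tilde{w}^\ast(T_r[c]) = T_r[c]$ and $\tilde{w}^\ast\{0\} = \{0\}$ as classes in $H^0(T_r[c])$. However, because $w$ has determinant $-1$, $\tilde{w}$ is orientation-reversing on every fiber of $T_r$, so $\tilde{w}^\ast u_{T_r, c} = -u_{T_r, c}$ by the uniqueness statement in Theorem \ref{thm: Thom iso for a torus bundle relative to c torsion}. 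Passing through the Thom isomorphism therefore gives
\[
\tilde{w}^\ast(T_r[c] - c^n\{0\}) = -(T_r[c] - c^n\{0\}) \in H^n(T_r, T_r - T_r[c], \Z[1/c]).
\]

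Next I would verify that $-\tilde{w}^\ast z_r$ satisfies both characterizing properties of the Eisenstein class in Theorem \ref{thm: Eisenstein class of a torus bundle}. It is a lift of $T_r[c] - c^n\{0\}$ by the previous identity, and invariance under $[a]_\ast$ for $a \in \mathbb{N}^{(c)}$ follows from the fact that $\tilde{w}^\ast$ commutes with $[a]_\ast$ (exactly as in Lemma \ref{lemma: v_r*z_r is Gamma_r equivariant}, because $\tilde{w}$ commutes with multiplication by $a$ on $T_r$). By the uniqueness part of Theorem \ref{thm: Eisenstein class of a torus bundle}, this forces $\tilde{w}^\ast z_r = -z_r$.

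Finally, since $w v_r = v_r$, one checks that the diagram
\[
\begin{tikzcd}
\Gamma_r(q) \backslash \sX \arrow[r, "v_r"] \arrow[d, "w"'] & T_r - T_r[c] \arrow[d, "\tilde{w}"] \\
\Gamma_r(q) \backslash \sX \arrow[r, "v_r"] & T_r - T_r[c]
\end{tikzcd}
\]
commutes: both compositions send $[g]$ to $[(wgw^{-1}, v_r)]$. Pulling back the identity $\tilde{w}^\ast z_r = -z_r$ along $v_r$ yields $w^\ast(v_r^\ast z_r) = -v_r^\ast z_r$, which is the claim. The only genuine subtlety is the sign coming from the orientation reversal, which is exactly the place where this lemma diverges from the preceding one; everything else is formal from the uniqueness of the Eisenstein class.
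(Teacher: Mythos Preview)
Your proposal is correct and follows essentially the same approach as the paper: both introduce the bundle map $\tilde{w}$, use that $\det w = -1$ reverses the fiber orientation to get $\tilde{w}^\ast(T_r[c]-c^n\{0\}) = -(T_r[c]-c^n\{0\})$, invoke the uniqueness in Theorem~\ref{thm: Eisenstein class of a torus bundle} to conclude $\tilde{w}^\ast z_r = -z_r$, and then pull back along $v_r$ using $wv_r = v_r$. Your version is in fact slightly more detailed than the paper's, which simply says the proof is analogous to Lemma~\ref{lemma: v_r*z_r is Gamma_r equivariant} and highlights only the orientation-reversal step.
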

\begin{proof}
    The proof is analogous to the proof of Lemma \ref{lemma: v_r*z_r is Gamma_r equivariant}, so we only outline it. Denote by $\tilde{w}$ the morphism of torus bundles
    \[
    \tilde{w}\colon T_r \too T_r, \ \left[ ([g] , v)   \right] \mapstoo [ (wgw^{-1} , wv)   ].
    \]
    Since $\tilde{w}$ reverses the orientation on the fibers (because the determinant of the matrix defining $w$ is $-1$), it follows that 
    \[
    \tilde{w}^\ast(T_r[c] - c^n \{0\}) = - \left( T_r[c] - c^n \{0\} \right) \in H^n(T_r , T_r - T_r[c]).
    \]
    Similar to Lemma \ref{lemma: v_r*z_r is Gamma_r equivariant}, we deduce from there that $\tilde{w}^\ast z_r = -z_r$. The desired result follows by pulling back this equality by $v_r$ and observing $w v_r = v_r$.    
\end{proof}

\subsection{Distribution relations}
We give some compatibility properties regarding the classes $z_r$ and their pullbacks by torsion sections. In particular, we prove a distribution relation. We begin with the following general lemma. 

\begin{lemma}\label{lemma: general lemma to commute pushforward and pullback}
	Consider the commutative diagram of topological spaces, where all the maps are continuous
	\[
	\begin{tikzcd}
		Z \arrow[d, "{h_1}"] \arrow[r, "{f_1}"] & Y \arrow[d, "{h_2}"] \\
		X \arrow[r, "{f_2}"]                                  & {S}       .                          
	\end{tikzcd}
	\]
	Suppose that the following two conditions hold:
	\begin{enumerate}
		\item $h_1$ and $h_2$ are $r$-sheeted covering maps, for $r \in \Z_{\geq 1}$.
		\item If $x \in X$ and $\{z_j\}_{j = 1}^r$ are its distinct lifts by $h_1$, the images $\{f_1(z_j)\}_{j= 1}^r$ are distinct.
	\end{enumerate}
	Then, for all $i \in \Z_{\geq 0}$, we have
	\[ 
	(h_1)_\ast f_1^\ast = f_2^\ast (h_2)_\ast \colon H^i(Y) \too H^i(X).
	\] 
\end{lemma}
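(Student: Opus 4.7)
The plan is to work at the level of singular cochains using the transfer description of pushforward along a finite covering map. Recall that for an $r$-sheeted covering $h \colon A \to B$, the map $h_\ast \colon H^i(A) \to H^i(B)$ is induced by the cochain-level formula
\[
(h_\ast \alpha)(\sigma) = \sum_{j=1}^{r} \alpha(\tilde\sigma_j),
\]
where $\sigma \colon \Delta^i \to B$ is a singular simplex and $\tilde\sigma_1, \ldots, \tilde\sigma_r \colon \Delta^i \to A$ are its $r$ distinct lifts. Such lifts exist and are unique given a choice of preimage of a basepoint $\sigma(v_0)$, since $\Delta^i$ is simply connected. I would first verify (as a brief reminder) that this formula defines a chain map and computes the usual pushforward in cohomology.

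Given $\beta \in H^i(Y)$ and a singular simplex $\sigma \colon \Delta^i \to X$, set $\tau \coloneq f_2 \circ \sigma$, let $\tilde\sigma_1, \ldots, \tilde\sigma_r$ be the lifts of $\sigma$ to $Z$ via $h_1$, and let $\tilde\tau_1, \ldots, \tilde\tau_r$ be the lifts of $\tau$ to $Y$ via $h_2$. Applying the transfer formula to both sides yields
\[
(f_2^\ast (h_2)_\ast \beta)(\sigma) = \sum_{j=1}^{r} \beta(\tilde\tau_j) \quad \text{and} \quad ((h_1)_\ast f_1^\ast \beta)(\sigma) = \sum_{j=1}^{r} \beta(f_1 \circ \tilde\sigma_j).
\]
It therefore suffices to show that the two collections $\{f_1 \circ \tilde\sigma_j\}_j$ and $\{\tilde\tau_j\}_j$ agree as unordered $r$-tuples of simplices in $Y$.

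By commutativity of the square, $h_2 \circ (f_1 \circ \tilde\sigma_j) = f_2 \circ h_1 \circ \tilde\sigma_j = \tau$, so each $f_1 \circ \tilde\sigma_j$ is a lift of $\tau$ to $Y$. Set $x_0 \coloneq \sigma(v_0) \in X$ and let $z_1, \ldots, z_r$ be its distinct lifts in $Z$, labeled so that $\tilde\sigma_j(v_0) = z_j$. Then $f_1 \circ \tilde\sigma_j$ sends $v_0$ to $f_1(z_j) \in Y$, and these $r$ points all lie above $f_2(x_0) = \tau(v_0)$. Hypothesis (2) guarantees that the $f_1(z_j)$ are pairwise distinct, and since $h_2$ is an $r$-sheeted covering, they must be exactly the $r$ preimages of $\tau(v_0)$ in $Y$. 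By the uniqueness of lifts given a starting point, the simplices $f_1 \circ \tilde\sigma_j$ are precisely the $\tilde\tau_j$, up to reordering.

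This yields the desired identity at the cochain level, and hence in cohomology. The only substantive input is hypothesis (2): without it, two lifts of $\sigma$ could map via $f_1$ to the same simplex in $Y$, collapsing the sum on the left and breaking the equality. I expect the main (minor) obstacle to be pedantry about the transfer formula for cohomology; every other ingredient — the lifting property of simply connected spaces, the commutativity of the square, and the distinctness provided by (2) — fits together essentially on inspection.
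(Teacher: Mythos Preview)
Your proposal is correct and follows essentially the same approach as the paper: both work at the cochain level with the transfer description of pushforward, compute each side on a singular simplex $\sigma$, and use hypothesis~(2) together with uniqueness of lifts to identify the sets $\{f_1 \circ \tilde\sigma_j\}$ and $\{\tilde\tau_j\}$. The arguments are virtually identical in structure and detail.
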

\begin{proof}
	For the proof of this lemma, we follow the same notation as in its statement. Let $\varphi \in C^i(Y, \Z)$ be a degree $i$ cochain and consider $\sigma\colon \Delta^i \to X$ a continuous map from an $i$-simplex $\Delta^i$ to $X$. Fix a vertex $u \in \Delta^i$, and let $x = \sigma(u)$.
	
	Since $h_1$ is an $r$-sheeted covering map, there are $\tilde{\sigma}_1, \dots, \tilde{\sigma}_r \colon \Delta^i \to Z$ distinct lifts of $\sigma$ by $h_1$, characterized by the property $\tilde{\sigma}_j(u) = z_j$. Then, 
	\[
	(h_1)_\ast f_1^\ast \varphi(\sigma) = \sum_j f_1^\ast \varphi (\tilde{\sigma}_j) = \sum_j \varphi(f_1 \circ \tilde{\sigma}_j).
	\]
    Similarly, let $y_1, \dots, y_r \in Y$ be the distinct lifts of $f_2(x)$, and consider $\tilde{\omega}_1, \dots, \tilde{\omega}_r\colon \Delta^i \to Y$ the distinct lifts of $f_2 \circ \sigma$ by $h_2$, characterized by the property $\tilde{\omega}_j(u) = y_j$. Then, 
	\[
	f_2^\ast (h_2)_\ast \varphi(\sigma) = (h_2)_\ast \varphi(f_2 \circ \sigma) = \sum_j  \varphi(\tilde{\omega}_j).
	\] 
	We now observe that we have the equality of sets $\{ f_1 \circ \tilde{\sigma}_j \}_j = \{ \tilde{\omega}_j \}_j$.
	Indeed, Condition (2) in the statement of the lemma implies that the simplices $\{f_1 \circ \tilde{\sigma}_j\}_j$ are all distinct, which implies that both sets have the same number of elements. Moreover, since $f_1 \circ \tilde{\sigma}_j$ is a lift of $f_2 \circ \sigma$ by $h_2$, we deduce the inclusion $\{f_1 \circ \tilde{\sigma}_j \} \subset \{ \tilde{\omega}_j \}_j$ and the desired equality of sets follows. 
    	
	From this equality of sets and the previous two calculations, we obtain the desired equality $(h_1)_\ast f_1^\ast = f_2^\ast (h_2)_\ast$ of cochain maps, which induces the result in cohomology.
\end{proof}
\begin{remark}
	Condition (2) of Lemma \ref{lemma: general lemma to commute pushforward and pullback} holds if the commutative diagram is Cartesian. 
\end{remark}

\begin{proposition}\label{prop: Eisenstein class compatible with pullback}
	Let $r, r' \in \Z$ with $r \geq r' \geq 1$, consider the projection map $\mathrm{pr}\colon T_r - T_r[c] \to T_{r'} - T_{r'}[c]$, and denote by $\mathrm{pr}^\ast$ the corresponding pullback in cohomology. Then, $\mathrm{pr}^\ast z_{r'} = z_r$.
\end{proposition}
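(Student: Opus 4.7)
The plan is to invoke the uniqueness clause of Theorem \ref{thm: Eisenstein class of a torus bundle}: I will verify that $\mathrm{pr}^\ast z_{r'} \in H^{n-1}(T_r - T_r[c], \Z[1/c])$ satisfies the two characterizing properties that define $z_r$, and the equality $\mathrm{pr}^\ast z_{r'} = z_r$ will then follow immediately. Note that $\mathrm{pr}$ is induced from the identity map on $\sX \times \R^n/\Z^n$ and is in particular a finite covering, fiberwise an isomorphism of oriented tori.

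To check property (1), I will use the naturality of the long exact sequence \eqref{eq: les relative cohomology} applied to the morphism of pairs $\mathrm{pr}\colon (T_r, T_r - T_r[c]) \to (T_{r'}, T_{r'} - T_{r'}[c])$. This reduces the claim to the identity
\[
\mathrm{pr}^\ast \bigl(T_{r'}[c] - c^n \{0\}\bigr) = T_r[c] - c^n \{0\} \quad \text{in } H^n(T_r, T_r - T_r[c], \Z[1/c]).
\]
By the naturality of the Thom isomorphism of Theorem \ref{thm: Thom iso for a torus bundle relative to c torsion} — valid because $\mathrm{pr}$ preserves fiberwise orientations — this reduces further to the corresponding identity in $H^0(T_r[c], \Z[1/c])$, which is immediate from $\mathrm{pr}^{-1}(T_{r'}[c]) = T_r[c]$ and $\mathrm{pr}^{-1}(\{0\}_{T_{r'}}) = \{0\}_{T_r}$.

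For property (2), I need to show $[a]_\ast \mathrm{pr}^\ast z_{r'} = \mathrm{pr}^\ast z_{r'}$ for every $a \in \mathbb{N}^{(c)}$. Unpacking the definition of $[a]_\ast$ from Remark \ref{rmk: details on pushforward by [a]} as the composition of the restriction $i^\ast$ along $i\colon T - T[ac] \hookrightarrow T - T[c]$ with the push-forward along the $a^n$-sheeted cover $[a]\colon T - T[ac] \to T - T[c]$, the desired property reduces to the commutations $\mathrm{pr}^\ast i^\ast = i^\ast \mathrm{pr}^\ast$ (trivial from functoriality of pullback) and $\mathrm{pr}^\ast [a]_\ast = [a]_\ast \mathrm{pr}^\ast$. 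For the latter I will apply Lemma \ref{lemma: general lemma to commute pushforward and pullback} to the commutative square
\[
\begin{CD}
T_r - T_r[ac] @>{\mathrm{pr}}>> T_{r'} - T_{r'}[ac] \\
@V{[a]}VV @VV{[a]}V \\
T_r - T_r[c] @>>{\mathrm{pr}}> T_{r'} - T_{r'}[c],
\end{CD}
\]
whose vertical maps are $a^n$-sheeted covers. The required lift condition holds because, for any point $x \in T_r - T_r[c]$, its preimages under $[a]$ are of the form $x + t$ with $t$ ranging over the $a$-torsion of the fiber, and since $\mathrm{pr}$ is a fiberwise group isomorphism, these remain pairwise distinct in $T_{r'} - T_{r'}[ac]$. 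Combined with the invariance $[a]_\ast z_{r'} = z_{r'}$, this yields property (2).

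The main obstacle is the correct setup and application of Lemma \ref{lemma: general lemma to commute pushforward and pullback} to transport the invariance of $z_{r'}$ across $\mathrm{pr}$, in particular identifying the proper Cartesian-style square and verifying the distinct-lift hypothesis. Once this commutation is in hand, both defining properties of $z_r$ are satisfied by $\mathrm{pr}^\ast z_{r'}$, and uniqueness in Theorem \ref{thm: Eisenstein class of a torus bundle} closes the proof.
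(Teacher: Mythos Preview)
Your proposal is correct and follows essentially the same route as the paper: verify that $\mathrm{pr}^\ast z_{r'}$ satisfies the two characterizing properties of Theorem~\ref{thm: Eisenstein class of a torus bundle}, using naturality for the residue condition and Lemma~\ref{lemma: general lemma to commute pushforward and pullback} applied to the same square to transport the $[a]_\ast$-invariance across $\mathrm{pr}^\ast$. Your write-up is in fact slightly more explicit than the paper's, spelling out the Thom-isomorphism naturality and the distinct-lift hypothesis of the lemma.
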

\begin{proof}
    The structure of the proof is analogous to the proof of Lemma \ref{lemma: v_r*z_r is Gamma_r equivariant}, so we only outline the key points. First, we observe 
    \[
    \mathrm{pr}^\ast(T_{r'}[c] - c^n \{0 \} ) = T_{r}[c] - c^n \{ 0\} \in H^n(T_{r}, T_{r} - T_{r}[c]). 
    \]
    Therefore, $\mathrm{pr}^\ast(z_{r'})$ is a lift of $T_{r}[c] - c^n \{ 0\}$. Second, we claim that $\mathrm{pr}^\ast$ commutes with $[a]_\ast$. The key to proving this statement is to apply Lemma \ref{lemma: general lemma to commute pushforward and pullback} to the diagram
    \[
    \begin{tikzcd}
    {T_r - T_r[ac]} \arrow[r, "\mathrm{pr}"] \arrow[d, "{[a]}"] & {T_{r'} - T_{r'}[ac]} \arrow[d, "{[a]}"] \\
    {T_r - T_r[c]} \arrow[r, "\mathrm{pr}"]                     & {T_{r'} - T_{r'}[c]}.
    \end{tikzcd}
    \]
    Therefore, $z_r = \mathrm{pr}^\ast z_{r'}$ by Theorem \ref{thm: Eisenstein class of a torus bundle}.
\end{proof} 

From the previous proposition, we deduce that the classes $v_r^\ast z_r$ satisfy the following distribution relation.

\begin{proposition}\label{prop: pullback of Eis class trace compatible}
	Let $r \geq 1$ and consider the pushforward attached to the finite quotient map $\mathrm{pr}\colon \Gamma_{r+1}(q) \backslash \sX \to \Gamma_{r}(q) \backslash \sX$, namely
	\[
		\mathrm{pr}_\ast\colon H^{n-1}(\Gamma_{r+1}(q) \backslash \sX, \Z[1/c]) \too H^{n-1}(\Gamma_r(q) \backslash \sX, \Z[1/c]).
	\]
	Then, $\mathrm{pr}_\ast(v_{r+1}^\ast z_{r+1}) = v_r^\ast z_r$.
\end{proposition}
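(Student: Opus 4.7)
The plan is to deduce this distribution relation by combining Proposition~\ref{prop: Eisenstein class compatible with pullback}, the distribution property $[p]_\ast z_r = z_r$ from part (2) of Theorem~\ref{thm: Eisenstein class of a torus bundle}, and the interchange formula of Lemma~\ref{lemma: general lemma to commute pushforward and pullback} applied to a suitable square.

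Concretely, I would consider the map
\[
V \colon \Gamma_{r+1}(q) \backslash \sX \too T_r - T_r[pc], \qquad [g]_{\Gamma_{r+1}(q)} \mapstoo [(g, v_{r+1})]_{\Gamma_r(q)},
\]
which coincides with $\pi \circ v_{r+1}$, where $\pi\colon T_{r+1} \to T_r$ is the natural projection, and lands in $T_r - T_r[pc]$ since $pc \cdot v_{r+1} = c v_r \not\equiv 0 \bmod \Z^n$ (because $p \nmid c$ and $r \geq 1$). The key observation is that the square
\[
\begin{tikzcd}
\Gamma_{r+1}(q) \backslash \sX \arrow[r, "V"] \arrow[d, "\mathrm{pr}"] & T_r - T_r[pc] \arrow[d, "{[p]}"] \\
\Gamma_r(q) \backslash \sX \arrow[r, "v_r"] & T_r - T_r[c]
\end{tikzcd}
\]
commutes and satisfies the hypotheses of Lemma~\ref{lemma: general lemma to commute pushforward and pullback}. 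I would verify these as follows: both vertical maps are $p^n$-sheeted covers — the degree of $[p]$ is immediate, while $[\Gamma_r(q):\Gamma_{r+1}(q)] = p^n$ follows by analyzing the map $\Gamma_r(q) \to (1/p)\Z^n/\Z^n$, $\gamma \mapsto \gamma v_{r+1} - v_{r+1}$, whose kernel is $\Gamma_{r+1}(q)$ and whose surjectivity can be obtained by constructing matrices of the form $I + p^r N$ with $\mathrm{tr}(N) \equiv 0 \bmod p$ via strong approximation in $\SL_n$. Condition (2) of Lemma~\ref{lemma: general lemma to commute pushforward and pullback} is direct: if $[g_j]_{\Gamma_{r+1}(q)}$ and $[g_{j'}]_{\Gamma_{r+1}(q)}$ are distinct lifts of the same $\Gamma_r(q)$-coset, then $V([g_j]_{\Gamma_{r+1}(q)}) \neq V([g_{j'}]_{\Gamma_{r+1}(q)})$, since an equality would force some $\delta \in \Gamma_r(q)$ with $\delta v_{r+1} \equiv v_{r+1} \bmod \Z^n$, i.e., $\delta \in \Gamma_{r+1}(q)$.

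Granted this, Lemma~\ref{lemma: general lemma to commute pushforward and pullback} gives $\mathrm{pr}_\ast V^\ast = v_r^\ast [p]_\ast$ on $H^{n-1}(T_r - T_r[pc], \Z[1/c])$. Evaluating on $z_r$ and using Proposition~\ref{prop: Eisenstein class compatible with pullback} to rewrite $\pi^\ast z_r = z_{r+1}$, together with the Eisenstein invariance $[p]_\ast z_r = z_r$, one obtains
\[
\mathrm{pr}_\ast v_{r+1}^\ast z_{r+1} = \mathrm{pr}_\ast v_{r+1}^\ast \pi^\ast z_r = \mathrm{pr}_\ast V^\ast z_r = v_r^\ast [p]_\ast z_r = v_r^\ast z_r.
\]

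The main obstacle is the index computation $[\Gamma_r(q):\Gamma_{r+1}(q)] = p^n$, equivalently the statement that $\Gamma_r(q)$ acts transitively on the $p^n$-element set $\{v_{r+1} + w/p : w \in (\Z/p)^n\}$ of $p$-torsion preimages of $v_r$; this is where arithmetic input (strong approximation) enters, all other steps being either formal or direct from previous results.
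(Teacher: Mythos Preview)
Your argument is correct and follows essentially the same route as the paper. The paper's square has $T_1 - T_1[pc]$ and $T_1 - T_1[c]$ on the right (reached via the further projections $T_r \to T_1$ and $T_{r+1} \to T_1$), and it verifies hypothesis~(2) of Lemma~\ref{lemma: general lemma to commute pushforward and pullback} by noting that the horizontal maps into $T_1$ are injective (since $\Gamma_1(q)$ is torsion-free), rather than by your direct stabilizer computation; your choice of $T_r$ is slightly more economical. The paper also simply asserts $[\Gamma_r(q):\Gamma_{r+1}(q)] = p^n$ without further comment, treating it as routine rather than as the main obstacle. One minor notational point: when you ``evaluate on $z_r$,'' you are implicitly applying the lemma to $i^\ast z_r \in H^{n-1}(T_r - T_r[pc])$, so that $V^\ast z_r$ should be read as $V^\ast i^\ast z_r$ and $[p]_\ast z_r$ as $[p]_\ast i^\ast z_r$ in the sense of Remark~\ref{rmk: details on pushforward by [a]}; with this understood, your chain of equalities is exactly the paper's.
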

\begin{proof}
    Consider the map
    \[
    f_r\colon \Gamma_r(q) \backslash \sX \too T_r - T_r[c] \too T_1 - T_1[c],
    \]
    where the first arrow is induced by $v_r$ and the second one is the quotient map. Also, observe that since $r \geq 1$ we can define 
    \[
    f_{r+1}: \Gamma_{r+1}(q) \backslash \sX \too T_{r+1} - T_{r+1}[pc] \too T_1 - T_1[pc],
    \]
    in a similar way as $f_r$, but where we used that $v_{r+1}$ is of exact $p^{r+1}$ torsion, with $p^{r+1} > p$. It is a consequence of Proposition \ref{prop: Eisenstein class compatible with pullback} that, if $\iota \colon T_1 - T_1[pc] \to T_1 - T_1[c]$, 
    \[
    v_r^\ast z_r = f_r^\ast z_1, \quad v_{r+1}^\ast z_{r+1} =  f_{r+1}^\ast \iota^\ast z_{1}.
    \]
    We will now deduce the desired statement from the invariance of $z_1$ under multiplication by $p$. With this aim, observe that we can apply Lemma \ref{lemma: general lemma to commute pushforward and pullback} to the following commutative diagram 
    \[
    \begin{tikzcd}
    \Gamma_{r+1}(q) \backslash \sX \arrow[r, "f_{r+1}", hook] \arrow[d, "\mathrm{pr}"] & {T_1 - T_1[pc]} \arrow[d, "{[p]}"] \\
    \Gamma_r(q) \backslash \sX \arrow[r, "f_r", hook]                                  & {T_1 - T_1[c].}                    
    \end{tikzcd}
    \]
    Indeed, since $\Gamma_r(q)$ is torsion-free and $[\Gamma_r(q): \Gamma_{r+1}(q)] = p^n$, both horizontal maps are $p^n$-sheeted covering maps, implying Condition (1) of the lemma. Moreover, the fact that $\Gamma_r(q)$ is torsion-free implies that the maps $f_r$ and $f_{r+1}$ are injective, giving Condition (2) of the lemma. Therefore,
    \[
    \mathrm{pr}_\ast f_{r+1}^\ast = f_r^\ast [p]_\ast.
    \]
    From there, 
    \[
    \mathrm{pr}_\ast v_{r+1}^\ast z_{r+1} = \mathrm{pr}_\ast f_{r+1}^\ast \iota^\ast z_1 = f_r^\ast [p]_\ast \iota^\ast z_1 = f_r^\ast z_1 = v_r^\ast z_r,
    \]
    where we used the invariance of $z_1$ under multiplication by $p$ on the second to last equality (see Theorem \ref{thm: Eisenstein class of a torus bundle} and Remark \ref{rmk: details on pushforward by [a]}).
\end{proof}

\section{Differential form representative of the Eisenstein class}\label{sec: differential form representatives of Eisenstein class}
In \cite{BCG}, Bergeron, Charollois, and Garc\'ia construct a closed differential form on $T_r - T_r[c]$ representing the Eisenstein class $z_r$. Their construction, inspired by the work of Bismut and Cheeger \cite{BismutCheeger1992}, consists of a regularized average of a transgression form considered by Mathai and Quillen. In this section, we outline this procedure and use the differential forms we obtain to prove some properties about pullbacks of the Eisenstein class by torsion sections (see Proposition \ref{prop: average of pullbacks cEpsi by vectors of exact order p is 0}). The expressions given here will also be used in the last section to relate periods of the Eisenstein class to special values of $L$-functions.

\subsection{Mathai--Quillen form and the transgression form}\label{section: MQ form and the transgression form}

Let $S \coloneq \GL_n(\R) / \SO_n$ and consider the real vector bundle $E \coloneq S \times \R^n \to S$, which is $\GL_n(\R)$-equivariant for the left multiplication action on each of the components of $E$ and on $S$. Mathai and Quillen construct a closed $\GL_n(\R)$-equivariant differential form 
\[
\varphi \in \Omega_{\mathrm{rd}}^n(E)^{\GL_n(\R)}
\]
which has rapid decay (Gaussian shape) and integral $1$ along the fibers. In particular, $\varphi$ represents the Thom class of the oriented vector bundle $E \to S$ via the isomorphisms
\[
H^n(\Omega_{\mathrm{rd}}^\bullet(E)) \simeq H^n(E, E - \{0\}, \mathbb{R})
\]
between the cohomology of the complex of forms on $E$ with rapid decay along the fibers $\Omega_{\mathrm{rd}}^\bullet(E)$ and relative singular cohomology (see \cite[Page 98 and Page 99]{MQ}).

There is an explicit expression for the form $\varphi$, which we proceed to outline following \cite[Theorem 13]{BCG} and \cite{MQ}. The reader is referred to these sources for further details on the construction of $\varphi$, as for our purposes it is sufficient to know the shape of its expression. Using the Iwasawa decomposition of $\GL_n(\R)$, fix $h\colon S \to \GL_n(\R)$ a smooth section of the quotient map $\GL_n(\R) \twoheadrightarrow S$. Then
\begin{equation}\label{eq: explicit expression for varphi}
\varphi = \pi^{-n/2} e^{-\lvert h^{-1}x\rvert^2} \sum_{\substack{I \subset \{1, \dots, n\} \\ \lvert I \rvert \text{ even} }} \varepsilon_{I, I'} \mathrm{Pf}(\Omega_I/2) \left( d(h^{-1}x) + \theta h^{-1}x \right)^{I'},
\end{equation}
where:
\begin{itemize}
    \item $x = (x_1, \dots, x_n) \in \R^n$ and $\rvert x\lvert = (x_1^2 + \dots + x_n^2)^{1/2}$ is its standard norm.
    \item $\theta$ is an $n \times n$ matrix of $1$-forms on $S$, obtained as the pullback by $h$ of the connection of the principal $\SO_n$-bundle $\GL_n(\R) \to S$ given by $\theta_{\GL_n(\R)} = (g^{-1}dg - dg^t (g^t)^{-1})/2$.
    \item $\Omega$ is an $n\times n$ matrix of $2$-forms on $S$, obtained as the pullback by $h$ of the curvature $d\theta_{\GL_n(\R)} + \theta_{\GL_n(\R)}^2$. Then, $\mathrm{Pf}(\Omega_I/2)$ is an $\lvert I \rvert$-form given as the Pfaffian of the submatrix of $\Omega/2$ of size $\lvert I \rvert$ involving the indices in $I$.
    \item $I'$ denotes the complement of $I\subset \{1, \dots, n \}$, $\varepsilon_{I, I'} \in \{ \pm 1\}$, and for a vector $v$ of size $n$, $v^{I'} = v_{i_1} v_{i_2} \cdots v_{i_{\lvert I'\rvert}}$, where $I' = \{ i_1, \dots , i_{\lvert I' \rvert}\}$.
\end{itemize}
\begin{remark} 
We will not use the expressions for $\theta$ and $\Omega$, aside from the fact that they are forms of degree $1$ and $2$ on $S$.
\end{remark}

For $t \in \R_{>0}$, let $[t]\colon E \to E$ be multiplication by $t$ on the fibers. An important property of $\varphi$ is that for every $t \in \R_{>0}$, $[t]^\ast \varphi$ also represents the Thom class. Indeed, the Gaussian on the fibers gets dilated, but the value of the integral over the fibers is preserved and equal to $1$. In particular,
\[
[t]^\ast \varphi \too \delta_{0}, \ \text{ as } t \too +\infty,
\]
where $\delta_0$ denotes the current of integration along the zero section of $E$, also represents the Thom class (as a current). Recall that the Eisenstein class is a lift of Thom classes of a torus bundle, by Theorem \ref{thm: Eisenstein class of a torus bundle}. The next proposition constructs a form $\eta$ whose differential involves $\delta_{0}$. The relevance of this form is that a (regularized) average of it will give a representative of the Eisenstein class.

\begin{definition}
    Let $R \coloneq \sum_i x_i \frac{\partial}{\partial x_i}$ be the radial vector field on $E = S \times \R^n$, where $\{x_i \}_i$ denote the coordinates on $\R^n$ and consider the contraction $\psi \coloneq \iota_R \varphi \in \Omega_{\mathrm{rd}}^{n-1}(E)^{\GL_n(\R)}$, which is $\GL_n(\R)$-invariant (see \cite[Proposition 14]{BCG}).    
\end{definition}

\begin{proposition}\label{prop: transgression prop of psi}
    Consider the differential form on $E - S$
    \begin{equation}\label{eq: transgression of Thom form}
    \eta \coloneq \int_0^{+\infty} [t]^\ast \psi \frac{dt}{t}.
    \end{equation}
    Viewed as a current on $E$, it satisfies the transgression property $d \eta = \delta_0 - [0]^\ast \varphi$.
\end{proposition}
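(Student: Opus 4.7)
The plan is to derive the identity from Cartan's magic formula, exploiting the fact that $\varphi$ is closed and that the flow of the radial vector field $R$ on $E$ is precisely the one-parameter family of fiber dilations $\phi_{s} = [e^{s}]$. Since $\varphi$ represents the Thom class it is in particular closed, so Cartan's formula gives
\begin{equation*}
\mathcal{L}_{R}\varphi \;=\; d\iota_{R}\varphi + \iota_{R}d\varphi \;=\; d\iota_{R}\varphi \;=\; d\psi.
\end{equation*}
Applying $[e^{s}]^{\ast}$ to both sides and using that pullback commutes with $d$ yields the infinitesimal identity $\tfrac{d}{ds}[e^{s}]^{\ast}\varphi = d\,[e^{s}]^{\ast}\psi$. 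Integrating in $s$ from $s_{0}$ to $s_{1}$ and substituting $t = e^{s}$ so that $ds = dt/t$ gives the finite transgression identity
\begin{equation*}
[t_{1}]^{\ast}\varphi \;-\; [t_{0}]^{\ast}\varphi \;=\; d\!\int_{t_{0}}^{t_{1}} [t]^{\ast}\psi\,\frac{dt}{t},\qquad 0 < t_{0} < t_{1} < \infty.
\end{equation*}

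The proposition will then follow by letting $t_{0} \to 0^{+}$ and $t_{1} \to +\infty$, interpreted in the sense of currents on $E$. On the left I would invoke two convergence claims. First, $[t_{0}]^{\ast}\varphi \to [0]^{\ast}\varphi$ uniformly on compacts, which is immediate from \eqref{eq: explicit expression for varphi} since $[t]^{\ast}\varphi$ is smooth for $t \geq 0$ (and in fact $[0]^{\ast}\varphi$ picks out only the $I' = \emptyset$ term, namely the pulled-back Euler form). Second, $[t_{1}]^{\ast}\varphi \to \delta_{0}$ as currents, which is the classical fact that the Mathai--Quillen Gaussian representative of the Thom class concentrates on the zero section under fiber dilation while preserving fiber integral $1$, so that pairing with any compactly supported test form recovers integration against the zero section.

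The main technical step is to show that $\eta = \int_{0}^{+\infty} [t]^{\ast}\psi\,\frac{dt}{t}$ defines a locally integrable current on all of $E$, so that $d$ can be passed through the limit. Near $t = +\infty$ the Gaussian factor $e^{-t^{2}|h^{-1}x|^{2}}$ in \eqref{eq: explicit expression for varphi} forces rapid decay of $[t]^{\ast}\psi$ on compact subsets of $E - S$, where $S$ denotes the zero section. Near $t = 0$ the key observation is that $\psi = \iota_{R}\varphi$ vanishes on the zero section because $R$ does; combined with the factor of $t$ contributed by each fiber direction $dx_{i}$ under the pullback $[t]^{\ast}$, one gets an estimate $[t]^{\ast}\psi = O(t)$ as $t \to 0^{+}$, so that $[t]^{\ast}\psi\,\tfrac{dt}{t}$ is uniformly integrable near $0$. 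A dominated-convergence argument against compactly supported test forms then allows passage to the limit in the finite transgression identity, yielding $d\eta = \delta_{0} - [0]^{\ast}\varphi$. The main obstacle I anticipate is the rigorous treatment of the limit $t_{1} \to +\infty$ along points of the zero section itself, which must be handled genuinely distributionally and is the essential input inherited from \cite{MQ}.
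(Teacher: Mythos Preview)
Your approach is correct and is essentially the same as the paper's: both derive the identity from Cartan's magic formula applied to the closed form $\varphi$, interpret $\tfrac{d}{dt}[t]^\ast\varphi$ via the Lie derivative along the radial vector field $R$, and then integrate in $t$ to obtain $d\eta = \delta_0 - [0]^\ast\varphi$ after passing to the limits $t\to 0^+$ and $t\to +\infty$. The paper's version is a two-line sketch that defers the convergence analysis to \cite{BCG} and \cite{MQ}, whereas you spell out the estimates near $t=0$ and $t=+\infty$ more explicitly; but the underlying argument is identical.
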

\begin{proof}
    The main idea for the proof of this statement lies in the following equalities
    \[
    \delta_0 - [0]^\ast \varphi = \int_0^{+\infty} \frac{d}{dt} [t]^\ast \varphi dt = \int_0^{+\infty} d [t]^\ast \iota_R \varphi \frac{dt}{t} = d \eta,
    \]
    where the second equality follows from interpreting $\frac{d}{dt} [t]^\ast \varphi$ in terms of a Lie derivative with respect to the vector field $R$ and using Cartan magic formula. For more details, see Section 7.2 and Section 7.3 of \cite{BCG} and Page 106 of \cite{MQ}.
\end{proof}

Using the explicit expression for $\varphi$ given in \eqref{eq: explicit expression for varphi}, and following the same notation as in that equation, we obtain
\[
\psi = \pi^{-n/2} e^{- \lvert h^{-1} x \rvert ^2} \sum_{\substack{I \subsetneq
 \{1, \dots, n\} \\ \lvert I \rvert \text{ even}}} \left(\varepsilon_{I, I'} \mathrm{Pf}(\Omega_I/2) \sum_{k = 1}^{\lvert I'\rvert} (-1)^{k+1} (h^{-1}x)_{i_k} \left( d(h^{-1}x) + \theta h^{-1}x \right)^{I' - \{ i_k\}}\right),
\]
\[
\eta = \frac{\pi^{-n/2}}{2} \sum_{\substack{I \subsetneq \{1, \dots, n\} \\ \lvert I \rvert \text{ even}}}  \left(\varepsilon_{I, I'} \mathrm{Pf}(\Omega_I/2) \frac{\Gamma(\lvert I' \rvert /2)}{\lvert h^{-1}x \rvert^{\lvert I' \rvert} } \sum_{k = 1}^{\lvert I'\rvert} (-1)^{k+1} (h^{-1}x)_{i_k} \left( d(h^{-1}x) + \theta h^{-1}x \right)^{I' - \{ i_k\}}\right).
\]
Here $I' = \{ i_1, \dots, i_{\lvert I' \rvert}\}$ is the complement of $I \subsetneq \{ 1, \dots, n \}$. The exact formulas will not be necessary for us. On the other hand, it will be important to note: 
\begin{itemize}
    \item $\varphi$ and $\psi$ are linear combinations of products of an exponential and a polynomial. In particular, they have rapid decay along the fibers. 
    \item $[0]^\ast \psi = 0$.
    \item $\eta$ does not have rapid decay along the fibers.
\end{itemize}

\subsection{Eisenstein transgression}
We proceed to consider a regularized average of the form $\eta$ in \eqref{eq: transgression of Thom form} over a lattice to obtain forms on torus bundles representing the Eisenstein class. For $L \subset \Q^n$ a $\Z$-lattice and $\lambda \in L$, let
\[
\mathrm{tr}_\lambda \colon E \too E , \ (g,x) \mapstoo (g , x + \lambda).
\]
Then, if $t \in \R_{>0}$, define 
\begin{equation}\label{eq: det theta(psi,L)}
	\theta([t]^\ast \psi, L) \coloneq \sum_{\lambda \in L}\mathrm{tr}_\lambda^\ast [t]^\ast \psi.
\end{equation}
The sum converges as the differential form $t^\ast \psi$ has rapid decay on the fibers of $E \to S$. 
\begin{theorem}\label{thm: meromorphic continuation of Epsi}
    View $\theta([t]^\ast \psi, L)$ as a differential form on $S \times (\R^n - L)$. For $s \in \C$ with $\mathrm{Re}(s) \gg 0$, the integral
    \[
    E_\psi(L, s) \coloneq \int_0^{+\infty} \theta([t]^\ast \psi, L)t^s \frac{dt}{t}
    \]
    converges. Furthermore, it admits a meromorphic continuation to all $s \in \C$, regular at $s = 0$, and its value at every regular $s \in \C$ defines a differential form on $S \times (\R^n - L)$. 
\end{theorem}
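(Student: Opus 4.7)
The plan is to follow Riemann's classical strategy for meromorphic continuation of Mellin transforms of theta series via Poisson summation. Using the explicit formula for $\psi$ recalled in Section \ref{section: MQ form and the transgression form}, one may decompose $\psi$ as a finite sum of pieces of the shape $A(g,x)\, B(g)$, where $A(g,x) = P(x) e^{-|h^{-1}x|^2}$ is a Schwartz function in $x \in \R^n$ (with $P$ a polynomial and $h = h(g)$) and $B$ is a differential form on $S$ combined with a product of $d(h^{-1}x)_i$'s of some fiber degree $j \leq n-1$. Under the pullback $[t]^\ast$ each such piece picks up a factor $t^j$ from the fiber $1$-forms, while $A(g,x)$ becomes $A(g,tx)$; the Gaussian $e^{-t^2|h^{-1}x|^2}$ then yields absolute convergence of the translated sum $\theta([t]^\ast \psi, L)$ at each $t>0$ and super-exponential decay as $t\to\infty$. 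Consequently $\int_1^{+\infty}\theta([t]^\ast\psi,L)t^s\tfrac{dt}{t}$ is entire in $s$, and direct polynomial bounds on the small-$t$ behaviour show absolute convergence of the full integral $E_\psi(L,s)$ for $\mathrm{Re}(s)$ sufficiently large, with a form-valued output on $S\times(\R^n-L)$ in that range.

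For the meromorphic continuation beyond this half-plane I would split the Mellin integral at $t=1$ and apply Poisson summation on $L$ term-by-term to the small-$t$ piece, $\sum_{\lambda\in L} A(g,t(x+\lambda)) = t^{-n}(\det L)^{-1}\sum_{\mu\in L^\vee}\widehat{A}(g,\mu/t)e^{2\pi i\langle\mu,x\rangle}$. For $\mu\neq 0$, the dual Gaussian factor in $\widehat{A}(g,\mu/t)$ provides super-exponential decay as $t\to 0$, so after the change of variable $t\mapsto 1/t$ these terms contribute entire functions to $E_\psi(L,s)$. The $\mu=0$ term produces $t^{j-n}(\det L)^{-1}\widehat{A}(g,0)\, B$, whose Mellin transform on $(0,1]$ has at worst a simple pole at $s=n-j$ with explicit residue. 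Summing over the decomposition, the potential poles of $E_\psi(L,s)$ all lie at $s\in\{1,2,\dots,n\}$; since $\psi$ has total degree $n-1$ and hence $j\leq n-1$, the value $s=0$ is avoided, proving regularity at $s=0$.

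The main obstacle is justifying this strategy compatibly with the form-valued nature of $\theta([t]^\ast\psi,L)$, and keeping track of form-degrees cleanly enough that the pole-location count above is rigorous. This amounts to verifying that the $S$-direction factors built from $\theta$, $\Omega$, and the components of $\theta h^{-1}x$ depend only on $g$ (and linearly on $x$ in the last case), so that the $\R^n$-Fourier transform acts only on the scalar coefficients $A(g,x)$ of the decomposition, and to controlling uniformity in $(g,x)$ when interchanging the lattice sum with the Mellin integral. Once these compatibilities are established, analytic continuation is performed term-by-term and the form-valued structure on $S\times(\R^n-L)$ is preserved at every regular value of $s$, with regularity at $s=0$ reducing to the degree-count explained above.
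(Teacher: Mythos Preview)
Your approach is correct and is precisely the standard Riemann--Poisson argument that underlies the reference the paper cites (Proposition~17 and \S8.5 of \cite{BCG}); the paper itself does not give a proof but simply defers to that source. Your decomposition of $\psi$ by fiber degree, the split of the Mellin integral at $t=1$, and the application of Poisson summation to the lattice sum on the small-$t$ piece are exactly what is needed, and your pole-location count $s=n-j$ with $j\le n-1$ correctly gives regularity at $s=0$.

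One small point of comparison: the paper's one-line gloss on regularity at $s=0$ emphasizes instead that $[t]^\ast\psi\to 0$ as $t\to+\infty$ on $S\times(\R^n-L)$, i.e.\ the input that makes $\int_1^{+\infty}$ entire. You record this as well (``super-exponential decay as $t\to\infty$'') but place the weight of the argument on the small-$t$ side. Both ingredients are of course required, and your account is the more explicit of the two. The compatibility concerns you flag --- separating the $S$-direction factors built from $\theta$ and $\Omega$ so that the $\R^n$-Fourier transform acts only on scalar Schwartz coefficients, and justifying the interchange of lattice sum and Mellin integral locally uniformly in $(g,x)$ --- are indeed routine once the explicit formula for $\psi$ in \S\ref{section: MQ form and the transgression form} is in hand.
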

\begin{proof}
    This follows from Proposition 17 and Section 8.5 of \cite{BCG}. In particular, the fact that the integral is regular at $s = 0$ follows from the fact that we are viewing $\theta([t]^\ast \psi, L)$ as a form on $S \times (\R^n - L)$, and $[t]^\ast \psi$ tends to $0$ as $t \to +\infty$ on $S \times (\R^n - L)$.
\end{proof}

The previous theorem implies that $E_\psi(L,s)$ is regular at $s = 0$ and 
\[
E_\psi(L) \coloneq E_\psi(L,0)
\]
defines a form on $S \times (\R^n - L)/L$. In fact, $E_\psi(L)$ descends to a form in $\sX \times (\R^n-L)/L$ by the calculation on (8.9) of \cite{BCG}. Moreover, if $\Gamma' \subset \SL_n(\R)$ is a subgroup contained in the stabilizer of $L$, the form $E_\psi(L)$ is invariant under $\Gamma'$.

\begin{remark}\label{rmk: Epsi as a regularized average}
    We outline how to view $E_\psi(L)$ as
    a regularized average of $\eta$. As we pointed out at the end of Section \ref{section: MQ form and the transgression form}, the form $\eta$ does not have rapid decay along the fibers. Therefore, the sum $\sum_{\lambda \in L} \mathrm{tr}_\lambda^\ast \eta$
    does not converge. On the other hand, for $s \in \C$ with $\mathrm{Re}(s) \gg 0$ define  
    \[
    \eta(s) \coloneq \int_0^{+\infty} [t]^\ast \psi t^s \frac{dt}{t}.
    \]
    Then, $\eta(s)$ has the same expression as the one given for $\eta$ at the end of Section \ref{section: MQ form and the transgression form} where the term $\Gamma(\lvert I' \rvert /2)/\lvert h^{-1}x \rvert^{\lvert I' \rvert}$ is replaced by $\Gamma((\lvert I' \rvert + s) /2)/\lvert h^{-1}x \rvert^{\lvert I' \rvert + s}$. In particular, it follows that if $\mathrm{Re}(s) \gg 0$, the sum $\sum_{\lambda \in L} \mathrm{tr}_\lambda^\ast \eta(s)$ is absolutely convergent and 
    \[
    E_\psi(L, s) = \int_0^{+\infty} \theta([t]^\ast \psi, L)t^s \frac{dt}{t} = \sum_{\lambda \in L} \mathrm{tr}_\lambda^\ast \eta(s), 
    \]
    where we exchanged the integral with the sum (using that for $\mathrm{Re}(s) \gg 0$, the sums are absolutely convergent). Thus, $E_\psi(L)$ is equal to the value at $s = 0$ of the meromorphic continuation of $\sum_{\lambda \in L} \mathrm{tr}_\lambda^\ast \eta(s)$. 
\end{remark}

Recall the torus bundle 
\[
T_r = \Gamma_r(q) \backslash (\sX \times \R^n / \Z^n) \too \Gamma_r(q) \backslash \sX
\]
introduced in Section \ref{subsec: Eisenstein class of universal family of tori with level structure}. 
\begin{definition}
    Consider the linear combination
    \[
    {}_c E_\psi \coloneq E_\psi(c^{-1}\Z^n) - c^nE_\psi(\Z^n),
    \]
    which we view as a differential form on $T_r - T_r[c]$ for every $r \geq 1$.
\end{definition} 

\begin{theorem}\label{thm: differential form describing the Eisenstein class}
	The form ${}_c E_\psi$ is closed in $T_r - T_r[c]$ and its cohomology class 
    \[
    {}_c E_\psi \in H^{n-1}_{\mathrm{dR}}({T}_r - {T}_r[c]) \simeq H^{n-1}({T}_r - {T}_r[c], \R)
    \]
    is equal to the image of the Eisenstein class $z_r$ in $H^{n-1}({T}_r - {T}_r[c] , \R)$.		
\end{theorem}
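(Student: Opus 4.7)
The strategy is to verify directly that the de Rham class of ${}_cE_\psi$ satisfies the two characterizing properties of the Eisenstein class in Theorem~\ref{thm: Eisenstein class of a torus bundle}: (a) it lifts $T_r[c] - c^n\{0\}$ via the connecting map $H^{n-1}(T_r - T_r[c]) \to H^n(T_r, T_r - T_r[c])$ of \eqref{eq: les relative cohomology}, and (b) it is invariant under $[a]_\ast$ for all $a \in \mathbb{N}^{(c)}$. Uniqueness in Theorem~\ref{thm: Eisenstein class of a torus bundle} will then identify its class with $z_r$ after passing from $\Z[1/c]$- to $\R$-coefficients. Note that $\Gamma_r(q)$-invariance of each piece $E_\psi(L)$ is automatic from the $\GL_n(\R)$-invariance of $\psi$ and invariance of the lattices $L = c^{-1}\Z^n$ and $L = \Z^n$ under $\Gamma_r(q) \subset \SL_n(\Z)$, so the forms descend to $T_r - T_r[c]$.

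First I would prove closedness and compute $d({}_c E_\psi)$ as a current on $T_r$ via the transgression identity of Proposition~\ref{prop: transgression prop of psi}. For $\mathrm{Re}(s) \gg 0$, Remark~\ref{rmk: Epsi as a regularized average} gives $E_\psi(L,s) = \sum_{\lambda \in L} \mathrm{tr}_\lambda^\ast \eta(s)$, with $\eta(s) = \int_0^{+\infty} [t]^\ast \psi \, t^s\, dt/t$. Using $d\psi = \mathcal{L}_R\varphi$ (since $\varphi$ is closed and $\psi = \iota_R\varphi$), one obtains $d[t]^\ast\psi = t \tfrac{d}{dt}[t]^\ast\varphi$; integration by parts in $t$ then yields, for $\mathrm{Re}(s) > 0$ and away from the lattice points,
\[
dE_\psi(L,s) = -s\, E_\varphi(L,s), \qquad E_\varphi(L,s) \coloneq \sum_{\lambda \in L} \mathrm{tr}_\lambda^\ast \!\int_0^{+\infty}[t]^\ast\varphi \, t^s \frac{dt}{t}.
\]
Meromorphic continuation of $E_\varphi(L,s)$ is the usual Siegel--Epstein/Eisenstein integral; it may have a pole at $s=0$, but the residue depends on $L$ only through its covolume, so the combination $E_\varphi(c^{-1}\Z^n,s) - c^n E_\varphi(\Z^n,s)$ is regular and vanishes at $s=0$. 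Taking the difference produces $d({}_cE_\psi) = 0$ on $T_r - T_r[c]$. To identify the class in $H^n(T_r, T_r - T_r[c])$, I would view ${}_cE_\psi$ as a current on all of $T_r$ and, using the current-level identity $d\eta = \delta_0 - [0]^\ast\varphi$, show that the singular contributions of $E_\psi(c^{-1}\Z^n)$ and $c^n E_\psi(\Z^n)$ near the lattices produce delta currents $\sum_{\lambda \in c^{-1}\Z^n/\Z^n}\delta_\lambda$ and $c^n \delta_0$ respectively, while the smooth ``$[0]^\ast\varphi$'' Eisenstein tails cancel in the same manner as above. Thus the coboundary of ${}_c E_\psi$ in the long exact sequence \eqref{eq: les relative cohomology} is represented by the Thom current of $T_r[c] - c^n\{0\}$, giving property (a).

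For property (b), I would exploit the defining sum $E_\psi(L) = \sum_\lambda \mathrm{tr}_\lambda^\ast \eta$ (regularized). For $a \in \mathbb{N}^{(c)}$, pushforward by $[a]\colon T_r - T_r[ac] \to T_r - T_r[c]$ translates into reorganizing the sums according to the sublattice $aL \subset L$, using that $\mathrm{tr}_\lambda^\ast \eta$ is $\GL_n(\R)$-equivariant and that $[a]^\ast$ intertwines $\mathrm{tr}_{a\lambda}^\ast$ and $\mathrm{tr}_\lambda^\ast$ suitably. A short computation matches $[a]_\ast E_\psi(a^{-1}c^{-1}\Z^n) = E_\psi(c^{-1}\Z^n)$ and $[a]_\ast E_\psi(a^{-1}\Z^n) = E_\psi(\Z^n)$ after regularization; the same argument in BCG shows ${}_cE_\psi = [a]_\ast {}_cE_\psi$. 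Combining (a) and (b) with Theorem~\ref{thm: Eisenstein class of a torus bundle} and $\Z[1/c] \hookrightarrow \R$ identifies $[{}_cE_\psi]$ with the image of $z_r$ in $H^{n-1}(T_r - T_r[c],\R)$.

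The main technical obstacle is the regularization step controlling the pole behavior of the Eisenstein integrals at $s=0$ and the interchange of $d$ with the $s$-limit: one must verify that for the specific combination $E_\psi(c^{-1}\Z^n) - c^n E_\psi(\Z^n)$ the residues cancel so that closedness on $T_r - T_r[c]$, together with the current identity $d({}_cE_\psi) = T_r[c] - c^n\{0\}$ on all of $T_r$, hold simultaneously. This is precisely the content carried out in the transgression computations of Sections~7--8 of \cite{BCG}, and I would invoke their estimates rather than reproducing them in detail.
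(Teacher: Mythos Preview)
Your proposal is correct and follows essentially the same approach as the paper: verify that the de Rham class of ${}_cE_\psi$ satisfies the two characterizing properties of Theorem~\ref{thm: Eisenstein class of a torus bundle} (its coboundary as a current is $\delta_{T_r[c]} - c^n\delta_{\{0\}}$, and it is $[a]_\ast$-invariant), then invoke uniqueness. The paper's proof is more terse---it simply cites Theorems~19, 21 and Proposition~20 of \cite{BCG} for these two facts and notes that the $[0]^\ast\varphi$ contributions vanish after regularization---whereas you spell out the integration-by-parts mechanism and the residue cancellation in the $c$-smoothed combination, but the logical skeleton is identical.
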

\begin{proof}
    See Theorem 19, Proposition 20, and Theorem 21 of \cite{BCG}. There it is explained that, since $E_\psi$ is a regularized average of $\eta$ (see Remark \ref{rmk: Epsi as a regularized average}), Proposition \ref{prop: transgression prop of psi} implies that 
    \[
    d({}_c E_\psi) = \delta_{T[c]} - c^n \delta_{\{ 0\}},
    \]
    where $\delta_{T[c]}$ and $\delta_{\{ 0\}}$ denote currents of integration along $T[c]$ and $\{ 0\}$ (the contributions $[0]^\ast \varphi$ appearing in Proposition \ref{prop: transgression prop of psi} vanish after the regularization). Moreover, $[a]_\ast E_\psi = E_\psi$ by Proposition 20 of \cite{BCG}. Thus, ${}_c E_\psi$ is a closed form on $T_r - T_r[c]$ satisfying the characterizing properties of the Eisenstein class $z_r$ asserted in Theorem \ref{thm: Eisenstein class of a torus bundle}.
\end{proof}

\subsection{Pullbacks by torsion sections}\label{subsec: pullbacks by torsion sections}
We now use the differential forms introduced above to study the pullbacks of the form ${}_c E_\psi$ by torsion sections. For $v \in \Q^n$, denote also by $v$ the corresponding section $v\colon S \to E$. Then, for $s \in \C$ with $\mathrm{Re}(s) \gg 0$, consider the differential form on $S$
\[
\eta(v, s) \coloneq \int_0^{+\infty} v^\ast [t]^\ast\psi t^s \frac{dt}{t} = \int_{0}^{+\infty} (tv)^\ast \psi t^s \frac{dt}{t}.
\]
Since $0^\ast \psi= 0$, which can be verified using the explicit expression given at the end of Section \ref{section: MQ form and the transgression form}, we have $\eta(0, s) = 0$. From this same expression and Remark \ref{rmk: Epsi as a regularized average}, we deduce that for $v \neq 0$
\begin{equation}\label{eq: expression eta(v,s)}
\begin{split}
&\eta(v,s) = \\ &\frac{\pi^{-n/2}}{2} \sum_{\substack{I \subsetneq \{1, \dots, n\} \\ \lvert I \rvert \text{ even}}} \left(\varepsilon_{I, I'} \mathrm{Pf}(\Omega_I/2) \frac{\Gamma((\lvert I' \rvert + s)/2)}{\lvert h^{-1}v \rvert^{\lvert I' \rvert + s} } \sum_{k = 1}^{\lvert I' \rvert} (-1)^{k+1} (h^{-1}v)_{i_k} \left( d(h^{-1})v + \theta h^{-1}v \right)^{I' - \{ i_k\}}\right).
\end{split}
\end{equation}
\begin{proposition}\label{prop: meromorphic continuation of sum of eta}
	Let $L \subset \Q^n$ be a $\Z$-lattice and $v \in \Q^n - L$. For $s \in \C$ with $\mathrm{Re}(s) \gg 0$,
	\[
	v^\ast E_{\psi}(L, s) = \sum_{\lambda \in v + L} \eta(\lambda , s).
	\]
	In particular, the right-hand side has a meromorphic continuation regular at $s = 0$.
\end{proposition}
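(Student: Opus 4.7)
The plan is a direct computation that commutes the pullback $v^\ast$ with the sum and the integral defining $E_\psi(L,s)$, followed by a Fubini-type justification. Unwinding equation \eqref{eq: det theta(psi,L)}, we obtain
\[
v^\ast E_\psi(L,s) \;=\; \int_0^{+\infty} \sum_{\lambda \in L} v^\ast \mathrm{tr}_\lambda^\ast [t]^\ast \psi \cdot t^s \,\frac{dt}{t}.
\]
Since $\mathrm{tr}_\lambda \circ v \colon S \to E$ is the section $g \mapsto (g, v+\lambda)$, we have $v^\ast \mathrm{tr}_\lambda^\ast [t]^\ast \psi = (v+\lambda)^\ast [t]^\ast \psi$. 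Reindexing the sum by $\mu = v + \lambda \in v + L$ and, once permitted, swapping the order of integration and summation, each resulting term is precisely the defining integral of $\eta(\mu,s)$, which yields the stated identity.

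The technical content is the justification of Fubini for $\mathrm{Re}(s) \gg 0$. The hypothesis $v \notin L$ gives $0 \notin v + L$, so every $\mu$ in the sum is nonzero. Using the explicit formula for $\psi$ recorded at the end of Section \ref{section: MQ form and the transgression form}, the substitution $x = t\mu$ shows that each monomial of $(t\mu)^\ast \psi$ carries a factor $t^{|I'|} e^{-t^2 |h^{-1}\mu|^2}$ multiplied by a polynomial of degree at most $|I'|$ in the entries of $h^{-1}\mu$. Because $I \subsetneq \{1,\dots,n\}$ forces $|I'| \geq 1$, this is integrable near $t = 0$ and decays rapidly as $t \to +\infty$. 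Computing the $t$-integral of each summand as in \eqref{eq: expression eta(v,s)} reduces the question to the absolute convergence, on compact subsets of $S$, of an Eisenstein-type series of the shape $\sum_{\mu \in v+L} |h^{-1}\mu|^{-\mathrm{Re}(s)}$ (up to locally bounded coefficients involving the $1$- and $2$-forms $\theta$ and $\Omega$), which holds for $\mathrm{Re}(s)$ sufficiently large. These bounds permit the application of Fubini, yielding the desired formula.

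The meromorphic-continuation claim follows at once: Theorem \ref{thm: meromorphic continuation of Epsi} supplies the meromorphic continuation of $E_\psi(L,s)$ together with regularity at $s = 0$, and pulling back this meromorphic family by the smooth section $v$ preserves both properties. The only genuine obstacle is the analytic bookkeeping of the convergence estimates, but the underlying strategy is the same one already used in Section 8.5 of \cite{BCG} and recalled in Remark \ref{rmk: Epsi as a regularized average} to establish the meromorphic continuation of $E_\psi(L,s)$ itself, so no genuinely new analytic input is required.
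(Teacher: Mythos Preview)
Your proof is correct and follows essentially the same approach as the paper's: the paper's argument is a one-line reference to Theorem~\ref{thm: meromorphic continuation of Epsi} and Remark~\ref{rmk: Epsi as a regularized average}, the latter of which already records the identity $E_\psi(L,s)=\sum_{\lambda\in L}\mathrm{tr}_\lambda^\ast\eta(s)$ together with the Fubini justification for $\mathrm{Re}(s)\gg 0$. You have simply unfolded that citation, supplying in more detail the absolute-convergence estimates that the paper takes for granted.
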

\begin{proof}
    This follows from Theorem \ref{thm: meromorphic continuation of Epsi} and Remark \ref{rmk: Epsi as a regularized average}.
\end{proof}
\noindent Thus, if $v \in \Q^n - (1/c)\Z^n$,
\begin{equation}\label{eq: v Epsi in terms of eta}
	v^\ast {}_c E_\psi = \lim_{s \to 0} \sum_{\lambda \in v + c^{-1} \Z^n} \eta(\lambda, s) - c^n \sum_{ \lambda \in v + \Z^n} \eta(\lambda, s),
\end{equation}
where here and from now on, $\lim_{s \to 0}$ denotes evaluation of the meromorphic continuation. 

In fact, the right-hand side of the equation appearing in Proposition \ref{prop: meromorphic continuation of sum of eta} defines a differential form on $S$ even if $v \in L$. More precisely, 
\[
\sum_{\lambda \in L} \eta(\lambda, s)
\]
converges for $\mathrm{Re}(s) \gg 0$, and admits a meromorphic continuation to $\C$ which is regular at $s = 0$. We proceed to prove a weaker version of this statement, as this will be enough for our purposes.

\begin{lemma}\label{lemma: pullback at 0}
	Let $g \in S$, consider tangent vectors $Y_1, \dots, Y_{n-1} \in T_g S$ and denote $Y = (Y_1, \dots, Y_{n-1})$. Then, for $s \in \C$ with $\mathrm{Re}(s) \gg 0$, 
	\[
	s \mapstoo \sum_{\lambda \in L} \eta(\lambda, s)_{g}(Y)
	\]
	converges and admits a meromorphic continuation to $\C$ which is regular at $s = 0$. 
\end{lemma}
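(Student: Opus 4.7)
The plan is to reduce the statement to standard analytic continuation results for Epstein-type zeta functions, using the explicit formula for $\eta(v,s)$ recalled in \eqref{eq: expression eta(v,s)}. Fix $g\in S$ and tangent vectors $Y=(Y_1,\dots,Y_{n-1})$. The $\lambda=0$ contribution to the sum vanishes for $\mathrm{Re}(s)>0$, since both numerator and denominator of the formula vanish to positive order at the origin, so summing over $\lambda \in L$ is equivalent to summing over $\lambda\in L\setminus\{0\}$. Evaluating the formula at these data yields a finite sum indexed by proper even subsets $I\subsetneq\{1,\dots,n\}$ of terms
\[
c_I(g,Y)\cdot \Gamma\!\left(\tfrac{|I'|+s}{2}\right)\cdot Z_I(s),\qquad Z_I(s)\coloneq\sum_{\lambda\in L\setminus\{0\}}\frac{P_I(\lambda;g,Y)}{|h^{-1}(g)\lambda|^{|I'|+s}},
\]
where $P_I$ is a product of $|I'|$ linear forms in $\lambda$, hence homogeneous of degree $|I'|$. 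For $\mathrm{Re}(s)>n$ each $Z_I$ converges absolutely (numerator of order $|\lambda|^{|I'|}$, denominator of order $|\lambda|^{|I'|+\mathrm{Re}(s)}$), proving convergence of the full sum. Since $I$ is proper, $|I'|\geq 1$, so the gamma factor is holomorphic at $s=0$, and it suffices to extend each $Z_I(s)$ meromorphically to $\C$ and verify regularity at $s=0$.

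Changing variables to $\mu=h^{-1}(g)\lambda$, the sum becomes
\[
Z_I(s)=\sum_{\mu\in L'\setminus\{0\}}\frac{\tilde P_I(\mu)}{|\mu|^{|I'|+s}},\qquad L'\coloneq h^{-1}(g)L,\quad \tilde P_I(\mu)\coloneq P_I(h(g)\mu;g,Y),
\]
and $\tilde P_I$ remains homogeneous of degree $|I'|$. I would then apply the classical decomposition of a homogeneous polynomial into spherical harmonics,
\[
\tilde P_I(\mu)=\sum_{0\leq k\leq|I'|/2}|\mu|^{2k}H_{|I'|-2k}(\mu),
\]
where each $H_d$ is a harmonic homogeneous polynomial of degree $d$ depending smoothly on $g$ and $Y$. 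This writes $Z_I(s)$ as a finite sum of generalized Epstein zeta functions $Z_I^{(k)}(s)=\sum_{\mu\in L'\setminus\{0\}}H_{|I'|-2k}(\mu)|\mu|^{-(|I'|+s-2k)}$, reducing the problem to the analytic continuation of each $Z_I^{(k)}$.

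For each $k$, I would run the classical Mellin-transform plus Poisson-summation argument: using $|\mu|^{-(|I'|+s-2k)}=\Gamma\!\bigl(\tfrac{|I'|+s-2k}{2}\bigr)^{-1}\int_0^\infty e^{-t|\mu|^2}t^{(|I'|+s-2k)/2-1}dt$ and exchanging sum and integral for $\mathrm{Re}(s)$ large, split $\int_0^\infty=\int_0^1+\int_1^\infty$. The tail integral is entire in $s$ by exponential decay of the theta series as $t\to\infty$. To the head, apply Poisson summation to $\sum_\mu H_d(\mu)e^{-t|\mu|^2}$, where $d=|I'|-2k$. When $d\geq 1$, Hecke's identity for the Fourier transform of a spherical harmonic times a Gaussian shows that the Poisson-dual theta series has vanishing constant term (because $H_d(0)=0$) and decays exponentially as $t\to 0^+$; hence the full Mellin integral, and so $Z_I^{(k)}(s)$, extends to an entire function of $s$. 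When $d=0$, which occurs only when $|I'|$ is even and $k=|I'|/2$, $H_0$ is a constant and $Z_I^{(k)}(s)$ is a scalar multiple of the classical Epstein zeta function of $L'$, meromorphic with its only pole, a simple one, at $s=n$, hence regular at $s=0$.

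Assembling across $k$ and $I$ gives the lemma. The main obstacle is bookkeeping rather than analytic depth: one must identify the polynomial $P_I$ carefully from the explicit expression for $\eta$, confirm that it is homogeneous of degree exactly $|I'|$, and track how the spherical-harmonic decomposition interacts with the gamma factor $\Gamma((|I'|+s)/2)$. Once this reduction is in place, the Poisson-summation plus Hecke-identity machinery is entirely classical.
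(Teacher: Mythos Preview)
Your argument is correct, and the underlying analytic engine (Mellin transform plus Poisson summation, with the only possible pole landing at $s=n$) is the same as the paper's. The route, however, is genuinely different. The paper does not unpack the explicit formula \eqref{eq: expression eta(v,s)} term by term; instead it observes that $f(v)\coloneq (v^\ast\psi)_g(Y)$ is a Schwartz function on $\R^n$ with $f(0)=0$ (because $0^\ast\psi=0$), rewrites $\sum_{\lambda\in L}\eta(\lambda,s)_g(Y)=\int_0^{+\infty}\sum_{\lambda\in L}f(t\lambda)\,t^s\,\tfrac{dt}{t}$, splits the integral at $t=1$, and applies Poisson summation once to the piece near $t=0$. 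This immediately gives a meromorphic continuation with the only possible pole at $s=n$, coming from the constant term $\hat f(0)$ on the dual side. Your approach instead expands $\eta(\lambda,s)_g(Y)$ into a finite sum of Epstein-type series with homogeneous polynomial numerators, decomposes each numerator into spherical harmonics, and invokes Hecke's identity to handle each harmonic piece separately. This buys you more: you see exactly which pieces are entire (those with a nonconstant harmonic) and which carry the potential pole at $s=n$ (the constant harmonic in even $|I'|$). The cost is the extra bookkeeping you flag at the end. The paper's version is shorter precisely because it never needs the spherical-harmonic decomposition or Hecke's identity: the Schwartz property of $\psi$ and the single vanishing $f(0)=0$ already suffice.
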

\begin{proof}
	It follows from the explicit expression of $\eta(v,s)$ given in \eqref{eq: expression eta(v,s)} that the sum 
    \[
    \sum_{\lambda \in L} \eta(\lambda, s)_{g}(Y)
    \]
    is absolutely convergent for $\mathrm{Re}(s) \gg 0$. From there, we deduce that if $\mathrm{Re}(s) \gg 0$, we have the equality 
	\[
	\sum_{\lambda \in L} \eta(\lambda, s)_{g}(Y) = \int_0^{+\infty} \sum_{\lambda \in L} \left((t\lambda)^\ast \psi\right)_{g}(Y) t^s \frac{dt}{t},
	\]
    as we can exchange the integral with the sum.  
	Thus, it is enough to prove that the right-hand side has a meromorphic continuation regular at $s = 0$. For that, define the function 
	\[
	f\colon \R^n \too \R, \ v\mapstoo (v^\ast \psi)_{g}(Y).
	\]
	Since $\psi$ is a differential form which has rapid decay along the fibers, it follows that $f$ is a Schwartz function. Hence, we need to prove that 
	\begin{equation}\label{eq: function we want continuation}
	\int_{0}^{+\infty} \sum_{\lambda \in L} f(t\lambda) t^s \frac{dt}{t}
	\end{equation}
	has a meromorphic continuation to $s \in \C$ which is regular at $s = 0$. We split the integral as a sum of integrals from $1$ to $+\infty$ and from $0$ to $1$. Observe that $f(0) = 0$, as $0^\ast \psi = 0$. The rapid decay of $f$, together with the fact that $f(0) = 0$, implies that the integral from $1$ to $+\infty$ converges absolutely and defines an entire function on $s$. To study the integral from $0$ to $1$, we use Poisson summation formula
	\[
	\int_0^1 \sum_{\lambda \in L} f(t\lambda) t^s \frac{dt}{t} = \int_{0}^1 \sum_{\lambda \in L^{\vee}} \hat{f}(\lambda/t) t^{s - n}\frac{dt}{t},
	\]  
	where $\hat{f}$ denotes the Fourier transform of $f$ and $L^{\vee}$ the dual lattice of $L$. For $\mathrm{Re}(s) \gg n$, the previous integral can be written as 
	\[
	\frac{\hat{f}(0)}{s-n} + \int_1^{+\infty} \sum_{\lambda \in L^\vee - \{ 0 \}} \hat{f}(\lambda u) u^{n-s} \frac{du}{u}.
	\]
	Since $\hat{f}$ is a Schwartz function, the integral converges for all values of $s \in \C$ and defines an entire function. Thus, this expression gives a meromorphic continuation of the integral from $0$ to $1$ regular everywhere except maybe at $s = n$. The result follows from there.	
\end{proof}

Finally, we are ready to prove the following expression regarding pullbacks of the Eisenstein class by torsion sections, which will be useful for the next section.

\begin{proposition}\label{prop: average of pullbacks cEpsi by vectors of exact order p is 0}
	For $v \in \Q^n - c^{-1}\Z^n$, view  $v^\ast {}_c E_\psi$ as a differential form on $\sX$. Then, 
	\[
	\sum_{v \in \frac{1}{p}\Z^n / \Z^n - \{ 0\}} v^\ast {}_c E_\psi = 0.
	\]
\end{proposition}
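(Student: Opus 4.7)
The plan is to expand each pullback via \eqref{eq: v Epsi in terms of eta}, interchange the order of summation, and collapse the resulting lattice sums using the dilation behavior of $\eta(\cdot,s)$; the identity will then follow from an elementary cancellation of a combinatorial factor at $s = 0$.

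First, using \eqref{eq: v Epsi in terms of eta}, I would rewrite
\[
\sum_{v \in \frac{1}{p}\Z^n/\Z^n - \{0\}} v^\ast {}_c E_\psi = \lim_{s \to 0}\!\left(\sum_v \sum_{\lambda \in v + c^{-1}\Z^n}\!\eta(\lambda,s) - c^n \sum_v \sum_{\lambda \in v + \Z^n}\!\eta(\lambda,s)\right),
\]
all quantities being understood pointwise on $\sX$ so that Lemma \ref{lemma: pullback at 0} applies. For $\mathrm{Re}(s) \gg 0$ the double sums converge absolutely and may be reindexed. Since $(p,c)=1$, a short Bezout argument shows that as $v$ runs over $\frac{1}{p}\Z^n/\Z^n$, the cosets $v + c^{-1}\Z^n$ partition $\frac{1}{pc}\Z^n$ and the cosets $v+\Z^n$ partition $\frac{1}{p}\Z^n$. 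Excluding $v=0$ removes exactly the cosets $c^{-1}\Z^n$ and $\Z^n$, respectively. Hence, using $\eta(0,s)=0$ to harmlessly include the missing point, the reindexed expression becomes
\[
\lim_{s\to 0}\!\left(\sum_{\lambda\in \frac{1}{pc}\Z^n}\!\eta(\lambda,s) - \sum_{\lambda\in c^{-1}\Z^n}\!\eta(\lambda,s) - c^n\!\sum_{\lambda\in\frac{1}{p}\Z^n}\!\eta(\lambda,s) + c^n\!\sum_{\lambda\in\Z^n}\!\eta(\lambda,s)\right).
\]

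Second, from the integral defining $\eta(v,s)=\int_0^\infty (tv)^\ast\psi\,t^{s}\tfrac{dt}{t}$, the substitution $u=\alpha t$ gives the scaling $\eta(\alpha\mu,s) = \alpha^{-s}\eta(\mu,s)$ for every $\alpha>0$ (initially for $\mathrm{Re}(s)\gg 0$, hence everywhere by analytic continuation). Writing $Z(s)\coloneq \sum_{\lambda\in\Z^n}\eta(\lambda,s)$, this yields
\[
\sum_{\lambda\in \frac{1}{pc}\Z^n}\!\eta(\lambda,s) = (pc)^{s}Z(s),\quad \sum_{\lambda\in c^{-1}\Z^n}\!\eta(\lambda,s) = c^{s}Z(s),\quad \sum_{\lambda\in \frac{1}{p}\Z^n}\!\eta(\lambda,s) = p^{s}Z(s).
\]
Collecting terms, the whole expression equals
\[
\lim_{s\to 0} Z(s)\,\bigl[(pc)^{s} - c^{s} - c^{n}p^{s} + c^{n}\bigr].
\]

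Third, I would conclude by observing that the bracket factors as $(c^s-1)(p^s - c^n)$ (not needed, but makes the vanishing transparent) and, in particular, vanishes at $s=0$, giving $1-1-c^n+c^n=0$. By Lemma \ref{lemma: pullback at 0} applied to $L=\Z^n$, the form $Z(s)$ is regular at $s=0$, so the product has limit $0$, proving the proposition.

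The only step requiring any care is the reindexing/interchange of summation, which is legitimate because at $\mathrm{Re}(s)\gg 0$ the sums over lattices converge absolutely (thanks to the Gaussian decay of $\psi$), while everything else is bookkeeping plus the scaling identity; I do not foresee a substantive obstacle.
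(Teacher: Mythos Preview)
Your proof is correct and follows essentially the same approach as the paper: both use the scaling identity $\eta(\alpha v,s)=\alpha^{-s}\eta(v,s)$ to reduce everything to a factor vanishing at $s=0$ times a lattice sum that is regular at $s=0$ by Lemma~\ref{lemma: pullback at 0}, the only cosmetic difference being that the paper treats the $c^{-1}\Z^n$ and $\Z^n$ pieces separately rather than combining them into a single bracket. One small slip: the bracket factors as $(c^s-c^n)(p^s-1)$, not $(c^s-1)(p^s-c^n)$, though as you note this is inessential since you verify the vanishing at $s=0$ directly.
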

\begin{proof}
	By Proposition \ref{prop: meromorphic continuation of sum of eta}, and more precisely \eqref{eq: v Epsi in terms of eta}, we can write the sum of the proposition as the evaluation at $s = 0$ of the following expression
	\[
	\sum_{v \in \frac{1}{p}\Z^n/\Z^n - \{0 \} } \sum_{\lambda \in v + c^{-1}\Z^n} \eta(\lambda, s) - c^n \sum_{v \in \frac{1}{p}\Z^n/\Z^n - \{0 \}} \sum_{\lambda \in v + \Z^n} \eta(\lambda, s).
	\]
	We will verify that each of the two terms vanishes when evaluated at $s = 0$. Since the proof is analogous in the two cases, we will show that  
	\[
	\lim_{s \to 0}\sum_{v \in \frac{1}{p}\Z^n/\Z^n - \{0 \}} \sum_{\lambda \in v + \Z^n} \eta(\lambda, s) = 0.
	\]

    Let $g \in S$, consider tangent vectors $Y_1, \dots, Y_{n-1} \in T_g S$, and let $Y = (Y_1, \dots, Y_{n-1})$. Then, it is enough to see
    \[
    \lim_{s \to 0} \sum_{v \in \frac{1}{p}\Z^n/\Z^n - \{0 \}} \sum_{\lambda \in v + \Z^n} \eta(\lambda, s)_g(Y) = 0.
    \]
    Then, for $s \in \C$ with $\mathrm{Re}(s) \gg 0$
	\[
    \begin{split}
	\sum_{v \in \frac{1}{p}\Z^n/\Z^n - \{0 \}} \sum_{\lambda \in v + \Z^n} \eta(\lambda, s)_g(Y)  = & \sum_{v\in \frac{1}{p}\Z^n} \eta(\lambda, s)_g(Y) - \sum_{\lambda \in \Z^n} \eta(\lambda,s)_g(Y),
    \end{split}
	\]
	where the right-hand side consists of the difference of two functions which admit a meromorphic continuation to all $s \in \C$ and are regular at $s = 0$ by Lemma \ref{lemma: pullback at 0}. The previous expression is equal to
	\[
	\sum_{\lambda \in \Z^n} \eta(\lambda/p, s)_g(Y) - \sum_{\lambda \in \Z^n} \eta(\lambda, s)_g(Y) = p^s\sum_{\lambda \in \Z^n} \eta(\lambda, s)_g(Y) - \sum_{\lambda \in \Z^n} \eta(\lambda, s)_g(Y).
	\]
	Here we used that $\eta(\lambda/p, s) = p^s \eta(\lambda, s)$, which can be verified from the definition of $\eta(v,s)$.
	Since the meromorphic continuation of  $\sum_{\lambda \in \Z^n} \eta(\lambda, s)_g(Y)$ is regular at $s = 0$ by Lemma \ref{lemma: pullback at 0}, the evaluation at $s = 0$ of the expression above is zero.
\end{proof}

\section{The Eisenstein group cohomology class}\label{sec: Topological construction of the Eisenstein group cohomology class}

In this section, we package the pullbacks of the Eisenstein class by $p$-power torsion sections in a group cohomology class for $\Gamma \coloneq \SL_n(\Z)$ valued in measures on $\X\coloneq \Z_p^n - p\Z_p^n$. Then, we discuss the process of lifting this class to a class valued in total mass zero measures on $\X$, which will be an important property for defining rigid classes and $p$-adic invariants attached to totally real fields.

\subsection{From singular to group cohomology} 
Let $r \geq 1$ and let
\[
m \coloneq \mathrm{lcm}(c, [\Gamma: \Gamma(q)] , 2).
\]
Since $\Gamma_r(q)$ is normal in $\Gamma_r$, there are actions of $\Gamma_r$ on the singular cohomology of $\Gamma_r(q) \backslash \sX$, described above Lemma \ref{lemma: v_r*z_r is Gamma_r equivariant}, and on the group cohomology of $\Gamma_r(q)$, via conjugation. These actions are compatible with the natural isomorphism from singular to group cohomology, giving 
\begin{equation}\label{eq: iso singular coho with group coho + restr group coho}
H^{n-1}(\Gamma_r(q) \backslash \sX, \Z[1/m])^{\Gamma_r} \xlongrightarrow{\sim} H^{n-1}(\Gamma_r(q), \Z[1/m])^{\Gamma_r} \xlongrightarrow{\sim} H^{n-1}(\Gamma_r, \Z[1/m]).
\end{equation}
The first map is an isomorphism because $\Gamma_r(q)$ acts freely on $\sX$, as it is torsion-free. The second one is given by the corestriction map multiplied by $[\Gamma_r : \Gamma_r(q)]^{-1}$, which belongs to $\Z[1/m]$ as $[\Gamma_r : \Gamma_r(q)]$ divides $[\Gamma: \Gamma(q)]$. The inverse of the second map is restriction.

For every $r \geq 1$, in Section \ref{subsec: topo construction of Eis class} we constructed the classes
\[
v_r^\ast z_r \in H^{n-1}(\Gamma_r(q) \backslash \sX, \Z[1/m])^{\Gamma_r, -}.
\]
and proved they are invariant under the action of $\Gamma_r$ and belong to the $-1$-eigenspace for the action induced by $w \coloneq \mathrm{diag}(1, -1, 1, \dots, 1) \in \GL_n(\Z)$ in Lemma \ref{lemma: v_r*z_r is Gamma_r equivariant} and Lemma \ref{lemma: action of w on pullback of Eis class}.
\begin{definition}\label{def: definition of c_r}
	For $r \geq 1$, let $c_r \in H^{n-1}(\Gamma_r, \Z[1/m])$ be the group cohomology class corresponding to $v_r^\ast z_r$ via the isomorphisms of \eqref{eq: iso singular coho with group coho + restr group coho}. 
\end{definition}
Similarly as above, $w \in \GL_n(\Z)$ induces an action on group cohomology for $\Gamma_r(q)$ (as well as for $\Gamma_r$) via conjugation. This is compatible with the involution in singular cohomology induced by $w$ considered in Section \ref{subsec: Eisenstein class of universal family of tori with level structure} via \eqref{eq: iso singular coho with group coho + restr group coho}. It follows that $c_r \in H^{n-1}(\Gamma_r, \Z[1/m])^{-}$, where here and form now on the superindex $-$ indicates the $-1$-eigenspace for $w$. 

The trace compatibility of the singular cohomology classes $(v_r^\ast z_r)_r$ leads to the compatibility of the group cohomology classes $(c_r)_r$ with respect to corestriction maps.
\begin{proposition}\label{prop: trace compatibility c_r}
	For $r \geq 1$ let $\mathrm{cor}\colon H^{n-1}(\Gamma_{r+1}, \Z[1/m]) \to H^{n-1}(\Gamma_r, \Z[1/m])$ be the corestriction map. Then, $\mathrm{cor}(c_{r+1}) = c_r$.
\end{proposition}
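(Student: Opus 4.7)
The plan is to transport the trace compatibility of Proposition \ref{prop: pullback of Eis class trace compatible} from singular to group cohomology via \eqref{eq: iso singular coho with group coho + restr group coho}, and then check that the normalization factors appearing in the definition of $c_r$ cancel. Unpacking Definition \ref{def: definition of c_r}, one has
\[
c_r = [\Gamma_r : \Gamma_r(q)]^{-1}\,\mathrm{cor}_{\Gamma_r(q)}^{\Gamma_r}(v_r^\ast z_r),
\]
where I use that $\sX$ is contractible and $\Gamma_r(q)$ acts freely to identify the singular cohomology of $\Gamma_r(q)\backslash\sX$ with the group cohomology of $\Gamma_r(q)$; the analogous formula holds at level $r+1$.

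The first key step is the standard identification of singular pushforward along a finite cover of aspherical spaces with group-cohomological corestriction. Applied to the $p^n$-sheeted covering $\mathrm{pr}\colon \Gamma_{r+1}(q)\backslash\sX \to \Gamma_r(q)\backslash\sX$, this identifies $\mathrm{pr}_\ast$ with $\mathrm{cor}_{\Gamma_{r+1}(q)}^{\Gamma_r(q)}$ after the first isomorphism in \eqref{eq: iso singular coho with group coho + restr group coho}. Combined with Proposition \ref{prop: pullback of Eis class trace compatible}, it gives
\[
\mathrm{cor}_{\Gamma_{r+1}(q)}^{\Gamma_r(q)}(v_{r+1}^\ast z_{r+1}) = v_r^\ast z_r
\]
in $H^{n-1}(\Gamma_r(q), \Z[1/m])$.

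The second step is a corestriction computation using transitivity along the two chains $\Gamma_{r+1}(q) \subset \Gamma_{r+1} \subset \Gamma_r$ and $\Gamma_{r+1}(q) \subset \Gamma_r(q) \subset \Gamma_r$:
\begin{align*}
\mathrm{cor}_{\Gamma_{r+1}}^{\Gamma_r}(c_{r+1})
&= [\Gamma_{r+1}:\Gamma_{r+1}(q)]^{-1}\,\mathrm{cor}_{\Gamma_{r+1}(q)}^{\Gamma_r}(v_{r+1}^\ast z_{r+1}) \\
&= [\Gamma_{r+1}:\Gamma_{r+1}(q)]^{-1}\,\mathrm{cor}_{\Gamma_r(q)}^{\Gamma_r}\bigl(\mathrm{cor}_{\Gamma_{r+1}(q)}^{\Gamma_r(q)}(v_{r+1}^\ast z_{r+1})\bigr) \\
&= \frac{[\Gamma_r:\Gamma_r(q)]}{[\Gamma_{r+1}:\Gamma_{r+1}(q)]}\,c_r.
\end{align*}

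Finally I would verify that the ratio of indices equals $1$. By multiplicativity this reduces to $[\Gamma_r:\Gamma_{r+1}] = [\Gamma_r(q):\Gamma_{r+1}(q)]$; the right-hand side equals $p^n$ as noted in the proof of Proposition \ref{prop: pullback of Eis class trace compatible}, and the left-hand side also equals $p^n$ since $\Gamma_r$ acts transitively on the $p^n$ preimages of $v_r$ under multiplication by $p$ in $\Q^n/\Z^n$: any $\gamma \in \Gamma$ sending the primitive vector $v_{r+1}$ to another primitive preimage $u$ of $v_r$ automatically satisfies $\gamma v_r = p\gamma v_{r+1} = pu = v_r$, so lies in $\Gamma_r$. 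The only nontrivial ingredient beyond routine bookkeeping is the identification $\mathrm{pr}_\ast = \mathrm{cor}_{\Gamma_{r+1}(q)}^{\Gamma_r(q)}$ in the first step, so that is where I would take the most care.
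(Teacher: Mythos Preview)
Your proof is correct and follows essentially the same approach as the paper: transport Proposition~\ref{prop: pullback of Eis class trace compatible} to group cohomology via the identification $\mathrm{pr}_\ast = \mathrm{cor}_{\Gamma_{r+1}(q)}^{\Gamma_r(q)}$, then use transitivity of corestriction and the equality $[\Gamma_{r+1}:\Gamma_{r+1}(q)] = [\Gamma_r:\Gamma_r(q)]$. Your explicit verification of the index equality via the transitive $\Gamma_r$-action on the $p^n$ preimages of $v_r$ is a nice detail that the paper simply asserts.
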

\begin{proof}
	Denote by $c_r(q) \in H^{n-1}(\Gamma_{r}(q) , \Z[1/m])^{\Gamma_r}$ the image of $v_r^\ast z_r$ via the first isomorphism in \eqref{eq: iso singular coho with group coho + restr group coho}. Since this isomorphism is compatible with respect to pushforward and corestriction (see \cite[Chapter III, Section 9 (E)]{browncohomology}), it follows from Proposition \ref{prop: pullback of Eis class trace compatible} that if 
	\[
	\mathrm{cor}_q \colon H^{n-1}(\Gamma_{r+1}(q), \Z[1/m]) \too H^{n-1}(\Gamma_r(q), \Z[1/m]),
	\] 
    denotes corestriction in group cohomology, then $\mathrm{cor}_q(c_{r+1}(q)) = c_r(q)$. This implies that 
    \[
    \mathrm{cor}\left([\Gamma_{r+1}:\Gamma_{r+1}(q)] c_{r+1} \right)  = [\Gamma_{r}:\Gamma_{r}(q)] c_{r}.
    \]
    which leads to the desired result as $[\Gamma_{r+1}: \Gamma_{r+1}(q)] = [\Gamma_r : \Gamma_r(q)] \in \Z[1/m]^\times$.
\end{proof}
It is a computation to verify that the corestriction maps are equivariant with respect to the involution $w$. From there, we conclude
\[
(c_r)_r \in \varprojlim_r H^{n-1}(\Gamma_r, \Z[1/m])^-.
\]
\subsection{Cohomology class with coefficients in $\Z[1/m]$-measures} We first describe the action of $w \in \GL_n(\Z)$ on group cohomology with coefficients. For every $r \geq 0$ (including $\Gamma_0 = \Gamma$), conjugation by $w$ induces the automorphism $\alpha\colon \Gamma_r \to \Gamma_r$, $\alpha(\gamma) = w \gamma w$. Then, if $M$ is a $\GL_n(\Z)$-module $M$, we can consider the morphism of complexes of group cochains
\[
C^\bullet(\Gamma_r, M) \too C^\bullet(\Gamma_r, M), \ c \mapstoo w \circ c \circ \alpha^r, 
\]
which induces an involution $w$ on $H^i(\Gamma_r, M)$. We will denote by $H^i(\Gamma_r, M)^{-}$ the $(-1)$-eigenspace for $w$.

For $r \geq 1$, let $\X_r \coloneq (\Z/p^r\Z)^n - (p\Z/p^r\Z)^n$ and if $A$ is an abelian group, denote 
\[
\D(\X_r, A) \coloneq \mathrm{Maps}(\X_r, A).
\]
It admits a left action of $\GL_n(\Z)$ given by $(g \cdot \lambda)(x) = \lambda(g^{-1}x)$, for $g \in \GL_n(\Z)$, $\lambda \in \D(\X_r, A)$, and $x \in \X_r$. Let $x_r \coloneq (1, 0, \dots, 0)^t \in \X_r$. Since the stabilizer of $x_r$ in $\Gamma$ is $\Gamma_r$, we deduce that we have a $\Gamma$-equivariant isomorphism
\[
\coInd_{\Gamma_r}^{\Gamma}(A) \xlongrightarrow{\sim} \D(\X_r, A ), \ f \longmapsto \lambda_f,
\]
where $\lambda_f(x) = f(\gamma)$ for $\gamma \in \Gamma$ such that $\gamma x_r = x$. In particular, Shapiro's lemma induces an isomorphism
\begin{equation}\label{eq: iso from Shapiro's lemma}
H^i(\Gamma, \D(\X_r, \Z[1/m]) \xlongrightarrow{\sim} H^i(\Gamma_r, \Z[1/m]), \ [\lambda] \mapstoo [c(\lambda)]
\end{equation}
where $c(\lambda)(\gamma_0, \dots, \gamma_{i}) = \lambda(\gamma_0, \dots, \gamma_i)(x_r)$. Moreover, the isomorphism is equivariant with respect to the action of $w$. 
\begin{definition}\label{def: mu_r}
    For every $r \geq 1$, define $\mu_r \in H^{n-1}(\Gamma, \D(\X_r, \Z[1/m]))^-$ to be the image of $c_r$ by the inverse of the isomorphism induced by Shapiro's lemma given in \eqref{eq: iso from Shapiro's lemma}.
\end{definition}
Consider the $\GL_n(\Z)$-equivariant maps
\begin{equation}\label{eq: transition map ur}
u_{r+1} \colon \D(\X_{r+1}, A) \too \D(\X_{r} , A), \ u_{r+1}(f)(x) = \sum_{\substack{ x' \in \X_{r+1} \\ x' \equiv x \mod p^{r} }}  f(x').
\end{equation}
It follows from the compatibility of the classes $(c_r)_r \in \varprojlim_r H^{n-1}(\Gamma_r , \Z[1/m])^-$ proven in Proposition \ref{prop: trace compatibility c_r}, that we have a compatible system 
\[
(\mu_r)_r \in \varprojlim_r H^{n-1}(\Gamma, \D(\X_r, \Z[1/m]))^-,
\]
where the transition maps are given by $u_r$ for every $r \geq 2$. This statement can be proven using Chapter III, Section 9 (A) of \cite{browncohomology}, which leads to describing the corestriction maps in terms of the map given by Shapiro's lemma and $u_r$.

Denote by $\D(\X, A)$ the space of $A$-valued distributions on $\X$. An element of $\lambda \in \D(\X, A)$ is determined by the values $\lambda(U)$ of the characteristic functions of compact open sets $U$. In particular, it is determined by the images of the following compact open sets. For $x \in \X_r$, choose any lift of it in $\X$, also denoted by $x$, and let 
\begin{equation}\label{eq: def of U_x/p^r}
U_{x/p^r} \coloneq x + p^r \Z_p^n \subset \X. 
\end{equation}
Endow $\D(\X, A)$ with a left action of $\GL_n(\Z)$ given by $(g \cdot \lambda)(U) = \lambda(g^{-1}U)$ and define 
\[
 \D(\X, A) \ontoo \D(\X_r, A), \ \lambda \mapstoo \lambda_r,
\]
where for $x \in \X_r$, $\lambda_r(x) = \lambda(U_{x/p^r})$. This discussion implies the following lemma.
\begin{lemma}\label{lemma: inverse limit leads to distributions on X}
	Let $A$ be an abelian group. The map 
	\[
	\D(\X, A) \xlongrightarrow{\sim} \varprojlim_{r} \D(\X_r, A), \ \lambda \mapstoo (\lambda_r)_r,
	\]
	is a $\Gamma$-equivariant isomorphism.
\end{lemma}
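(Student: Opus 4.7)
The plan is to construct an explicit inverse by extending a compatible system of functions on $\X_r$ to a finitely additive function on the Boolean algebra of compact open subsets of $\X$, and then to check $\Gamma$-equivariance at the level of basic clopens.

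First I would verify that the map is well-defined, i.e.\ that $(\lambda_r)_r$ really lies in the inverse limit. Because $\X$ is compact and totally disconnected, the sets $U_{x/p^r}$ of \eqref{eq: def of U_x/p^r} form a basis of the topology, and for each $r$ the collection $\{U_{x/p^r}\}_{x \in \X_r}$ is a finite partition of $\X$. Moreover
\[
U_{x/p^r} \;=\; \bigsqcup_{\substack{x' \in \X_{r+1} \\ x' \equiv x \bmod p^r}} U_{x'/p^{r+1}},
\]
so finite additivity of $\lambda$ gives $u_{r+1}(\lambda_{r+1})(x) = \sum_{x' \equiv x} \lambda(U_{x'/p^{r+1}}) = \lambda(U_{x/p^r}) = \lambda_r(x)$, matching the transition map \eqref{eq: transition map ur}. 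For injectivity, if $\lambda_r = 0$ for every $r$ then $\lambda$ vanishes on every basic clopen $U_{x/p^r}$; since any compact open subset of $\X$ is a finite disjoint union of such basic clopens, finite additivity forces $\lambda = 0$.

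For surjectivity, given a compatible system $(\lambda_r)_r$ I would define $\lambda$ on a compact open $U \subset \X$ as follows: any such $U$ is a finite disjoint union $U = \bigsqcup_j U_{x_j/p^{r_j}}$, and by refining using the displayed decomposition above one may assume a single level $r = \max_j r_j$, so $U = \bigsqcup_j U_{y_j/p^r}$ for some $y_j \in \X_r$. Set $\lambda(U) \defeq \sum_j \lambda_r(y_j)$. Independence from the choice of $r$ follows by passing to level $r+1$ and invoking the compatibility $u_{r+1}(\lambda_{r+1}) = \lambda_r$, which is exactly what makes the sum unchanged under refinement. Finite additivity of $\lambda$ is immediate from the definition. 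By construction $\lambda(U_{x/p^r}) = \lambda_r(x)$, so $(\lambda_r)_r$ is indeed the image of $\lambda$.

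Finally, for $\Gamma$-equivariance (in fact $\GL_n(\Z)$-equivariance), note that for $g \in \GL_n(\Z) \subset \GL_n(\Z_p)$ one has $g \Z_p^n = \Z_p^n$, hence
\[
g \cdot U_{x/p^r} \;=\; g(x + p^r \Z_p^n) \;=\; gx + p^r \Z_p^n \;=\; U_{gx/p^r}.
\]
Therefore
\[
(g \cdot \lambda)_r(x) = (g \cdot \lambda)(U_{x/p^r}) = \lambda(g^{-1} U_{x/p^r}) = \lambda(U_{g^{-1}x/p^r}) = \lambda_r(g^{-1}x) = (g \cdot \lambda_r)(x),
\]
which is exactly the $\GL_n(\Z)$-action on $\D(\X_r, A)$. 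The main (and only real) obstacle is the well-definedness of $\lambda(U)$ in the surjectivity step, i.e.\ checking that the naive sum does not depend on the chosen partition; this is the place where the compatibility of $(\lambda_r)_r$ under the maps $u_{r+1}$ is used in an essential way, and once it is in hand the rest of the argument is bookkeeping.
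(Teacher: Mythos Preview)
Your argument is correct and is precisely a careful spelling-out of what the paper leaves implicit: the paper does not give a proof of this lemma but simply states that it follows from the preceding discussion (that a distribution is determined by its values on the basic clopens $U_{x/p^r}$, together with the definition of the transition maps $u_r$). Your explicit construction of the inverse and verification of $\GL_n(\Z)$-equivariance on basic clopens is exactly the intended argument.
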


We will now combine the compatible system of classes $(\mu_r)_r$ to a group cohomology class valued on $\D(\X, \Z[1/m])$. First, we note the following fact regarding the cohomology of $\Gamma = \Gamma_0$ and the congruence subgroups $\Gamma_r$ for $r \geq 1$ in the stable range, which is a consequence of the work of Li and Sun \cite{LS}.

\begin{lemma}\label{lemma: H^i(Gamma_r , Z[1/m])^- finite for i <= n-2}
    For every $r \geq 0$ and $0 \leq i \leq n - 2$, the group $H^i(\Gamma_r, \Z[1/m])^{-}$ is finite. 
\end{lemma}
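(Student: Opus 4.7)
The plan is to reduce this finiteness statement to the vanishing of a rational cohomology group in the stable range, and then to invoke Li--Sun's results.

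First, I would note that since $p$ is odd, $\Gamma(p^r)$ is torsion-free, and it has finite index in $\Gamma_r$; this shows $\Gamma_r$ is of type $\mathrm{FP}_\infty$ over $\Z$. Consequently $H^i(\Gamma_r, \Z[1/m])$ is a finitely generated $\Z[1/m]$-module for every $i$. Because $2 \mid m$, we have $1/2 \in \Z[1/m]$, so the order-two involution $w$ yields a direct sum decomposition
\[
H^i(\Gamma_r, \Z[1/m]) = H^i(\Gamma_r, \Z[1/m])^+ \oplus H^i(\Gamma_r, \Z[1/m])^-,
\]
and the $-$ summand is itself a finitely generated $\Z[1/m]$-module. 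A finitely generated $\Z[1/m]$-module is finite if and only if its $\Z[1/m]$-rank vanishes, so it suffices to show that $H^i(\Gamma_r, \Q)^- = 0$ for $0 \leq i \leq n-2$.

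Next, I would reinterpret the $-$ eigenspace as the cohomology of an overgroup with sign-twisted coefficients. Let $\widetilde{\Gamma}_r \coloneq \Gamma_r \cup w \Gamma_r \subset \GL_n(\Z)$; it is an index-two extension of $\Gamma_r$ coinciding with the stabilizer of $v_r$ in $\GL_n(\Z)$. Since $\lvert \widetilde{\Gamma}_r / \Gamma_r \rvert = 2$ is invertible in $\Q$, the Hochschild--Serre spectral sequence degenerates to give a canonical isomorphism
\[
H^i(\Gamma_r, \Q)^- \;\simeq\; H^i(\widetilde{\Gamma}_r, \mathrm{sgn}_\Q),
\]
where $\mathrm{sgn}$ is the determinant character restricted to $\widetilde{\Gamma}_r$.

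Finally, the required vanishing $H^i(\widetilde{\Gamma}_r, \mathrm{sgn}_\Q) = 0$ for $0 \leq i \leq n-2$ would follow from the stable-range results of Li and Sun \cite{LS} on the low-degree cohomology of congruence subgroups of $\GL_n(\Z)$ with twisted coefficients. The main obstacle I would expect is verifying that the precise form of the congruence subgroup $\widetilde{\Gamma}_r$ and the sign coefficient system fit the hypotheses of \cite{LS}; as a back-up, one can fall back on Borel's stability theorem, which shows that $H^i(\SL_n(\Z), \Q)$ in the stable range is an exterior algebra on classes that also appear in $H^i(\GL_n(\Z), \Q) = H^i(\SL_n(\Z), \Q)^+$, so that $H^i(\SL_n(\Z), \Q)^- = 0$ there, and then transfer the vanishing to the congruence subgroup $\Gamma_r$ using Shapiro's lemma together with the known stability of Eisenstein cohomology in low degrees.
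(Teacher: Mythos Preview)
Your approach is essentially the paper's: reduce to finite generation plus rational vanishing, identify $H^i(\Gamma_r,\Q)^-$ with $H^i(\widetilde{\Gamma}_r,\Q(\det))$, and invoke \cite{LS}. The one point you flag as an obstacle---whether $\widetilde{\Gamma}_r$ with the sign twist fits the hypotheses of \cite{LS}---is exactly what the paper resolves, and the trick is simple: apply Shapiro once more to write $H^i(\widetilde{\Gamma}_r,\R(\det)) \simeq H^i(\GL_n(\Z), I)$ with $I = \coInd_{\widetilde{\Gamma}_r}^{\GL_n(\Z)} \R(\det)$, observe that $I^{\GL_n(\Z)} = H^0(\widetilde{\Gamma}_r,\R(\det)) = 0$, and then Example~1.10 of \cite{LS} applies directly to $\GL_n(\Z)$ with this coefficient module in degrees $i \le n-2$. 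For the finite-generation step the paper cites Borel--Serre (type (WFL)) rather than $\mathrm{FP}_\infty$, but this is cosmetic.
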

\begin{proof}
    Denote by $\tilde{\Gamma}_r$ the stabilizer of $v_r \in \Q^n/\Z^n$ in $\GL_n(\Z)$. Then, Shapiro's lemma implies the following isomorphisms.
    \[
    H^i(\Gamma_r , \R)^- \simeq H^i(\tilde{\Gamma}_r, \R(\det)) \simeq H^i(\GL_n(\Z), I), 
    \]
    where $I \coloneq \coInd_{\tilde{\Gamma}_r}^{\GL_n(\Z)}(\R(\det))$. Since $I^{\GL_n(\Z)} = 0$, again by Shapiro's lemma, it follows from Example 1.10 of \cite{LS} that 
    \[
    H^i(\Gamma_r, \R)^-= 0.
    \]
    It is now an application of the universal coefficient theorem that $H^i(\Gamma_r, \Z[1/m])^-$ is torsion. 

    By \cite[Theorem 11.4]{BorelSerre}, the group $\Gamma_r$ is of type (WFL). In particular, it is of type (VFL). By the Remark in Page 101 of Section 1.8 of \cite{SerreCohomologiedesgroupes}, and the universal coefficient theorem, it follows that $H^i(\Gamma_r, \Z[1/m])^-$ is finitely generated over $\Z[1/m]$. Since it is also a torsion group, we deduce that it is finite, as desired. 
\end{proof}

\begin{proposition}\label{prop: class valued on measures in terms of inverse limit of classes}
    For every $0 \leq i \leq n-1$, the map $\lambda \mapsto (\lambda_r)_r$ of Lemma \ref{lemma: inverse limit leads to distributions on X} induces an isomorphism
    \[
    H^{i}\left(\Gamma, \D(\X, \Z[1/m])\right)^{-} \xlongrightarrow{\sim} \varprojlim_r H^{i}(\Gamma , \D(\X_r, \Z[1/m]))^-.
    \]
\end{proposition}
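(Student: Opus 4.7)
The plan is to interpret the map in question via the Milnor exact sequence associated to an inverse limit of $\Gamma$-modules and to reduce the required vanishing of $\varprojlim^1$ to the finiteness statement of Lemma \ref{lemma: H^i(Gamma_r , Z[1/m])^- finite for i <= n-2}. For $i = 0$ the result is immediate, since taking $\Gamma$-invariants commutes with inverse limits; so we focus on $1 \leq i \leq n-1$.

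First, I would observe that the transition maps $u_{r+1}\colon \D(\X_{r+1},\Z[1/m]) \to \D(\X_r,\Z[1/m])$ defined in \eqref{eq: transition map ur} are surjective $\Gamma$-equivariant maps. Indeed, given any $g \in \D(\X_r,\Z[1/m])$, choose a set-theoretic section $\X_r \to \X_{r+1}$ of the (surjective) reduction map, and define $f \in \D(\X_{r+1},\Z[1/m])$ to be $g$ on the image of this section and zero elsewhere; then $u_{r+1}(f) = g$. Combined with Lemma \ref{lemma: inverse limit leads to distributions on X}, this yields a short exact sequence of $\Gamma$-modules
\[
0 \too \D(\X,\Z[1/m]) \too \prod_r \D(\X_r,\Z[1/m]) \xlongrightarrow{\phi} \prod_r \D(\X_r,\Z[1/m]) \too 0,
\]
where $\phi((\lambda_r)_r) = (\lambda_r - u_{r+1}(\lambda_{r+1}))_r$.

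Next, I would take the long exact sequence in $\Gamma$-cohomology. Since $\Hom_\Gamma(P,-)$ commutes with arbitrary products for any $\Gamma$-module $P$, group cohomology commutes with products, so $H^i(\Gamma,\prod_r \D(\X_r,\Z[1/m])) = \prod_r H^i(\Gamma,\D(\X_r,\Z[1/m]))$. Unwinding, the kernel and cokernel of the map induced by $\phi$ on these products are, by definition, $\varprojlim_r H^i(\Gamma,\D(\X_r,\Z[1/m]))$ and $\varprojlim_r^1 H^{i-1}(\Gamma,\D(\X_r,\Z[1/m]))$ respectively. This produces the Milnor short exact sequence
\[
0 \too \varprojlim{}_r^1 H^{i-1}(\Gamma,\D(\X_r,\Z[1/m])) \too H^i(\Gamma,\D(\X,\Z[1/m])) \too \varprojlim{}_r H^i(\Gamma,\D(\X_r,\Z[1/m])) \too 0.
\]
All maps are equivariant for the involution $w$; since $2 \mid m$, the $w = -1$ eigenspace is a direct summand and the sequence restricts to the $-$ parts.

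It then remains to verify that the left-hand $\varprojlim^1$ vanishes on $(-)$-eigenspaces for $1 \leq i \leq n-1$. By the Shapiro isomorphism \eqref{eq: iso from Shapiro's lemma}, which is $w$-equivariant,
\[
H^{i-1}(\Gamma,\D(\X_r,\Z[1/m]))^- \simeq H^{i-1}(\Gamma_r,\Z[1/m])^-,
\]
and since $i - 1 \leq n - 2$, Lemma \ref{lemma: H^i(Gamma_r , Z[1/m])^- finite for i <= n-2} makes each term of this inverse system finite. An inverse system of finite abelian groups is automatically Mittag-Leffler (the images of the transition maps stabilize at each level), so its $\varprojlim^1$ vanishes. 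Feeding this into the Milnor sequence gives the desired isomorphism, and the main obstacle of the argument---ensuring $\varprojlim^1$ vanishes---is precisely what Lemma \ref{lemma: H^i(Gamma_r , Z[1/m])^- finite for i <= n-2} is set up to provide.
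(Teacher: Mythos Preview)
Your proposal is correct and follows essentially the same route as the paper: set up the short exact sequence realizing $\D(\X,\Z[1/m])$ as the inverse limit, derive the Milnor sequence, pass to the $w=-1$ summand using $2\mid m$, and kill the $\varprojlim^1$ term via Mittag--Leffler from the finiteness in Lemma~\ref{lemma: H^i(Gamma_r , Z[1/m])^- finite for i <= n-2}. The only cosmetic difference is that the paper writes the short exact sequence at the level of cochain complexes $C^\bullet(\Gamma,-)$ and takes the associated long exact sequence, whereas you write it at the level of $\Gamma$-modules and invoke the fact that group cohomology commutes with products; these are the same argument.
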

\begin{proof}
    To simplify the notation, denote $\D \coloneq \D(\X, \Z[1/m])$ and $\D_r \coloneq \D(\X_r, \Z[1/m])$.  For a group $G$, a $G$-module $M$, and $j \in \Z_{\geq 0}$, let
\[
C^j(G, M) \coloneq \Hom_{G}(\Z[G^{j+1}], M),
\]
where the action of $G$ in $G^{j+1}$ is diagonal. The complex $C^\bullet(G, M)$ with the usual coboundary maps computes the group cohomology of $G$ with coefficients in $M$.

The surjective morphisms $u_r \mod p^{r-1}\colon \D_r \to \D_{r-1}$ obtained by taking the maps in \eqref{eq: transition map ur} modulo $p^{r-1}$ induce surjective maps $u_r\colon C^i(\Gamma, \D_r ) \twoheadrightarrow C^i(\Gamma, \D_{r-1})$ which can be combined as
\[
u = (u_r)\colon \prod_{r \geq 1} C^i(\Gamma, \D_r ) \ontoo \prod_{r \geq 1} C^i(\Gamma, \D_r).
\]
Denote by $1$ the identity map. 
Since $u_r$ is surjective for every $r$, we deduce that $1 - u$ is also surjective, as it is possible to find the preimage of any tuple via a recurrence. In particular, we have a short exact sequence of complexes 
\[
0 \too C^{\bullet}(\Gamma, \D) \too \prod_{r \geq 1} C^\bullet(\Gamma , \D_r) \xlongrightarrow{ 1 - u} \prod_{r \geq 1}C^\bullet(\Gamma, \D_r) \too 0.
\]
Note that to justify exactness in the middle, we used that $\D= \varprojlim_r \D_r$ by Lemma \ref{lemma: inverse limit leads to distributions on X}. Since $2$ is invertible in $\Z[1/m]$, we can consider the $w = -1$ eigenspace of the corresponding long exact sequence in cohomology. This yields the short exact sequence
\begin{equation}\label{eq: ses involving R1lim}
	0 \too R^1\varprojlim_r  H^{i-1}(\Gamma, \D_r)^-  \too H^i(\Gamma, \D)^- \too \varprojlim_r H^i(\Gamma, \D_r)^- \too 0,
\end{equation}
where we used that 
\[
R^1 \varprojlim_r H^{i-1}(\Gamma, \D_r)^- = \mathrm{coker}\left( \prod_{r \geq 1} H^{i-1}(\Gamma, \D_r)^- \xrightarrow{1 - u} \prod_{r \geq 1} H^{i-1}(\Gamma, \D_r)^- \right).
\]
Finally, since $H^{i-1}(\Gamma, \D_r)^- \simeq H^{i-1}(\Gamma_r, \Z[1/m])^-$ is finite for $i-1 \leq n-2$ by Lemma \ref{lemma: H^i(Gamma_r , Z[1/m])^- finite for i <= n-2}, it follows that $\left( H^{i-1}(\Gamma, \D_r)^-\right)_r$ satisfies the Mittag--Leffler condition. Thus, $R^1\varprojlim_r H^{i-1}(\Gamma, \D_r)^- = 0$ for every $i-1 \leq n - 2$, proving the desired isomorphism.
\end{proof}

\begin{definition}\label{def: muZp}
    Define 
    \[
    \mu \in H^{n-1}(\Gamma, \D(\X, \Z[1/m]))^-
    \]
    to be the class corresponding to $(\mu_r)_r$ via the isomorphism of Proposition \ref{prop: class valued on measures in terms of inverse limit of classes}.
\end{definition}
\begin{remark}\label{rmk: relation to BKL}
    The class $\mu$ viewed as a class with coefficients in $\Z_p$-valued measures on $\X$ is equal to the restriction of the classes considered in \cite[Definition 1.8.4]{beilinson2018} to measures on primitive vectors on $\Z_p^n$.
\end{remark}
\subsection{Cocycle with coefficients in $\R$-distributions}

Using the differential form ${}_c E_\psi$ introduced in Section \ref{sec: differential form representatives of Eisenstein class}, which represents the Eisenstein class, we give an explicit representative of the image of $\mu_r \in H^{n-1}(\Gamma, \D(\X_r, \Z[1/m]))$ in $H^{n-1}(\Gamma, \D(\X_r, \R))$. This will be used to lift $\mu$ to a class valued in measures of total mass zero and to compare our constructions to special values of $L$-functions.

\begin{lemma}\label{lemma: representative of c_r over R}
	Let $r \geq 1$ and let $z \in \sX$ be an arbitrary point. The map
	\[
	c_{v_r^\ast {}_c E_\psi}\colon \Gamma_r^n \too \R, \ (\gamma_0, \dots, \gamma_{n-1}) \mapstoo \int_{\Delta(\gamma_0 z, \dots, \gamma_{n-1}z)} v_r^\ast {}_c E_\psi,
	\]
	where $\Delta(\gamma_0z , \dots , \gamma_{n-1}z)$ denotes the geodesic simplex in $\sX$ with vertices $\{ \gamma_i z \}_{i}$, defines a group cocycle and represents the class $c_r \in H^{n-1}(\Gamma_r , \R)$.
\end{lemma}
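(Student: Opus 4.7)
The proof naturally splits into two parts: verifying the cocycle condition, and identifying the cohomology class. My plan is to reduce both to the classical fact that, on a contractible manifold with a free discrete action, the singular/de Rham-to-group-cohomology isomorphism can be realized by integration over geodesic simplices based at a fixed point.

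\emph{Cocycle condition.} First I would show that $v_r^\ast {}_c E_\psi$, pulled back to $\sX$, is closed and $\Gamma_r$-invariant. Closedness is part of Theorem \ref{thm: differential form describing the Eisenstein class}: ${}_c E_\psi$ is closed on $T_r - T_r[c]$, and $v_r$ lands in this open set because its order $p^r$ is coprime to $c$. The $\Gamma_r$-invariance follows because the Mathai--Quillen form $\psi$ is $\GL_n(\R)$-invariant, so $E_\psi(L)$ is invariant under the stabilizer of $L$; pulling back by the constant section $v_r$ produces a form on $\sX$ invariant under the stabilizer of $v_r$, which is precisely $\Gamma_r$. Given these two properties, the coboundary identity
\[
\delta c_{v_r^\ast {}_c E_\psi}(\gamma_0, \dots, \gamma_n) = \int_{\partial \Delta(\gamma_0 z, \dots, \gamma_n z)} v_r^\ast {}_c E_\psi = \int_{\Delta(\gamma_0 z, \dots, \gamma_n z)} d(v_r^\ast {}_c E_\psi) = 0
\]
is immediate from Stokes' theorem, and the $\Gamma_r$-equivariance $c(\gamma \gamma_0, \dots, \gamma \gamma_{n-1}) = c(\gamma_0, \dots, \gamma_{n-1})$ needed to view $c$ as a homogeneous cochain with trivial coefficients follows from the invariance of the integrand and the fact that $\Gamma_r$ acts by isometries, sending $\Delta(\gamma_0 z, \dots, \gamma_{n-1} z)$ to $\Delta(\gamma \gamma_0 z, \dots, \gamma \gamma_{n-1} z)$.

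\emph{Identification with $c_r$.} Since $[\Gamma_r : \Gamma_r(q)]$ is a unit in $\R$, restriction $\mathrm{res}\colon H^{n-1}(\Gamma_r, \R) \to H^{n-1}(\Gamma_r(q), \R)$ is injective (because $\mathrm{cor} \circ \mathrm{res}$ equals this index), so it suffices to check that the restriction of $c_{v_r^\ast {}_c E_\psi}$ to $\Gamma_r(q)^n$ represents $\mathrm{res}(c_r)$. By the definition of $c_r$ via \eqref{eq: iso singular coho with group coho + restr group coho}, the class $\mathrm{res}(c_r) \otimes \R$ corresponds to $v_r^\ast z_r \otimes \R \in H^{n-1}(\Gamma_r(q) \backslash \sX, \R)$ under the classical isomorphism with $H^{n-1}(\Gamma_r(q), \R)$, which exists because $\Gamma_r(q)$ acts freely on the contractible space $\sX$. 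I would then invoke the standard cochain-level realization of this isomorphism: the assignment $(\gamma_0, \dots, \gamma_i) \mapsto \Delta(\gamma_0 z, \dots, \gamma_i z)$ defines a $\Gamma_r(q)$-equivariant quasi-isomorphism from the homogeneous bar resolution of $\Z$ to the singular chain complex of $\sX$, and integrating a closed $\Gamma_r(q)$-invariant form against it computes the image of its de Rham class in group cohomology. Since $v_r^\ast {}_c E_\psi$ represents $v_r^\ast z_r \otimes \R$ by Theorem \ref{thm: differential form describing the Eisenstein class}, the conclusion follows.

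The main technical point is this last cochain-level identification; it is classical (compare \cite{browncohomology}), but for a fully self-contained argument one would write down a bicomplex comparing the de Rham complex on $\Gamma_r(q) \backslash \sX$ with the bar complex of $\Gamma_r(q)$ and trace through the resulting zig-zag. Independence of the basepoint $z$ is automatic from the contractibility of $\sX$, since any two such geodesic simplices with matching combinatorial data differ by a boundary.
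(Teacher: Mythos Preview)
Your proof is correct and follows essentially the same approach as the paper's: establish that the form is closed and $\Gamma_r$-invariant (hence the integral gives a homogeneous cocycle via Stokes), then identify the class by restricting to $\Gamma_r(q)$ and invoking the standard integration-over-geodesic-simplices realization of the isomorphism $H^{n-1}_{\mathrm{dR}}(\Gamma_r(q)\backslash\sX)\simeq H^{n-1}(\Gamma_r(q),\R)$, together with Theorem~\ref{thm: differential form describing the Eisenstein class}. Your explicit use of the injectivity of restriction (via $\mathrm{cor}\circ\mathrm{res} = [\Gamma_r:\Gamma_r(q)]$) is the same reduction as in the paper, just phrased slightly differently from tracing the definition of $c_r$ through \eqref{eq: iso singular coho with group coho + restr group coho}.
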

\begin{proof}
	The form $v_r^\ast {}_c E_\psi$ on $\sX$ is closed and invariant under the action of $\Gamma_r$. It follows from there that $c_{v_r^\ast {}_c E_\psi}$ is a group cocycle and its cohomology class is independent of the choice of point $z \in \sX$.
	
	We proceed to see that the class of $c_{v_r^\ast {}_c E_\psi}$ is $c_r$. For this, note that Theorem \ref{thm: differential form describing the Eisenstein class} implies that $v_r^\ast {}_c E_\psi$ descends to a  closed differential form on $\Gamma_r(q) \backslash \sX$ representing $v_r^\ast z_r \in H^{n-1}(\Gamma_r(q)\backslash \sX, \R)$. Thus, the image of $v_r^\ast z_r$ by the first map in the isomorphism \eqref{eq: iso singular coho with group coho + restr group coho} (with coefficients in $\R$) is represented by the restriction of $c_{v_r^\ast {}_c E_\psi}$ to $\Gamma_r(q)^n$. In particular, it follows from the definition of $c_r$ that $[c_{v_r^\ast {}_c E_\psi}] = c_r$. 
\end{proof}
Fix $z \in \sX$ an arbitrary point. Define a cocycle 
\[
\mu_{v_r^\ast {}_c E_\psi}\colon \Gamma^n \too \D(\X_r, \R), \ (\gamma_0, \dots, \gamma_{n-1}) \mapstoo \left( \bar{x} \mapstoo  \int_{\Delta(\gamma_0 z, \dots, \gamma_{n-1} z)}  (x/p^r)^\ast {}_c E_{\psi} \right),
\]
where $x \in \Z^n$ is a lift of $\bar{x} \in \X_r$, $z \in \sX$ denotes a fixed arbitrary base point, and $\Delta(\gamma_0z , \dots , \gamma_{n-1}z)$ is defined as in the lemma above. 

\begin{proposition}\label{prop: representative of mu_r with R coefficients}
	We have $[\mu_{v_r^\ast {}_c E_\psi}] = \mu_r$ when viewed as classes in $H^{n-1}(\Gamma, \D(\X_r, \R))$.
\end{proposition}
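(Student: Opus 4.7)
The plan is to recognize the cocycle $\mu_{v_r^\ast {}_c E_\psi}$ as a direct geometric lift of the cocycle $c_{v_r^\ast {}_c E_\psi}$ from Lemma \ref{lemma: representative of c_r over R}, and then identify it with $\mu_r$ through Shapiro's lemma as in the isomorphism \eqref{eq: iso from Shapiro's lemma}. Concretely, I would organize the argument in three steps.

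First, I would check that $\mu_{v_r^\ast {}_c E_\psi}$ is well-defined and is a $\Gamma$-equivariant (homogeneous) cocycle valued in $\D(\X_r, \R)$. Well-definedness in $\bar{x}$ amounts to independence of the choice of lift $x \in \Z^n$: if $x, x' \in \Z^n$ both reduce to $\bar{x} \in \X_r$, then $(x - x')/p^r \in \Z^n$, so the sections $x/p^r$ and $x'/p^r$ of $\sX \times \R^n \to \sX$ descend to the same section of $T_r$, and hence pull back ${}_c E_\psi$ to the same form on $\sX$. The cocycle condition follows from Stokes' theorem together with closedness of ${}_c E_\psi$ (Theorem \ref{thm: differential form describing the Eisenstein class}) applied to the geodesic simplex boundary identity $\partial \Delta(\gamma_0 z, \dots, \gamma_n z) = \sum_i (-1)^i \Delta(\gamma_0 z, \dots, \widehat{\gamma_i z}, \dots, \gamma_n z)$. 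For $\Gamma$-equivariance, I would use that ${}_c E_\psi$ is $\Gamma$-invariant on $\sX \times \R^n/\Z^n$ (with $\Gamma$ acting diagonally): for $\gamma \in \Gamma$ and $v \in \Q^n/\Z^n$ one checks directly from the definition that $(\gamma^{-1} v)^\ast {}_c E_\psi = L_\gamma^\ast (v^\ast {}_c E_\psi)$, where $L_\gamma$ denotes left translation by $\gamma$ on $\sX$. Substituting $v = x/p^r$ and changing variables in the simplex integral yields exactly the $\Gamma$-equivariance identity for the action on $\D(\X_r, \R)$.

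Second, I would apply the Shapiro isomorphism \eqref{eq: iso from Shapiro's lemma} to $[\mu_{v_r^\ast {}_c E_\psi}]$. By the explicit formula $c(\lambda)(\gamma_0, \dots, \gamma_{n-1}) = \lambda(\gamma_0, \dots, \gamma_{n-1})(x_r)$ with $x_r = (1, 0, \dots, 0)^t \in \X_r$, and since $x_r$ lifts to $(1, 0, \dots, 0) \in \Z^n$ so that $x_r/p^r = v_r$, this evaluation gives, for $(\gamma_0, \dots, \gamma_{n-1}) \in \Gamma_r^n$,
\[
c(\mu_{v_r^\ast {}_c E_\psi})(\gamma_0, \dots, \gamma_{n-1}) = \int_{\Delta(\gamma_0 z, \dots, \gamma_{n-1} z)} v_r^\ast {}_c E_\psi = c_{v_r^\ast {}_c E_\psi}(\gamma_0, \dots, \gamma_{n-1}).
\]
By Lemma \ref{lemma: representative of c_r over R}, this cocycle represents $c_r$ in $H^{n-1}(\Gamma_r, \R)$.

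Third, by the definition of $\mu_r$ (Definition \ref{def: mu_r}) as the preimage of $c_r$ under Shapiro, and by the bijectivity of the Shapiro isomorphism with $\R$-coefficients, one concludes $[\mu_{v_r^\ast {}_c E_\psi}] = \mu_r$ in $H^{n-1}(\Gamma, \D(\X_r, \R))$. The only nontrivial step is the verification of $\Gamma$-equivariance described above; once that is in place, everything reduces to unwinding the Shapiro isomorphism and invoking Lemma \ref{lemma: representative of c_r over R}.
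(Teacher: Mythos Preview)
Your proposal is correct and follows essentially the same approach as the paper: verify that $\mu_{v_r^\ast {}_c E_\psi}$ is a $\Gamma$-cocycle using closedness and $\Gamma$-invariance of ${}_c E_\psi$, then evaluate at $x_r$ via the Shapiro map \eqref{eq: iso from Shapiro's lemma} to obtain $c_{v_r^\ast {}_c E_\psi}$, and conclude by Lemma~\ref{lemma: representative of c_r over R} and Definition~\ref{def: mu_r}. You simply spell out in more detail the well-definedness in $\bar{x}$ and the $\Gamma$-equivariance computation that the paper leaves implicit.
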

\begin{proof}
	First observe that $\mu_{v_r^\ast {}_c E_\psi}$ is a group cocycle. This follows from the fact that ${}_c E_\psi$ is closed and invariant under $\Gamma$. Now, the proposition follows from observing that $[\mu_{v_r^\ast {}_c E_\psi}]$ maps to $[c_{v_r^\ast {}_c E_\psi}]$ via the isomorphism given by Shapiro's lemma 
	\[
	H^{n-1}(\Gamma, \D(\X_r, \R)) \xlongrightarrow{\sim} H^{n-1}(\Gamma_r, \R)
	\]
    described in \eqref{eq: iso from Shapiro's lemma}, Lemma \ref{lemma: representative of c_r over R}, and the definition of $\mu_r$ (see Definition \ref{def: mu_r}).
\end{proof}

Consider the $\Gamma$-equivariant morphism given by taking the total mass of a distribution
\[
\D(\X_1, \R) \too \R, \ \lambda \mapstoo \sum_{x \in \X_1} \lambda(x).
\]

\begin{corollary}\label{cor: mu1 has total mass zero}
	The corestriction map $H^{n-1}(\Gamma_1, \R) \to H^{n-1}(\Gamma, \R)$ maps $c_1$ to $0$. In particular, the morphism induced by taking the total mass of a measure 
    \[
    H^{n-1}(\Gamma_1, \D(\X_1, \R)) \too H^{n-1}(\Gamma, \R)
    \]
    maps $\mu_1$ to $0$.   
\end{corollary}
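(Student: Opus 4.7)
Both statements will follow from the de Rham representative of Lemma \ref{lemma: representative of c_r over R} together with Proposition \ref{prop: average of pullbacks cEpsi by vectors of exact order p is 0}. For the vanishing of $\mathrm{cor}(c_1)$, I would extend the cocycle
\[
c_{v_1^\ast {}_c E_\psi}(\gamma_0,\ldots,\gamma_{n-1}) = \int_{\Delta(\gamma_0 z,\ldots,\gamma_{n-1}z)} v_1^\ast {}_c E_\psi
\]
by the same formula to a $\Gamma_1$-invariant homogeneous cochain on $\Gamma^n$, and apply the standard formula for corestriction with trivial coefficients. A change of variables via $L_{\gamma^{-1}}$ in each geodesic simplex, combined with the identity
\[
L_{\gamma^{-1}}^\ast v^\ast {}_c E_\psi = (\gamma v)^\ast {}_c E_\psi,
\]
which follows from the $\Gamma$-invariance of ${}_c E_\psi$ under the diagonal action $(g,v) \mapsto (\gamma g,\gamma v)$ on the universal torus bundle, yields
\[
\mathrm{cor}(c_{v_1^\ast {}_c E_\psi})(\gamma_0,\ldots,\gamma_{n-1}) = \int_{\Delta(\gamma_0 z,\ldots,\gamma_{n-1}z)} \sum_{\gamma \in \Gamma/\Gamma_1} (\gamma v_1)^\ast {}_c E_\psi.
\]
Since $\SL_n(\Z) \twoheadrightarrow \SL_n(\F_p)$ acts transitively on $\F_p^n - \{0\}$ when $n \geq 2$, with stabilizer of $v_1$ equal to $\Gamma_1$ by definition, the orbit map $\Gamma/\Gamma_1 \xrightarrow{\sim} \tfrac{1}{p}\Z^n/\Z^n - \{0\}$, $\gamma \mapsto \gamma v_1$, is a bijection. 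The integrand therefore equals $\sum_{v \in \frac{1}{p}\Z^n/\Z^n - \{0\}} v^\ast {}_c E_\psi$, which vanishes by Proposition \ref{prop: average of pullbacks cEpsi by vectors of exact order p is 0}.

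For the ``in particular'' statement, a short cocycle computation using the Shapiro isomorphism of \eqref{eq: iso from Shapiro's lemma} shows that the total-mass map realizes corestriction on cohomology. Explicitly, for $\lambda \in C^{n-1}(\Gamma,\D(\X_1,\R))$, expanding the formula for corestriction of $c(\lambda)(\gamma_0,\ldots,\gamma_{n-1}) = \lambda(\gamma_0,\ldots,\gamma_{n-1})(x_1)$ and using the $\Gamma$-equivariance of $\lambda$ together with the bijection $\Gamma/\Gamma_1 \xrightarrow{\sim} \X_1$, $\gamma \mapsto \gamma x_1$, produces
\[
\mathrm{cor}(c(\lambda))(\gamma_0,\ldots,\gamma_{n-1}) = \sum_{x \in \X_1} \lambda(\gamma_0,\ldots,\gamma_{n-1})(x),
\]
which is precisely the total-mass map applied at the cochain level. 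Hence the map induced by total mass on cohomology coincides with $\mathrm{cor}\circ \mathrm{sh}$, and therefore sends $\mu_1$ to $\mathrm{cor}(c_1) = 0$.

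The main subtlety is establishing the equivariance relation $L_{\gamma^{-1}}^\ast v^\ast {}_c E_\psi = (\gamma v)^\ast {}_c E_\psi$. One extracts it from the compatibility $\Phi_\gamma \circ \sigma_v = \sigma_{\gamma v} \circ L_\gamma$ between the section $\sigma_v(g) = (g,v)$ and the diagonal $\Gamma$-action $\Phi_\gamma(g,v) = (\gamma g,\gamma v)$ preserving the lattices $\Z^n, c^{-1}\Z^n$ and thus ${}_c E_\psi$; after that, everything reduces to standard manipulations with corestriction, Shapiro's lemma, and the orbit-stabilizer analysis for $\Gamma$ acting on $\frac{1}{p}\Z^n/\Z^n$.
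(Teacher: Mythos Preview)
Your proof is correct and uses the same ingredients as the paper's: the de Rham representative from Lemma~\ref{lemma: representative of c_r over R}, the $\Gamma$-equivariance of ${}_cE_\psi$, the orbit--stabilizer bijection $\Gamma/\Gamma_1 \simeq \tfrac{1}{p}\Z^n/\Z^n-\{0\}$, and Proposition~\ref{prop: average of pullbacks cEpsi by vectors of exact order p is 0}. The only difference is organizational: the paper first identifies $\mathrm{cor} = (\text{total mass})\circ(\text{Shapiro})^{-1}$ and then computes the total mass of the explicit representative $\mu_{v_1^\ast{}_cE_\psi}$ of $\mu_1$ from Proposition~\ref{prop: representative of mu_r with R coefficients}, whereas you compute $\mathrm{cor}(c_1)$ directly from the representative $c_{v_1^\ast{}_cE_\psi}$ and then invoke the same factorization to deduce the total-mass statement; the two computations unwind to the same integral of $\sum_{v}v^\ast{}_cE_\psi$ over a geodesic simplex.
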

\begin{proof}
	The corestriction map can be written as 
	\[
	H^{n-1}(\Gamma_1, \R) \xlongrightarrow{\sim} H^{n-1}(\Gamma, \D(\X_1 , \R) ) \too H^{n-1}(\Gamma, \R),
	\]
	where the first map is given by the inverse of the map given by Shapiro's lemma, and the second one is the map induced by taking the total mass of a measure (see \cite[Chapter III, Section 9 (A)]{browncohomology}). In view of this observation and of Proposition \ref{prop: representative of mu_r with R coefficients}, it is enough to prove that the image of $[\mu_{v_1^\ast {}_c E_\psi}]$ by the second map is trivial. For that, observe that such image is represented by the cocycle
	\[
	(\gamma_0, \dots, \gamma_{n-1}) \mapstoo \int_{\Delta(\gamma_0z, \dots, \gamma_{n-1}z)}  \sum_{\bar{x} \in \X_1} (x/p)^\ast {}_c E_\psi.
	\]
	It follows from Proposition \ref{prop: average of pullbacks cEpsi by vectors of exact order p is 0} that the sum of differential forms in the integral is equal to zero, giving the desired result. 	
\end{proof}

\subsection{Lifting to measures of total mass zero}\label{subsec: lfting to measures of total mass zero} To construct rigid classes, it is useful to lift the class $\mu$ to a class with coefficients in measures of total mass zero. Let $\D_0 \coloneq \D_0(\X, \Z[1/m])$ be the sub-module of $\D \coloneq  \D(\X, \Z[1/m])$ consisting of measures $\lambda \in \D$ such that $\lambda(\X) = 0$. Consider the short exact sequence
\begin{equation}\label{eq: ses total mass}
0 \too \D_0 \too \D \too \Z[1/m] \too 0.
\end{equation}

\begin{proposition}\label{prop: image of mu is torsion} 
	The image of $\mu$ by the map $H^{n-1}(\Gamma, \D) \to H^{n-1}(\Gamma, \Z[1/m])$ is torsion.
\end{proposition}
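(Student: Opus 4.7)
The strategy is to show that the image of $\mu$ under the total mass map coincides with the corestriction $\mathrm{cor}(c_1) \in H^{n-1}(\Gamma, \Z[1/m])$ of the class $c_1$ from $\Gamma_1$ to $\Gamma$; this class is already known to vanish after base change to $\R$ by Corollary \ref{cor: mu1 has total mass zero}, and a finite-generation argument will then upgrade vanishing in $H^{n-1}(\Gamma,\R)$ to being torsion in $H^{n-1}(\Gamma,\Z[1/m])$.

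First, the plan is to factor the total mass map as
\[
\D \longrightarrow \D(\X_1,\Z[1/m]) \longrightarrow \Z[1/m],
\]
where the first arrow is the projection $\lambda\mapsto\lambda_1$ from Lemma \ref{lemma: inverse limit leads to distributions on X} and the second is summation $\lambda_1 \mapsto \sum_{x\in\X_1}\lambda_1(x)$. By the construction of $\mu$ in Definition \ref{def: muZp} via Proposition \ref{prop: class valued on measures in terms of inverse limit of classes}, the image of $\mu$ under the first map in cohomology is the class $\mu_1 \in H^{n-1}(\Gamma,\D(\X_1,\Z[1/m]))$. Next, under the Shapiro isomorphism \eqref{eq: iso from Shapiro's lemma}, summation on $\D(\X_1, \Z[1/m])$ corresponds to the corestriction $H^{n-1}(\Gamma_1,\Z[1/m])\to H^{n-1}(\Gamma,\Z[1/m])$ (cf.\ \cite[Chapter III, Section 9 (A)]{browncohomology}), and $\mu_1$ corresponds to $c_1$ by Definition \ref{def: mu_r}. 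Combining these identifications, the image of $\mu$ under total mass equals $\mathrm{cor}(c_1)$.

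By Corollary \ref{cor: mu1 has total mass zero}, $\mathrm{cor}(c_1)$ maps to $0$ in $H^{n-1}(\Gamma,\R)$. Since $\Gamma=\SL_n(\Z)$ is of type (VFL) by Borel--Serre \cite[Theorem 11.4]{BorelSerre}, the cohomology group $H^{n-1}(\Gamma,\Z[1/m])$ is a finitely generated $\Z[1/m]$-module (a fact already used in the proof of Lemma \ref{lemma: H^i(Gamma_r , Z[1/m])^- finite for i <= n-2}), and the natural change-of-coefficients map $H^{n-1}(\Gamma,\Z[1/m])\otimes_\Z\R\to H^{n-1}(\Gamma,\R)$ is an isomorphism. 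Hence an element of $H^{n-1}(\Gamma,\Z[1/m])$ that dies in $H^{n-1}(\Gamma,\R)$ must already be torsion, which gives the desired conclusion.

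The only nontrivial point is the identification of the total mass map in cohomology with $\mathrm{cor}(c_1)$; unwinding the compatibility of Shapiro's lemma with corestriction via summation along the transitive $\Gamma$-set $\X_1$ is the one step that requires some care, while the passage from the $\R$-vanishing to the torsion statement is an immediate consequence of the (VFL) finiteness for $\SL_n(\Z)$.
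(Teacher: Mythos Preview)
Your proof is correct and follows the same approach as the paper: the paper's proof is a one-line appeal to Corollary~\ref{cor: mu1 has total mass zero}, and you have simply unpacked the implicit steps (factoring the total mass map through level~$1$, identifying the image with $\mu_1$ and hence $\mathrm{cor}(c_1)$, and using finite generation to pass from $\R$-vanishing to torsion).
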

\begin{proof}
	The result follows from Proposition \ref{cor: mu1 has total mass zero}.
\end{proof}

As we explained above, $w = \mathrm{diag}(1, -1, 1, \dots, 1) \in \GL_n(\Z)$ acts on the cohomology groups $H^i(\Gamma, \Z[1/m])$, $H^i(\Gamma, \D_0)$, and $H^i(\Gamma, \D)$. Moreover, \eqref{eq: ses total mass} yields a long exact sequence
\[
0 = H^{n-2}(\Gamma, \Q)^{-} \too H^{n-1}(\Gamma, \D_0)^{-}_{\Q} \too H^{n-1}(\Gamma, \D)^{-}_{\Q} \too H^{n-1}(\Gamma, \Q)^{-},
\]
where the subindex denotes taking the tensor product with $\Q$ over $\Z[1/m]$ and we used Lemma \ref{lemma: H^i(Gamma_r , Z[1/m])^- finite for i <= n-2} for the vanishing of $H^{n-2}(\Gamma, \Q)^{-}$. Thus, by Proposition \ref{prop: image of mu is torsion}, $\mu$ admits a unique lift to a class in $H^{n-1}(\Gamma, \D_0)^{-}_{\Q}$. 

\begin{definition}
    Let \begin{equation}\label{eq: mu0 fixed}
    \mu_0 \in H^{n-1}(\Gamma, \D_0(\X, \Z[1/m]))^{-}_{\Q}
    \end{equation}
    be a a lift of $\mu \in H^{n-1}(\Gamma, \D)_{\Q}$.
\end{definition}

\begin{remark} \label{rem:dependc3}
   If $a$ is any integer prime to $p$, we denote $[a]_*$ as the $\GL_n(\Z)$-equivariant operator on $\D(\X_r)$ or $\D(\X)$ given by pushforward of measures along the multiplication-by-$a$ map $\X_r \to \X_r$ (resp. $\X \to \X$). Then Remark \ref{rem:dependc} implies that for any $c$ and $d$ coprime to $p$ and each other 
    \[
        ([c]_*^{-1}-c^n){}_d\mu_r = ([d]_*^{-1}-d^n){}_c\mu_r,
    \]
    where the pre-subscripts, as before, denote the class associated to the corresponding smoothings. Then Proposition \ref{prop: class valued on measures in terms of inverse limit of classes} implies the same for the inverse limit class $\mu$. Note that the pullback $[a]^*$ on the cohomology of the torus induces $[a]_*^{-1}$ on the distributions over torsion specializations. From there, we deduce that, up to torsion, 
\[
        ([c]_*^{-1}-c^n){}_d\mu_0 = ([d]_*^{-1}-d^n){}_c\mu_0.
\]
\end{remark}

\section{Drinfeld's symmetric domain and log-rigid classes}\label{sec: Drinfeld's $p$-adic symmetric domain and rigid cocycles}

In this section, we introduce Drinfeld's $p$-adic symmetric domain $\sX_p$. Then, we define a lift from measures on $\X = \Z_p^n - p\Z_p^n$ of total mass zero to log-rigid analytic functions on $\sX_p$. This leads to construct a log-rigid class $J_{E, \mathcal L}$ as the image of the class $\mu_0 \in H^{n-1}(\Gamma, \D_0(\X, \Z[1/m]))_{\Q}$ of the previous section by such lift. We conclude by defining the evaluation of $J_{E, \mathcal L}$ at points $\tau \in \sX_p$ attached to totally real fields where $p$ is inert.

\subsection{Drinfeld's domain and rigid functions}
Drinfeld's $p$-adic symmetric domain is defined as $\sX_p \coloneq \mathbb{P}^{n-1}(\C_p) - \bigcup_{H \in \mathcal H} H$, where $\mathcal H$ is the set of all $\Q_p$-rational hyperplanes. It has the structure of a rigid analytic space, which we proceed to describe following \cite{SchneiderStuhler}.

For a given $H \in \mathcal H$, let $\ell_H$ be an equation of $H$ such that its coefficients form a unimodular vector in $\C_p^n$. Also, if $z \in \mathbb{P}^{n-1}(\C_p)$, we will always assume $z = [(z_0, \dots, z_{n-1})]$ is represented by a vector with unimodular coordinates. For $m \geq 1$, define
\[
	\sX_p^{\leq m} \coloneq \{ z \in \mathbb{P}^{n-1}(\C_p) \mid \ord_p( \ell_H(z) ) \leq m, \text{ for all $H \in \mathcal H$}  \}.
\]
The family $\{\sX_p^{\leq m}\}_m$ forms an admissible covering of $\sX_p$ by open affinoid subdomains. 

The ring of rigid functions on $\sX_p^{\leq m}$ can be described as follows. Let $\mathcal H_m$ be the set of equivalence classes of $\mathcal H$ modulo $p^m$. Also, fix $\bar{\mathcal H}_{m}$ a set of representatives for the equivalence classes in $\mathcal H_{m+1}$ containing the coordinate hyperplanes $H_i = \{ z_i = 0 \}$ for every $i = 0, \dots, n-1$. For $H, H' \in \mathcal H$, define the function $f_{H,H'}\colon \sX_p \to \C_p$
\[
f_{H,H'}(z)\coloneq \frac{\ell_H(z)}{\ell_{H'}(z)}.
\]
Then, observe that we can describe 
\[
\sX_p^{\leq m} = \{z \in \sX_p \mid \ord_p( f_{H, H'}(z) ) \geq -m \text{ for all $H, H' \in \bar{\mathcal H}_m$} \}.
\]
Let $A_m$ be the affinoid $\Q_p$-algebra obtained as the quotient of the free Tate algebra over $\Q_p$ in the indeterminates  $\{ T_{H, H'} \}_{H, H' \in \bar{\mathcal H}_m}$
modulo the closed ideal generated by 
\[
\begin{split}
	& T_{H,H} - p^m,  \text{ for $H \in \bar{\mathcal H}_m$} \\
	& T_{H, H'} T_{H', H''} - p^m T_{H, H''}, \text{ for $H, H', H'' \in \bar{\mathcal H}_m$}, \\
	& T_{H, H_j} - \sum_{i = 0}^{r-1} \lambda_i T_{H_i, H_j}, \text{ if $\ell_H(z) = \sum_{i = 0}^{n-1}\lambda_i z_i$ for $H \in \bar{\mathcal H}_m$ and $0 \leq j \leq n-1$}.
\end{split}
\]
The previous descriptions of $\sX_p^{\leq m}$ and $A_m$ lead to the following result.

\begin{proposition}
	Denote by $\mathcal A^{\leq m}$ the ring of rigid analytic functions on $\sX_p^{\leq m}$. Then, we have an isomorphism of $\Q_p$-algebras
	\[
	A_m \xlongrightarrow{\sim} \mathcal A^{\leq m}, \ T_{H, H'} \mapstoo  p^m f_{H, H'}.
	\]
	In particular, it induces an isomorphism of rigid spaces $\sX_p^{\leq m} \xlongrightarrow{\sim} \mathrm{Sp}(A_m)$.
\end{proposition}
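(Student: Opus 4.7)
The plan is to identify $\sX_p^{\leq m}$ with a rational subdomain of the rigid analytic projective space $\PP^{n-1}$ and then read off its affinoid algebra by the standard rational-domain construction.

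The first step is to show that the defining conditions of $\sX_p^{\leq m}$, a priori indexed by all of $\mathcal H$, reduce to a finite list indexed by $\bar{\mathcal H}_m$. Indeed, if two unimodular equations satisfy $\ell_H \equiv \ell_{H'} \pmod{p^{m+1}}$, then the ultrametric inequality yields $|\ell_H(z) - \ell_{H'}(z)| \leq p^{-m-1}$, while the lower bound $|\ell_{H_0}(z)| \geq p^{-m}$ guaranteed on $\sX_p^{\leq m}$ (for some coordinate hyperplane $H_0$) forces $H$ and $H'$ to impose the same valuation condition. Hence $\sX_p^{\leq m}$ is the rational subdomain of an affine chart of $\PP^{n-1}$ cut out by the finite list of inequalities $\ord_p(f_{H,H'}(z)) \geq -m$ for $H, H' \in \bar{\mathcal H}_m$.

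Next, I would verify that the map $T_{H,H'} \mapsto p^m f_{H,H'}$ respects the three families of relations defining $A_m$: the equality $f_{H,H} = 1$ gives the normalization $T_{H,H} = p^m$, the multiplicative identity $f_{H,H'}f_{H',H''} = f_{H,H''}$ yields the cocycle relation after accounting for the factors of $p^m$, and the $\Q_p$-linearity $\ell_H = \sum_i \lambda_i z_i$ with $\ell_{H_j} = z_j$ gives the linearity relation expressing $T_{H, H_j}$ in terms of the $T_{H_i, H_j}$. This shows the map is well-defined as a morphism of $\Q_p$-algebras.

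The main task is then to prove it is an isomorphism. Surjectivity follows from the general theory of rational subdomains: the ring $\mathcal A^{\leq m}$ is topologically generated by the functions cutting out $\sX_p^{\leq m}$, which after clearing denominators are exactly the $p^m f_{H,H'}$. For injectivity, I would first quotient by the linearity relations to reduce to the subalgebra generated by the coordinate cross-ratios $f_{H_i, H_j}$, and argue that the only constraints on these come from the cocycle identity; this is the main technical step and is where I would invoke the detailed description of the affinoid algebra of $\sX_p^{\leq m}$ in Schneider--Stuhler. Once the algebraic isomorphism is established, the corresponding isomorphism of rigid spaces $\sX_p^{\leq m} \xlongrightarrow{\sim} \mathrm{Sp}(A_m)$ follows from the anti-equivalence between affinoid $\Q_p$-algebras and affinoid rigid $\Q_p$-spaces.
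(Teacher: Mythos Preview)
Your proposal is correct and aligns with the paper's approach: the paper simply cites Proposition~4 of Schneider--Stuhler for this result, and your sketch is essentially an outline of that argument, including the reduction to finitely many hyperplanes via congruence modulo $p^{m+1}$ and the identification with a rational subdomain. Since you ultimately invoke Schneider--Stuhler for the injectivity step as well, there is no substantive difference between your approach and the paper's.
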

\begin{proof}
	See proof of Proposition 4 of \cite{SchneiderStuhler}.
\end{proof}

In particular, $\mathcal A^{\leq m}$ is a Banach algebra with respect to the supremum norm.

\begin{definition}
	The ring of rigid analytic functions on $\sX_p$, denoted by $\mathcal A$, is the space of functions $f\colon \sX_p \to \C_p$ such that for every $m$, their restriction to $\sX_p^{\leq m}$ belongs to $\mathcal A^{\leq m}$. 
\end{definition}

We will also consider a larger space of functions on $\sX_p$, called log-rigid analytic functions. Let $\log_p\colon \C_p^\times \to \C_p$ be the branch of the $p$-adic logarithm satisfying $\log_p(p) = 0$. A function $f\colon \sX_p^{\leq m} \to \C_p$ is log-rigid analytic on $\sX_p^{\leq m}$ if it can be written as 
\[
f = f_0 + \sum_{H, H' \in \mathcal H} c_{H, H'}\log_p(f_{H, H'}(z)),
\]
where $f_0 \in \mathcal A^{\leq m}$ and $c_{H, H'} \in \Q_p$ are all but finitely many equal to $0$. Denote the space of log-rigid analytic functions on $\sX_p^{\leq m}$ by $\mathcal A^{\leq m}_{\mathcal L}$. 

\begin{definition}
	The space of log-rigid analytic functions on $\sX_p$, denoted by $\mathcal A_{\mathcal L}$, is the space of functions $f\colon \sX_p \to \C_p$ such that for every $m$, their restriction to $\sX_p^{\leq m}$ belongs to $\mathcal A^{\leq m}_{\mathcal L}$. 
\end{definition}

The following lemma will be useful to study log-rigid functions in the next sections.

\begin{lemma}\label{lemma: log(l_H/l_H') is rigid analytic if H and H' congruent}
    Let $m \geq 1$, and let $H, H' \in \mathcal H$ be hyperplanes with equations $\ell_H$ and $\ell_{H'}$ which are congruent modulo $p^{m+1}$. Then, the function 
    \[
    f\colon \sX_p^{\leq m} \too \C_p, \ z \mapstoo \log_p\left( f_{H,H'}(z) \right)
    \]
    is rigid analytic on $\sX_p^{\leq m}$.
\end{lemma}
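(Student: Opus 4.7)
The plan is to write $f_{H,H'}(z) = 1 + u(z)$ where $u(z) \coloneq (\ell_H(z)-\ell_{H'}(z))/\ell_{H'}(z)$, then show $u$ is a rigid analytic function on $\sX_p^{\leq m}$ with sup norm bounded by $p^{-1}$, and finally recognize $\log_p\circ(1+u)$ as the composition of $u$ with the power series $\log_p(1+X) = \sum_{n\geq 1}(-1)^{n+1}X^n/n$, which converges on the closed disk $|X|\leq p^{-1}$.

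More concretely, first I would bound $u(z)$. By hypothesis, the coefficients of $\ell_H-\ell_{H'}$ lie in $p^{m+1}\mathcal{O}_{\C_p}$, so for any $z=[(z_0,\dots,z_{n-1})]$ represented by a unimodular vector we have $|\ell_H(z)-\ell_{H'}(z)|\leq p^{-(m+1)}$. On the other hand, by definition of $\sX_p^{\leq m}$ we have $\ord_p(\ell_{H'}(z))\leq m$, i.e.\ $|\ell_{H'}(z)|\geq p^{-m}$. Combining these gives $|u(z)|\leq p^{-1}$ uniformly on $\sX_p^{\leq m}$. The function $u$ is visibly rigid analytic on $\sX_p^{\leq m}$ as a ratio of linear forms whose denominator is bounded away from zero; equivalently, $1/\ell_{H'}$ is already a rigid analytic function on $\sX_p^{\leq m}$ by the affinoid description of the previous proposition, and $u$ is obtained by multiplying it by the linear form $\ell_H-\ell_{H'}$.

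Next I would invoke the convergence of $\log_p(1+X)=\sum_{n\geq 1}(-1)^{n+1}X^n/n$. Since $\ord_p(n)\leq \log n/\log p$, we have $\ord_p(X^n/n)\geq n\cdot \ord_p(X) - \log n/\log p$, which tends to $+\infty$ whenever $\ord_p(X)\geq 1$. In the Banach algebra $\mathcal{A}^{\leq m}$ with supremum norm $\|\cdot\|$, the element $u$ satisfies $\|u\|\leq p^{-1}$, so $\|u^n/n\|\to 0$. Therefore the series $\sum_{n\geq 1}(-1)^{n+1}u^n/n$ converges in $\mathcal{A}^{\leq m}$ and defines a rigid analytic function on $\sX_p^{\leq m}$. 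Pointwise this sum equals $\log_p(1+u(z)) = \log_p(f_{H,H'}(z))$ by the definition of the Iwasawa branch on the open disk of radius $1$ centered at $1$ (the choice $\log_p(p)=0$ plays no role here since $f_{H,H'}(z)\in 1+p\mathcal{O}_{\C_p}$, so all factors of $p$ are suppressed).

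Honestly, there is no serious obstacle; the lemma is a uniform convergence estimate dressed in rigid-analytic clothing. The only mild subtlety is bookkeeping with the normalization of $\ell_H, \ell_{H'}$ as unimodular forms, which is needed to turn the congruence $\ell_H\equiv \ell_{H'}\pmod{p^{m+1}}$ into the sharp bound $|\ell_H(z)-\ell_{H'}(z)|\leq p^{-(m+1)}$ on unimodular representatives of points of $\sX_p^{\leq m}$. Once that bound is in place the computation is automatic.
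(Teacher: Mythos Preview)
Your proof is correct and follows essentially the same approach as the paper: write $f_{H,H'}(z)=1+u(z)$ with $u=(\ell_H-\ell_{H'})/\ell_{H'}$, use the congruence modulo $p^{m+1}$ together with the bound $\ord_p(\ell_{H'}(z))\leq m$ on $\sX_p^{\leq m}$ to get $\lvert u\rvert \leq p^{-1}$, and then expand via the logarithm power series. Your write-up is in fact slightly more detailed than the paper's, in that you make explicit the convergence in the Banach algebra $\mathcal A^{\leq m}$ and the role of the unimodular normalization.
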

\begin{proof}
    Observe that we can write
    \[
    f(z) = \log_p\left( 1 - \frac{ \ell_{H'}(z) - \ell_H(z)  }{\ell_{H'}(z)} \right).
    \]
    Moreover, since $\ell_H \equiv \ell_{H'} \mod p^{m+1}$ and $z \in \sX_p^{\leq m}$, we have 
    \[
    \ord_p\left( \frac{ \ell_{H'}(z) - \ell_H(z)  }{\ell_{H'}(z)}  \right) \geq 1.
    \]
    Therefore, 
    \[
    f(z) = \sum_{k \geq 1} \frac{1}{k} \left( \frac{ \ell_{H'}(z) - \ell_H(z)  }{\ell_{H'}(z)}   \right)^k,
    \]
    which is rigid analytic on $\sX_p^{\leq m}$. 
\end{proof}

Observe that matrix multiplication induces a right action of $\SL_n(\Q_p)$ on $\sX_p$ given as follows. For $g \in \SL_n(\Q_p)$ and $z \in \sX_p$ represented by a vector in $\C_p^n$, that we also denote by $z$, we have
\[
(z,g) \coloneq [g^t z],
\]
where $g^t \in \SL_n(\Q_p)$ denotes the transpose of $g$. This induces a left action of $\SL_n(\Q_p)$ on the space of $\C_p$-valued functions on $\sX_p$.  If $g \in \SL_n(\Q_p)$, $f$ is a function on $\sX_p$, and $z \in \sX_p$  
\[
(g \cdot f)(z) \coloneq f(g^{t}z).
\]
This action preserves the subspaces $\mathcal A$ and $\mathcal A_{\mathcal L}$.

\subsection{Lifts from measures to functions on $\sX_p$} Recall that $\sX_p$ consists of the points in $\mathbb{P}^{n-1}(\C_p)$ that do not belong to a $\Q_p$-rational hyperplane. On the other hand, a point in $\X = \Z_p^n - p\Z_p^n$ gives the equation of a $\Q_p$-rational hyperplane. This suggests considering the two-variable function  
\[
\left( \C_p^n - \bigcup_{H \in \mathcal H} H \right) \times \X \too \C_p, \ (z , x) \mapstoo \log_p( z^t \cdot x), 
\]
Integration with respect to the variable $x \in \X$ will induce a map from total mass zero measures on $\X$ to functions on $\sX_p$.

\begin{lemma}
    Let $\lambda \in \D_0(\X, \Z_p)$. The function $F\colon \sX_p \too \C_p$ given by
    \[
    z \mapstoo F(z) \coloneq \int_{\X} \log_p(z^t \cdot x) d\lambda,
    \]
    where $z$ in the right hand side denotes an arbitrary representative in $\C_p^n$ of $z \in \sX_p$,
    is well-defined and belongs to $\mathcal A_{\mathcal L}$.
\end{lemma}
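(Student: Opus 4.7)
The plan is to verify well-definedness and then show that $F$ is log-rigid on each affinoid piece $\sX_p^{\leq m}$ of the admissible covering. For well-definedness, rescaling the representative $z \mapsto \alpha z$ with $\alpha \in \C_p^\times$ changes $\log_p(z^t \cdot x)$ by the additive constant $\log_p \alpha$, whose integral against $\lambda$ vanishes since $\lambda(\X) = 0$. Hence $F$ depends only on the class of $z$ in $\sX_p$.

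For the log-rigid structure, fix $m \geq 1$ and partition $\X = \bigsqcup_{x_0 \in \X_{m+1}} U_{x_0/p^{m+1}}$, identifying each $x_0$ with a primitive lift in $\Z_p^n$. For $z \in \sX_p^{\leq m}$ and $x \in U_{x_0/p^{m+1}}$, write
\[
\log_p(z^t \cdot x) = \log_p(z^t \cdot x_0) + \log_p\!\bigl(1 + y_{x_0}(z,x)\bigr), \qquad y_{x_0}(z,x) \coloneq \frac{z^t(x - x_0)}{z^t \cdot x_0}.
\]
The bounds $|z^t(x-x_0)|_p \leq p^{-(m+1)}$ and $|z^t \cdot x_0|_p \geq p^{-m}$ (the latter since $z \in \sX_p^{\leq m}$ and $x_0$ is a unimodular equation of the hyperplane $H_{x_0}$) give $|y_{x_0}|_p \leq p^{-1}$, so the log series converges uniformly in $x \in U_{x_0/p^{m+1}}$. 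Integrating and summing yields
\[
F(z) = \sum_{x_0 \in \X_{m+1}} \lambda(U_{x_0/p^{m+1}}) \log_p(z^t \cdot x_0) + \sum_{x_0 \in \X_{m+1}} g_{x_0}(z),
\]
where $g_{x_0}(z) \coloneq \int_{U_{x_0/p^{m+1}}} \log_p(1 + y_{x_0}) \, d\lambda$. Since $\sum_{x_0} \lambda(U_{x_0/p^{m+1}}) = \lambda(\X) = 0$, the first sum can be rewritten (after subtracting $\log_p(z^t \cdot x_0^*)$ for a fixed reference $x_0^*$) as the finite $\Q_p$-linear combination $\sum_{x_0} \lambda(U_{x_0/p^{m+1}}) \log_p(f_{H_{x_0}, H_{x_0^*}}(z))$, exhibiting the log-rigid part.

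The main remaining task, and the step I expect to be most delicate, is to show that each $g_{x_0}$ belongs to $\mathcal{A}^{\leq m}$. Using the uniform bound $|y_{x_0}|_p \leq p^{-1}$, I would expand the logarithm as a power series, interchange it with the $p$-adic integral, and apply the multinomial expansion of $(z^t(x-x_0))^k$ to obtain
\[
g_{x_0}(z) = \sum_{k \geq 1} \frac{(-1)^{k+1}}{k} \sum_{|\alpha| = k} \binom{k}{\alpha} \Bigl(\int (x-x_0)^\alpha d\lambda\Bigr) \prod_i f_{H_i, H_{x_0}}(z)^{\alpha_i},
\]
where $H_i$ is the coordinate hyperplane $\{z_i = 0\}$. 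The coefficient bound $\bigl|\binom{k}{\alpha} \int (x-x_0)^\alpha d\lambda\bigr|_p \leq p^{-k(m+1)}$ and the estimate $|f_{H_i, H_{x_0}}|_p \leq p^m$ on $\sX_p^{\leq m}$ combine to bound the sup-norm of the $k$-th term by $p^{-k + v_p(k)} \leq kp^{-k}$, which tends to zero, giving convergence in the Banach algebra $\mathcal{A}^{\leq m}$. The main obstacle is choosing the partition scale (the value $p^{m+1}$ turns out to be sharp) so as to balance the decay from $|x - x_0|_p$ against the growth of $|z^t \cdot x_0|_p^{-k}$ on the affinoid, and then carefully tracking the resulting Banach-norm estimates.
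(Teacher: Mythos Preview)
Your proof is correct and follows essentially the same decomposition as the paper: on $\sX_p^{\leq m}$ you split $F$ as the Riemann sum $f_{m+1}(z) = \sum_{x_0 \in \X_{m+1}} \lambda(U_{x_0/p^{m+1}})\log_p(z^t\cdot x_0)$ (the log-rigid part, using $\lambda(\X)=0$) plus a remainder $\sum_{x_0} g_{x_0}$ that must be shown rigid. The only methodological difference is in that last step. The paper argues indirectly: it introduces the full sequence of Riemann sums $(f_r)_r$, shows it is Cauchy in the sup norm on $\sX_p^{\leq m}$, observes that each $f_r - f_{m+1}$ is rigid by the earlier lemma on $\log_p(\ell_H/\ell_{H'})$ for $H\equiv H'\pmod{p^{m+1}}$, and concludes by completeness of $\mathcal{A}^{\leq m}$. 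You instead expand $\log_p(1+y_{x_0})$ as a power series, push the $\lambda$-integral through, and estimate each term via $|(x-x_0)^\alpha|_p \le p^{-k(m+1)}$ and $|f_{H_i,H_{x_0}}|_p \le p^m$ to get sup-norm decay $p^{v_p(k)-k}\to 0$. Your route is more explicit and self-contained; the paper's is cleaner once the auxiliary lemma is in hand. Both rely on the same sharp choice of scale $p^{m+1}$ that you correctly identified.
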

\begin{proof}
    For every $r \geq 1$, fix $V_r$ a set of representatives in $\Z^n$ of $\X_r = (\Z/p^r\Z)^n - (p\Z/p^r\Z)^n $ and define 
    \[
    f_r\colon \sX_p \too \C_p, \ z \mapstoo \sum_{v \in V_r} \lambda(U_{v/p^r}) \log_p(z^t \cdot v),
    \]
    where $U_{v/p^r} \subset \X$ is as in \eqref{eq: def of U_x/p^r}. Observe that since $\lambda(\X) = 0$, $f_r(z)$ is independent of the choice of representative of $z$ in $\C_p^n$, showing that $f_r$ is a well-defined function. For the rest of the proof we will assume that the representative of $z$ (also denoted $z$) is chosen so that its coordinates are unimodular. We follow the next steps:
    \begin{itemize}
        \item $F$ is a well-defined function on $\sX_p$. Indeed, for $z \in \C_p^n - \bigcup_{H \in \mathcal H} H$, the function $x \in \X \mapsto \log_p(z^t \cdot x)$ is continuous on the compact set $\X$. Thus, the integral defining $F(z)$ converges and we have pointwise convergence
        \[
        F(z) = \lim_{r \to +\infty} f_r(z).
        \]

        \item The sequence $({f_r}_{\vert \sX_p^{\leq m}})$ converges to $F_{\vert \sX_p^{\leq m}}$ with respect to the sup norm for $m \geq 1$. To simplify the notation, denote by $(f_r)$ and $F$ the restrictions of these functions to $\sX_p^{\leq m}$. To prove that $(f_r)_r$ converges to $F$ with respect to the sup norm it is enough to see that $(f_r)_r$ is Cauchy with respect to this norm. Observe that, if we let 
        $\pi\colon V_{r+1} \twoheadrightarrow{} V_r$ be the lift of the reduction modulo $p^r$ map $\X_{r+1} \twoheadrightarrow{} \X_r$ and use that $\lambda$ is a measure, we have
        \[
        \begin{split}
        f_{r+1}(z) - f_r(z) & = \sum_{v \in V_{r+1}} \lambda(U_{v/p^{r+1}} ) \log_p \left( \frac{z^t \cdot v}{ z^t \cdot \pi(v)} \right) \\ & = \sum_{v \in V_{r+1}} \lambda(U_{v/p^{r+1}}) \log_p\left(  1 + \frac{z^t \cdot (v - \pi(v))}{z^t \cdot \pi(v)}  \right).
        \end{split}
        \]
        Since $v \equiv \pi(v) \mod p^r$, we deduce that for every $z \in \sX_p^{\leq m}$ 
        \[
        \ord_p\left( \frac{z^t \cdot (v - \pi(v))}{z^t \cdot \pi(v)} \right) \geq r - m,
        \]
        Thus, if $r > m$, we can use the power series expansion of $\log(1 + x)$ to deduce that 
        \[
        \ord_p(f_{r+1}(z) - f_r(z)) \geq r-m \ \text{for all $z \in \sX_p^{\leq m}$}
        \]
        It follows from there that $(f_r)_r$ is Cauchy.

        \item $F \in \mathcal A_{\mathcal L}$. Let $m \geq 1$ and denote by $(f_r)_r$ and $F$ the restrictions of these functions to $\sX_p^{\leq m}$. It is enough to see that  $F$ belongs to $\mathcal A_{\mathcal L}^{\leq m}$. With this aim, write
        \[
        F = \left( \lim_{r \to +\infty} (f_r - f_{m+1}) \right)+ f_{m+1}.
        \]
        We claim that $\lim_{r \to +\infty} (f_r - f_{m+1})$ is a rigid analytic function. Indeed, we can write
        \[
        f_r(z) - f_{m+1}(z) = \sum_{v \in V_{r+1}}\lambda(U_{v/p^{r+1}}) \log_p\left( \frac{z^t \cdot v}{ z^t \cdot \pi^{r-(m+1)}(v)}  \right).
        \]
        Since $v \equiv \pi^{r- (m+1)}(v) \mod p^{m+1}$, it follows from Lemma \ref{lemma: log(l_H/l_H') is rigid analytic if H and H' congruent}, that $f_r - f_{m+1}$ is rigid analytic on $\sX_p^{\leq m}$. Then, since the sequence $(f_r - f_{m+1})_r$ converges with respect to the sup norm by the previous point of this proof, and $\mathcal A^{\leq m}$ is complete with respect to this norm, we deduce the desired claim.

        On the other hand, since $\lambda$ has total mass zero, we have that $f_{m+1} \in A_{\mathcal L}^{\leq m}$, as it can be written as a linear combination of $\log_p(f_{H, H'}(z))$ for $\Q_p$-rational hyperplanes $H, H' \in \mathcal H$. Hence, we deduce that $F \in \mathcal A_{\mathcal L}^{\leq m}$ and we are done.
    \end{itemize}

    \end{proof}

In view of the previous lemma, we can define a lift from measures of total mass zero to log-rigid analytic functions on $\sX_p$.

\begin{definition}
    Let $\mathrm{ST}$ be the morphism given by
    \[
    \mathrm{ST} \colon \D_0(\X, \Z[1/m]) \too \mathcal A_{\mathcal L} , \ \lambda \mapstoo \left( z \mapstoo \int_\X \log_p(z^t \cdot x)d\lambda \right).
    \]
\end{definition}
The morphism $\mathrm{ST}$ is $\Gamma$-equivariant. Therefore, it induces a map in cohomology 
\[
\mathrm{ST} \colon H^{n-1}(\Gamma, \D_0(\X, \Z[1/m])) \too H^{n-1}(\Gamma, \mathcal A_{\mathcal L}).
\]
Using this map, we obtain our desired log-rigid analytic class. 

\begin{definition}
    Let $\mu_0 \in H^{n-1}(\Gamma, \D_0(\X, \Z[1/m]))_{\Q}$ be as in \eqref{eq: mu0 fixed}. Define 
    \[
    J_{E, \mathcal L} \coloneq \mathrm{ST}(\mu_0) \in H^{n-1}(\Gamma, \mathcal A_{\mathcal L})_{\Q}.
    \]
\end{definition}

As with the cocycles $\mu_0$, we have the following independence-of-$c$ result, where we here, as before, denote dependence on the smoothing with a pre-subscript.
\begin{proposition}
    If $c$, $d$ are coprime to each other and also to $p$, we have 
    \[
        (1-d^n){}_cJ_{E, \mathcal L}=(1-c^n){}_dJ_{E, \mathcal L}.
    \]
\end{proposition}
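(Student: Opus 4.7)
The plan is to derive this from the independence relation for $\mu_0$ given in Remark \ref{rem:dependc3}, using the observation that the Poisson-like lift $\mathrm{ST}$ kills the pushforward action $[a]_*$ when restricted to measures of total mass zero.

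The key point is the following direct computation. For $a$ coprime to $p$ and $\lambda \in \D_0(\X,\Z[1/m])$, one has
\[
\mathrm{ST}([a]_* \lambda)(z) \;=\; \int_{\X} \log_p(z^t \cdot (ax))\, d\lambda(x) \;=\; \log_p(a)\,\lambda(\X) \;+\; \int_{\X} \log_p(z^t \cdot x)\, d\lambda(x) \;=\; \mathrm{ST}(\lambda)(z),
\]
since $\lambda(\X)=0$. Thus $\mathrm{ST}\circ [a]_* = \mathrm{ST}$ as $\Gamma$-equivariant maps from $\D_0(\X,\Z[1/m])$ to $\mathcal A_{\mathcal L}$; applying this to $[a]_*^{-1}\lambda$ yields $\mathrm{ST}\circ [a]_*^{-1} = \mathrm{ST}$ as well. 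Passing to cohomology, I conclude that the operators $[a]_*^{\pm 1}$ act as the identity on every class in the image of the map $\mathrm{ST}\colon H^{n-1}(\Gamma, \D_0(\X,\Z[1/m]))_{\Q} \to H^{n-1}(\Gamma, \mathcal A_{\mathcal L})_{\Q}$.

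Now I apply $\mathrm{ST}$ to the identity of Remark \ref{rem:dependc3}. Since the target $H^{n-1}(\Gamma, \mathcal A_{\mathcal L})_{\Q}$ is a $\Q$-vector space, the torsion ambiguity in that remark is harmless, and we get
\[
\mathrm{ST}\bigl(([c]_*^{-1} - c^n)\, {}_d\mu_0\bigr) \;=\; \mathrm{ST}\bigl(([d]_*^{-1} - d^n)\,{}_c\mu_0\bigr).
\]
Using the previous paragraph, the left-hand side equals $(1-c^n){}_d J_{E,\mathcal L}$ and the right-hand side equals $(1-d^n){}_c J_{E,\mathcal L}$, which upon rearranging gives the desired equality $(1-d^n){}_c J_{E,\mathcal L} = (1-c^n){}_d J_{E,\mathcal L}$.

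I do not anticipate any substantive obstacle: the only thing to verify carefully is the vanishing $\log_p(a)\,\lambda(\X)=0$, which is immediate from the total-mass-zero hypothesis and is exactly the reason the lift was constructed on $\D_0$ rather than on all of $\D$. The argument also explains conceptually why the smoothing factor $c$ becomes inessential after passing through $\mathrm{ST}$, paralleling the analogous phenomenon for logarithmic derivatives of Siegel units.
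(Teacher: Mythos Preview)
Your proof is correct and follows essentially the same approach as the paper: both establish $\mathrm{ST}\circ [a]_* = \mathrm{ST}$ on $\D_0$ via the total-mass-zero condition and then apply the relation from Remark~\ref{rem:dependc3}. Your write-up is a bit more explicit about handling the torsion ambiguity and passing to $[a]_*^{-1}$, but the argument is the same.
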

\begin{proof}
    {For any prime-to-$p$ scalar $a$, we have $\mathrm{ST} \circ [a]_* = \mathrm{ST}$, as if $\lambda \in \D_0(\X, \Z[1/m])$
    \[
    \int_{\X} \log_p(z^t\cdot x)\,d([a]_*\lambda) = \int_{\X}\log_p a +\log_p(z^t\cdot x)\,d\lambda = \int_{\X}\log_p(z^t\cdot x)\,d\lambda
    \]
    with the last equality by $\lambda(\X)=0$. Then the result follows by passing to group cohomology for $\Gamma$ and applying Remark \ref{rem:dependc3}.}
\end{proof}

In particular, $(1-c^n)^{-1}{}_cJ_{E, \mathcal L}$ is independent of $c$, though this introduces denominators.

\begin{remark}\label{remark: comparison J_Eis with JDR when n = 2}
Suppose $n = 2$ and consider $\mathcal{J}_{\mathrm{DR}} \in H^1(\SL_2(\Z), \mathcal A^\times)^{-}$ a lift of (the restriction to $\SL_2(\Z)$ of) $J_{\mathrm{DR}} \in H^1(\SL_2(\Z[1/p]), \mathcal A^\times/\C_p^\times)^-$ constructed in \cite{DPV2}. By comparing the constructions of $J_{\mathrm{DR}}$ and $J_{E, \mathcal L}$, we deduce $J_{E, \mathcal L} = \log_p(\mathcal {J}_{\mathrm{DR}})$.  
\end{remark}

\subsection{Evaluation at totally real fields where $p$ is inert}

Let $F$ be a totally real field of degree $n$ where $p$ is inert and denote by $\sigma_1, \dots, \sigma_n$ the collection of embeddings of $F$ into $\R$. Let $\mathfrak a$ be an integral ideal of $F$ of norm coprime to $pc$. Fix $\{ \tau_1, \dots, \tau_n \}$ an oriented $\Z$-basis of $\mathfrak a^{-1}$, in the sense that the square matrix $(\sigma_i(\tau_j))_{i,j}$ has positive determinant, and let $\tau \in F^n$ be the column vector whose $i$th entry is equal to $\tau_i$. The vector $\tau$ induces an isomorphism of $\Q$-vector spaces 
\[
\Q^n \xlongrightarrow{\sim} F, \ x \mapstoo \tau^t \cdot x. 
\]
The action of multiplication by $F^\times$ on $F$, which is $\Q$-linear, gives an embedding
\begin{equation}\label{eq: F inside Mn(Q)}
F \intoo \mathrm{M}_n(\Q), \ \alpha \mapstoo A_\alpha
\end{equation}
determined by the following property: for $\alpha \in F$ and $x \in \Q^n$, $\alpha (\tau^t \cdot x) = \tau^t \cdot ( A_{\alpha} x)$.

\begin{lemma}
	The element $\tau \in \mathbb{P}^{n-1}(\C_p)$ belongs to $\sX_p$ and is fixed by $F^1 \xhookrightarrow{} \SL_n(\Q)$.
\end{lemma}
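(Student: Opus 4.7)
The plan is to verify the two assertions separately, each being essentially a consequence of linear algebra once the correct identification is set up.

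First, to see that $\tau \in \sX_p$, one must check that no $\Q_p$-rational hyperplane passes through $\tau$, or equivalently, that $\tau_1, \dots, \tau_n$ are $\Q_p$-linearly independent inside $\C_p$. The key observation is that, since $\{\tau_1, \dots, \tau_n\}$ is a $\Z$-basis of $\mathfrak a^{-1}$, it is in particular a $\Q$-basis of $F$. Tensoring with $\Q_p$, one obtains $F\otimes_{\Q}\Q_p = \bigoplus_i \Q_p \tau_i$ as $\Q_p$-vector spaces. The hypothesis that $p$ is inert in $F$ gives $F\otimes_\Q\Q_p = F_p$, a field extension of $\Q_p$ of degree $n$, and the fixed embedding $\bar\Q \subset \bar\Q_p \subset \C_p$ realizes it inside $\C_p$. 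Thus $\{\tau_1, \dots, \tau_n\}$ is a $\Q_p$-basis of $F_p \subset \C_p$, and so is $\Q_p$-linearly independent; therefore $\tau \in \sX_p$.

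Second, to see that $\tau$ is fixed by the image of $F^1$, I would unwind the definition of the embedding $F\hookrightarrow \mathrm{M}_n(\Q)$ from \eqref{eq: F inside Mn(Q)}. By construction, for every $\alpha \in F$ and every $x \in \Q^n$,
\[
\alpha(\tau^t \cdot x) = \tau^t \cdot (A_\alpha x) = (A_\alpha^t \tau)^t\cdot x.
\]
Since this identity holds for all $x \in \Q^n$, it forces the equality of vectors $A_\alpha^t \tau = \alpha \tau$ in $F^n$, and hence in $\C_p^n$ via the fixed embedding. Specializing to $\alpha \in F^1$, so that $A_\alpha \in \SL_n(\Q)$, we conclude that in $\mathbb{P}^{n-1}(\C_p)$,
\[
\tau \cdot A_\alpha = [A_\alpha^t \tau] = [\alpha \tau] = [\tau],
\]
using the right action convention $(z,g)\mapsto [g^t z]$ and that $\alpha \neq 0$. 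So $\tau$ is fixed by $F^1$.

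Neither step presents a real obstacle; the only subtlety is to keep track of transposes and right-action conventions so as to connect the multiplicative action of $F$ on itself (via $\alpha \cdot (\tau^t x) = \tau^t A_\alpha x$) with the right action of $\SL_n(\Q_p)$ on $\sX_p$ defined by $g \colon z \mapsto [g^t z]$. In particular, no deeper input about $p$-adic symmetric domains is needed beyond the fact that its complement in $\PP^{n-1}(\C_p)$ is the union of $\Q_p$-rational hyperplanes.
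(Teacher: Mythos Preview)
Your proof is correct and follows essentially the same approach as the paper: you use that $p$ inert forces $F\otimes_\Q\Q_p$ to be a field so that the $\tau_i$ are $\Q_p$-independent, and you derive $A_\alpha^t\tau=\alpha\tau$ from the defining relation of $A_\alpha$ to conclude the fixed-point statement. Your version is just more explicit about the transposes and the right-action convention.
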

\begin{proof}
	The coordinates of $\tau$ give a $\Q$-basis of $F$. Since $p$ is inert in $F$, the coordinates of $\tau$ also form a $\Q_p$-basis of the completion of $F$ at $p$. In particular, they are independent over $\Q_p$. In other words, $\tau \in \sX_p$. Finally, for every $\alpha \in F$ we have $A_\alpha^t \tau = \alpha \tau$ by the property stated below \eqref{eq: F inside Mn(Q)}. In particular, $\tau \in \sX_p$ is fixed by the action of $F^1 \xhookrightarrow{} \SL_n(\Q)$.
\end{proof}

Let $U_F$ be the subgroup of totally positive units in $\cO_F^\times$. We view $U_F$ as a subgroup of $\Gamma$. Consider the following morphism in cohomology induced by evaluation at $\tau$
\[
H^{n-1}(\Gamma, \mathcal A_{\mathcal L}) \xlongrightarrow{\mathrm{ev}_{\tau}} H^{n-1}(U_F,  \C_p ).
\]
By Dirichlet's unit theorem, $U_F \simeq \Z^{n-1}$.  Therefore, $H_{n-1}(U_F, \Z) \simeq \Z$, and we can fix a generator of this group $c_{U_F} \in H_{n-1}(U_F, \Z)$.

\begin{definition}
	Consider the same notation as above, and let $J \in H^{n-1}(\Gamma, {\mathcal A}_{\mathcal L})_{\Q}$. Define the evaluation of $J$ at $[\tau] \in \sX_p$ by the cap product
	\[
	J[\tau] \coloneq c_{U_F} \frown \mathrm{ev}_{\tau}(J) \in \C_p.
	\]
\end{definition}

Since $J_{E, \mathcal L} = \mathrm{ST}(\mu_0)$, it follows from the description of the map $\mathrm{ST}$ that $J_{E, \mathcal L}[\tau] \in F_p$.
We also note that this definition depends, up to a sign, of the choice of generator $c_{U_F} \in H_{n-1}(U_F, \Z)$. In the next section, we will make a precise choice of generator when comparing the local trace of these values to the local trace of $p$-adic logarithms of Gross--Stark units.

\section{Traces of values of the log-rigid class and the Gross--Stark Conjecture}\label{sec: Values of the log-rigid class and Gross--Stark conjecture}

Let $F$ be a totally real field where $p$ is inert, let $\mathfrak a$ be an integral ideal of $F$ coprime to $pc$, and fix $\tau \in F^n$ a vector whose entries give an oriented $\Z$-basis of $\mathfrak a^{-1}$, which yields a point $\tau \in X_p$. Recall the log-rigid analytic class $J_{E, \mathcal L}$ constructed in the previous section and the value $J_{E, \mathcal L}[\tau] \in F_p$. In this section, we prove 
\[
\mathrm{Tr}_{F_p/\Q_p} J_{E, \mathcal L}[\tau] = -L_p'(1_{[\mathfrak a],p},0),
\]
where $L_p(1_{[\mathfrak a],p},s)$ denotes a $p$-adic partial zeta function attached to the class of $\mathfrak a$ in the narrow Hilbert class group of $F$. From this expression and the rank $1$ Gross--Stark conjecture, we obtain the equality
\[
\mathrm{Tr}_{F_p/\Q_p} J_{E, \mathcal L}[\tau] = \mathrm{Tr}_{F_p/\Q_p} \log_p(u^{\sigma_{\mathfrak a}})
\] 
for $u^{\sigma_\mathfrak a}$ a Gross-Stark unit in the narrow Hilbert class field of $F$ attached to the class of $\mathfrak{a}$. 

\subsection{$p$-adic $L$-functions and Gross--Stark conjecture} We state the the Gross--Stark conjecture in a simple setting. For more details, we refer the reader to \cite[Section 3]{Gross1981padicL} and \cite[Section 2]{DasguptaShintani}. We begin by introducing the following notation. For an integral ideal $\mathfrak f$ of $F$, denote by $G_{\mathfrak f}$ the narrow ray class group modulo $\mathfrak f$. It is obtained by taking the quotient of the set of integral ideals in $F$ which are prime to $\mathfrak f$ by the relation 
\[
\mathfrak b \sim_{\mathfrak f} \mathfrak c \text{ if and only if } \mathfrak b \mathfrak c^{-1} = (\lambda) \text{ for $\lambda \in 1 + \mathfrak f \mathfrak c^{-1}$ totally positive.}
\]
Then, if $\varepsilon$ is a $\bar{\Q}$-valued function on $G_{\mathfrak f}$, we let
\[
L(\varepsilon , s) \coloneq \sum_{(\mathfrak b, \mathfrak f) = 1 } \varepsilon(\mathfrak b) \mathrm{N} \mathfrak b^{-s},
\]
where the sum is over integral ideals which are coprime to $\mathfrak f$. This sum converges for $s \in \C$ such that $\mathrm{Re}(s) > 1$ and it can be extended via analytic continuation to a meromorphic function at $\C$ with at most a pole at $s = 1$, that we will still denote by $L(\varepsilon, s)$. Recall that $c$ is a positive integer prime to $p$ and denote by $\varepsilon_c$ the function on $G_{\mathfrak f}$ given by $\varepsilon_c(\mathfrak b) = \varepsilon((c)\mathfrak b)$. For $k \in \Z_{\geq 1}$, consider
\[
\Delta_{c}(\varepsilon, 1 - k) \coloneq L(\varepsilon, 1 - k ) - c^{nk}  L( \varepsilon_c, 1 - k ).
\]
It is result of Klingen and Siegel that $\Delta_{c}(\varepsilon, 1- k) \in \Q(\varepsilon)$, where $\Q(\varepsilon)$ denotes the field generated by the values of $\varepsilon$. Deligne--Ribet and Cassou--Nogu\`{e}s refined this statement by studying the integrality properties of these values. Their study results in the existence of $p$-adic analytic functions interpolating these values, which we proceed to outline for the case of partial zeta functions. 

Let $G \coloneq \varprojlim_{r \geq 1} G_{p^r}$, where the limit is taken with respect to the natural projection maps $G_{p^{r+1}} \to G_{p^r}$, let $H_{\mathfrak a }$ be the open subset of $G$ consisting of the pre-image of $\mathfrak a$ via the natural map $G \to G_1$, and denote by $1_{[\mathfrak a], p} \colon G \to \Z$ the characteristic function of $H_{\mathfrak a}$. If $\varepsilon \colon G \to \Z$ is locally constant, it factors through $G_{p^r}$ for some $r \geq 1$. We then define $L(\varepsilon ,s)$ by viewing $\varepsilon$ as a function on $G_{p^r}$, which is independent of the choice of $r$.
\begin{theorem}\label{thm: Deligne--Ribet p-adic L-function}
    For $\varepsilon \colon H_{\mathfrak a} \to \Z$ locally constant, consider the product $\varepsilon 1_{[\mathfrak a], p}$ and view it as a locally constant function on $G$.
    \begin{enumerate}
        \item If $k \geq 1$, we have $\Delta_c(\varepsilon 1_{[\mathfrak a], p}, 1-k) \in \Z[1/c]$.
        \item The distribution $\mu_{\mathfrak a} \colon\varepsilon \mapsto \Delta_c(\varepsilon1_{[\mathfrak a], p}, 0)$
        defines a measure on $H_{\mathfrak a}$.
        \item The function 
        \[
        L_p(1_{ [\mathfrak a], p } , \cdot) \colon \Z_p \too \Z_p, \ s \mapstoo \int_{H_{\mathfrak a}} \langle \Norm \mathfrak b\rangle^{-s}  d\mu_{\mathfrak a}(\mathfrak b)
        \]
        is analytic and is characterized by the following interpolation property:  for every integer $k \geq 1$ such that $k \equiv 1 \mod [F(\mu_{2p}) : F]$, 
        \begin{equation}\label{eq: interpolation property of L_p at 1- k}
        L_p(1_{ [\mathfrak a], p } , 1 - k) = \Delta_c(1_{[\mathfrak a], p} , 1 - k ).
        \end{equation}
            \end{enumerate}
\end{theorem}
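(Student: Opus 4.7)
The plan is to follow the classical strategy of Deligne--Ribet, with Cassou-Nogu\`es's approach via Shintani's method as an alternative that may be more natural in the broader context of this paper.

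For part (1), I would first invoke the Klingen--Siegel theorem, which gives rationality of the values $L(\varepsilon 1_{[\mathfrak a],p}, 1-k)$ for $k \geq 1$, hence rationality of $\Delta_c(\varepsilon 1_{[\mathfrak a],p}, 1-k)$. To promote $\Q$ to $\Z[1/c]$, one exploits that the $c$-smoothing multiplies the Euler product, character by character, by a factor of the form $1 - c^{nk}\chi(c)$, which cancels precisely the denominators at primes $\ell \nmid c$ that can arise in the unsmoothed values. Concretely, for each prime $\ell \neq c$, one reduces $\ell$-integrality of the smoothed value to the $\ell$-integrality of the non-smoothed one over $\Z_{(\ell)}$, which is the classical statement of Carlitz and Deligne--Ribet.

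For part (2), the technical heart of the theorem, I would realize $\Delta_c(\varepsilon 1_{[\mathfrak a],p}, 1-k)$ as a constant term at a suitable cusp of a $c$-smoothed Hilbert modular Eisenstein series of parallel weight $k$. The $q$-expansion principle of Deligne--Ribet then allows one to deduce congruences between constant terms from congruences between non-constant Fourier coefficients, which are explicit divisor-sum expressions and therefore tractable. One obtains Kummer-type congruences of the form
\[
\Delta_c(\varepsilon 1_{[\mathfrak a],p}, 1-k_1) \equiv \Delta_c(\varepsilon 1_{[\mathfrak a],p}, 1-k_2) \pmod{p^{N+1}}
\]
whenever $k_1 \equiv k_2 \pmod{(p-1)p^N}$ (with both congruent to $1$ modulo $[F(\mu_{2p}):F]$). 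Such congruences are equivalent to boundedness of the naive distribution $\mu_{\mathfrak a}$, promoting it to an honest $p$-adic measure on $H_{\mathfrak a}$. An alternative that bypasses Hilbert modular forms is to decompose each ideal class via Shintani cones, expressing the $L$-values as finite $\Q$-linear combinations of generalized Bernoulli numbers, and then deriving the congruences directly from classical $p$-adic congruences among Bernoulli polynomials.

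For part (3), once $\mu_{\mathfrak a}$ is established as a bounded measure, the function $L_p(1_{[\mathfrak a],p}, s) = \int_{H_{\mathfrak a}} \langle \Norm \mathfrak b \rangle^{-s} \, d\mu_{\mathfrak a}(\mathfrak b)$ is $p$-adic analytic in $s \in \Z_p$ since $s \mapsto \langle \Norm \mathfrak b \rangle^{-s}$ is an analytic family of characters uniformly in $\mathfrak b \in H_{\mathfrak a}$. For the interpolation at $s = 1-k$, I would observe that in the Teichm\"uller decomposition $\Norm \mathfrak b = \omega(\Norm \mathfrak b) \langle \Norm \mathfrak b \rangle$, the congruence $k \equiv 1 \pmod{[F(\mu_{2p}):F]}$ forces $\omega(\Norm \mathfrak b)^{k-1} = 1$, so that $\langle \Norm \mathfrak b \rangle^{k-1} = \Norm \mathfrak b^{k-1}$ on $H_{\mathfrak a}$. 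Then uniformly approximating $\mathfrak b \mapsto \Norm \mathfrak b^{k-1}$ on the compact space $H_{\mathfrak a}$ by locally constant functions, and using boundedness of $\mu_{\mathfrak a}$, reduces the integral to values of $\mu_{\mathfrak a}$ on characteristic functions, yielding exactly $\Delta_c(1_{[\mathfrak a],p}, 1-k)$. The principal obstacle throughout is step (2): proving boundedness of $\mu_{\mathfrak a}$ requires substantial input, and this is precisely where the smoothing by $c$ is indispensable, since it uniformly kills denominators across the full family of partial zeta values.
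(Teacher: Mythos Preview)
The paper does not actually prove this theorem: its entire proof reads ``This follows from Theorem 0.5 of \cite{DR}.'' In other words, the result is treated as a known input from Deligne--Ribet, and the paper simply cites it.

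Your proposal, by contrast, sketches the actual content of the Deligne--Ribet proof (rationality via Klingen--Siegel, integrality from the $c$-smoothing, boundedness of the measure via the $q$-expansion principle for Hilbert modular Eisenstein series, and the standard derivation of analyticity and interpolation from a bounded measure), with the Cassou-Nogu\`es/Shintani alternative noted. This is a faithful outline of how the cited theorem is established, so in that sense you have supplied what the paper deliberately outsourced. The one place I would tighten your argument is in part (3): the condition $k \equiv 1 \pmod{[F(\mu_{2p}):F]}$ can be strictly weaker than $k \equiv 1 \pmod{p-1}$ when $F$ meets $\Q(\mu_p)^+$ nontrivially, so the claim $\omega(\Norm\mathfrak b)^{k-1}=1$ does not follow immediately from the Teichm\"uller decomposition of $\Norm\mathfrak b$ as a $p$-adic integer; one needs to use that the measure $\mu_{\mathfrak a}$, as constructed by Deligne--Ribet on the Galois side, already incorporates this finer periodicity. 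But this is a detail internal to the cited result, not something the paper addresses either.
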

\begin{proof}
    This follows from Theorem 0.5 of \cite{DR}.
\end{proof}
Observe that $L(1_{[\mathfrak a],p}, s)$ is a partial zeta function with the Euler factor corresponding to $p$ removed. This implies that $\Delta_c(1_{[\mathfrak a], p} , 0 ) = 0$ and, by \eqref{eq: interpolation property of L_p at 1- k}, $L_p(1_{ [\mathfrak a], p } , 0) = 0$ as well. The Gross--Stark conjecture gives an arithmetic interpretation for the value of the derivative $L_p'(1_{ [\mathfrak a], p }, 0)$ with respect to $s$ at $s = 0$. For that, let $H$ be the narrow Hilbert class field of $F$ and consider the following subgroup of $p$-units in $H$
\[
\cO_H[1/p]_-^\times \coloneq \{ x \in H^\times \mid \lvert x \rvert_{\mathfrak q} = 1 \ \forall \mathfrak q \nmid p  \},
\]
where $\mathfrak q$ runs over all archimedean and nonarchimedean places of $H$ not dividing $p$. The embedding $\bar{\Q} \subset \bar{\Q}_p$ determines a prime $\mathfrak{p}$ of $H$ above $p$ that we fix here and from now on. The following proposition is due to Gross, see \cite[Proposition 3.8]{Gross1981padicL}.
\begin{proposition}\label{prop: determination of u}
	There exists a unique element $u \in \cO_H[1/p]_-^\times \otimes \Q$ satisfying 
	\[
	\ord_{\mathfrak p}(u^{\sigma_{\mathfrak a}}) = \Delta_c(1_{[\mathfrak a]} , 0) \text{ for all $\mathfrak {a}$ coprime to $p$,} 
	\]
	where $1_{[\mathfrak a]}$ denotes the characteristic function of $[\mathfrak a]$ on $G_1$ and, here and from now on, $\sigma_{\mathfrak{a}} \in \Gal(H/F)$ denotes the Frobenius element associated to $\mathfrak a$.
\end{proposition}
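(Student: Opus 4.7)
The plan is to realize $u$ as the unique preimage, in the $\Q$-vector space $V \coloneq \cO_H[1/p]_-^\times \otimes \Q$, of a prescribed element of the rational group algebra $\Q[\Gal(H/F)]$ under a natural valuation map. A key initial observation is that, since $p$ is inert in $F$ and the rational prime $p$ is a totally positive generator of $p\cO_F$, the prime $p\cO_F$ is trivial in $\mathrm{Cl}^+(F) \cong \Gal(H/F)$. Hence its Artin symbol is trivial, $p\cO_F$ splits completely in the narrow Hilbert class field $H$, and the primes of $H$ above $p$ are canonically indexed by $\Gal(H/F)$ via $\sigma \mapstoo \mathfrak p^{\sigma}$. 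In particular, prescribing $\ord_{\mathfrak p}(u^{\sigma_{\mathfrak a}})$ for every narrow ideal class $[\mathfrak a]$ amounts to prescribing the valuation of $u$ at every prime of $H$ above $p$.

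Consider then the $\Q$-linear map
\[
\Phi \colon V \too \Q[\Gal(H/F)], \quad u \mapstoo \sum_{\sigma \in \Gal(H/F)} \ord_{\mathfrak p}(u^\sigma)\, \sigma.
\]
Uniqueness of $u$ is obtained by showing that $\Phi$ is injective: if $\Phi(u) = 0$, then $u$ is a unit at every prime of $H$ above $p$, so $u \in \cO_H^\times \otimes \Q$. The minus condition then forces every archimedean absolute value of $u$ to equal $1$, so $u$ is a root of unity and vanishes in $V$.

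Existence reduces to showing that the rational combination
\[
\theta \coloneq \sum_{[\mathfrak a] \in \mathrm{Cl}^+(F)} \Delta_c(1_{[\mathfrak a]}, 0)\, \sigma_{\mathfrak a} \in \Q[\Gal(H/F)],
\]
whose coefficients lie in $\Q$ by Klingen--Siegel, belongs to the image of $\Phi$. The global product formula applied to any element of $V$, combined with the vanishing of the archimedean contributions on the minus part, forces the image of $\Phi$ to lie in the augmentation kernel of $\Q[\Gal(H/F)]$. That $\theta$ itself lies in this kernel follows from the identity $\sum_{[\mathfrak a]} 1_{[\mathfrak a]} = 1$ and $\Delta_c(1, 0) = (1 - c^n)\zeta_F(0)$, together with the vanishing $\zeta_F(0) = 0$ when $F$ is totally real of degree $n \geq 2$: by the functional equation, $\Gamma_{\R}(s)^n\zeta_F(s)$ has only a simple pole at $s = 0$, while $\Gamma_{\R}(s)^n$ has a pole of order $n$, forcing $\zeta_F$ to vanish to order $n - 1 \geq 1$ there. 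A rank computation via Dirichlet's $S$-unit theorem for $\cO_H[1/p]^\times$, decomposed character-by-character under $\Gal(H/F)$ and restricted to the minus part, then identifies the image of $\Phi$ with the augmentation kernel; setting $u \coloneq \Phi^{-1}(\theta)$ yields the desired element.

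The main obstacle is the dimension/image identification: one must match the $\Q$-rank of $V$ exactly with the dimension of the augmentation kernel of $\Q[\Gal(H/F)]$. Here the complete splitting of $p$ in $H$ is crucial, since it makes the primes above $p$ contribute a full copy of the regular representation to the Dirichlet rank of $\cO_H[1/p]^\times$; the archimedean contribution to the minus part, in turn, is controlled by the signature of $H$ and the action of the complex conjugations. Character by character, this is a routine but careful bookkeeping — essentially the content of Gross's original argument in \cite{Gross1981padicL}.
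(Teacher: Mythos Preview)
Your proposal is correct. The paper itself gives no argument for this proposition at all --- it simply cites \cite[Proposition~3.8]{Gross1981padicL} --- so your sketch actually supplies more than the paper does, and what you outline is precisely Gross's own strategy: the valuation map $\Phi$ into the group algebra, injectivity from the fact that elements of $\cO_H[1/p]_-^\times$ with trivial $p$-valuation are roots of unity, image contained in the augmentation ideal by the product formula, the target $\theta$ landing there by $\zeta_F(0)=0$, and surjectivity via a rank comparison.
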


\begin{remark}\label{rmk: from Gross--Stark to Brumer--Stark}
    If every prime factor of $c$ is greater than $n + 1$, the Brumer--Stark conjecture, proven in \cite{DKBrumerStark} and \cite{DKSW}, implies that in fact $u \in \cO_H[1/p]^\times_-$. Indeed, the quantities $\Delta_c(\varepsilon, 1 - k)$ can be written as a linear combination of values of (smoothed) partial zeta functions $\zeta_{S,T}(\sigma, 1- k)$ considered in \cite{DasguptaShintani} for $T$ running over subsets of the set of prime ideals of $F$ dividing $c\cO_F$. Under the condition on $c$ given above, each of these subsets satisfies the assumptions to apply the proof of Brumer--Stark, see \cite[Section 1.1]{DKSW}. 
\end{remark}

Since $p\cO_F$ splits completely on $H$, we have $H \subset H_{\mathfrak p} = F_p$. 

\begin{theorem}[Gross--Stark conjecture]\label{thm: Gross--Stark conj}
	Let $u$ be as in Proposition \ref{prop: determination of u}. We have
	\[
	L_p'(1_{[\mathfrak a] , p} , 0) = - \log_p (\Norm_{F_p/\Q_p}u^{\sigma_{\mathfrak a}}) \text{ for all $\mathfrak {a}$ coprime to $p$.} 
	\]
\end{theorem}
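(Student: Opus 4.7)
The assertion of this theorem is exactly the rank one Gross--Stark conjecture of \cite{Gross1981padicL} applied to the character $1_{[\mathfrak{a}]}$ on the narrow class group $G_1$ of $F$. The plan is therefore not to produce a new proof, but to organize the result as a direct consequence of the theorems of Dasgupta--Darmon--Pollack \cite{DDP} and Ventullo \cite{Ventullo}, after checking that the setup used there matches the objects built in this paper.

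First, I would recast both sides in a manner compatible with \cite{DDP}. On the analytic side, the function $L_p(1_{[\mathfrak a],p},s)$ constructed from $\mu_{\mathfrak a}$ in Theorem~\ref{thm: Deligne--Ribet p-adic L-function} is the Deligne--Ribet--Cassou-Nogu\`es $p$-adic partial $L$-function attached to the narrow class $[\mathfrak a] \in G_1$ with the Euler factor at $p$ removed, as its $c$-smoothed form agrees with $\Delta_c(1_{[\mathfrak a],p},1-k)$ at positive integers $k \equiv 1 \pmod{[F(\mu_{2p}):F]}$. In particular its value at $s=0$ vanishes, so $L_p'(1_{[\mathfrak a],p},0)$ is an honest derivative of order one at an exceptional zero. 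On the arithmetic side, $u \in \cO_H[1/p]_-^\times \otimes \Q$ is the Gross--Stark unit determined by Proposition~\ref{prop: determination of u}, whose $\mathfrak p$-order equals the classical value $\Delta_c(1_{[\mathfrak a]},0)$; via the fixed embedding $H \subset H_{\mathfrak p} = F_p$, the quantity $\log_p\bigl(\mathrm{N}_{F_p/\Q_p}u^{\sigma_{\mathfrak a}}\bigr)$ is the sum over places of $H$ above $p$ of $\log_p$ of the corresponding conjugate of $u$.

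Next, I would recall that the rank one Gross--Stark conjecture predicts exactly the identity $L_p'(1_{[\mathfrak a],p},0) = -\log_p\bigl(\mathrm{N}_{F_p/\Q_p}u^{\sigma_{\mathfrak a}}\bigr)$ in this formulation: up to the smoothing factor $\Delta_c$ which is harmless since $c$ is prime to $p$ and disappears after dividing by $(1-\varepsilon(\mathfrak c)\mathrm{N}\mathfrak c^{1-k})$-type factors, this is the statement of \cite[Conj.\ 2.12]{Gross1981padicL} for the trivial character on $G_1$ twisted by the indicator of $[\mathfrak a]$. The theorem of Dasgupta--Darmon--Pollack \cite{DDP} establishes this formula whenever a certain nonvanishing hypothesis on $L$-invariants and primes above $p$ holds, via a deformation-theoretic argument comparing Hida families of cuspidal eigenforms to Eisenstein families on $\mathrm{GL}_2/F$. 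Ventullo \cite{Ventullo} removed the remaining hypothesis, covering the totally inert setting relevant to us.

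The only genuine verification is therefore a book-keeping step: that the $c$-smoothed function $L_p(1_{[\mathfrak a],p},s)$ and the $c$-smoothed Gross--Stark unit $u$ appearing in Proposition~\ref{prop: determination of u} satisfy the same normalizations and Euler factors as the objects in \cite{DDP,Ventullo}. This is routine once one notes that both constructions are characterized by the interpolation property \eqref{eq: interpolation property of L_p at 1- k} and by the $\mathfrak p$-valuation condition, respectively. With these identifications, the theorem follows. The expected main obstacle is none beyond the careful bookkeeping of $c$-smoothings and sign conventions relating the derivative at $s=0$ to the ``$-$'' part conventions used for $u$.
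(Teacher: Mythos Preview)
Your proposal is correct and matches the paper's approach exactly: the paper's proof consists solely of the citation ``See \cite{DDP} and \cite{Ventullo},'' and you have done the same, merely adding some expository remarks on why the objects here align with those in the cited references.
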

\begin{proof}
	See \cite{DDP} and \cite{Ventullo}.
\end{proof}

\subsection{Periods of the Eisenstein class along tori attached to totally real fields} We use the differential forms representing the Eisenstein class of Section \ref{sec: differential form representatives of Eisenstein class} to prove that pullbacks of the Eisenstein class by torsion sections encode special values of zeta functions of totally real fields. A general version of the result was proven in \cite[Section 12.6]{BCG} using an adelic framework, and we specialize their results and outline the proof below for the cases that will be relevant for us. Our calculations are similar to those in Section 4.2 of \cite{BergeronCharolloisGarcia2023}.

Recall that $F$ is a totally real field of degree $n$ where $p$ is inert, $\mathfrak a$ is an integral ideal of $F$ prime to $pc$, and $\tau \in F^n$ is a column vector whose entries give a positively oriented $\Z$-basis of $\mathfrak a^{-1}$. As we saw in the previous section, $\tau$ induces a $\Q$-linear isomorphism 
\begin{equation}\label{eq: iso beta Q^n = F}
\beta\colon \Q^n \xlongrightarrow{\sim} F, \ x \mapstoo \tau^t \cdot x. 
\end{equation}
The action of multiplication by $F^\times$ on $F$, which is $\Q$-linear, gives an embedding
\begin{equation}\label{eq: F inside Mn(Q)}
	F \intoo \mathrm{M}_n(\Q), \ \alpha \mapstoo A_\alpha
\end{equation}
determined by the following property: for all $\alpha \in F$ and $x \in \Q^n$, $\alpha (\tau^t \cdot x) = \tau^t \cdot ( A_{\alpha} x)$.
Let $(F\otimes \R)^1_+$ be the subset of totally positive elements of norm $1$. The embedding \eqref{eq: F inside Mn(Q)} induces an oriented map (see Section 12.4 of \cite{BCG} for more details on the orientation)
\[
i_{\tau}\colon (F \otimes \R)_+^1 \too \sX.
\]
Denote by $U_F$ the subgroup of totally positive units in $\cO_F^\times$. Since $U_F$ has rank $n-1$ by Dirichlet's unit theorem, it follows that
\begin{equation}\label{eq: X(F)}
X(F) \coloneq U_F \backslash (F\otimes \R)_+^1
\end{equation}
is a compact oriented manifold of dimension $n-1$. 

We now introduce a linear combination of pullbacks of the Eisenstein class that we will integrate along $X(F)$. For $r \geq 1$, let
\[
\chi \colon \left(\mathfrak a^{-1} - p\mathfrak a^{-1} \right) / p^r\mathfrak a^{-1} \too \bar{\Q}
\]
be an $\cO_F^\times$-invariant function, where here and from now on, $\left(\mathfrak a^{-1} - p\mathfrak a^{-1} \right) / p^r\mathfrak a^{-1}$ denotes the set $\mathfrak a^{-1} - p\mathfrak a^{-1}$ modulo the translation action by $p^r \mathfrak a^{-1}$. Recall $\X_r : = (\Z/p^r\Z)^n - (p\Z/ p^r \Z)^n$ and observe that dot product with $\tau$ induces a bijection 
\begin{equation}\label{eq: bijections Xr to a^{-1} - pa^{-1} mod p^r to (O/p^r)*}
\X_r \xlongrightarrow{\sim} \left(\mathfrak a^{-1} - p\mathfrak a^{-1} \right) / p^r\mathfrak a^{-1}.
\end{equation}
We will sometimes view $\chi$ as a function on $\mathfrak a^{-1}$ which is $0$ on $p\mathfrak a^{-1}$, and the rest of their values are determined by the value of $\chi$ on the residue classes modulo $p^r\mathfrak a^{-1}$. 
\begin{definition}
	Consider the same notation as above. Define 
	\begin{equation}\label{eq: definition of E_r, chi}
		E_{\tau, \chi} \coloneq \sum_{\bar{x} \in \X_r} \chi(c\tau^t \cdot x) \left( x/p^r \right)^\ast {}_c E_{\psi} \in \Omega^{n-1}(\sX),
	\end{equation}
	For $s \in \C$, we define $E_{\tau, \chi}(s)$ as above but replacing ${}_c E_\psi$ by ${}_c E_\psi(s) = E_\psi(c^{-1}\Z^n , s) - c^n E_\psi(\Z^n ,s)$ in the definition.
\end{definition}
	\begin{lemma}
		The differential form $E_{\tau,\chi}$ on $\sX$ is invariant under $U_F \subset \Gamma$, where the inclusion of $U_F$ in $\Gamma$ is induced by \eqref{eq: F inside Mn(Q)}.
	\end{lemma}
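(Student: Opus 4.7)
The plan is to use the $\GL_n(\R)$-equivariance of the Mathai--Quillen forms to track the action of $A_u$ explicitly, and then exploit the defining property of the embedding $F \hookrightarrow M_n(\Q)$ to absorb the action of $u$ into the $\cO_F^\times$-invariance of $\chi$.

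First I would observe that if $\tilde A_u \colon T_r \to T_r$ denotes the bundle map $(g,v) \mapsto (A_u g, A_u v)$, then $\tilde A_u^* {}_c E_\psi = {}_c E_\psi$. This follows from the $\GL_n(\R)$-invariance of $\psi$ on $E = S \times \R^n$ together with the identity $\mathrm{tr}_{A_u \lambda} \circ \tilde A_u = \tilde A_u \circ \mathrm{tr}_\lambda$, which permits reindexing of the lattice sum $\theta([t]^*\psi,L)$ in \eqref{eq: det theta(psi,L)} after noting that $A_u$ preserves the lattices $\Z^n$ and $c^{-1}\Z^n$.

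Next I would compute $A_u^*\!\left[(x/p^r)^* {}_c E_\psi\right]$ by the commutative diagram identity
\[
(x/p^r) \circ A_u \;=\; \tilde A_u \circ (A_u^{-1} x/p^r)
\]
of maps $\sX \to T_r$, which yields, using the invariance from the first step,
\[
A_u^* E_{\tau,\chi} \;=\; \sum_{\bar x \in \X_r} \chi(c\tau^t \cdot x)\,(A_u^{-1} x/p^r)^* {}_c E_\psi.
\]
The substitution $y = A_u^{-1} x$ (a bijection of $\X_r$) rewrites this as $\sum_{\bar y} \chi(c\tau^t \cdot A_u y)(y/p^r)^* {}_c E_\psi$.

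Finally I would invoke the defining relation $\alpha(\tau^t \cdot x) = \tau^t \cdot (A_\alpha x)$, applied to $\alpha = u$, which gives $\tau^t \cdot A_u y = u(\tau^t \cdot y)$; thus $c\tau^t \cdot A_u y = u\cdot(c\tau^t \cdot y)$ in $\mathfrak a^{-1}$, and the $\cO_F^\times$-invariance of $\chi$ shows $\chi(c\tau^t \cdot A_u y) = \chi(c\tau^t \cdot y)$. This delivers $A_u^* E_{\tau,\chi} = E_{\tau,\chi}$, as desired.

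The only mildly delicate point is the first step, where one must be careful that the regularization process defining $E_\psi(L,s)$ commutes with the reindexing of the lattice sum; this is essentially immediate from the absolute convergence of the defining sums for $\mathrm{Re}(s) \gg 0$ together with the uniqueness of meromorphic continuation, but it is the only place where one needs to unpack the construction of Section \ref{sec: differential form representatives of Eisenstein class} rather than just use its formal properties.
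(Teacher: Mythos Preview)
Your proof is correct and follows essentially the same approach as the paper: use the $\Gamma$-invariance of ${}_c E_\psi$ to obtain $\gamma^*(v^* {}_c E_\psi) = (\gamma^{\pm 1} v)^* {}_c E_\psi$, reindex the sum over $\X_r$, and conclude via the relation $\tau^t A_\alpha = \alpha \tau^t$ together with the $\cO_F^\times$-invariance of $\chi$. The paper is slightly terser, simply asserting the first identity as a fact without unpacking the lattice-reindexing/regularization point you flag in your final paragraph.
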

	\begin{proof}
		For $\gamma \in \Gamma$, note that we have $\gamma^\ast v^\ast {}_c E_\psi = (\gamma v)^\ast {}_c E_{\psi}$. Then, if $\gamma \in U_F \subset \Gamma$ 
		\[
		\begin{split}
			\gamma^\ast E_{\tau, \chi} & \coloneq   \sum_{\bar{x} \in \X_r} \chi(c\tau^t \cdot x) (\gamma x/p^r)^\ast {}_c E_\psi \\ & = \sum_{\bar{x} \in \X_r} \chi(c\tau^t  \cdot \gamma^{-1} x) (x/p^r)^\ast {}_c E_\psi \\ & = \sum_{\bar{x} \in \X_r} \chi(\varepsilon c\tau^t  \cdot x) (x/p^r)^\ast {}_c E_\psi  \\ & = \sum_{\bar{x} \in \X_r} \chi(c\tau^t \cdot x) (x/p^r)^\ast {}_c E_\psi,
		\end{split}
		\]
		where we used that $\tau^t \gamma^{-1} =   \varepsilon \tau^t$, for $\varepsilon \in U_F$ the preimage of $\gamma^{-1}$ by \eqref{eq: F inside Mn(Q)} and that $\chi$ is $U_F$-invariant.
	\end{proof}

Thus, $i_\tau^\ast E_{\tau, \chi}$ defines a closed form on $X(F)$ and we can consider 
\[
\int_{X(F)} i_\tau^\ast E_{\tau, \chi}.
\]
We will express this integral in terms of $L$-values. Observe that we have a bijection
\begin{equation}\label{eq: from a^-1/p^r a^{-1} to preimage of a in Gp^r}
\left( \left(\mathfrak a^{-1} - p\mathfrak a^{-1} \right) / p^r\mathfrak a^{-1} \right) / U_F \xlongrightarrow{\sim} \{\mathfrak b \in G_{p^r} \mid \mathfrak b \sim_1 \mathfrak a \} \intoo G_{p^r}, \ [\lambda] \mapstoo [\mathfrak a (\lambda)], 
\end{equation}
where $\lambda \in \mathfrak a^{-1}$ is a totally positive element in $[\lambda]$. We can use this bijection to consider 
\[
\chi \cdot 1_{[\mathfrak a],p} \colon G_{p^r} \xlongrightarrow{} \bar{\Q},
\]
where $1_{[\mathfrak a], p}$ denotes characteristic function of the preimage of $[\mathfrak a] \in G_1$ via the projection $G_{p^r} \to G_1$ and $\chi$ is viewed as a function on the preimage of $[\mathfrak a]$ in $G_{p^r}$ via the bijection above.  
\begin{lemma}\label{lemma: expression for L-function summing over a^{-1}}
    We have
	\[
	L(  \chi 1_{[\mathfrak a] , p}    , 0) = \lim_{s \to 0}\frac{1}{2^n} \sum_{ \alpha \in U_F \backslash \mathfrak a^{-1}  } \frac{ \chi( \alpha ) \mathrm{sign}(\Norm \alpha )  }{ \lvert \Norm \alpha \rvert ^{s}},
	\]
	where on the right hand side, $\lim_{s \to 0}$ denotes evaluation at $s = 0$ of the analytic continuation.
\end{lemma}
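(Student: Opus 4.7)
The plan is to unfold the partial $L$-value via the bijection~\eqref{eq: from a^-1/p^r a^{-1} to preimage of a in Gp^r} and a sign-character expansion, and then to reduce the comparison to vanishing statements at $s=0$ that follow from the inertness of $p$.

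First, I would use the bijection to rewrite $L(\chi 1_{[\mathfrak a], p}, s)$: each integral ideal $\mathfrak b$ with $\mathfrak b \sim_1 \mathfrak a$ and $(\mathfrak b, p) = 1$ corresponds uniquely to a $U_F$-orbit of a totally positive $\lambda \in \mathfrak a^{-1} - p\mathfrak a^{-1}$ via $\mathfrak b = \mathfrak a(\lambda)$, with $N\mathfrak b = N\mathfrak a \cdot |N\lambda|$ and $\chi(\mathfrak b) = \chi(\lambda)$. This gives, for $\mathrm{Re}(s)$ sufficiently large,
\[
L(\chi 1_{[\mathfrak a], p}, s) = (N\mathfrak a)^{-s} \sum_{\lambda \in U_F \backslash (\mathfrak a^{-1} - p\mathfrak a^{-1})_+} \chi(\lambda)|N\lambda|^{-s}.
\]
Next, I would expand the indicator of totally positive elements as $1_+(\alpha) = \frac{1}{2^n}\sum_{S \subseteq \{1, \ldots, n\}}\prod_{i \in S}\mathrm{sign}(\sigma_i\alpha)$ and unfold the sum to all nonzero $\alpha \in U_F \backslash \mathfrak a^{-1}$, obtaining $\frac{1}{2^n}\sum_S\sum_\alpha \mathrm{sign}_S(\alpha)\chi(\alpha)|N\alpha|^{-s}$ where $\mathrm{sign}_S(\alpha) \coloneq \prod_{i \in S}\mathrm{sign}(\sigma_i\alpha)$. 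The $\cO_F^\times$-invariance of $\chi$, combined with the change of variables $\alpha \mapsto u\alpha$ for $u \in \cO_F^\times$ and the invariance of $|N\alpha|$ under $\cO_F^\times$, forces the inner sum for a given $S$ to vanish unless $\mathrm{sign}_S(u) = 1$ for every $u \in \cO_F^\times$---equivalently, unless $S$ lies in the annihilator $V^\perp$ of the image $V \subseteq \{\pm 1\}^n$ of $\cO_F^\times$ under the sign map $u \mapsto (\mathrm{sign}(\sigma_i u))_i$.

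The main obstacle is the evaluation at $s = 0$: I need to show that only the term $S = \{1, \ldots, n\}$ survives, giving the stated $\mathrm{sign}(N\alpha)$ weight. For each $S \in V^\perp$, the inner sum can be reorganised as a partial Dirichlet series over ideals in a fixed wide class of $\mathfrak a$ coprime to $p$, twisted by the well-defined $\cO_F^\times$-invariant character $\mathrm{sign}_S$. Since $p$ is inert and $\chi$ vanishes on $p$-divisible ideals, extending $\chi$ to all integral ideals via $\tilde\chi(p^k\mathfrak b_0) \coloneq \chi(\mathfrak b_0)$ yields a factorisation of the form $(1 - p^{-ns}) L_{\mathrm{reg}}(s)$ in which $L_{\mathrm{reg}}$ is regular at $s=0$ and the first factor has a simple zero there. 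Careful bookkeeping of which $S \in V^\perp$ correspond to nontrivial wide-class contributions, together with this Euler-factor vanishing, should leave only the $S = \{1, \ldots, n\}$ piece at $s=0$, which, combined with $(N\mathfrak a)^{-s}|_{s=0} = 1$, produces the claimed identity.
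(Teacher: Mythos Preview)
Your sign-character decomposition is a sound way to begin, and the reduction to $S \in V^\perp$ is correct. The gap is in the final step: your Euler-factor argument does not isolate $S=\{1,\dots,n\}$.

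First, the extension $\tilde\chi(p^k\mathfrak b_0)\coloneq\chi(\mathfrak b_0)$ does not produce a periodic function on $\mathfrak a^{-1}$ (check $\tilde\chi(p\alpha_0+p^r\beta)$ versus $\tilde\chi(p\alpha_0)$ when $\chi$ has genuine level $p^r$), so $L_{\mathrm{reg}}$ is not a combination of Hecke $L$-functions with known behavior at $s=0$. In fact, from your own factorization one has $L_{\mathrm{reg}}(s)=(1-p^{-ns})^{-1}\cdot(\text{coprime sum})$, so asserting that $L_{\mathrm{reg}}$ is regular at $s=0$ is \emph{equivalent} to the coprime sum vanishing there---exactly what you are trying to establish. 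Second, even granting the factorization, the factor $(1-p^{-ns})$ is the same for every $S$ (the generator $p$ is totally positive, so $\mathrm{sign}_S(p)=1$ for all $S$); it cannot distinguish $S=\{1,\dots,n\}$ from the others. The phrase ``careful bookkeeping \dots\ should leave only the $S=\{1,\dots,n\}$ piece'' hides the actual content of the lemma.

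The mechanism that singles out $S=\{1,\dots,n\}$ is archimedean, not at $p$. Each $S$-term is a linear combination of Hecke $L$-functions $L(\psi,s)$ whose infinity type is $\mathrm{sign}_S$. The completed $L$-function carries the Gamma factor $\prod_{i\in S}\Gamma_\R(s+1)\prod_{i\notin S}\Gamma_\R(s)$, and since $\Gamma_\R(s)$ has a simple pole at $s=0$, the functional equation forces $L(\psi,0)=0$ whenever $|S|<n$. Only the totally odd case $|S|=n$ can contribute. This vanishing is exactly Siegel's classical result (see the paper's citation via Chapdelaine, equation (7.15)); the paper does not reprove it either but points to the correct source. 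Your proposal would be repaired by replacing the $p$-adic Euler-factor paragraph with this Gamma-factor argument.
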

\begin{proof}
	The result can be deduced from equation (7.15) of \cite{HugoC}, which is originally due to Siegel (\cite{Siegel}).
\end{proof}

\begin{theorem}\label{thm: period of E_r,chi in terms of L-functions}
	Consider the same notation as above. Then,
	\[
	\int_{X(F)} i_{\tau}^\ast E_{\tau ,\chi} = \Delta_c\left( \chi 1_{[\mathfrak a] , p} ,0\right).
	\]
\end{theorem}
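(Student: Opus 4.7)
The plan is to unwind $E_{\tau,\chi}$ in terms of the regularized transgression form $\eta$, to compute the period integral of $i_\tau^\ast\eta$ along the torus slice in closed form, and then to identify the resulting sums with partial $L$-values using Siegel's formula (Lemma~\ref{lemma: expression for L-function summing over a^{-1}}).

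For $\mathrm{Re}(s)\gg 0$, I will use Proposition~\ref{prop: meromorphic continuation of sum of eta} to obtain the absolutely-convergent expansion
\[
E_{\tau,\chi}(s) = \sum_{\bar x\in\X_r}\chi(c\tau^t\cdot x)\!\left(\sum_{\lambda\in x/p^r+c^{-1}\Z^n}\!\eta(\lambda,s) \;-\; c^n\!\sum_{\lambda\in x/p^r+\Z^n}\!\eta(\lambda,s)\right).
\]
After pulling back via $i_\tau$ and picking a fundamental domain $\mathcal{F}$ for $U_F$ acting on $(F\otimes\R)_+^1$, I can exchange sum and integral over $\mathcal{F}$ and then fold the $U_F$-translates of $\mathcal{F}$ using the equivariance $g^\ast\eta(\lambda,s)=\eta(g^{-1}\lambda,s)$: since the weight $\chi(c\tau^t\cdot x)$ is $U_F$-invariant under the substitution $\bar x\mapsto A_\varepsilon \bar x$, each $U_F$-orbit $[\lambda_0]$ will contribute $\int_{(F\otimes\R)_+^1}i_\tau^\ast\eta(\lambda_0,s)$ to the folded expression.

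The heart of the proof is the closed-form identity
\begin{equation}\label{eq:key-integral-sketch}
\int_{(F\otimes\R)_+^1} i_\tau^\ast\eta(\lambda,s) \;=\; \frac{1}{2^n}\,\mathrm{sign}\!\left(\Norm_{F/\Q}(\tau^t\cdot\lambda)\right)\cdot|\Norm_{F/\Q}(\tau^t\cdot\lambda)|^{-s}
\end{equation}
for $\lambda\in\Q^n$ with $\tau^t\cdot\lambda\neq 0$. My plan is to prove it by choosing the section $h\colon S\to\GL_n(\R)$ adapted to the slice, so that on $i_\tau((F\otimes\R)_+^1)$ the matrix $h^{-1}$ is diagonal with entries $\sigma_i(u)^{-1}$; the symmetric combination defining $\theta$ then vanishes on the diagonal locus, so $i_\tau^\ast\theta=0$, and in the explicit formula~\eqref{eq: expression eta(v,s)} only the $I=\emptyset$ term survives after taking top-dimensional components along $X(F)$. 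The restricted integrand factors as a product over the real embeddings of $F$ of one-variable Gaussian-type integrals, and their regularized combination yields~\eqref{eq:key-integral-sketch}. This calculational step will be the main obstacle; it specializes the adelic period computation of Section~12.6 of~\cite{BCG} (cf.\ Section~4.2 of~\cite{BergeronCharolloisGarcia2023}).

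With~\eqref{eq:key-integral-sketch} in hand, I will substitute $\gamma=cp^r\tau^t\cdot\lambda$ in the first folded double sum and $\gamma=p^r\tau^t\cdot\lambda$ in the second; these substitutions identify each double sum with a $U_F$-orbit sum over $\gamma\in U_F\backslash(\mathfrak a^{-1}-p\mathfrak a^{-1})$, weighted by $\chi(\gamma)$ (resp.\ $\chi(c\gamma)$), and the powers of $cp^r$ and $p^r$ arising from $|\Norm|^{-s}$ collapse to $1$ at $s=0$. By the bijection~\eqref{eq: from a^-1/p^r a^{-1} to preimage of a in Gp^r} together with Siegel's formula, the two terms become $L(\chi\cdot 1_{[\mathfrak a],p},0)$ and $c^n L(\chi_c\cdot 1_{[\mathfrak a],p},0)$ respectively (using that $c\in\Z_{>0}$ is totally positive so $[(c)\mathfrak a]=[\mathfrak a]$ in $G_1$), and their difference is exactly $\Delta_c(\chi\cdot 1_{[\mathfrak a],p},0)$ by the definition of the smoothing.
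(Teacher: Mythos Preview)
Your proposal is correct and follows essentially the same route as the paper: expand ${}_cE_\psi(s)$ in terms of $\eta(\lambda,s)$ via Proposition~\ref{prop: meromorphic continuation of sum of eta}, fold over $U_F$-orbits, evaluate the one-orbit integral $\int_{(F\otimes\R)_+^1}i_\tau^\ast\eta(\lambda,s)$, and identify the result with Siegel's formula (Lemma~\ref{lemma: expression for L-function summing over a^{-1}}). One small correction: your identity~\eqref{eq:key-integral-sketch} is only the $s=0$ specialization; the exact formula (cited from \cite[Section~12.8]{BCG} in the paper) carries an $s$-dependent prefactor $\pi^{-n/2}2^{s/2}\,\Gamma\!\bigl(\tfrac{s}{2n}+\tfrac{1}{2}\bigr)^n$ in front of $2^{-n}$, which is holomorphic and equals $1$ at $s=0$, so your conclusion is unaffected.
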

\begin{proof}
	For $s \in \C$ such that $\mathrm{Re}(s) \gg 0$, we have 
	\[
	\begin{split}
		& \int_{X(F)} i_{\tau}^\ast E_{r, \chi}(s) =  \int_{X(F)} i_{\tau}^\ast \left(  \sum_{\bar{v} \in \X_r} \chi(c\tau^t \cdot v) \left( v/p^r \right)^\ast {}_c E_{\psi}(s) \right) \\ & = \int_{X(F)} i_{\tau}^\ast \left( \sum_{\bar{v} \in \X_r} \chi(c\tau^t \cdot v)  \left( \sum_{\lambda \in v/p^r + c^{-1}\Z^n }  \eta(\lambda, s) - c^n  \sum_{\lambda \in v/p^r + \Z^n }  \eta(\lambda, s) \right)      \right),
	\end{split}
	\]
    where we recall that $\eta(\lambda, s)$ was introduced in Section \ref{subsec: pullbacks by torsion sections}. Using that $\eta(v/p^r, s) = p^s \eta(v, s)$, and keeping in mind that we will later be interested in evaluating the analytic continuation of the expression above at $s = 0$, it is enough to compute 
	\[
	\begin{split}
		& \int_{X(F)} i_{\tau}^\ast \left( \sum_{\bar{v} \in \X_r} \chi(c\tau^t \cdot v)  \left( \sum_{\lambda \in v + c^{-1}p^r\Z^n }  \eta(\lambda, s) - c^n  \sum_{\lambda \in v + p^r\Z^n }  \eta(\lambda, s) \right)      \right) \\ & = \int_{X(F)} i_{\tau}^\ast \left( \sum_{\bar{v} \in \X_r} \chi(c\tau^t \cdot v) \left( \sum_{x \in \beta(v) + c^{-1}p^r\mathfrak{a}^{-1} }  \eta(\beta^{-1}x, s) -  c^n  \sum_{x \in \beta(v) + p^r\mathfrak{a}^{-1} }  \eta(\beta^{-1}x, s) \right)      \right) \\ & = \int_{X(F)} i_{\tau}^\ast  \left(  \sum_{x \in c^{-1}  \mathfrak a^{-1}} \chi(cx) \eta(\beta^{-1}x, s) - c^n \sum_{x \in \mathfrak a^{-1}} \chi(cx) \eta(\beta^{-1} x, s) \right).		
	\end{split}
	\]
	We can compute the inner sums by first taking representatives of $U_F \backslash  c^{-1}\mathfrak a^{-1}$ and $U_F\backslash \mathfrak a^{-1}$, that we denote by $x$, and then running over all elements in $U_F$, denoted by $u$. Hence, we obtain that the previous expressions can be written as
	\[
	\begin{split}
		&\int_{X(F)} i_{\tau}^\ast \left( \sum_{U_F \backslash c^{-1} \mathfrak a^{-1}} \sum_{U_F} \chi(cux) \eta(\beta^{-1}ux, s)  - c^n \sum_{ U_F \backslash \mathfrak a^{-1}} \sum_{U_F} \chi(cux) \eta(\beta^{-1}ux, s)    \right) \\ & = \sum_{ U_F \backslash c^{-1}\mathfrak a^{-1} } \chi(cx)\int_{(F\otimes \R)^1_+} i_{\tau}^\ast \eta(\beta^{-1} x, s  ) - c^n \sum_{U_F \backslash \mathfrak a^{-1}} \chi(cx) \int_{(F \otimes \R)^1_+} i_{\tau}^\ast\eta(\beta^{-1}x, s).
	\end{split}
	\]
	From \cite[Section 12.8]{BCG}, we have that for $x \in F$
	\[
	\int_{(F\otimes \R)^1_+} i_{\tau}^\ast\eta(\beta^{-1}x, s) = \pi^{-n/2}2^{s/2 - n}\Gamma\left(\frac{s}{2n} + \frac{1}{2}\right)^n \frac{\mathrm{sign}(\Norm(x))}{\lvert \Norm(x)\rvert^s}.
	\] 
	Hence, we deduce 
	\[
	\int_{X(F)} i_\tau^\ast E_{r, \chi} = \frac{1}{2^n}\lim_{s \to 0} \sum_{x \in U_F \backslash \mathfrak a^{-1}   } \chi(x)\frac{\mathrm{sign}(\Norm(x))}{\lvert \Norm(x) \rvert^s} - c^n \sum_{x\in U_F \backslash \mathfrak a^{-1}} \chi(cx) \frac{\mathrm{sign}(\Norm(x))}{\lvert \Norm(x) \rvert^s}.
	\]
	Finally, the desired equality follows from Lemma \ref{lemma: expression for L-function summing over a^{-1}}.
\end{proof}

\subsection{The class $\mu$ and $p$-adic $L$-functions} We state the relation between the class $\mu$ constructed in Section \ref{sec: Topological construction of the Eisenstein group cohomology class} and the $p$-adic $L$-function $L_p(1_{[\mathfrak a],p},s)$ introduced above. From there, we relate $\mathrm{Tr}_{F_p/\Q_p}J_{E, \mathcal L}[\tau]$ to traces of $p$-adic logarithms of Gross--Stark units. 

Denote by $\mathfrak a_p$ the completion of $\mathfrak a $ at $p$. For $\chi\colon \mathfrak a^{-1}_p - p \mathfrak a^{-1}_p \to \bar{\Q}_p$ a continuous function that is $\cO_F^\times$-equivariant, define the map
\[
\varphi_\chi\colon \D(\X, \Z[1/m]) \too \bar{\Q}_p, \ \lambda \mapstoo \int_{\X} \chi (c\tau^t \cdot x ) d\lambda.
\]
Since $\varphi_{\chi}$ is $U_F$-equivariant, it induces a map in cohomology 
\[
\varphi_\chi\colon H^{n-1}(\Gamma, \D(\X, \Z[1/m])) \too H^{n-1}(U_F, \bar{\Q}_p).
\]
Fix the generator $c_{U_F} \in H_{n-1}(U_F, \Z) \simeq H_{n-1}(X(F), \Z) \simeq \Z$ corresponding to the positive orientation of $X(F)$ in \eqref{eq: X(F)}. We can then consider the cap product
\[
c_{U_F} \frown \varphi_\chi(\mu) \in \bar{\Q}_p.
\]
To make the notation more transparent, we will write
\[
c_{U_F} \frown \varphi_\chi(\mu) = \int_{\X} \chi (c\tau^t \cdot x) d\mu(c_{U_F}).
\]
When $\chi$ is locally constant, this quantity relates to special values of partial $L$-functions in the following way.
\begin{proposition}\label{prop: integral of loc constant function along mu = L-function}
	Let $\chi\colon \mathfrak a_p^{-1}   - p \mathfrak a_p^{-1}\twoheadrightarrow{} \left( \mathfrak a^{-1} - p \mathfrak a^{-1} \right)/ p^r\mathfrak a^{-1} \to \bar{\Q}$ be an $\cO_F^\times$-invariant function. Then, 
	\[
	\int_{\X} \chi(c\tau^t \cdot x)  d\mu(c_{U_F})  = \Delta_c(1_{[\mathfrak a], p} {\chi}, 0).
	\]
\end{proposition}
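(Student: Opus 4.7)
The strategy is to reduce the $p$-adic integral on the left-hand side to a period of the Eisenstein differential form ${}_cE_\psi$ along $X(F)$, and then quote Theorem \ref{thm: period of E_r,chi in terms of L-functions} to identify this period with $\Delta_c(\chi 1_{[\mathfrak a],p},0)$. Since $\chi$ factors through $(\mathfrak a^{-1}-p\mathfrak a^{-1})/p^r\mathfrak a^{-1}$ for some $r \geq 1$, and the bijection \eqref{eq: bijections Xr to a^{-1} - pa^{-1} mod p^r to (O/p^r)*} identifies this with $\X_r$, the functional $\varphi_\chi$ factors as
\[
\D(\X,\Z[1/m]) \too \D(\X_r,\Z[1/m]) \xlongrightarrow{\tilde\varphi_\chi} \bar\Q, \qquad \tilde\varphi_\chi(\lambda_r) = \sum_{\bar x \in \X_r}\chi(c\tau^t\cdot x)\,\lambda_r(\bar x).
\]
By the definition of $\mu$ via the inverse system (Proposition \ref{prop: class valued on measures in terms of inverse limit of classes}), we obtain $\varphi_\chi(\mu) = \tilde\varphi_\chi(\mu_r)$ in $H^{n-1}(\Gamma,\bar\Q)$, and in particular the cap product with $c_{U_F}$ can be computed using any representative of $\mu_r$ after base change along an embedding $\bar\Q \hookrightarrow \C$.

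The next step is to switch to the archimedean representative afforded by Proposition \ref{prop: representative of mu_r with R coefficients}: $\mu_r$ is represented over $\R$ by $\mu_{v_r^\ast {}_cE_\psi}$. Applying $\tilde\varphi_\chi$ termwise and using the definition \eqref{eq: definition of E_r, chi} of $E_{\tau,\chi}$, I would compute for any $\gamma_0,\dots,\gamma_{n-1}\in\Gamma$
\[
(\tilde\varphi_\chi \circ \mu_{v_r^\ast {}_cE_\psi})(\gamma_0,\dots,\gamma_{n-1}) = \sum_{\bar x\in\X_r}\chi(c\tau^t\cdot x)\int_{\Delta(\gamma_0 z,\dots,\gamma_{n-1}z)}(x/p^r)^\ast {}_cE_\psi = \int_{\Delta(\gamma_0 z,\dots,\gamma_{n-1}z)} E_{\tau,\chi}.
\]
So $\varphi_\chi(\mu)$ is represented on $U_F$ by the cocycle sending $(\gamma_0,\dots,\gamma_{n-1})\in U_F^{n}$ to $\int_{\Delta(\gamma_i z)} E_{\tau,\chi}$.

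Now I would choose the base point $z\in\sX$ in the image of $i_\tau$, so that all $U_F$-translates $\gamma_i z$ lie in the totally geodesic submanifold $i_\tau\bigl((F\otimes\R)^1_+\bigr)\subset\sX$, and the geodesic simplex $\Delta(\gamma_0 z,\dots,\gamma_{n-1}z)$ also lies in it. A smooth fundamental domain for the free $U_F$-action on $(F\otimes\R)^1_+$ can be geodesically triangulated and pushed forward to give a singular chain representing the fundamental class $c_{U_F}\in H_{n-1}(U_F,\Z)$, whose support $i_\tau$-descends to the entire oriented manifold $X(F)$. Cap-producting this representative with the cocycle above and using the $U_F$-invariance of $E_{\tau,\chi}$ on the submanifold, I obtain
\[
c_{U_F}\frown\varphi_\chi(\mu) = \int_{X(F)} i_\tau^\ast E_{\tau,\chi},
\]
and Theorem \ref{thm: period of E_r,chi in terms of L-functions} finishes the proof.

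The main technical obstacle is Step 3: realizing $c_{U_F}$ as a concrete sum of geodesic simplices whose union $i_\tau$-descends to $X(F)$ with the correct orientation, so that the pairing against the cocycle gives exactly $\int_{X(F)} i_\tau^\ast E_{\tau,\chi}$. One must verify that the sign from the fixed generator $c_{U_F}$ (corresponding to the positive orientation of $X(F)$) matches the orientation used in Theorem \ref{thm: period of E_r,chi in terms of L-functions}, and that the freedom in choosing the base point $z$ does not affect the answer (which follows from the fact that ${}_cE_\psi$ is closed and $U_F$-invariant, so the cohomology class of the cocycle is independent of $z$).
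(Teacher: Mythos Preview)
Your proposal is correct and follows essentially the same route as the paper: factor through level $r$, pass to the archimedean representative $\mu_{v_r^\ast {}_cE_\psi}$ via Proposition \ref{prop: representative of mu_r with R coefficients}, recognize the resulting cocycle as integration of $E_{\tau,\chi}$ over geodesic simplices, and identify the cap product with $c_{U_F}$ as $\int_{X(F)} i_\tau^\ast E_{\tau,\chi}$ before invoking Theorem \ref{thm: period of E_r,chi in terms of L-functions}. The paper's proof is terser at the last step, simply asserting the equality with the period over $X(F)$ without spelling out the triangulation argument; your discussion of choosing $z$ in the image of $i_\tau$ (which is a flat, hence totally geodesic) and realizing $c_{U_F}$ by a triangulated fundamental domain is a legitimate way to justify that passage.
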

\begin{proof}
	Consider the $U_F$-equivariant morphism
	\[
	\varphi_{\chi,r}\colon \D(\X_r, \Z[1/m] ) \too \bar{\Q}_p, \ \lambda_r \mapstoo \sum_{\bar{x} \in \X_r} \chi(c \tau^t \cdot x) \lambda_r(\bar{x}).
	\]	
	Since $\chi$ factors through $\left( \mathfrak a^{-1}  - p \mathfrak a^{-1}\right) / p^r\mathfrak a^{-1}$, it follows that
	\begin{equation}\label{eq: varphi_chi(mu) factors through mur}
	c_{U_F} \frown \varphi_{\chi}(\mu) = c_{U_F} \frown \varphi_{\chi, r}(\mu_r),
	\end{equation}
	where $\mu_r \in H^{n-1}(\Gamma, \D(\X_r, \Z[1/m]))$ is the class described in Definition \ref{def: mu_r}. In particular, $c_{U_F} \frown \varphi_{\chi, r}(\mu_r) \in \bar{\Q}$. Fix an embedding $\bar{\Q} \subset \C$. Then, the right-hand side of \eqref{eq: varphi_chi(mu) factors through mur} can be computed using a representative of the image of $\mu_r$ in $H^{n-1}(\Gamma, \D(\X_r, \R))$. By Proposition \ref{prop: representative of mu_r with R coefficients}, such a representative is given by 
	\[
	\varphi_r\colon \Gamma^n \too \D(\X_r, \R), \ (\gamma_0, \dots, \gamma_{n-1}) \mapstoo \left( \bar{x} \mapstoo  \int_{\Delta(\gamma_0 z, \dots, \gamma_{n-1} z)}  (x/p^r)^\ast {}_c E_{\psi} \right),
	\]
	where $z \in \sX$ denotes an arbitrary point and $\Delta(\gamma_0z , \dots , \gamma_{n-1}z)$ is the geodesic simplex in $\sX$ with vertices $\{ \gamma_i z \}_{i}$. Hence, \eqref{eq: varphi_chi(mu) factors through mur} can be written as
	\[
	\int_{X(F)}\iota_\tau^\ast \sum_{\bar{x} \in \X_r} \chi(c \tau^t \cdot x)(x/p^r)^\ast {}_c E_{\psi}  = \int_{X(F)} \iota_\tau^\ast E_{\tau, \chi},
	\]
	where $X(F)$ is given in \eqref{eq: X(F)} and $E_{\tau, \chi}$ in \eqref{eq: definition of E_r, chi}. By Theorem \ref{thm: period of E_r,chi in terms of L-functions}, the result follows.
\end{proof}

Let $\bar{U}_F$ denote the completion of $U_F$ in $\cO_{F, p}^\times$. The previous proposition has an interpretation in terms of measures on $\mathfrak a_p^{-1} - p\mathfrak a_p^{-1} / \bar{U}_F$, that we proceed to explain. Cap product with $c_{U_F}$ yields the morphism
\[
H^{n-1}(U_F , \D(\X, \Z[1/m]) \xlongrightarrow{}  \D(\X, \Z[1/m])_{U_F}.
\]
In addition, if we identify $\X \xrightarrow{\sim} \mathfrak a_p^{-1}$ via dot product with $\tau \in F^n$, and denote $\pi\colon \mathfrak a_p^{-1} - p\mathfrak a_p^{-1} \twoheadrightarrow (\mathfrak a_p^{-1} - p\mathfrak a_p^{-1}) / \bar{U}_F$ the quotient map, we can define
\[
h \colon \D(\X, \Z[1/m])_{U_F} \too \D\left( (\mathfrak a_{p}^{-1} - p \mathfrak a_p^{-1}) /\bar{U}_F , \Z[1/m]\right)
\]
in the following way: for $[\mu] \in \D(\X, \Z[1/m])_{U_F}$ and $V \subset \mathfrak (\mathfrak a_p^{-1} - p\mathfrak a_p^{-1})/\bar{U}_F$ open set, $h([\lambda])(V) \coloneq \lambda(\pi^{-1}(V))$. We can then consider these two maps and view $c_{U_F} \frown \mu_{\vert U_F}$ as an element in $\D(\mathfrak a_p^{-1} - p\mathfrak a_p^{-1} /\bar{U}_F, \Z[1/m])$. 

Moreover, the bijection given in \eqref{eq: from a^-1/p^r a^{-1} to preimage of a in Gp^r} considered for every $r \geq 1$, induces a homeomorphism 
\[
\mathfrak a_p^{-1} - p\mathfrak a_p^{-1} /\bar{U}_F \xlongrightarrow{\sim} H_{\mathfrak a} \subset G = \varprojlim G_{p^r}.
\]
\begin{corollary}\label{cor: relation mu with p-adic L-function}
    Let $\mu_{\mathfrak a}$ be the measure on $H_{\mathfrak a}$ considered in Theorem \ref{thm: Deligne--Ribet p-adic L-function}. Via the homomorphism above, we have
    $c_{U_F} \frown \mu_{\vert U_F} = \mu_{\mathfrak a}$. In particular, for $s \in \Z_p$
    \[
    L_p(1_{[a], p}, s) = \int_{\X} \left \langle \mathrm{N}(\mathfrak a) \Norm_{F_p/\Q_p}(c\tau^t \cdot x)\right \rangle^{-s} d\mu(c_{U_F}).
    \]
\end{corollary}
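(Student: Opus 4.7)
The plan is two-fold: first, I would establish the measure identification $c_{U_F} \frown \mu_{\vert U_F} = \mu_{\mathfrak a}$ on $H_{\mathfrak a}$ by testing against locally constant functions; then I would deduce the $L$-function formula by integrating both sides against $\mathfrak b \mapsto \langle \mathrm{N}\mathfrak b \rangle^{-s}$.

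For the first step, I would use that $H_{\mathfrak a}$ is compact and profinite, so any $\bar{\Q}_p$-valued distribution on it is determined by its pairings with locally constant functions. Fix a locally constant $\varepsilon : H_{\mathfrak a} \to \bar{\Q}$ and, via the homeomorphism $(\mathfrak a_p^{-1} - p\mathfrak a_p^{-1})/\bar{U}_F \xlongrightarrow{\sim} H_{\mathfrak a}$, transport it to an $\cO_F^\times$-invariant locally constant function $\chi : \mathfrak a_p^{-1} - p\mathfrak a_p^{-1} \to \bar{\Q}$. Unwinding the definitions of the cap product, of the map $h$ preceding the corollary, and of the identification $\X \simeq \mathfrak a_p^{-1}$ via $\tau^t$, the pairing $(c_{U_F} \frown \mu_{\vert U_F})(\varepsilon)$ becomes the integral $\int_\X \chi(c\tau^t \cdot x) \, d\mu(c_{U_F})$, with the factor $c$ built in by the $c$-smoothing inherent in the Eisenstein cocycle underlying $\mu$. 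By Proposition \ref{prop: integral of loc constant function along mu = L-function}, this integral equals $\Delta_c(1_{[\mathfrak a], p} \chi, 0)$; by the defining property of $\mu_{\mathfrak a}$ from Theorem \ref{thm: Deligne--Ribet p-adic L-function}(2), this is precisely $\mu_{\mathfrak a}(\varepsilon)$. Since this holds for every locally constant test function, the two measures are equal.

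For the second step, I would apply the measure equality to the continuous function $\mathfrak b \mapsto \langle \mathrm{N}\mathfrak b \rangle^{-s}$ on $H_{\mathfrak a}$ (extending from locally constant ones by uniform density on the compact space $H_{\mathfrak a}$). By Theorem \ref{thm: Deligne--Ribet p-adic L-function}(3), its integral against $\mu_{\mathfrak a}$ is $L_p(1_{[\mathfrak a], p}, s)$. Under the homeomorphism composed with $\X \simeq \mathfrak a_p^{-1}$ via $\tau^t$, the pullback of $\langle \mathrm{N}\mathfrak b\rangle^{-s}$ becomes $\langle \mathrm{N}(\mathfrak a) \Norm_{F_p/\Q_p}(c\tau^t \cdot x)\rangle^{-s}$, using that the integral ideal $(c)\mathfrak a(\lambda)$ has norm $\mathrm{N}(\mathfrak a) \Norm_{F/\Q}(c\lambda)$ at $p$. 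The main subtlety here is consistently tracking the $c$-twist across both sides: it enters on the distribution side through the $c$-smoothing of the Eisenstein class in the definition of $\mu$, and on the $L$-function side through the operator $\Delta_c$ in the definition of $\mu_{\mathfrak a}$. Proposition \ref{prop: integral of loc constant function along mu = L-function} is precisely the bridge aligning these two $c$-factors, so once that result is in place the remainder is careful but routine bookkeeping with the parameterizations.
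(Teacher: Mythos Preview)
Your proposal is correct and follows the same approach as the paper's proof, which simply cites the discussion preceding the corollary, Proposition \ref{prop: integral of loc constant function along mu = L-function}, and Theorem \ref{thm: Deligne--Ribet p-adic L-function}. Your two-step outline (identifying the measures on locally constant test functions, then extending to $\langle \mathrm{N}\mathfrak b\rangle^{-s}$ by density) is exactly what those citations unpack.
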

\begin{proof}
    The equality $c_{U_F} \frown \mu_{\vert U_F} = \mu_{\mathfrak a}$ follows from the discussion above, Proposition \ref{prop: integral of loc constant function along mu = L-function} and Theorem \ref{thm: Deligne--Ribet p-adic L-function}.
\end{proof}

As a consequence, we obtain the relation between the local trace of $J_{E, \mathcal L}[\tau]$ and the local trace of the logarithm of a Gross--Stark unit.
\begin{theorem}\label{thm: Tr J_Eis[tau] = Tr log(u)}
	Let $u \in \cO_H[1/p]_-^\times \otimes \Q$ be the Gross--Stark unit introduced in Proposition \ref{prop: determination of u}. We have,
	\[
	\mathrm{Tr}_{F_p/\Q_p}J_{E, \mathcal L}[\tau] = \mathrm{Tr}_{F_p/\Q_p}\log_p(u^{\sigma_{\mathfrak a}}).
	\]
\end{theorem}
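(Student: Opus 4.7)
\textbf{Proof plan for Theorem \ref{thm: Tr J_Eis[tau] = Tr log(u)}.} The strategy is to reduce the statement to the derivative of $L_p(1_{[\mathfrak{a}],p}, s)$ at $s=0$ via Corollary \ref{cor: relation mu with p-adic L-function}, and then invoke the Gross--Stark conjecture (Theorem \ref{thm: Gross--Stark conj}). There are only three algebraic ingredients: the explicit description of $J_{E,\mathcal L} = \mathrm{ST}(\mu_0)$, the compatibility $\mathrm{Tr}_{F_p/\Q_p}\log_p(y) = \log_p\mathrm{N}_{F_p/\Q_p}(y)$, and the fact that $\mu_0$ lifts $\mu$ to total-mass-zero measures.

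First, I would unwind the definition of $J_{E,\mathcal L}[\tau]$. Since $J_{E,\mathcal L} = \mathrm{ST}(\mu_0)$ and evaluation at $\tau$ commutes with the integral defining $\mathrm{ST}$, cap product with the fundamental class $c_{U_F}$ gives
\[
J_{E,\mathcal L}[\tau] = \int_{\X} \log_p(\tau^t\cdot x)\,d\mu_0(c_{U_F}) \in F_p,
\]
where one uses that $\tau^t\cdot x \in \cO_{F,p}^\times$ for $x \in \X$, because $p$ is inert in $F$ and $(\mathfrak a,p)=1$. Applying $\mathrm{Tr}_{F_p/\Q_p}$ and using that $p$-adic logarithm commutes with the embeddings $F_p \into \overline{\Q}_p$ yields
\[
\mathrm{Tr}_{F_p/\Q_p}J_{E,\mathcal L}[\tau] = \int_{\X}\log_p\mathrm{N}_{F_p/\Q_p}(\tau^t\cdot x)\,d\mu_0(c_{U_F}).
\]

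Next, I would differentiate the identity of Corollary \ref{cor: relation mu with p-adic L-function} at $s=0$. Since $\langle y\rangle = y/\omega(y)$ on $\Z_p^\times$ with $\omega(y)$ a root of unity, we have $\log_p\langle y\rangle = \log_p(y)$ for every $y \in \Z_p^\times$. Because $\mathrm{N}(\mathfrak a)c^n \in \Z_p^\times$, differentiation gives
\[
L_p'(1_{[\mathfrak a],p},0) = -\int_{\X}\bigl(\log_p\mathrm{N}(\mathfrak a) + n\log_p(c) + \log_p\mathrm{N}_{F_p/\Q_p}(\tau^t\cdot x)\bigr)\,d\mu(c_{U_F}).
\]
Representing the class $\mu$ by its $\D_0$-valued cocycle lift $\mu_0$ — which is legitimate since $\mu$ is the image of $\mu_0$ under $\D_0 \into \D$ — the measure $\mu_0(c_{U_F})$ on $\X$ has total mass zero, so the two constant contributions integrate to $0$. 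Therefore
\[
L_p'(1_{[\mathfrak a],p},0) = -\int_{\X}\log_p\mathrm{N}_{F_p/\Q_p}(\tau^t\cdot x)\,d\mu_0(c_{U_F}) = -\mathrm{Tr}_{F_p/\Q_p}J_{E,\mathcal L}[\tau].
\]

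Finally, by the Gross--Stark conjecture (Theorem \ref{thm: Gross--Stark conj}),
\[
L_p'(1_{[\mathfrak a],p},0) = -\log_p\mathrm{N}_{F_p/\Q_p}(u^{\sigma_\mathfrak{a}}) = -\mathrm{Tr}_{F_p/\Q_p}\log_p(u^{\sigma_\mathfrak{a}}),
\]
and comparing the two expressions proves the theorem. The only genuine difficulty has already been dispatched in Corollary \ref{cor: relation mu with p-adic L-function}: namely, identifying the cap product $c_{U_F}\frown\mu$ with the Deligne--Ribet measure $\mu_{\mathfrak a}$. After that, the remaining argument is a short algebraic manipulation, the only subtle point being the legitimate substitution of $\mu$ by its $\D_0$-valued lift $\mu_0$ so that the constants $\log_p\mathrm{N}(\mathfrak a)$ and $n\log_p(c)$ drop out of the integral.
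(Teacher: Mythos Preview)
Your proof is correct and follows essentially the same approach as the paper: both reduce the trace identity to $\mathrm{Tr}_{F_p/\Q_p}J_{E,\mathcal L}[\tau]=-L_p'(1_{[\mathfrak a],p},0)$ by differentiating the integral formula of Corollary~\ref{cor: relation mu with p-adic L-function} and using that $\mu_0$ has total mass zero, and then conclude via the Gross--Stark conjecture (Theorem~\ref{thm: Gross--Stark conj}). Your write-up is simply more explicit about the differentiation step and the vanishing of the constant terms $\log_p\mathrm{N}(\mathfrak a)$ and $n\log_p(c)$.
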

\begin{proof}
	By viewing $\D_0(\X, \Z[1/m]) \subset \D(\X, \Z[1/m])$, we can consider $c_{U_F} \frown \varphi_{\chi_s}(\mu_0) \in \bar{\Q}_p$, where $s \in \Z_p$ and 
    \[
    \chi_s \colon \mathfrak a_p^{-1} - p \mathfrak a_p^{-1} \too \bar{\Q}_p^\times, \ \alpha \mapstoo  \langle \Norm(\mathfrak a) \Norm_{F_p/\Q_p}(\alpha) \rangle^{-s}.
    \]
    Moreover, since $\mu_0$ is a lift of $\mu$, Corollary \ref{cor: relation mu with p-adic L-function} implies that for every $s \in \Z_p$,
	\[
	c_{U_F} \frown \varphi_{\chi_s}(\mu_0) = L_p(1_{[\mathfrak a], p}, s).
	\]
	Then, it follows from the definition of $J_{E, \mathcal L} = \mathrm{ST}(\mu_0)$, and the fact that $\mu_0$ takes values on measures of total mass zero, that $\mathrm{Tr}_{F_p/\Q_p} J_{E, \mathcal L}[\tau] = -L_p'(1_{[\mathfrak a], p}, 0)$.
	Hence, the result follows from Theorem \ref{thm: Gross--Stark conj}.
\end{proof}

\section{Conjecture on the values of the log-rigid class}\label{sec: conjecture on values of JEis}

In this section, we make a conjecture on the values of the log-rigid class $J_{E, \mathcal L}$ at certain points $\tau \in X_p$ attached to totally real fields where $p$ is inert. Then, we study the conjecture for the concrete case that $F/\Q$ is Galois and the point $\tau$ corresponds to a $\Gal(F/\Q)$-stable ideal of $F$. Finally, we provide an observation that motivates the conjecture.

\subsection{Conjecture on the values $J_{E, \mathcal L}[\tau]$}
We consider the same notation as in Section \ref{sec: Values of the log-rigid class and Gross--Stark conjecture}. In particular, let $F$ be a totally real field where $p$ is inert, let $\mathfrak a$ be an integral ideal of $F$ coprime to $pc$, and fix $\tau \in F^n$ a vector whose entries give an oriented $\Z$-basis of $\mathfrak a^{-1}$, which yields a point $\tau \in X_p$. We can then consider $J_{E, \mathcal L}[\tau] = c_{U_F} \frown \mathrm{ev}_\tau(J_{E, \mathcal L}) \in F_p$, where $c_{U_F} \in H_{n-1}(U_F, \Z) \simeq H_{n-1}(X(F), \Z)$  is a generator corresponding to the positive orientation of $X(F)$ in \eqref{eq: X(F)}. Moreover, let $H$ be the narrow Hilbert class field of $F$, $\fp$ the fixed prime ideal of $H$ above $p$ determined by the embedding $\bar{\Q} \subset \bar{\Q}_p$, and recall the inclusion $H \subset H_\fp = F_p$. Also note that the $p$-adic logarithm can be extended to a map 
\[
\log_p \colon H_\fp^\times \otimes \Q \too H_\fp
\]
by linearity. In view of Theorem \ref{thm: Tr J_Eis[tau] = Tr log(u)}, we make the following conjecture.

\begin{conjecture}\label{conj: Jtriv[tau] = log(GS)}
	Suppose that $F$ is a totally real field where $p$ is inert. Let $\tau \in X_p$ be as above and let $u \in \cO_H[1/p]_-^\times \otimes \Q$ be the Gross--Stark unit determined in Proposition \ref{prop: determination of u}. Then,
	\[
	J_{E, \mathcal L}[\tau] = \log_p(u^{\sigma_{\mathfrak a}}),
	\]
    where $\sigma_{\mathfrak a} \in \Gal(H/F)$ denotes the Frobenius associated to the class of $\mathfrak a$.
\end{conjecture}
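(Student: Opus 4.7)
My plan is to proceed in two stages. First, I would establish the conjecture in the setting of Theorem~\ref{thm intro: JEis[tau]=log_p(u^sigma a) in the Gal case}, namely for $F/\Q$ Galois with $[\mathfrak a]$ stable under $\Gal(F/\Q)$, by showing that both sides lie in $\Q_p$; the trace formula of Theorem~\ref{thm: Tr J_Eis[tau] = Tr log(u)} then forces the asserted identity after dividing by $n = [F_p : \Q_p]$. The containment $\log_p(u^{\sigma_\mathfrak a}) \in \Q_p$ should follow from the observation that, under the stated hypotheses, all $\Gal(H/\Q)$-conjugates of the Gross--Stark unit $u^{\sigma_\mathfrak a}$ are characterised by the same valuation data at primes above $p$, and so coincide modulo roots of unity; this is the content of the companion work \cite{RX2.5}. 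The containment $J_{E, \mathcal L}[\tau] \in \Q_p$ should follow from a Galois-equivariance argument: whenever $\sigma \in \Gal(F/\Q)$ fixes $[\mathfrak a]$, the induced permutation of the coordinates of $\tau$ is realised by conjugation with an element of $\GL_n(\Z)$ normalising the torus $F^1 \hookrightarrow \SL_n(\Q)$, and $\Gamma$-invariance of the class $J_{E, \mathcal L}$ then translates into Galois-invariance of its value at $\tau$.

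For the general conjecture, I would construct a rigid analytic lift $\mathcal J_E \in H^{n-1}(\Gamma, \mathcal A^\times)_{\Q}$ of $J_{E, \mathcal L}$ under $f \mapsto \log_p f$. Such a lift should arise from a multiplicative Poisson kernel
\[
\mathrm{ST}^\times \colon \D_0(\X, \Z_p) \too \mathcal A^\times, \quad \lambda \mapstoo \Bigl( z \mapstoo {\textstyle\int^\times_{\X}} (z^t \cdot x)\, d\lambda(x) \Bigr),
\]
in the spirit of van der Put and Schneider--Teitelbaum. The conjecture would then refine to the multiplicative identity $\mathcal J_E[\tau] = u^{\sigma_\mathfrak a}$ in $F_p^\times \otimes \Q$ modulo roots of unity, from which the logarithmic conjecture is immediate; Theorem~\ref{thm: Tr J_Eis[tau] = Tr log(u)} would provide a compatibility check on $\mathrm{Norm}_{F_p/\Q_p} \mathcal J_E[\tau]$.

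The essential difficulty is to show that $\mathcal J_E[\tau]$ is algebraic --- indeed a $p$-unit of $H$ --- rather than merely an element of $F_p^\times$. For $n = 2$ this is the central achievement of \cite{DPV2}, accomplished through a Hida-family deformation of an Eisenstein family at weight $(1,1)$, Gross's $p$-adic factorisation of a Katz $L$-function, and a theta correspondence with genus characters; these ingredients have no immediate analog for $\SL_n$ when $n \geq 3$. A more tractable route --- signalled by the authors themselves via the integral symbol complex of \cite{X1} --- would use explicit symbol representatives of $\mu_0$ to express $\mathcal J_E[\tau]$ as a product over Shintani cones and then identify this product with the Shintani-style formulas for Gross--Stark units of \cite{DKExplicitCFT} and \cite{DHS}.
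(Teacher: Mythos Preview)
The statement is a \emph{conjecture}, and the paper does not prove it in general. What the paper does prove is the special case where $F/\Q$ is Galois and the ideal $\mathfrak a$ itself (not merely its narrow class) is $\Gal(F/\Q)$-stable; this is the Corollary at the end of Section~\ref{subsec: The case of Galois extensions}. Your first stage correctly targets this case and follows essentially the paper's strategy: Proposition~\ref{lemma: log(sigma_a u) is in Qp if [a] is Gal(F/Q)-stable} gives $\log_p(u^{\sigma_{\mathfrak a}}) \in \Q_p$ via the valuation characterisation of $u$ (Lemma~\ref{lemma: eta(sigma_a u) = eta(u) in Up(x)Q}), Theorem~\ref{thm: J_Eis[tau] in Qp if tau is Gal stable} gives $J_{E,\mathcal L}[\tau] \in \Q_p$, and the trace identity of Theorem~\ref{thm: Tr J_Eis[tau] = Tr log(u)} then forces equality after dividing by $n$.

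Two points of precision. First, your hypothesis ``$[\mathfrak a]$ stable under $\Gal(F/\Q)$'' suffices for the Gross--Stark side, but for $J_{E,\mathcal L}[\tau] \in \Q_p$ the paper requires the \emph{ideal} $\mathfrak a$ to be Galois-stable: the argument of Theorem~\ref{thm: J_Eis[tau] in Qp if tau is Gal stable} needs each $\eta \in \Gal(F/\Q)$ to act on the $\Z$-lattice $\mathfrak a^{-1}$ by a matrix in $\GL_n(\Z)$, which fails if $\eta$ only preserves the ideal class. Second, the paper's mechanism for $J_{E,\mathcal L}[\tau] \in \Q_p$ is not merely ``$\Gamma$-invariance'' but the $w = -1$ eigenproperty of $\mu_0$, which allows one to lift to $H^{n-1}(\GL_n(\Z), \D_0(\det))$ and then restrict through $U_F \rtimes \Gal(F/\Q)$ (Lemma~\ref{lemma: c_{U_F} cap mu0 is GF-invariant}); the determinant twist is what makes cap product with $c_{U_F}$ genuinely $\Gal(F/\Q)$-equivariant.

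Your second stage is, appropriately, a research programme rather than a proof. The paper says exactly this: the general conjecture is open, and the routes you sketch---a multiplicative lift to $\mathcal A^\times$ via an $\SL_n(\Z[1/p])$-class, and a comparison with Shintani-type formulas through an integral symbol complex---are precisely those the authors flag as future work in Section~\ref{subsection : litreview} and the paragraphs following Conjecture~\ref{conj: JEis[tau] = log(u_tau)} in the introduction. There is no gap to name here because there is no claimed proof; you have correctly identified both what is proven and what remains.
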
 
When $n = 2$, the conjecture is true by Theorem B of \cite{DPV2} since, as stated in Remark \ref{remark: comparison J_Eis with JDR when n = 2}, we have the equality $J_{E, \mathcal L} = \log_p(\mathcal J_{\mathrm{DR}})$.

\begin{remark}
    Let $\alpha \in F$ be such that $\Norm_{F/\Q}(\alpha)$ is positive. Then both $\tau$ and $\alpha \tau$ give rise to oriented basis of an ideal and to the same point in $X_p$. In particular $J_{E, \mathcal L}[\tau] = J_{E, \mathcal L}[\alpha \tau]$. Moreover, the fact that Gross--Stark units have absolute value $1$ for any complex embedding of $H$, and that $\sigma_{\mathfrak a}$ and $\sigma_{\alpha^{-1}\mathfrak a}$ differ by an even number of complex conjugations, imply that $u^{\sigma_\mathfrak a} = u^{\sigma_{\alpha^{-1}\mathfrak a}}$. This observation suggests that $X_p$ is suitable to encode Gross--Stark units.
\end{remark}

\subsection{The case of Galois extensions}\label{subsec: The case of Galois extensions}

Suppose that $F$ is Galois over $\Q$. If the narrow ideal class $[\mathfrak a]$ is $\Gal(F/\Q)$-stable, we prove that $\log_p(\sigma_{\mathfrak a} u) \in \Q_p$. If moreover the ideal $\mathfrak a$ is $\Gal(F/\Q)$-stable, we show that $J_{E, \mathcal L}[\tau] \in \Q_p$. Thus, Conjecture \ref{conj: Jtriv[tau] = log(GS)} follows from Theorem \ref{thm: Tr J_Eis[tau] = Tr log(u)} in the case that $\mathfrak a$ is $\Gal(F/\Q)$-stable. 

Observe that under these assumptions, $H$ is Galois over $\Q$. Denote by $D_{\fp} \subset \Gal(H/\Q)$ the decomposition group at $\fp$. Note that $\Gal(H/\Q)$, and therefore also $D_\fp$, act on $\cO_H[1/p]^\times_{-} \otimes \Q$.

\begin{lemma}\label{lemma: eta(sigma_a u) = eta(u) in Up(x)Q}
    Let $u$ be the Gross--Stark unit as above and let $[\mathfrak a]$ be a narrow ideal class that is $\Gal(F/\Q)$-fixed. For every ${\eta} \in D_\fp$, we have $\eta(\sigma_\mathfrak a u) = \sigma_\mathfrak a u$ in $\cO_H[1/p]^\times_{-} \otimes \Q$.
\end{lemma}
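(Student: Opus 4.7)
The plan is to verify the equality by showing that the ratio $\eta(\sigma_{\mathfrak a} u)/\sigma_{\mathfrak a} u$ has trivial valuation at every prime of $H$ above $p$, and then invoke triviality of the $-1$ eigenspace of $\cO_H^\times \otimes \Q$. Since $F/\Q$ is Galois, so is $H/\Q$. Because $p$ is inert in $F$ and splits completely in $H/F$, we have $D_{\mathfrak p} \cap \Gal(H/F) = \{1\}$ and $|D_{\mathfrak p}| = n$, so the projection induces an isomorphism $D_{\mathfrak p} \xrightarrow{\sim} \Gal(F/\Q)$; let $\bar\eta \in \Gal(F/\Q)$ denote the image of $\eta$. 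The crucial Galois-theoretic tool is the functoriality of Artin reciprocity, which yields $\eta\, \sigma_{\mathfrak b}\, \eta^{-1} = \sigma_{\bar\eta(\mathfrak b)}$ for every ideal $\mathfrak b$ of $F$ coprime to $p$.

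For any prime $\mathfrak q$ of $H$ above $p$, I would write $\mathfrak q = \sigma_{\mathfrak b}(\mathfrak p)$ for the unique $\sigma_{\mathfrak b} \in \Gal(H/F)$ (using that $\mathfrak p$ splits completely in $H/F$). Using $\ord_{\sigma \mathfrak p}(x) = \ord_{\mathfrak p}(\sigma^{-1} x)$ together with the characterization of $u$ in Proposition \ref{prop: determination of u}, one computes
\[
\ord_{\mathfrak q}(\sigma_{\mathfrak a} u) \;=\; \ord_{\mathfrak p}\bigl(u^{\sigma_{\mathfrak b^{-1}\mathfrak a}}\bigr) \;=\; \Delta_c\bigl(1_{[\mathfrak b^{-1}\mathfrak a]}, 0\bigr).
\]
For the other term, $\eta(\mathfrak p) = \mathfrak p$ and the conjugation identity give $\eta^{-1}(\mathfrak q) = \sigma_{\bar\eta^{-1}(\mathfrak b)}(\mathfrak p)$, so
\[
\ord_{\mathfrak q}(\eta\,\sigma_{\mathfrak a} u) \;=\; \ord_{\sigma_{\bar\eta^{-1}(\mathfrak b)}\mathfrak p}(\sigma_{\mathfrak a} u) \;=\; \Delta_c\bigl(1_{[\bar\eta^{-1}(\mathfrak b)^{-1}\mathfrak a]}, 0\bigr).
\]
The hypothesis $\bar\eta^{-1}[\mathfrak a] = [\mathfrak a]$ gives $[\bar\eta^{-1}(\mathfrak b)^{-1}\mathfrak a] = \bar\eta^{-1}[\mathfrak b^{-1}\mathfrak a]$, and the partial zeta values $\Delta_c(1_{[\mathfrak d]}, 0)$ are $\Gal(F/\Q)$-invariant on classes, since $\bar\eta$ preserves ideal norms term by term in the defining Dirichlet series. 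Thus the two valuations coincide at every $\mathfrak q \mid p$.

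Hence $\eta(\sigma_{\mathfrak a} u)/\sigma_{\mathfrak a} u$ is a $p$-unit with trivial valuation at every prime above $p$, so it lies in $\cO_H^\times \otimes \Q$, and it still lies in the $-1$ eigenspace since the latter is $\Gal(H/\Q)$-stable. I would conclude by noting that $(\cO_H^\times)_{-} \otimes \Q = 0$: if $x \in \cO_H^\times$ is inverted by every complex conjugation, then at every archimedean place $v$ of $H$ one has $|x|_v = 1$, so $x$ is a root of unity by Kronecker's theorem, hence zero modulo torsion.

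The main obstacle will be correctly tracking the Galois-theoretic bookkeeping, particularly the interplay between the Artin conjugation rule, the action of $\eta$ on primes above $p$, and the $\Gal(F/\Q)$-invariance of the partial zeta values; the hypothesis of $\Gal(F/\Q)$-stability of $[\mathfrak a]$ is precisely what is needed to absorb the twist by $\bar\eta$ into the ideal class indexing the zeta value.
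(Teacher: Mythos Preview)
Your proof is correct and follows essentially the same route as the paper. Both arguments compute the $\mathfrak p$-adic valuations of all $\Gal(H/F)$-translates of $\eta(\sigma_{\mathfrak a}u)$ using the conjugation rule $\eta\,\sigma_{\mathfrak b}\,\eta^{-1}=\sigma_{\bar\eta(\mathfrak b)}$, the fact that $\eta$ fixes $\mathfrak p$, and the $\Gal(F/\Q)$-invariance of partial zeta values; the only cosmetic difference is that the paper then invokes the uniqueness clause of Proposition~\ref{prop: determination of u} directly, whereas you unpack that uniqueness via Kronecker's theorem. One small remark: your sentence ``if $x\in\cO_H^\times$ is inverted by every complex conjugation, then at every archimedean place $v$ of $H$ one has $|x|_v=1$'' only handles the complex places of $H$; but since the ratio already lies in $\cO_H[1/p]_-^\times\otimes\Q$, whose very definition forces $|x|_v=1$ at all archimedean places, the Kronecker conclusion is valid regardless.
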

\begin{proof}
    We will use the uniqueness property determining Gross--Stark units of Proposition \ref{prop: determination of u}. For every ideal $\mathfrak b$ of $\cO_F$, denote by $\sigma_{\mathfrak b} \in \Gal(H/F)$ the corresponding Frobenius and observe
    \[
    \sigma_{\mathfrak b} \eta \sigma_{\mathfrak a} (u) = \eta \eta^{-1} \sigma_{\mathfrak b} \eta  \sigma{_\mathfrak a} (u) = \eta \sigma_{\eta^{-1}(\mathfrak b)} \sigma_{\mathfrak a}(u) = \eta \sigma_{\eta^{-1}(\mathfrak b\mathfrak a)}(u), 
    \]
    where we used the $\Gal(F/\Q)$-equivariance of the Artin map in the second equality, and the fact that $[\mathfrak a]$ is Galois fixed in the last one. From there,
	\[
	\begin{split}
		\ord_{\mathfrak p}(\sigma_{\mathfrak b}{\eta}\sigma_\mathfrak a (u) ) & = \ord_{\mathfrak p}(\eta \sigma_{\eta^{-1}(\mathfrak b\mathfrak a)}(u)) \\ & = \ord_{\mathfrak p}( \sigma_{\eta^{-1}(\mathfrak b\mathfrak a)}(u)) \\ & = \Delta_{c}(1_{[\eta^{-1}(\mathfrak b\mathfrak a)]} , 0) \\& = \Delta_c(1_{[\mathfrak b \mathfrak a]}, 0),
	\end{split}
	\]
	where we used that $\tilde{\eta}(\mathfrak p) = \mathfrak p$ in the second equality, Proposition \ref{prop: determination of u} in the second to last equality, and the last equality follows from $L(1_{[\eta^{-1}(\mathfrak b\mathfrak a)]}, s) = L(1_{[\mathfrak b \mathfrak a]}, s)$, which can be verified from their definition. From the uniqueness asserted in Proposition \ref{prop: determination of u}, it can be deduced $\eta(\sigma_{\mathfrak a }u) = \sigma_\mathfrak a u$ in $\cO_H[1/p]^\times_- \otimes \Q$ and we are done.      
\end{proof}

\begin{proposition}\label{lemma: log(sigma_a u) is in Qp if [a] is Gal(F/Q)-stable}
    Let $u \in \cO_H[1/p]_{-}^\times \otimes \Q$ be the Gross--Stark unit introduced above and let $[\mathfrak a]$ be a narrow ideal class that is $\Gal(F/\Q)$-fixed. We have $\log_p(\sigma_{\mathfrak a} u) \in \Q_p$.
\end{proposition}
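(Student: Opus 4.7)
The plan is to deduce the proposition directly from Lemma~\ref{lemma: eta(sigma_a u) = eta(u) in Up(x)Q} by combining the $D_\fp$-invariance of $\sigma_\mathfrak a u$ with the Galois equivariance of the $p$-adic logarithm.

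Here is the approach. First, I would unpack the key identification: since $H_\fp = F_p$ and $F_p/\Q_p$ is the unramified extension of degree $n$ (as $p$ is inert in $F$), the decomposition group $D_\fp \subset \Gal(H/\Q)$ acts on the completion $H_\fp$ and is thereby identified with $\Gal(F_p/\Q_p)$, whose fixed field is $\Q_p$. Second, I would extend the $p$-adic logarithm linearly to $\log_p\colon F_p^\times \otimes \Q \to F_p$ and, via the embedding $\cO_H[1/p]^\times \hookrightarrow F_p^\times$ induced by $H \subset H_\fp$, view $\sigma_\mathfrak a u$ as an element of $F_p^\times \otimes \Q$.

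With these identifications in place, the conclusion is almost immediate. By Lemma~\ref{lemma: eta(sigma_a u) = eta(u) in Up(x)Q}, $\eta(\sigma_\mathfrak a u) = \sigma_\mathfrak a u$ in $\cO_H[1/p]_-^\times \otimes \Q$ for every $\eta \in D_\fp$, and this equality persists after the $D_\fp$-equivariant embedding into $F_p^\times \otimes \Q$. Since $\log_p$ is $\Gal(F_p/\Q_p)$-equivariant (a standard consequence of its continuity together with Galois equivariance on principal units), applying $\eta \in D_\fp$ gives
\[
\eta(\log_p(\sigma_\mathfrak a u)) = \log_p(\eta(\sigma_\mathfrak a u)) = \log_p(\sigma_\mathfrak a u),
\]
and therefore $\log_p(\sigma_\mathfrak a u) \in F_p^{D_\fp} = \Q_p$.

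I do not expect any serious obstacle: the argument is essentially a one-line consequence of the preceding lemma once the identification $D_\fp \simeq \Gal(F_p/\Q_p)$ is set up. The only subtle point worth verifying is that the embedding $H \hookrightarrow F_p$ determined by the fixed prime $\fp$ is $D_\fp$-equivariant, but this is built into the definition of the decomposition group, so no further work is needed.
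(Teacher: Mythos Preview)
Your proposal is correct and follows essentially the same approach as the paper: both argue that the identification $D_\fp \simeq \Gal(H_\fp/\Q_p)$ together with the Galois equivariance of $\log_p$ reduces the claim immediately to the $D_\fp$-invariance of $\sigma_{\mathfrak a} u$ established in Lemma~\ref{lemma: eta(sigma_a u) = eta(u) in Up(x)Q}. The paper's write-up is slightly more explicit about the equivariance property $\log_p(\eta x) = \tilde{\eta}\log_p(x)$, but the content is the same.
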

\begin{proof}
    We will see that for every $\tilde{\eta} \in \Gal(H_\fp/\Q_p)$, we have $\tilde{\eta} \log_p(\sigma_{\mathfrak a} u) = \log_p(\sigma_{\mathfrak a} u)$.
    Consider the isomorphism given by extending an automorphism in $D_\fp \subset \Gal(H/\Q)$ to $H_{\fp}$.
    \[
    D_\fp \xlongrightarrow{\sim} \Gal(H_\fp / \Q_p), \ \eta \mapstoo \tilde{\eta}.
    \]
    Observe that the map induced by the $p$-adic logarithm
    \[
    \log_p \colon \cO_H[1/p]^\times_- \otimes \Q \too H_{\fp}^\times \otimes \Q \too H_\fp
    \]
    satisfies the following invariance property: for every $\eta \in D_\fp$ and $x \in \cO_H[1/p]^\times_- \otimes \Q$, we have $\log_p(\eta x) = \tilde{\eta} \log_p(x)$. Indeed, this follows from the definition of $D_\fp$ and the $\Gal(H_\fp/\Q_p)$-invariance of the $p$-adic logarithm on $H_\fp^\times$. Applying this to $x = \sigma_{\mathfrak a}(u)$ and using Lemma \ref{lemma: eta(sigma_a u) = eta(u) in Up(x)Q} we obtain the desired result.
\end{proof}
\begin{remark}\label{rmk: from log(u) to u and narrow Hilbert class field Galois case}
Suppose that $c$ is as in Remark \ref{rmk: from Gross--Stark to Brumer--Stark}. Then, $u \in \cO_H[1/p]^\times_-$ and its image under the embedding $u \in H^\times \subset H_{\fp}^\times = F_p^\times$ lands in $\Q_p^\times$. Therefore, by Gross--Stark,
\[
\log_p(u) = -\frac{1}{n} L_p'(1_{[\cO_F], p} , 0).
\]
In particular, $\frac{1}{n} L_p'(1_{[\cO_F], p} , 0) \in p\Z_p$, so it is in the domain of the $p$-adic exponential $\exp_p$. Moreover, since the valuation of $u$ at $\fp$ is equal to $\Delta_c(1_{[\cO_F]},0 )$, we have that, up to a root of unity in $F_p^\times$ (see \cite[Remark 2.7]{DKExplicitCFT} for a discussion on this ambiguity), 
\[
u = p^{\Delta_c(1_{[\cO_F]} ,0 )} \exp_p\left(  -\frac{1}{n} L_p'(1_{[\cO_F], p} , 0)\right),
\]
This gives an explicit formula for $u$, generalizing the type of abelian extensions of $F$ that can be constructed only from $p$-adic $L$-functions, and in particular generalizing Proposition 3.14 of \cite{Gross1981padicL}, see also Remark 7 of \cite{DDP} (note that the field generated by $u$ over $F$ is the same as the field generated by all its $\Gal(H/F)$-conjugates, as the extension is abelian). This is studied in more detail in \cite{RX2.5}.
\end{remark}

We now study the invariant $J_{E, \mathcal L}[\tau]$ in the case that the ideal $\mathfrak a$ is $\Gal(F/\Q)$-stable. Since the coordinates of $\tau \in F^n$ give a $\Z$-basis of $\mathfrak a^{-1}$, the isomorphism \eqref{eq: iso beta Q^n = F} induces an embedding
\[
\cO_F^\times \rtimes \Gal(F/\Q) \intoo \GL_n(\Z)
\]
determined by the following equations: for every $x \in \Q^n$, $\alpha \in F^\times$ and $\eta \in \Gal(F/\Q)$,
\[
\alpha (\tau^t \cdot x) = \tau^t A_{\alpha}x , \ \eta(\tau^t \cdot x) = \tau^t A_{\eta} x.
\]
Denote $\D_0 \coloneq \D_0(\X, \Z[1/m])$. Recall that $\GL_n(\Z)$ acts on $\D_0$ as follows: for $g \in \GL_n(\Z)$, $\lambda \in \D_0$, and $U \subset \X$ compact open 
\[
(g \cdot \lambda)(U) \coloneq \lambda(g^{-1} U).
\]
Consider also the $\GL_n(\Z)$-module $\D_0(\det) \coloneq \D_0 \otimes_{\Z[1/m]} \Z[1/m](\det)$. We use these actions and the embedding above to define an action of $\cO_F^\times$ and of $\Gal(F/\Q)$, on $\D_0$ and $\D_0(\det)$. In particular, since $\{1\} \rtimes \Gal(F/\Q)$ normalizes $U_F \rtimes \{1\}$, we have natural actions of $\Gal(F/\Q)$ on $H^{n-1}(U_F, \D_0(\det))$ as well as on the coinvariants $(\D_0)_{U_F}$. 

Recall the class $\mu_0 \in H^{n-1}(\Gamma , \D_0)_{\Q}^-$ given in \eqref{eq: mu0 fixed}. To lighten the notation for the next proof, by avoiding the appearance of tensor products, let $\ell \in \Z_{\geq 0}$ be such that $\ell \mu_0$ lifts to an element in $H^{n-1}(\Gamma, \D_0)^-$. Fix such a lift and denote it by $\tilde{\mu}_0 \in H^{n-1}(\Gamma, \D_0)^-$. Note that 
\[
\ell J_{E, \mathcal L}[\tau] = \int_{\X} \log_p(\tau^t \cdot x ) d\lambda \in F_p, 
\]
where $\lambda \coloneq c_{U_F} \frown \tilde{\mu}_0 \in (\D_0)_{U_F}$ and $c_{U_F} \in H_{n-1}(U_F, \Z)$ is the fixed fundamental class. This is independent of the choice of lift $\tilde{\mu}_0$, as the difference between two such lifts is torsion.

\begin{lemma}\label{lemma: c_{U_F} cap mu0 is GF-invariant}
    Let $\tilde{\mu}_0 \in H^{n-1}(\Gamma, \D_0)$ and $c_{U_F} \in H_{n-1}(\Gamma, \Z)$ be as above. The element $\lambda = c_{U_F} \frown \tilde{\mu}_0 \in (\D_0)_{U_F}$ is fixed by $\Gal(F/\Q)$.
\end{lemma}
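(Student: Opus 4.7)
The strategy is to analyze how $A_\eta \in \GL_n(\Z)$ (the image of $\eta \in \Gal(F/\Q)$) acts on each factor of the cap product $\lambda = c_{U_F} \frown \tilde{\mu}_0$ separately, and then combine the two via the naturality of cap product. Ultimately the whole statement reduces to the elementary identity $\det(A_\eta) = \mathrm{sgn}(\pi_\eta)$, where $\pi_\eta$ is the permutation of the real embeddings $\sigma_1,\dots,\sigma_n$ induced by $\eta$.

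First I would compute the action on $\tilde{\mu}_0 \in H^{n-1}(\Gamma, \D_0)$. Since $\Gamma = \SL_n(\Z)$ is normal in $\GL_n(\Z)$ with quotient $\{\pm 1\}$ represented by $w$, and since inner conjugation acts trivially on group cohomology, the $\GL_n(\Z)$-action on $H^{n-1}(\Gamma,\D_0)$ factors through the determinant, with the generator $w$ inducing the involution from Section~\ref{sec: Topological construction of the Eisenstein group cohomology class}. Combined with the fact that $\tilde{\mu}_0$ lies in the $w = -1$ eigenspace, this gives
\[
A_\eta \cdot \tilde{\mu}_0 = \det(A_\eta) \cdot \tilde{\mu}_0.
\]

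Next I would compute the action on $c_{U_F}$. A direct calculation with the defining identities $\alpha(\tau^t \cdot x) = \tau^t\cdot A_\alpha x$ and $\eta(\tau^t\cdot x) = \tau^t\cdot A_\eta x$ shows that $A_\eta A_\epsilon A_\eta^{-1} = A_{\eta(\epsilon)}$ for every $\epsilon \in U_F$, so the conjugation action of $A_\eta$ on $U_F$ is the Galois action. Via the Dirichlet logarithm embedding $U_F \otimes \R \hookrightarrow \{\sum x_i = 0\} \subset \R^n$, which is $\Gal(F/\Q)$-equivariant, this is realized by the coordinate permutation $\pi_\eta$; its determinant on the hyperplane equals $\mathrm{sgn}(\pi_\eta)$, since the permutation has that determinant on all of $\R^n$ and fixes the complementary line spanned by $(1,\dots,1)$. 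Hence
\[
A_\eta \cdot c_{U_F} = \mathrm{sgn}(\pi_\eta) \cdot c_{U_F}.
\]

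To match the two signs, I would verify $\det(A_\eta) = \mathrm{sgn}(\pi_\eta)$ as follows. Setting $M = (\sigma_i(\tau_j))_{i,j}$ (whose determinant is positive since $\tau$ is an oriented basis) and applying $\sigma_i$ to $\eta(\tau_j) = \sum_k A_\eta(k,j) \tau_k$ yields the matrix identity $(\sigma_{\pi_\eta(i)}(\tau_j))_{i,j} = M\,A_\eta$. The left-hand side is $M$ with rows permuted by $\pi_\eta$, so has determinant $\mathrm{sgn}(\pi_\eta) \det(M)$, and cancelling $\det(M)$ gives the claim. Combining everything via the naturality of cap product under the simultaneous $A_\eta$-action on $U_F$ and $\D_0$,
\[
A_\eta \cdot \lambda = (A_\eta \cdot c_{U_F}) \frown (A_\eta \cdot \tilde{\mu}_0) = \mathrm{sgn}(\pi_\eta)\det(A_\eta) \cdot \lambda = \lambda.
\]
The main subtlety is bookkeeping of signs, especially ensuring that the orientation sign on $c_{U_F}$ and the Galois action on $\tilde\mu_0$ (via the $w = -1$ condition) both track $\det(A_\eta)$; the computation above shows this is automatic from the chosen compatibility of orientations.
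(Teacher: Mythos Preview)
Your argument is correct and amounts to an explicit unpacking of the paper's proof. The paper lifts $\tilde{\mu}_0$ via Shapiro to $H^{n-1}(\GL_n(\Z),\D_0(\det))$, so that after restriction through $U_F\rtimes\Gal(F/\Q)$ the class lands in $H^{n-1}(U_F,\D_0(\det))^{\Gal(F/\Q)}$, and then asserts that cap product with $c_{U_F}$ from $H^{n-1}(U_F,\D_0(\det))$ to $(\D_0)_{U_F}$ is $\Gal(F/\Q)$-equivariant, leaving this last step as ``a direct calculation.'' Your computation is exactly that calculation made explicit: the Shapiro lift to $\D_0(\det)$ encodes your first sign $A_\eta\cdot\tilde\mu_0=\det(A_\eta)\,\tilde\mu_0$, while the equivariance of the cap product comes down to showing $\alpha_{\eta,*}c_{U_F}=\det(A_\eta)\,c_{U_F}$, i.e.\ your second sign together with the identity $\det(A_\eta)=\mathrm{sgn}(\pi_\eta)$. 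So the two proofs are the same in content; yours trades the structural packaging (the $\det$-twist) for a concrete sign check, which has the virtue of making transparent where the orientation conventions enter.
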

\begin{proof}
    By Shapiro's lemma, $\tilde{\mu}_0 \in H^{n-1}(\Gamma, \D_0)^{-}$ admits a unique lift via the isomorphism given by restriction
    \[
    H^{n-1}(\GL_n(\Z), \D_0(\det)) \xlongrightarrow{\sim} H^{n-1}(\Gamma, \D_0)^-,
    \]
    that we will also denote by $\tilde{\mu}_0$. It then follows that the restriction of $\tilde{\mu}_0$ to $U_F$ is $\Gal(F/\Q)$-invariant, as it can be obtained as the image of $\tilde{\mu}_0$ via the following maps
    \[
	H^{n-1}(\GL_n(\Z), \D_0(\det)) \too H^{n-1}(U_F \rtimes \Gal(F/\Q), \D_0(\det)) \too H^{n-1}(U_F, \D_0(\det))^{\Gal(F/\Q)}.
	\]
    The result follows as cup product with $c_{U_F}$ induces a $\Gal(F/\Q)$-equivariant map
    \[
    H^{n-1}(U_F, \D_0(\det)) \xlongrightarrow{\sim} (\D_0)_{U_F},
    \]
    which can be verified via a direct calculation.
\end{proof}

\begin{theorem}\label{thm: J_Eis[tau] in Qp if tau is Gal stable}
	Suppose that the coordinates of $\tau \in F^n$ given an oriented $\Z$-basis of a $\Gal(F/\Q)$-stable ideal $\mathfrak a^{-1}$. We have, $J_{E, \mathcal L}[\tau] \in \Q_p$.
\end{theorem}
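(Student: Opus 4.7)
The plan is to show that $J_{E, \mathcal L}[\tau]$ is fixed by every element of $\Gal(F_p/\Q_p)$. Since $J_{E, \mathcal L}[\tau] \in F_p$ is already known, this yields the claim. The key observation is that, with $F/\Q$ Galois and $p$ inert, $F_p/\Q_p$ is Galois of degree $n$ and we have a canonical identification $\Gal(F_p/\Q_p) \simeq \Gal(F/\Q)$ (the decomposition group is the full Galois group), so it suffices to verify invariance under the latter, which is already realized concretely via the embedding $\Gal(F/\Q) \hookrightarrow \GL_n(\Z)$, $\eta \mapsto A_\eta$.

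Starting from the explicit description $\ell J_{E, \mathcal L}[\tau] = \int_{\X} \log_p(\tau^t \cdot x)\, d\lambda$ with $\lambda = c_{U_F} \frown \tilde{\mu}_0 \in (\D_0)_{U_F}$ recalled just before Lemma \ref{lemma: c_{U_F} cap mu0 is GF-invariant}, I would first confirm that the integral descends to the coinvariants $(\D_0)_{U_F}$: for $u \in U_F$ one has $\log_p(\tau^t A_u x) - \log_p(\tau^t \cdot x) = \log_p(u)$, a constant in $x$, which pairs to zero against the total-mass-zero measure class $\lambda$. Then for $\eta \in \Gal(F/\Q)$, using the Galois-equivariance of $\log_p$, the identity $\eta(\tau^t \cdot x) = \tau^t A_\eta x$ (which extends from $x \in \Q^n$ to $x \in \X$ by continuity of $\eta$ on $F_p$), and the change of variable for the $\GL_n(\Z)$-action on $\D_0$, I would compute
\[
\eta\bigl(\ell J_{E, \mathcal L}[\tau]\bigr) = \int_{\X} \log_p(\tau^t \cdot y)\, d(A_\eta \cdot \lambda)(y).
\]

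The heart of the argument is then an immediate appeal to Lemma \ref{lemma: c_{U_F} cap mu0 is GF-invariant}: it gives $A_\eta \cdot \lambda = \lambda$ in $(\D_0)_{U_F}$, and combined with the well-definedness on coinvariants this yields $\eta(\ell J_{E, \mathcal L}[\tau]) = \ell J_{E, \mathcal L}[\tau]$, so that $\eta J_{E, \mathcal L}[\tau] = J_{E, \mathcal L}[\tau]$. The only point requiring care — and really the only nontrivial subtlety — is the interplay between the $U_F$-non-invariance of the integrand $\log_p(\tau^t \cdot x)$ and the fact that $\lambda$ only lives in coinvariants; this is resolved cleanly by the total-mass-zero condition, which is precisely why the lift of $\mu$ to a class $\mu_0$ valued in $\D_0$ constructed in Section \ref{subsec: lfting to measures of total mass zero} was essential. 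Beyond this, no real obstacle remains.
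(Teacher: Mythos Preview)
Your proposal is correct and follows essentially the same route as the paper's proof: reduce to $\Gal(F_p/\Q_p)$-invariance, use the identity $\eta(\tau^t\cdot x)=\tau^t A_\eta x$ together with the change of variable, and invoke Lemma~\ref{lemma: c_{U_F} cap mu0 is GF-invariant} to conclude $A_\eta\cdot\lambda=\lambda$ in $(\D_0)_{U_F}$. Your explicit check that the integral descends to $(\D_0)_{U_F}$ via the total-mass-zero condition is a welcome clarification that the paper leaves implicit.
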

\begin{proof}
	We need to see that $J_{E, \mathcal L}[\tau] \in F_p$ is fixed by $\Gal(F_p/\Q_p)$. For every $\tilde{\eta} \in \Gal(F_p/\Q_p)$ denote by $\eta \in \Gal(F/\Q)$ its restriction to $F$ and note
	\[
	\begin{split}
		\ell J_{E, \mathcal L}[\tau]^{\tilde{\eta}} & = \int_\X \log_p(\eta( \tau^t \cdot x) ) d\lambda \\ & = \int_{\X} \log_p( \tau^t A_{\eta} x ) d\lambda \\ & = \int_{\X} \log_p( \tau^t \cdot x )d(A_{\eta} \cdot \lambda) = \ell J_{E, \mathcal L}[\tau], 
	\end{split} 
	\]
	where in the last equality we used that $\lambda \in (\D_0)_{U_\tau}$ is fixed by $\Gal(F/\Q)$ by Lemma \ref{lemma: c_{U_F} cap mu0 is GF-invariant}.
\end{proof}
\begin{remark}
    In Theorem \ref{thm: J_Eis[tau] in Qp if tau is Gal stable}, we only used that $\tilde{\mu}_0$ is a cohomology class for $\SL_n(\Z)$ that belongs to the $w = -1$ eigenspace. Thus, it can be applied to other rigid analytic classes. 
\end{remark}
\begin{corollary}
    Suppose that $F$ is a totally real field that is Galois over $\Q$ and where $p$ is inert. Let $\tau \in F^n$ with coordinates generating $\mathfrak a^{-1}$, where $\mathfrak a$ is a $\Gal(F/\Q)$-stable ideal, and let $u \in \cO[1/p]_-^\times \otimes \Q$ be the Gross--Stark unit of Proposition \ref{prop: determination of u}. We have,
    \[
    J_{E, \mathcal L}[\tau] = \log_p(u^{\sigma_\mathfrak a}).
    \]
\end{corollary}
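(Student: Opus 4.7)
The plan is to combine the three main ingredients already established: the trace identity of Theorem \ref{thm: Tr J_Eis[tau] = Tr log(u)}, the rationality statement of Theorem \ref{thm: J_Eis[tau] in Qp if tau is Gal stable}, and the rationality of the logarithm of the Gross--Stark unit given by Proposition \ref{lemma: log(sigma_a u) is in Qp if [a] is Gal(F/Q)-stable}. Since $F/\Q$ is Galois and $p$ is inert, any $\Gal(F/\Q)$-stable ideal has $\Gal(F/\Q)$-stable narrow class, so both hypotheses are available simultaneously.

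First, I would invoke Theorem \ref{thm: J_Eis[tau] in Qp if tau is Gal stable}, applied to $\tau$, to conclude that $J_{E, \mathcal L}[\tau] \in \Q_p$. Next, I would apply Proposition \ref{lemma: log(sigma_a u) is in Qp if [a] is Gal(F/Q)-stable} (which only requires $[\mathfrak a]$ to be $\Gal(F/\Q)$-stable, automatic here) to obtain $\log_p(u^{\sigma_\mathfrak a}) \in \Q_p$. Both quantities thus live in $\Q_p \subset F_p$.

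Since $p$ is inert in the degree $n$ extension $F/\Q$, the completion $F_p$ is an unramified extension of $\Q_p$ of degree $n$, and therefore the trace map $\mathrm{Tr}_{F_p/\Q_p}$ acts as multiplication by $n$ on elements of $\Q_p$. Applying Theorem \ref{thm: Tr J_Eis[tau] = Tr log(u)}, I obtain
\[
n \cdot J_{E, \mathcal L}[\tau] = \mathrm{Tr}_{F_p/\Q_p} J_{E, \mathcal L}[\tau] = \mathrm{Tr}_{F_p/\Q_p} \log_p(u^{\sigma_{\mathfrak a}}) = n \cdot \log_p(u^{\sigma_{\mathfrak a}}).
\]
Dividing by $n$ yields the desired equality $J_{E, \mathcal L}[\tau] = \log_p(u^{\sigma_\mathfrak a})$.

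There is essentially no obstacle here: the entire content of the corollary is the confluence of the previously established results. The only minor point to verify is that the hypothesis of Proposition \ref{lemma: log(sigma_a u) is in Qp if [a] is Gal(F/Q)-stable} (stability of the narrow class $[\mathfrak a]$) follows from the stronger hypothesis of the corollary (stability of $\mathfrak a$ itself), and that $[F_p:\Q_p] = n$ so that the trace is indeed multiplication by $n$ on $\Q_p$; both are immediate.
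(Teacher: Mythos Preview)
Your proposal is correct and matches the paper's intended argument exactly: the paper states this corollary without proof, having set up precisely the three ingredients you cite (Theorem \ref{thm: Tr J_Eis[tau] = Tr log(u)}, Theorem \ref{thm: J_Eis[tau] in Qp if tau is Gal stable}, and Proposition \ref{lemma: log(sigma_a u) is in Qp if [a] is Gal(F/Q)-stable}) and explaining at the start of Section \ref{subsec: The case of Galois extensions} that the conjecture follows from the trace identity once both sides are known to lie in $\Q_p$. Your observation that the trace acts as multiplication by $n$ on $\Q_p$ is the implicit final step.
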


\subsection{Example in the Galois case} In this section, we give a numerical computation that exemplifies Proposition \ref{lemma: log(sigma_a u) is in Qp if [a] is Gal(F/Q)-stable}, namely the fact that the conjugates of Gross--Stark units corresponding to $\Gal(F/\Q)$-fixed narrow ideal classes belong to $\Q_p$. We used the algorithm developed by Damm--Johnsen, see \cite{Damm--Johnsen}, which is publicly available. We made minor modifications to the algorithm to output all conjugates of a given Gross--Stark unit. This algorithm computes non $c$-smoothed units, but we will abuse notation and denote the Gross--Stark units with the same symbol as before, as they only differ by a power.

Concretely, we take $p = 3$ and $F = \Q(\sqrt{D})$ with $D = 689$. The narrow Hilbert class field of $F$, denoted by $H$, is a cyclic extension of $F$ of degree $8$. The Galois group $\Gal(H/F)$ is isomorphic to the narrow Hilbert class group of $F$, denoted by $G_1$. Denote by $\mathcal F_D$ the set of binary quadratic forms with integer coefficients and discriminant $D$. This set is equipped with a group action of $\SL_2(\Z)$ by linear transformations and we have a bijection 
\[
\mathcal F_D/ \SL_2(\Z) \xlongrightarrow{\sim} G_1
\]
\[
[Q] = [a, b, c] \mapstoo [\mathfrak a_Q] = \left[ \left( a, \frac{-b + \sqrt{D}}{2} \right) \right],
\]
where $[a,b,c]$ denotes the class of $ax^2 + bxy + cy^2$. The Gross--Stark units are, up to high $p$-adic precision, roots of the polynomial
\[
6561x^8 - 11340x^7 - 882x^6 + 4333x^5 + 2665x^4 + 4333x^3 - 882x^2 - 11340x + 6561.
\]
More precisely, for every class $[Q] \in \mathcal F_D/\SL_2(\Z)$, the table below gives the image of the Gross--Stark unit $\sigma_{\mathfrak{a}_{Q}}(u) \in H$ via the embedding $H \subset H_{\fp} = F_p$.

\begin{table}[H]
 \caption{$p = 3$, $D = 689$. Elements in $\mathcal F_D/\SL_2(\Z)$ and their Gross--Stark unit.}
	\begin{tabular}{|l|l|l|}
		\hline
		$[Q]$  & $\mathrm{ord}(\mathfrak [\mathfrak{a}_Q])$ & $\sigma_{\mathfrak{a}_Q}(u) \in F_p$                                                    \\ \hline
		$[-20, 17, 5]$  & $8$ & $3^{-2}(7283498230698546457 + 20427811426324513506\sqrt{D}) + O(3^{39})$ \\
		$[-10, 7, 16]$  & $2$ & $3^4 \cdot 28799930840163216397 + O(3^{45})$                             \\
		$[-10, 17, 10]$ & $4$ & $25613292858296352193 + 34405602800800679412\sqrt{D}+ O(3^{41})$         \\
		$[-5, 17, 20]$  & $8$ & $3^2 (28389335835840796072 + 1041259434467889369\sqrt{D}) + O(3^{43})$   \\
		$[5,17, -20]$   & $8$ & $3^{-2} (7283498230698546457 + 16045184950846272897\sqrt{D} + O(3^{39})$ \\
		$[10,7, -16]$   &  $1$ &  $3^{-4}\cdot 23094469614450736543 + O(3^{37})$                           \\
		$[10, 17, -10]$  & $4$ & $25613292858296352193 + 2067393576370106991\sqrt{D} + O(3^{41})$         \\
		$[20, 17, -5]$  & $8$ & $3^2 (28389335835840796072 + 35431736942702897034\sqrt{D}) + O(3^{43})$   \\ \hline
	\end{tabular}
\end{table}
Note that $\sigma_{\mathfrak a_Q}(u) \in \Q_p$ if and only if $[\mathfrak a_Q]$ is $2$-torsion in $G_1$. For real quadratic fields, this is equivalent to the fact that the class $[\mathfrak a_Q]$ is $\Gal(F/\Q)$-fixed, as predicted by Proposition \ref{lemma: log(sigma_a u) is in Qp if [a] is Gal(F/Q)-stable}.

The work of Damm--Johnsen made it possible to verify this phenomenon in many additional cases. In these cases, the size of the narrow Hilbert class group ranged from $2$ to $20$.

\subsection{Further comments}
We conclude with some observations to support the conjecture. Denote $\D \coloneq \D(\X, \Z[1/m])$ and $\D_0 \coloneq \D_0(\X, \Z[1/m])$. Recall the class $\mu \in H^{n-1}(\Gamma, \D)^{-}$ constructed in Section \ref{sec: Topological construction of the Eisenstein group cohomology class} and denote by $\mu_{\vert U_F}$ its restriction to $U_F \subset \Gamma$. Consider the $U_F$-equivariant morphism 
\[
\bar{\mathcal E}\colon \D \too F_p /\Z[1/m] \log_p(\cO_F^\times), \  \lambda \mapstoo \int_{\X} \log_p(c\tau^t \cdot x) d\lambda,  
\]
where $\Z[1/m] \log_p(\cO_F^\times)$ denotes the $\Z[1/m]$-span of $\log_p(\cO_F^\times)$. Theorem \ref{thm: Tr J_Eis[tau] = Tr log(u)} implies 
\begin{equation}\label{eq: formula u_tau mod Z_plog(O_F)}
c_{U_F} \frown \bar{\mathcal E}(\mu_{\vert U_F}) = \log_p(u^{\sigma_\mathfrak a}) \mod \Z_p \log_p(\cO_F^\times).
\end{equation}

Conjecture \ref{conj: Jtriv[tau] = log(GS)} predicts an expression for  $\log_p(u^{\sigma_{\mathfrak a}})$ without the ambiguity $\Z_p \log_p(\cO_F^\times)$. Observe that, if we consider measures of total mass zero, we can define 
\[
\mathcal{E}\colon \D_0 \too F_p, \ \lambda \mapstoo \int_\X \log_p(c\tau^t \cdot x) d \lambda,
\]
which is $U_F$-equivariant. Moreover, it follows from Proposition \ref{prop: image of mu is torsion} that $\mu_{\vert U_F}$ lifts to a class in $H^{n-1}(U_F, \D_0)$. However, the lift is not unique. Indeed, the long exact sequence
\[
\cdots \too H^{n-2}(U_F, \Z[1/m]) \xlongrightarrow{\delta} H^{n-1}(U_F, \D_0) \too H^{n-1}(U_F, \D) \too \cdots
\]
shows that a lift of $\mu_{\vert U_F}$ is well-defined up to the image of $\delta$. Since $U_F \simeq \Z^{n-1}$ by Dirichlet's unit theorem, we have a natural isomorphism
\begin{equation}\label{eq: description H^{n-2}(U_F)}
H^{n-2}(U_F, \Z[1/m]) \simeq H_1(U_F, \Z[1/m]) \simeq U_F \otimes \Z[1/m].
\end{equation}
This leads to the following proposition. 
\begin{proposition}
	The map 
	\[
	H^{n-2}(U_F, \Z[1/m]) \xlongrightarrow{} F_p, \ \varepsilon \mapstoo  c_{U_F} \frown \mathcal E( \delta(\varepsilon) )
	\]
    has image equal to $\Z[1/m]\log_p(U_F)$. More precisely, via the natural isomorphism given in \eqref{eq: description H^{n-2}(U_F)}, it is equal to $\log_p \colon U_F \otimes \Z[1/m] \xrightarrow{\sim} \Z[1/m]\log_p(U_F)$.
\end{proposition}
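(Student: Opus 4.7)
My plan is to extend the $U_F$-equivariant map $\mathcal E \colon \D_0 \to F_p$ to a $\Z$-linear map $\tilde{\mathcal E} \colon \D \to F_p$ by the same formula $\tilde{\mathcal E}(\lambda) \coloneq \int_\X \log_p(c \tau^t \cdot x)\, d\lambda(x)$, and to identify the composition $\mathcal E_* \circ \delta$ with cup product by the cohomology class measuring the $U_F$-equivariance defect of $\tilde{\mathcal E}$. Since $\log_p$ is continuous and bounded on the compact set $c \tau^t \cdot \X \subset F_p^\times$, the extension $\tilde{\mathcal E}$ is well-defined and restricts to $\mathcal E$ on $\D_0$.

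A change of variable using $\tau^t A_\varepsilon = \varepsilon \tau^t$ for $\varepsilon \in U_F$ should then yield
\[
\tilde{\mathcal E}(A_\varepsilon \cdot \lambda) - \tilde{\mathcal E}(\lambda) = \log_p(\varepsilon) \cdot \lambda(\X),
\]
which vanishes on $\D_0$ (confirming the equivariance of $\mathcal E$) and descends to a $1$-cocycle $\xi \colon U_F \to F_p$ with $\xi(\varepsilon) = \log_p(\varepsilon)$. Because $U_F$ is abelian and acts trivially on $F_p$, $\xi$ is in fact a homomorphism and represents the class $\log_p \in H^1(U_F, F_p) = \Hom(U_F, F_p)$.

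The key step is to argue that $\mathcal E_* \circ \delta \colon H^{n-2}(U_F, \Z[1/m]) \to H^{n-1}(U_F, F_p)$ agrees with cup product by $[\xi] = \log_p$. I would check this at the cochain level: for a cocycle $f \in Z^{n-2}(U_F, \Z[1/m])$ and any $\Z$-linear splitting $s \colon \Z[1/m] \to \D$ of the total-mass map, the coboundary $d(s \circ f) \in Z^{n-1}(U_F, \D_0)$ represents $\delta(f)$; applying $\tilde{\mathcal E}$ and using its equivariance defect should give $\mathcal E \circ d(s \circ f) = d(\tilde{\mathcal E} \circ s \circ f) + \xi \cup f$, so that $\mathcal E_* \delta(f) = \log_p \cup f$ in cohomology.

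To conclude, I would invoke the cap-cup formula $c_{U_F} \frown (\alpha \cup \beta) = (c_{U_F} \frown \alpha) \frown \beta$ (together with graded commutativity) and Poincar\'e duality for the aspherical space $BU_F \simeq T^{n-1}$: the cap product with $c_{U_F}$ gives an isomorphism $H^{n-2}(U_F, \Z[1/m]) \xrightarrow{\sim} H_1(U_F, \Z[1/m]) = U_F \otimes \Z[1/m]$, which is precisely the identification of \eqref{eq: description H^{n-2}(U_F)}. Combining these with the evaluation pairing $H^1(U_F, F_p) \otimes H_1(U_F, \Z[1/m]) \to F_p$ yields $c_{U_F} \frown \mathcal E(\delta(\varepsilon^{(n-2)})) = \log_p(\varepsilon)$, so the map is $\log_p \colon U_F \otimes \Z[1/m] \to F_p$ with image $\Z[1/m] \log_p(U_F)$. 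The main technical step is the chain-level identification of $\mathcal E_* \circ \delta$ with cup product by $\log_p$; matching signs and conventions in the cap-cup and Poincar\'e duality identifications is routine bookkeeping that can be absorbed into the choice of isomorphism \eqref{eq: description H^{n-2}(U_F)}.
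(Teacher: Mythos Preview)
The paper states this proposition without proof in its ``Further comments'' section; it is presented as a motivational observation to justify the preferred lift $\mu_0$, and the authors evidently regard the verification as routine. Your proposal is a correct and natural way to supply the details.

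Your key idea---extending $\mathcal E$ to a $\Z$-linear (but not $U_F$-equivariant) map $\tilde{\mathcal E}\colon \D \to F_p$, identifying its equivariance defect as the $1$-cocycle $\xi(\varepsilon)=\log_p(\varepsilon)$, and then showing at the cochain level that $\mathcal E_* \circ \delta$ is cup product with $[\xi]$---is exactly the standard mechanism by which a connecting homomorphism followed by a map defined on the kernel is computed via a defect class. The change-of-variable computation $\tilde{\mathcal E}(A_\varepsilon\cdot\lambda)-\tilde{\mathcal E}(\lambda)=\log_p(\varepsilon)\,\lambda(\X)$ is correct, using $\tau^t A_\varepsilon x = \varepsilon(\tau^t x)$ in $F_p$. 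The final step, reducing $c_{U_F}\frown(\xi\cup f)$ to $\log_p(\varepsilon)$ via the cap--cup formula and the Poincar\'e-duality identification $H^{n-2}(U_F)\simeq H_1(U_F)=U_F\otimes\Z[1/m]$, is also correct up to the sign you flag; that sign is absorbed by the choice of the isomorphism in \eqref{eq: description H^{n-2}(U_F)} (which the paper describes only as ``natural''), so no issue arises.
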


In other words, the process of lifting $\mu_{\vert U_F}$ to a class valued in total mass zero measures allows to compute its image under $\mathcal E$ and construct an element in $F_p$. However, since the lift is only well-defined up to $U_F \otimes \Z[1/m]$, the elements we construct in $F_p$ are only well-defined up to $\Z[1/m] \log_p(U_F)$. Thus, we obtain a similar ambiguity for the Gross--Stark unit as the one appearing on the formula of the Gross--Stark conjecture.

However, in this paper, we considered cohomology classes for $\Gamma$, instead of $U_F$, to define our invariants. In this way, we obtained that $\mu \in H^{n-1}(\Gamma, \D)_{\Q}^{-}$ has a unique lift $\mu_0 \in H^{n-1}(\Gamma, \D_0)_{\Q}^{-}$. Indeed, as explained in Section \ref{subsec: lfting to measures of total mass zero}, this follows from the long exact sequence
\[
H^{n-2}(\Gamma, \Z[1/m])^{-} \too H^{n-1}(\Gamma, \D_0)^{-} \too H^{n-1}(\Gamma , \D)^{-} \too H^{n-1}(\Gamma, \Z[1/m])^{-},
\] 
Proposition \ref{prop: image of mu is torsion}, and the fact that $H^{n-2}(\Gamma, \Z_p)^{-}$ is torsion by \cite{LS}. Then, the restriction ${\mu_0}_{\vert U_F}$ is a preferred lift of $\mu_{\vert U_F}$ to $H^{n-1}(U_F, \D_0(\X, \Z[1/m]))$. Hence, using ${\mu_{0}}_{\vert U_F}$ and the map $\mathcal E$, we are able to produce a canonical element in $F_p$
\[
c_{U_F} \frown \mathcal E( {\mu_0}_{\vert U_F}  ) = J_{E, \mathcal L}[\tau] \in F_p.
\]
The fact that this construction is unique suggests that the quantity we produced could be a preferred lift of $\mathrm{Tr}_{F_p/\Q_p} \log_p(u^{\sigma_{\mathfrak a}})$, and this motivates us to state Conjecture \ref{conj: Jtriv[tau] = log(GS)} above. 

We summarize this discussion with the following commutative diagram
\[
\begin{tikzcd}
	\text{torsion} \arrow[r] \arrow[d]                                                                        & {H^{n-1}(\Gamma, \D_0)^{-}} \arrow[r] \arrow[d]                      & {H^{n-1}(\Gamma, \D)^{-}} \arrow[d]                            \\
	U_F \otimes \Z[1/m] \arrow[d, "c_{U_F} \frown \mathcal E \circ \delta(\cdot)"] \arrow[r, "\delta"] & {H^{n-1}(U_F, \D_0)} \arrow[d, "c_{U_F} \frown \mathcal E(\cdot)"] \arrow[r] & {H^{n-1}(U_F, \D)} \arrow[d, "c_{U_F} \frown \bar{\mathcal E}(\cdot)"] \\
	\Z[1/m]\log_p(U_F)  \arrow[r]                                                            & F_p \arrow[r]                                                                     & F_p/(\Z[1/m]\log(U_F)).                                                
\end{tikzcd}
\] 

{
\printbibliography

@article{BergeronCharolloisGarcia2023,
  author    = {N. Bergeron and P. Charollois and L. E. Garc{\'i}a},
  title     = {Eisenstein cohomology classes for $\mathrm{GL}_N$ over imaginary quadratic fields},
  journal   = {J. Reine Angew. Math.},
  volume    = {2023},
  number    = {797},
  pages     = {1--40},
  year      = {2023},
  doi       = {},
  publisher = {Walter de Gruyter},
  url       = {}
}

@article {C,
    AUTHOR = {{C}erednik, I. V.},
     TITLE = {Uniformization of algebraic curves by discrete arithmetic
              subgroups of {${\rm PGL}\sb{2}(k\sb{w})$} with compact
              quotient spaces},
   JOURNAL = {Mat. Sb. (N.S.)},
  FJOURNAL = {Matematicheski\u{\i} Sbornik. Novaya Seriya},
    VOLUME = {100(142)},
      YEAR = {1976},
    NUMBER = {1},
     PAGES = {59--88, 165},
}

@book{MilnorStasheff,
  title        = {Characteristic Classes},
  author       = {Milnor, J. W. and Stasheff, J. D.},
  year         = {1974},
  publisher    = {Princeton University Press},
  address      = {Princeton, NJ},
  series       = {Ann. of Math. Studies},
  volume       = {76},
  isbn         = {},
  pages        = {330}
}

@article{DarmonVonk2021,
  author    = {H. Darmon and J. Vonk},
  title     = {Singular moduli for real quadratic fields: a rigid analytic approach},
  journal   = {Duke Math. J.},
  volume    = {170},
  number    = {1},
  year      = {2021},
  pages     = {23--93},
  publisher = {Duke University Press}
}

@misc{DKSW,
  title        = {The {B}rumer--{S}tark {C}onjecture over $\mathbb{Z}$},
  author       = {Dasgupta, S. and Kakde, M. and Silliman, J. and Wang, J.},
  year         = {2023},
  eprint       = {2310.16399},
  archivePrefix= {arXiv},
  primaryClass = {math.NT},
  note         = {Preprint},
  %url          = {https://arxiv.org/abs/2310.16399}
}

@article {D,
    AUTHOR = {Drinfeld, V. G.},
     TITLE = {Coverings of {$p$}-adic symmetric domains},
   JOURNAL = {Funkcional. Anal. i Prilo\v{z}en.},
  FJOURNAL = {Akademija Nauk SSSR. Funkcional\cprime nyi Analiz i ego
              Prilo\v{z}enija},
    VOLUME = {10},
      YEAR = {1976},
    NUMBER = {2},
     PAGES = {29--40},
}

@article {SchneiderTeitelbaum,
    AUTHOR = {Schneider, P. and Teitelbaum, J.},
     TITLE = {An integral transform for {$p$}-adic symmetric spaces},
   JOURNAL = {Duke Math. J.},
  FJOURNAL = {Duke Mathematical Journal},
    VOLUME = {86},
      YEAR = {1997},
    NUMBER = {3},
     PAGES = {391--433},
}

@unpublished{RX2.5,
  author       = {Roset, M. and Xu, P.},
  title        = {Gross--Stark units for cyclic totally real fields},
  note         = {In preparation},
  year         = {2025}
}

@article{Gekeler2020,
  author       = {E.-U. Gekeler},
  title        = {Invertible functions on nonarchimedean symmetric spaces},
  journal      = {Algebra Number Theory},
  volume       = {14},
  number       = {9},
  pages        = {2481--2504},
  year         = {2020},
  doi          = {10.2140/ant.2020.14.2481}
}

@article {K,
    AUTHOR = {Kudla, S.},
     TITLE = {Integrals of {B}orcherds forms},
   JOURNAL = {Compos. Math.},
  FJOURNAL = {Compositio Mathematica},
    VOLUME = {137},
      YEAR = {2003},
    NUMBER = {3},
     PAGES = {293--349},
}

@article {M,
    AUTHOR = {Molina, S.},
     TITLE = {Ribet bimodules and the specialization of {H}eegner points},
   JOURNAL = {Israel J. Math.},
  FJOURNAL = {Israel Journal of Mathematics},
    VOLUME = {189},
      YEAR = {2012},
     PAGES = {1--38},
}

@phdthesis{xu2023arithmetic,
  author       = {P. Xu},
  title        = {Arithmetic {E}isenstein {T}heta {L}ifts},
  school       = {McGill University},
  year         = {2023},
  month        = apr,
  address      = {Montreal, Canada},
}

@preprint{X1,
	author={Peter Xu},
	title={Symbols for toric Eisenstein cocycles and arithmetic applications.},
	year=2025,
    note={Preprint.}
}

@InProceedings{St,
	author= {Sturm, J.},
	editor= {Chudnovsky, D.
	and Chudnovsky, G.
	and Cohn, H.
	and Nathanson, M.},
	title = {On the congruence of modular forms},
	booktitle= {Number Theory},
	year={1987},
	publisher= {Springer Berlin Heidelberg},
	address= {Berlin, Heidelberg},
	pages= {275--280},
}

@article{DGL, title={Rigid meromorphic cocycles for orthogonal groups}, volume={13}, DOI={10.1017/fms.2025.10035}, journal={Forum of Mathematics, Sigma}, author={Darmon, H. and Gehrmann, L. and Lipnowski, M.}, year={2025}, pages={e160}}

@article{BCG,
	AUTHOR = {Bergeron, N. and Charollois, P. and Garcia, L. E.},
     TITLE = {Transgressions of the {E}uler class and {E}isenstein
              cohomology of {${\rm GL}_N({\bf Z})$}},
   JOURNAL = {Jpn. J. Math.},
  FJOURNAL = {Japanese Journal of Mathematics},
    VOLUME = {15},
      YEAR = {2020},
    NUMBER = {2},
     PAGES = {311--379},
}

@article{Bannai_2024,
   title={The Hodge realization of the polylogarithm and the Shintani generating class for totally real fields},
   volume={448},
   journal={Adv. Math.},
   publisher={Elsevier BV},
   author={Bannai, Kenichi and Bekki, Hohto and Hagihara, Kei and Ohshita, Tatsuya and Yamada, Kazuki and Yamamoto, Shuji},
   year={2024},
   month=jun, pages={109716} 
}

@book{browncohomology,
  author    = {K. S. Brown},
  title     = {Cohomology of Groups},
  series    = {Graduate Texts in Mathematics},
  volume    = {87},
  year      = {1982},
  publisher = {Springer},
  address   = {New York},
  %doi       = {10.1007/978-1-4684-9327-6}
}

@article{BorelSerre,
	author={A. Borel and J-P. Serre},
	title={Corners and arithmetic groups.},
	journal={Comment. Math. Helv.},
	volume = 48,
	year = 1973,
	pages = {436-491},
	shorthand=""
}

@article{DasguptaShintani,
  title={Shintani zeta functions and {G}ross--{S}tark units for totally real fields},
  author={S. Dasgupta},
  journal={Duke Math. J.},
  year={2008},
  volume={143},
  pages={225-279},
  url={}
}

@article{DKBrumerStark,
    title={On the {B}rumer--{S}tark conjecture},
    author={S. Dasgupta and M. Kakde},
    journal={Ann. of Math.},
    year={2023},
    volume={197},
    pages={289-388},
    url={}
}

@article {DKExplicitCFT,
    author={S. Dasgupta and M. Kakde},
     TITLE = {Brumer--{S}tark units and explicit class field theory},
   JOURNAL = {Duke Math. J.},
    VOLUME = {173},
      YEAR = {2024},
    NUMBER = {8},
     PAGES = {1477--1555},
}

@article {vdP82,
    AUTHOR = {van der Put, M.},
     TITLE = {Les fonctions theta d'une courbe de {M}umford},
    journal = {Groupe de travail d'analyse ultrametrique},
    VOLUME = {9 no. 1, Talk no. 10, 12},
 PUBLISHER = {},
      YEAR = {1981-1982},
}

@article{CharolloisDasgupta2014,
  title     = {Integral {E}isenstein cocycles on {$\mathbf{GL}_n$}, {I}: {S}czech's cocycle and {$p$}-adic {$L$}-functions of totally real fields},
  author    = {Charollois, P. and Dasgupta, S.},
 JOURNAL = {Camb. J. Math.},
  volume    = {2},
  number    = {1},
  pages     = {49--90},
  year      = {2014},
  %doi       = {10.4310/CJM.2014.v2.n1.a2}
}

@article{DasguptaSpiess2018,
  title     = {Partial zeta values, {G}ross's tower of fields conjecture, and {G}ross--{S}tark units},
  author    = {Dasgupta, S. and Spie\ss, M.},
  journal   = {J. Eur. Math. Soc.},
  volume    = {20},
  number    = {11},
  pages     = {2643--2683},
  year      = {2018},
  %doi       = {10.4171/JEMS/821}
}

@incollection {Damm--Johnsen,
    AUTHOR = {Damm-Johnsen, H\aa vard},
     TITLE = {Modular algorithms for {G}ross-{S}tark units and
              {S}tark-{H}eegner points},
 BOOKTITLE = {Lu{C}a{NT}: {LMFDB}, computation, and number theory},
    SERIES = {Contemp. Math.},
    VOLUME = {796},
     PAGES = {261--284},
 PUBLISHER = {Amer. Math. Soc., Providence, RI},
}

@article{Gross1981padicL,
  author  = {Gross, B. H.},
  title   = {$p$-adic {$L$}-series at $s = 0$},
  journal = {Journal of the Faculty of Science, University of Tokyo, Section IA Mathematics},
  volume  = {28},
  year    = {1981},
  pages   = {979--994},
}

@article{GalanakisSpiess2024,
  title     = {Adelic {E}isenstein classes and divisibility properties of {S}tickelberger elements},
  author    = {Galanakis, A. and Spie\ss, M.}, 
  url       = {https://arxiv.org/abs/2402.11583},
    year         = {2024},
  eprint       = {2402.11583},
  archivePrefix= {arXiv},
  primaryClass = {math.NT},
}

@article{chapdelaine2009punits,
  author       = {Chapdelaine, H.},
  title        = {$p$-units in ray class fields of real quadratic number fields},
JOURNAL = {Compos. Math.},
  year         = {2009},
  volume       = {145},
  number       = {2},
  pages        = {364--392},
  %doi          = {10.1112/S0010437X08003886}
}

@article{CharolloisDasguptaGreenberg2015,
  title     = {Integral {E}isenstein cocycles on {$\mathrm{GL}_n$}, {II}: {S}hintani's method},
  author    = {Charollois, P. and Dasgupta, S. and Greenberg, M.},
  JOURNAL = {Comment. Math. Helv.},
  volume    = {90},
  number    = {2},
  pages     = {443--488},
  year      = {2015},
  %doi       = {10.4171/CMH/359}
}

@article{Sczech1993,
  title     = {Eisenstein group cocycles for {$\mathrm{GL}_n$} and values of {$L$}-functions},
  author    = {Sczech, R.},
  journal   = {Invent. Math.},
  volume    = {113},
  number    = {3},
  pages     = {581--616},
  year      = {1993},
  %doi       = {10.1007/BF01244313},
}

@article {Spiess,
    AUTHOR = {Spiess, Michael},
     TITLE = {Shintani cocycles and the order of vanishing of {$p$}-adic
              {H}ecke {$L$}-series at {$s=0$}},
   JOURNAL = {Math. Ann.},
    VOLUME = {359},
      YEAR = {2014},
    NUMBER = {1-2},
     PAGES = {239--265},
}

@article{beilinson2018,
  author = {A. Beilinson and G. Kings and A. Levin},
  title = {Topological Polylogarithms and {$p$}-adic Interpolation of {$L$}-values of Totally Real Fields},
  journal = {Math. Ann.},
  volume = {371},
  number = {3},
  pages = {1449--1495},
  year = {2018},
  publisher = {Springer}
}

@phdthesis{HugoC,
	author       = {H. Chapdelaine}, 
	title        = {Elliptic units in ray class fields of real quadratic number fields},
	school       = {McGill University},
	year         = 2007,
	month        = apr,
}

@article{DD,
	author={H. Darmon and S. Dasgupta},
	title={Elliptic Units for Real Quadratic Fields.},
	journal={Ann. of Math.},
	volume=163,
	year=2006,
	pages={301-46},
	shorthand="DD",
}

@article{DDP,
	author={H. Darmon and S. Dasgupta and R. Pollack},
	title={Hilbert modular forms and the {G}ross-{S}tark conjecture.},
	journal={Ann. of Math.},
	volume=174,
	year=2011,
	pages={439-484},
	shorthand="",
}

@article{DPV2,
	author={H. Darmon and A. Pozzi and J. Vonk},
	title={The values of the {D}edekind-{R}ademacher cocycle at real multiplication points.},
    journal={J. Eur. Math. Soc.},
    volume=26,
    year=2024,
    pages={3987-4032},
	shorthand="",
}

@article{DHS,
  author  = {Dasgupta, S. and Honnor, M. L. and Spie{\ss}, M.},
  title   = {On the equality of three formulas for {Brumer--Stark} units},
  journal = {Journal of the London Mathematical Society},
  volume  = {112},
  number  = {4},
  pages   = {e70296},
  year    = {2025},
}

@article{DR,
	author={P. Deligne and K. A. Ribet},
	title={ Values of abelian {$L$}-functions at negative integers over totally real fields.},
	journal={Invent. Math.},
	volume=59,
	number={},
	pages={227-286},
	year=1980,
	shorthand=""
}

@article{LS,
	author={J.-S. Li, and B. Sun},
	title={Low degree cohomologies of congruence groups},
	journal={Sci. China Math.},
	volume = 62,
	year = 2019,
	pages = {2287-2308},
	shorthand=""
}

@article{MQ,
	author={V. Mathai and D. Quillen},
	title={Superconnections, {T}hom classes, and equivariant differential forms},
	journal={Topology},
	volume=25,
	issue=1,
	pages={85-110},
	year=1986,
	shorthand=""
}

@article{BismutCheeger1992,
  author    = {J.-M. Bismut and J. Cheeger},
     TITLE = {Transgressed {E}uler classes of {${\rm SL}(2n,\bold Z)$}
              vector bundles, adiabatic limits of eta invariants and special
              values of {$L$}-functions},
   JOURNAL = {Ann. Sci. \'{E}cole Norm. Sup. (4)},
  FJOURNAL = {Annales Scientifiques de l'\'{E}cole Normale Sup\'{e}rieure. Quatri\`eme
              S\'{e}rie},
    VOLUME = {25},
      YEAR = {1992},
    NUMBER = {4},
     PAGES = {335--391}
}

@article{SerreCohomologiedesgroupes,
	author={J-P. Serre},
	title={Cohomologie des groupes discrets},
	journal={Ann. of Math. Studies },
	volume=70,
	year=1971,
	pages={77-169},
}

@article{SchneiderStuhler,
	author={P. Schneider and U. Stuhler},
	title={The cohomology of {$p$}-adic symmetric spaces},
	journal={Invent. Math.},
	volume=105,
	year=1991,
	pages={47-122},
}

@article{Siegel,
	author={C. L. Siegel},
	title={Uber die Fourierschen Koeffizienten von Modulformen},
	journal={Gesammelte Abhandlungen},
	volume=IV,
	year=1979,
	pages={98-139},
}

@article {Ventullo,
    AUTHOR = {Ventullo, K.},
     TITLE = {On the rank one abelian {G}ross-{S}tark conjecture},
   JOURNAL = {Comment. Math. Helv.},
  FJOURNAL = {Comment. Math. Helv. },
    VOLUME = {90},
      YEAR = {2015},
     PAGES = {939--963},
}

@preprint{GGM,
	author={L. Gehrmann and X. Guitart and M. Masdeu},
	title={Evaluation of Bianchi rigid meromorphic cocycles at big ATR points.},
	year=2025,
    note="Preprint."
}
}
\end{document}